\numberwithin{equation}{section}
\author{Edgar Assing}
\title{On the size of $p$-adic Whittaker functions}
\address{University of Bristol}
\email{edgar.assing@bristol.ac.uk}
\subjclass[2010]{11F70, 11L40, 11S80}
\keywords{automorphic representations, Whittaker new vectors, $p$-adic stationary phase, $p$-adic special functions}
\theoremstyle{plain}
\newtheorem{theorem}{Theorem}
\newtheorem{lemma}[theorem]{Lemma}
\numberwithin{theorem}{section}
\newtheorem{rem}{Remark}
\DeclareMathOperator{\R}{\mathbb{R}}
\DeclareMathOperator{\C}{\mathbb{C}}
\DeclareMathOperator{\Q}{\mathbb{Q}}
\DeclareMathOperator{\Z}{\mathbb{Z}}
\DeclareMathOperator{\N}{\mathbb{N}}
\DeclareMathOperator{\p}{\mathfrak{p}}
\DeclareMathOperator{\vol}{\text{Vol}}
\DeclareMathOperator{\Norm}{\text{Nr}_{E/F}}
\DeclareMathOperator{\Tr}{\text{Tr}_{E/F}}
\newcommand{\abs}[1]{\left\vert #1 \right\vert}
\newcommand{\du}[1]{\underline{\underline{#1}}}
\begin{document}

\begin{abstract}
In this paper we tackle a question raised by N. Templier and A. Saha concerning the size of Whittaker new vectors appearing in infinite dimensional representations of $GL_2$ over non-archimedean fields. We derive precise bounds for such functions in all possible situations. Our main tool is the $p$-adic method of stationary phase.
\end{abstract}

\maketitle

\setcounter{tocdepth}{1}
\tableofcontents

\section{Introduction}

The asymptotic behaviour of special functions plays an important role in number theory. Many recent results rely on almost excessive handling of Bessel functions, or more generally, hypergeometric functions. Such functions and their various asymptotic expansions have been studied for hundreds of years by many mathematicians in all areas. Their appearance in number theory stems from the theory of automorphic forms. Indeed, all sorts of special functions can be found in the archimedean components of automorphic representations. This however raises the question of non-archimedean analogues. From the perspective of automorphic representation it is clear that these $p$-adic analogues should just be vectors in a suitable model of the local representation at $p$. But what about suitable integral representations support properties and asymptotic size?

Some of these analogies are well known and have been studied for a long time. For example, very classical, the (quadratic) Gau\ss\  sum which can be thought of as a local version of Fresnel's integral. One sees this resemblance by writing
\begin{equation}
	\sum_{x\in \mathbb{F}_p} \exp\left( 2\pi i \frac{x^2}{p} \right) \leftrightsquigarrow \lim_{x\to \infty} \int_0^x \sin(t^2)dt. \nonumber
\end{equation}
This analogy shows up frequently in the $p$-adic method of stationary phase. Another classical couple would be Jacobi sum and Beta function. 

However, more interesting for us is the conceptual relationship
\begin{equation}
	\sum_{x\in \mathbb{F}_p^{\times}} \chi(x) \exp\left( 2\pi i \frac{ax+bx^{-1}}{p} \right) \leftrightsquigarrow K_{\nu}(x) = \frac{1}{2}\int_{0}^{\infty} t^{-\nu}\exp(-\frac{x}{2}(t+t^{-1}))\frac{dt}{t}. \nonumber
\end{equation}
On the left hand side we have a twisted version of the Kloosterman sum and on the right the $K$-Bessel function. This pair is well known to number theorists as it appears for example in the Kutznetsov trace formula. Furthermore, the $K$-Bessel function appears as the unique spherical vector in the Kirillov model of a (real) principal series representation, and is therefore part of the Whittaker expansion of Maa\ss\   forms.  If one considers non-spherical vectors, other types of representations, or even representations of $GL_2(\C)$, then the class of special functions gets bigger and bigger including classical Whittaker functions $W_{p,q}(x)$, for example. 

The representation theory of $GL_2(F)$ over non-archimedean fields $F$ is somewhat richer, in the sense that there are seemingly more  possible types of representations. Thus one could ask what the special functions featuring in the local Whittaker models are. 

The first natural choice (comparable to the spherical vectors in (real) principal series representations) are the unique Whittaker new vectors $W_{\pi}$. The goal of this paper is to shed some light on support and size of these functions. Being guided by the archimedean situation, we will find many parallels but also some differences.

If $\pi$ is spherical then there is a closed form for the Whittaker vector $W_{\pi}$ in terms of the Satake parameters of $\pi$ which relate to the Hecke eigenvalues of the associated automorphic form. If $\pi$ is non-spherical then the situation looks completely different. However, recent lower bounds for $W_{\pi}$ were used to justify large sup-norms of modular forms in the level aspect, see \cite{Sa15_2, Te12}. In \cite{Sa15}, the author used second moments of $W_{\pi}$ to prove the, at present, best hybrid bound for the sup-norm of Maa\ss\  forms. Bounds on $W_{\pi}$ were also used in \cite{CS16} to relate the ramification index of a modular parametrization of an elliptic curve at cusps to support properties of ramified Whittaker vectors. The stage for future application, of $p$-adic Whittaker functions, seems wide open in particular in the realm of Vorono\"i summation formula, see \cite{Te_vor}. The latter is linked to the study of $p$-adic Bessel distributions and the associated integral transforms as described in \cite{Co14}.

In \cite{Te12}, the author raised a question about the exact size of the functions $W_{\pi}$, and in \cite{Sa15_2} the author put forward a conjecture for the case of a weakly ramified central character. In this paper we will answer \cite[Question~1]{Sa15_2}. It turns out that \cite[Conjecture~2]{Sa15_2} is wrong in general. This was already noticed by A. Saha and Y. Hu, who obtained several counter examples (unpublished). In addition to the obvious connection to number theory and automorphic forms, these results may be interesting in their own right.

This paper exclusively deals with representations of $GL_2$ over non-archimedean fields and associated Whittaker new vectors. It would be interesting to see what happens for other groups. Such a program has been initiated for $GL_n(\R)$ in \cite{TB14}. But, apart from some $GL_3$-Kloosterman sums that have been studied in \cite{DF97}, the non-archimedean special functions of higher  rank groups are not understood. However, we don't think that our methods can be applied in the case of $GL_n$, $n\geq 3$. This is due to the fact that our calculations, which are still feasible for $n=2$, would be tremendously more complex for higher $n$.

\textbf{Acknowledgements.} I would like to thank A. Saha, who kindly informed me about the existence of counterexamples to \cite[Conjecture~2]{Sa15_2}. This led to the discovery of several mistakes in the previous version of the paper.  Furthermore, I thank A. Corbett for many valuable suggestions and feedback.

\subsection{The main result} \label{se:theorems}

We will now state the main results of this paper. We tried to keep the notation used up to this point to a minimum. Full definitions will be found in Section~\ref{se:notation}.
  
\begin{theorem} \label{th:main1}
Let $p$ be an odd prime and let $\pi$ be a supercuspidal representation of $GL_2(\Q_p)$ then
\begin{equation}
	\sup_{g\in GL_2(\Q_p)} \abs{W_{\pi}(g)} \leq 2p^{\frac{1}{2}+\frac{n}{12}}.\nonumber
\end{equation}
where $n$ is the log-conductor of $\pi$ and $W_{\pi}$ is the normalised Whittaker new vector associated to $\pi$. Furthermore, if $n$ is odd we have the better bound
\begin{equation}
	\sup_{g\in GL_2(\Q_p)} \abs{W_{\pi}(g)} \leq 2\sqrt{p}. \nonumber
\end{equation}
\end{theorem}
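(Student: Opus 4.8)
\emph{Reduction to the local Bessel function.} The first step is to reduce the global supremum to the torus together with finitely many ``large cell'' cosets. By the Iwasawa decomposition, the equivariance of $W_\pi$ under the centre and the unipotent radical, and the right $K_0(\p^n)$-invariance of the new vector, $\abs{W_\pi}$ descends to a function on $Z(\Q_p)N(\Q_p)\backslash GL_2(\Q_p)/K_0(\p^n)$. Taking the standard representatives of $GL_2(\Z_p)/K_0(\p^n)\cong\mathbb{P}^1(\Z_p/\p^n)$ of the form $\left(\begin{matrix}1&0\\c&1\end{matrix}\right)$ and $w\left(\begin{matrix}1&0\\c&1\end{matrix}\right)$, and applying the Bruhat decomposition to $\left(\begin{matrix}1&0\\c&1\end{matrix}\right)$ when $c\ne0$, one finds that $W_\pi$ on $\left(\begin{matrix}a&0\\0&1\end{matrix}\right)\left(\begin{matrix}1&0\\c&1\end{matrix}\right)$ equals, up to a phase of modulus one and a value of $\omega_\pi$, the value at some point of $\pi(w)$ applied to a unipotent translate of the Kirillov new vector $\xi_0$. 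For supercuspidal $\pi$ one has $\xi_0=\mathbbm{1}_{\Z_p^\times}$, so the torus values $W_\pi(\left(\begin{matrix}a&0\\0&1\end{matrix}\right))$ are $O(1)$, and the remaining contributions are values of $\pi(w)\xi_0$, i.e.\ of the integral transform of $\mathbbm{1}_{\Z_p^\times}$ against the local Bessel kernel $j_\pi$ of $\pi$; unfolding against the support of $\xi_0$ turns these into finite exponential sums, and it suffices to bound their supremum.

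\emph{Explicit sums from the dihedral classification.} Since $p$ is odd, every supercuspidal representation of $GL_2(\Q_p)$ is dihedral: $\pi=\pi_\theta$ for a regular character $\theta$ of $E^\times$, where $E/\Q_p$ is a quadratic extension, unramified or tamely ramified. Using the Weil-representation realisation of $\pi_\theta$ --- equivalently, the known formulas for $\varepsilon(\pi_\theta)$ and for $j_{\pi_\theta}$ --- the sums above take the shape of twisted Kloosterman sums attached to $E$,
\begin{equation}
	\sum_{t\in(\mathcal{O}_E/\mathfrak{P}^{k})^\times}\theta(t)\,\psi\!\left(\mathrm{Tr}_{E/\Q_p}(\alpha t+\beta t^{-1})\right),\nonumber
\end{equation}
where the length parameter $k$ is controlled by $a(\theta)$ and $\alpha,\beta\in E$ are built from $a$ and $c$. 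The conductor relations $n=2a(\theta)$ (for $E$ unramified) and $n=1+a(\theta)$ (for $E$ tamely ramified) show, in particular, that $n$ odd forces $E$ ramified with $a(\theta)$ even.

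\emph{Stationary phase and optimisation.} One then estimates such a sum by stratifying $t$ over residue classes modulo an intermediate power $\mathfrak{P}^j$. On a class $t=t_0(1+u)$ one linearises the wildly ramified twist, $\theta(t_0(1+u))=\theta(t_0)\,\psi(\ell_{t_0}(u)+\cdots)$ for a $\Q_p$-linear form $\ell_{t_0}$, so that the inner sum over $u$ is governed by the critical points of the resulting combined additive phase; classes without a critical point contribute nothing, and near a critical point a Taylor expansion reduces the inner sum to a (possibly higher-degree) Gauss sum. Bounding the worst possible degeneracy of a critical point in terms of the constraints imposed by $a(\theta)$ and by the ramification type of $E$ --- it turns out to be at most cubic --- and then optimising the resulting estimate over $j$ and over the remaining free parameters yields $\sup_{g}\abs{W_\pi(g)}\le 2p^{1/2+n/12}$. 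When $n$ is odd the combined phase is everywhere non-degenerate, the whole sum collapses to a single Salié-type sum of modulus at most $2\sqrt p$, and the clean bound $2\sqrt p$ follows.

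\emph{Main difficulty.} The crux is the degenerate case of $p$-adic stationary phase: one must fold the contribution of the wildly ramified character $\theta$ correctly into the additive phase, classify precisely how non-transversal the critical points of the combined phase can become --- this is exactly what produces the exponent $\tfrac12+\tfrac{n}{12}$ in place of the heuristic $O(\sqrt p)$ --- and then either iterate the stationary-phase reduction around such a point or invoke sharp bounds for higher-degree exponential sums modulo $\mathfrak{P}^{k}$. A further, mostly bookkeeping, burden is to carry the several cases --- unramified versus ramified $E$, and the two parities of $a(\theta)$ --- through the whole argument, each with its own admissible range of the parameters $a$ and $c$.
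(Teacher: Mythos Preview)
Your high-level strategy coincides with the paper's: use the dihedral classification (valid for odd $p$), express $W_\pi$ on a complete set of coset representatives as an oscillatory integral over the quadratic extension $E$, and apply $p$-adic stationary phase, the $n/12$ coming from an at-worst-cubic degeneracy. The paper's bookkeeping differs from yours in two respects. First, instead of Bruhat plus the Bessel kernel, the paper uses a fixed explicit set of representatives $g_{t,l,v}$ for $ZN\backslash G/K_1(\p^n)$ and a ``basic identity'' expressing the finite Fourier coefficients $c_{t,l}(\mu)$ of $W_\pi(g_{t,l,v})$ in terms of $\epsilon(\tfrac12,\mu\pi)$; summing back over $\mu$ and opening the $\epsilon$-factor as an integral over $E$ yields
\[
W_\pi(g_{t,l,v})=\gamma\,q^{-t/2}\int_{\mathfrak{O}^\times}\xi^{-1}(x)\,\psi\!\big(\mathrm{Tr}(\Omega^{t/f}x)+v\varpi^{-l}\mathrm{Nr}(x)\big)\,d_Ex,
\]
an integral whose phase is linear-plus-\emph{norm} in $x$, rather than your $\alpha t+\beta t^{-1}$ Kloosterman shape. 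The two forms are closely related (the norm is quadratic in real/imaginary parts, and after one step of stationary phase both collapse to one-dimensional sums), but the paper's form makes the case split by the parameter $l$ versus $n/2$ transparent: the degenerate critical points occur precisely at $l=n/2$, which for $n$ odd is not an integer, immediately explaining the improved bound there.

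The genuine content you have deferred is exactly what the paper spends its effort on: verifying that the degeneracy is \emph{at most} cubic. Concretely, for $E/\Q_p$ unramified one writes $x=x_1+x_2\sqrt{\zeta}$, reduces the two-dimensional stationary phase to a quadratic congruence in $x_1$ whose discriminant $\Delta=1+4v b_1+4v^2 b_2^2\zeta$ may vanish modulo $\p$; parametrising the enlarged critical set and re-expanding via the $p$-adic logarithm produces a one-variable integral whose linear and quadratic coefficients lie in $\p$ exactly when $\Delta\in\p$, while the cubic coefficient is shown to be a unit (computed as $-\tfrac13\mathrm{Tr}(b_\xi/A_\pm^3)\in\mathcal{O}^\times$). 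This is a nontrivial computation, and without it the claim ``it turns out to be at most cubic'' is an assertion, not a proof. The ramified case requires a parallel but separate analysis (different normalisations of measure, $n(\psi_E)=-1$, an extra finite-field sum at the boundary) and is where the extra factor $p^{1/2}$ in the general bound arises. Your outline is correct, but to make it a proof you must carry out this degeneracy analysis in both cases.
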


It turns out that this bound is sharp in general. The possible large exponent is due to the appearance of a degenerate critical point in our stationary phase argument. Since the central character of dihedral supercuspidal representations is always bounded by $\frac{n}{2}$, the bound stated above is worse than predicted in \cite[Conjecture~2]{Sa15_2}. Note that in some situations, for example $n$ odd, there are no degenerate critical points and we obtain the expected upper bound. If $n$ is even, our proof reveals some conditions which ensures that $W_{\pi}$ is uniformly bounded. However, these conditions are difficult to state and hard to check in practise.

\begin{theorem}\label{th:main2}
Let $p$ be an odd prime and let $\pi = \chi_1\boxplus\chi_2$ be a unitary principal series representation of $GL_2(\Q_p)$. We have the bound
\begin{equation}
	\sup_{g\in GL_2(\Q_p)} \abs{W_{\pi}(g)} \leq 2 p^{\frac{1}{2}\lfloor \frac{n}{2} \rfloor - \frac{a(\chi_2)}{3}}. \nonumber
\end{equation}
If the central character of $\pi$ satisfies $a(\omega_{\pi})<\frac{n}{2}$ and $p \equiv 3 \mod 4$, then
\begin{equation}
	\sup_{g\in GL_2(\Q_p)} \abs{W_{\pi}(g)} \leq 2. \nonumber
\end{equation}
\end{theorem}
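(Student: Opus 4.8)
The strategy is to reduce the sup-norm of $W_\pi$ for a principal series representation $\pi = \chi_1 \boxplus \chi_2$ to an explicit exponential sum over $\Q_p$, then estimate that sum by $p$-adic stationary phase.

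First I would recall that for an irreducible admissible infinite-dimensional representation, the Whittaker new vector $W_\pi$ in the Kirillov model, restricted to the diagonal torus, is supported on $\{\mathrm{diag}(y,1) : v(y) \geq -c\}$ for an explicit $c$ depending on the conductor, and by the Iwasawa decomposition together with the $K_0(\p^n)$-invariance of $W_\pi$, it suffices to bound $|W_\pi(\mathrm{diag}(y,1) w \nilp{x})|$ over representatives $x$ and values of $y$. The key structural input is an explicit integral formula: $W_\pi(\mathrm{diag}(y,1)w\nilp{x})$ can be written as a finite sum (or integral over a compact set) of terms of the shape $\chi(u)\,\psi(f(u))\,du$ where $f$ is a rational function in $u$ built from $y$, $x$, and the uniformizer, and $\chi$ is a ramified character (essentially $\chi_1/\chi_2$ or a twist thereof). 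Such a formula should either already be in the literature (Saha, Assing's earlier work) or be derivable by unfolding the action of $w\nilp{x}$ on the new vector via the functional equation / local zeta integral — this is the place I would be most careful, since getting the exact conductor exponents and the exact $f$ right is what drives the final bound.

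Second, having isolated the oscillatory integral $I = \int \chi(u)\psi(f(u))\,du$, I would apply the $p$-adic stationary phase method. One localizes to cosets mod $\p^k$; on a coset where $f'$ (more precisely the logarithmic derivative incorporating the contribution of $\chi$, i.e. the condition coming from both the additive phase and the multiplicative character) is a $p$-adic unit of controlled valuation, the integral over that coset vanishes once $k$ is large enough — this is the non-archimedean analogue of non-stationary phase. The surviving contribution comes from critical points, i.e. solutions of the congruence $f'(u) \equiv (\text{something involving } v(\chi)) \pmod{\text{appropriate power}}$. Counting these critical points and bounding the contribution of each — a non-degenerate critical point contributes $p^{-k/2}$ relative to the trivial bound, a critical point of multiplicity $m$ contributes like $p^{-k \cdot m/(m+1)}$ — produces the exponent $\tfrac12 \lfloor n/2 \rfloor - a(\chi_2)/3$: the $\lfloor n/2\rfloor$ is the "main term" size from the width of the integration region, the $-a(\chi_2)/3$ reflects a degenerate (cubic-type, multiplicity $2$) critical point, and the $a(\chi_2)$ normalization records which of the two characters carries the larger conductor (WLOG $a(\chi_2) \le a(\chi_1)$).

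Third, for the second (uniform) bound, the hypotheses $a(\omega_\pi) < n/2$ and $p \equiv 3 \bmod 4$ should be exactly what forces the relevant critical points to be \emph{non-degenerate}. The condition $a(\omega_\pi) = a(\chi_1) + a(\chi_2) < n/2 = \tfrac12(a(\chi_1)+a(\chi_2))$... — more precisely, since $n = a(\chi_1) + a(\chi_2)$ whenever both characters are ramified, the inequality $a(\omega_\pi) = a(\chi_1\chi_2) < n/2$ says $\chi_1\chi_2$ has strictly smaller conductor than generic, which forces the leading part of $\chi_1/\chi_2$ to have even conductor and behave, on the critical scale, like an additive character twisted by a quadratic character. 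Then the $p$-adic Gauss-sum evaluation enters: a quadratic Gauss sum over $\F_p$ has absolute value $\sqrt p$, and combined with the width-$\lfloor n/2\rfloor$ normalization this gives exactly the clean cancellation down to $O(1)$ — but only when the quadratic character that appears is the one whose Gauss sum does not acquire an extra sign obstruction, which is where $p \equiv 3 \bmod 4$ is used (it controls whether $-1$ is a square, hence whether the relevant quadratic form $au + bu^{-1}$ has its critical point in $\Q_p$ or only in the unramified quadratic extension). I would make this precise by writing $\chi_i = \chi_i^{(0)} \cdot (1 + \p^{k}\text{-part})$, Taylor-expanding the phase around the approximate critical point, and reading off that the quadratic term is a nonzero unit times $u^2$ exactly under these hypotheses, so that summing a genuine Gauss sum of modulus $\sqrt p$ against the $p^{-1/2}$ from the measure yields cancellation to bounded size.

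The main obstacle will be the first step: pinning down the precise integral representation of $W_\pi(\mathrm{diag}(y,1)w\nilp{x})$ with the correct conductor bookkeeping, since every exponent in the final answer is a delicate function of $a(\chi_1)$, $a(\chi_2)$, $a(\omega_\pi)$ and the parity of $n$, and a single off-by-one error there propagates into the stated bounds.
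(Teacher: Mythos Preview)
Your high-level plan --- reduce to special matrices via a coset decomposition, obtain an explicit oscillatory integral for $W_\pi$ on those matrices, then apply $p$-adic stationary phase with attention to degenerate critical points --- is exactly the paper's strategy. However, there are two substantive gaps between your outline and what the argument actually requires.

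First, the integral representation is not one-dimensional. For $\pi=\chi_1\boxplus\chi_2$ the paper shows (Lemma~\ref{lm:generic_ps_shape_Wf}) that $W_\pi(g_{t,l,v})$ is, up to explicit constants, the \emph{two}-dimensional sum
\[
K(\chi_1\otimes\chi_2,(\varpi^{-l_1},\varpi^{-l_2}),v\varpi^{-l})
=\int_{(\mathcal O^\times)^2}\chi_1(x_1)\chi_2(x_2)\,\psi(x_1\varpi^{-l_1}+x_2\varpi^{-l_2}+vx_1x_2\varpi^{-l})\,d^\times x_1\,d^\times x_2.
\]
The reduction to a one-variable problem is itself a major step: one applies stationary phase in the two variables simultaneously (Lemma~\ref{lm:prototype_K_stat}), parametrises the critical set $S\subset(\mathcal O/\p^r)^\times\times(\mathcal O/\p^r)^\times$, and only when $S$ is large (the degenerate cases) must one insert that parametrisation back and run stationary phase again on the resulting one-dimensional integral. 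Your plan skips this entirely, and with it the extensive case split (the paper treats roughly a dozen distinct ranges of $l$ relative to $a(\chi_1),a(\chi_2),n$ across Lemmas~\ref{lm:balanced_ps_bound}--\ref{lm:generic_ps_bound}); the exponent $\tfrac12\lfloor n/2\rfloor-a(\chi_2)/3$ only emerges after locating the worst case at $l=\tfrac{a(\chi_1)+a(\chi_2)}{2}$ (resp.\ $l=a(\chi_1)=a(\chi_2)$) and showing the cubic coefficient in the Taylor expansion of $\log_p$ is a unit there.

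Second, your reading of the hypothesis for the uniform bound is garbled. You momentarily write $a(\omega_\pi)=a(\chi_1)+a(\chi_2)$, which is wrong; the point is $\omega_\pi=\chi_1\chi_2$ and $a(\chi_1\chi_2)<n/2=a(\chi_1)=a(\chi_2)$ forces $b_{\chi_1}+b_{\chi_2}\in\p$. The role of $p\equiv 3\bmod 4$ is then not a Gauss-sum sign but the non-existence of a square root: in the discriminant $\Delta=1-2v(b_1+b_2)+v^2b_{\chi_1\chi_2^{-1}}^2\varpi^{2(a_1-a(\chi_1\chi_2^{-1}))}$ governing the critical-point congruence (Case~IV of Lemma~\ref{lm:est_KL2}), the assumption $a(\chi_1\chi_2)<a_1$ makes $\Delta\in\p$ equivalent to $-1\in(\mathcal O^\times)^2$, so for $p\equiv 3\bmod 4$ the discriminant is always a unit, $\sharp S\le 2$, and trivial estimation already gives $|W_\pi(g)|\le 2$.
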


One notes that the local bound stated in \cite[Corollary~2.35]{Sa15_2} reads
\begin{equation}
	\sup_{g\in GL_2(\Q_p)} \abs{W_{\pi}(g)} \leq \sqrt{2} p^{\frac{1}{2}\lfloor \frac{n}{2} \rfloor}. \label{eq:loc_bound_Sa}
\end{equation}
Thus, we observe that in most cases we improve upon this bound by a power saving. 

To derive these bounds we first prove integral representations of the Whittaker new vector on certain special matrices $g_{t,l,v}$. Roughly we will prove that\footnote{N. Templier kindly informed the author that this formula is originally due to him in an unpublished manuscript from 2011.}
\begin{equation}
	W_{\pi}(g_{t,l,v}) = C(t,\pi) \int_{\mathfrak{O}^{\times}} \xi(z) \psi(\Tr(A(t)z)+v\varpi^{-l}\Norm(z))d\mu_E \nonumber
\end{equation}
for a quadratic space $E$ with character $\xi$ associated to $\pi$ and explicit constants $C(t,\pi)\in \mathbb{C}$  and $A(t) \in E^{\times}$. The choices for $E$ and $\xi$ can be naturally explained for each $\pi$. Such integral representations are amenable to the $p$-adic method of stationary phase. It is a nice feature of $p$-adic analysis that instead of asymptotic expansions we gain explicit evaluations. Indeed, even if we are mostly interested in upper bounds, our methods are strong enough to yield tight lower bounds as well as precise formulas for $W_{\pi}(g_{t,l,v})$. To demonstrate this we worked out the explicit form of $W_{\pi}(g_{t,l,v})$ for $\pi = \chi \text{St}$ or $\pi=\chi\boxplus \chi$. This is Lemma~\ref{lm:St_twist_2_bound} and Lemma~\ref{lm:balanced_ps_bound} below. An interesting feature, which is similar to the archimedean case, is that in the transition region we encounter the $p$-adic analogue of the Airy function. In particular, we have the lower bound
\begin{equation}
	 \sup_{g\in \text{GL}_2(\Q_p)} \abs{W_{\chi\boxplus \chi}(g)} \gg p^{\frac{a(\chi)}{6}}. \label{eq:lower_bound_example}
\end{equation}

It was already noted in \cite{Sa15_2} that lower bounds on local Whittaker new vectors translate to lower bounds for classical modular forms (or Maa\ss\  forms). For example, as in the proof of \cite[Theorem~3.3]{Sa15_2} one can show that \eqref{eq:lower_bound_example} implies
\begin{equation}
	\frac{\Vert \chi\otimes f \Vert_{\infty}}{\Vert \chi \otimes f \Vert_2} \gg_{f,\epsilon} p^{\frac{n}{6}-\epsilon n} , \nonumber
\end{equation}
for a Maa\ss\  newform $f$ and a primitive Dirichlet character $\chi$ of conductor $p^n$ co-prime to the level of $f$.

Let us also mention that the local bound holds for any non-archimedean ground field $F$. This is also true for our results. Indeed, if the absolute ramification of $F$ over $\Q_p$ is small compared to $p$ then our proof carries over with simple notational changes. Otherwise one has to deal with some technicalities arising from the $p$-adic logarithm. But all this seems to work out, the only reason we restrict our attention to $\Q_p$ is to keep the calculations as clean as possible.

The assumption $p\neq 2$ seems to be more difficult to remove. First, there are many features arising from the method of stationary phase that are slightly different for $p=2$. However, with carefully chosen notation one could probably deal with these problems. Second, in order to treat supercuspidal representations we make heavy use of the fact that for odd $p$ every supercuspidal representation is dihedral. This fails for $p=2$ and one would have to treat the non-dihedral supercuspidal representations separately.

\subsection{Notation and pre-requests} \label{se:notation}

We assume the reader to be quite familiar with \cite{Sa15,Sa15_2}. For the sake of completeness we will briefly introduce necessary notation and some background material.

Throughout this paper we fix a non-archimedean local field $F$, which later on will be specialized to $F=\Q_p$ for some odd prime $p$. Let $\mathcal{O}$ be the ring of integers in $F$ and write $\p$ for the unique maximal ideal. We fix a uniformizer $\varpi$ and normalize the valuation $v$ such that $v(\varpi)=1$. Furthermore, we scale the absolute value such that $\abs{\varpi} = q^{-1}$ where $q  = \sharp F/\p$ is the number of elements in the residue field. We write $\mu$ (resp. $\mu^{\times}$) for the Haar measure on $(F,+)$ (resp. $(F^{\times},\cdot)$) and normalise it to satisfy $\mu(\mathcal{O})=1$ (resp. $\mu^{\times}(\mathcal{O}^{\times})=1$). These two measures satisfy the relation $\mu^{\times} = \frac{\zeta_F(1)}{\abs{\cdot}}\mu$ where $\zeta_F(s) = \frac{1}{1-q^{-s}}$ is the local zeta function of $F$.  

We fix an unramified additive character $\psi$ on $F$, in particular the exponent $n(\psi)=0$. In this case $\mu$ is normalized to be self dual with respect to the Fourier transform. By a multiplicative character we mean a continuous group homomorphism $F^{\times} \to \C^{\times}$. We define the sets
\begin{eqnarray}
	\mathfrak{X}_l &=& \{ \chi \text{ multiplicative character }\colon \chi(\varpi)=1 \text{ and } a(\chi) \leq l \},  \text{ and }\nonumber \\
	 \mathfrak{X}_l' &=& \{ \chi \in \mathfrak{X_l}\colon a(\chi) = l \}, \nonumber
\end{eqnarray}
where $a(\chi)$ denotes the log-conductor of $\chi$. Note that these definitions depend on the choice of $\varpi$. To each character we associate a $\epsilon$-factor $\epsilon(s,\chi)$ and a $L$-factor by
\begin{equation}
	L(s,\chi) = \begin{cases}
		\frac{1}{1-\chi(\varpi)q^{-s}} &\text{ if $\chi$ is unramified,}\\
		1 &\text{ else.}
	\end{cases} \nonumber	
\end{equation}
The $\epsilon$-factor depends on the additive character $\psi$. However, since we consider $\psi$ as fixed we drop this dependence from the notation. Many useful properties of $\epsilon$-factors can be found in \cite{Sc02}. The most important one for us is the link to the Gau\ss\  sum via the formula
\begin{equation}
	G(x,\mu) = \int_{\mathcal{O}^{\times}}\psi(xy)\mu(y)d^{\times}y = \begin{cases} 1 &\text{ if $\mu=1$ and $v(x)\geq 0$,} \\ -\zeta_F(1)q^{-1} &\text{ if $\mu=1$ and $v(x) = -1$,} \\ \zeta_F(1)q^{-\frac{a(\mu)}{2}}\epsilon(\frac{1}{2},\mu^{-1})\mu^{-1}(x) &\text{ if $\mu\neq 1$ and $v(x) =-a(\mu)$} \\ 0 &\text{ else.} \end{cases} \label{eq:evaluation_of_GS}
\end{equation}
This evaluation is taken from \cite[Lemma~2.3]{CS16} and will be used a lot throughout this text.

The letter $E$ is reserved for a quadratic space over $F$. Thus, basically $E$ is a quadratic extension of $F$ or it is simply the vector space $E=F\times F$. If we are dealing with a quadratic extension we let $e=e(E/F)$ be the ramification index and $f=f(E/F)$ be the degree of the residual extension. In particular we have $ef=2$. By $d=d(E/F)$ we denote the valuation of the discriminant of $E/F$, it satisfies $d=e-1$. The Galois group is $\text{Gal}_{E/F} = \{ 1, \sigma\}$. The norm and the trace are defined as usual by
\begin{equation}
	\Tr(z) = z+\sigma z\text{ and } \Norm(z) = z\cdot\sigma z. \nonumber 
\end{equation}
The Haar measure on $E$ will be normalised as follows:
\begin{eqnarray}
	\vol(\mathfrak{O},\mu_E) = q^{-\frac{d}{2}}, \nonumber
\end{eqnarray}
where $\mathfrak{O}$ is the ring of integers in $E$. The unique maximal ideal in $\mathfrak{O}$ is denoted by $\mathfrak{P}$ it will be generated by the uniformizer $\Omega$ of $E$. We will usually choose uniformizers such that $\Norm(\Omega) = \varpi^f$. Note that this determines a canonical valuation $v_E$ on $E$.

Further, let $\chi_{E/F}$ be the quadratic character on $F^{\times}$ which is trivial on $\Norm(E^{\times})$ and set
\begin{equation}
	\psi_E = \psi\circ \Tr. \nonumber
\end{equation}
By \cite[Lemma~2.3.1]{Sc02} we have
\begin{equation}
	n(\psi_E) = -\frac{d}{f}. \nonumber
\end{equation}
Which again implies that the Haar measure $\mu_E$ is normalised to be self dual with respect to $\psi_E$. Multiplicative characters on $E$ are usually denoted by $\xi$ and one can attach the same objects as we did over $F$.

If $E=F\times F$ we define the ring of integers to be $\mathfrak{O} = \mathcal{O}\times \mathcal{O}$ and the ideal $\mathfrak{P} = \p \times \p$. The Haar measure is simply the product measure  $\mu \times \mu$ and all multiplicative characters factor into two multiplicative characters on $F^{\times}$. To keep notation consistent we define
\begin{equation}
	\Tr(x_1,x_2) = x_1+x_2 \text{ and } \Norm((x_1,x_2)) = x_1x_2. \nonumber
\end{equation}

Next we define some standard subgroups of $G= GL_2(F)$ and equip them with measures. Let $Z$ be the center of $G$, 
\begin{equation}
	N = \left\{n(x)=\left(\begin{matrix}1&x\\0&1\end{matrix}\right)\colon x\in F \right\} \text{ and } A=\left\{a(y)=\left( \begin{matrix} y&0\\0&1 \end{matrix}\right)\colon y\in F^{\times} \right\}.\nonumber
\end{equation}
Further, let $K=GL_2(\mathcal{O})$ be a maximal compact subgroup of $G$ containing the closed subgroups
\begin{equation}
	K_1(n)=\left( \begin{matrix}1+\p^n & \mathcal{O} \\ \p^n & \mathcal{O} \end{matrix}\right)\cap K. \nonumber
\end{equation} 
As usual we define measures on $Z$, $A$ and $N$ via the identifications $Z=F^{\times}$, $A=F^{\times}$ and $N=F$. On $K$ we choose the probability Haar measure. This then defines a measure on $G = ZNAK$ via the Iwasawa decomposition. Another very useful decomposition of $G$ is 
\begin{equation}
	G = \bigsqcup_{t\in \Z} \bigsqcup_{0\leq l \leq n} \bigsqcup_{v\in \mathcal{O}^{\times}/(1+\p^{l_n})} ZNg_{t,l,v}K_1(n)  \label{eq:basic_decomp}
\end{equation}
with
\begin{equation} 
	g_{t,l,v} = \left( \begin{matrix} 0 & \varpi^t \\ -1 & -v\varpi^{-l}  \end{matrix} \right) \text{ and } l_n = \min(l,n-l)  \nonumber
\end{equation}
given in \cite[Lemma~2.13]{Sa15_2}.

Usually $\pi$ will denote a infinite dimensional, admissible, irreducible representation of $GL_2(F)$. Such a representation comes with several invariants. Namely, the log-conductor $n=a(\pi)$ and the central character $\omega_{\pi}$. We write $m=a(\omega_{\pi})$ for the log-conductor of the central character. The contragredient representation will be denoted by $\tilde{\pi}$. Attached to $\pi$ there are $L(s,\pi)$ and $\epsilon(\frac{1}{2},\pi)$ factors. Without loss of generality we may twist $\pi$ by an unramified character to ensure that $\omega_{\pi} \in \mathfrak{X}_m'$. Such representations are completely classified. More precisely we know each unitary, tempered, irreducible, admissible $\pi$ belongs to one of the following families.

\begin{enumerate}
\item \textbf{Twists of Steinberg:} $\pi = \chi St$, for some unitary character $\chi$ satisfying $\chi(\varpi)=1$. In this case we have $\omega_{\pi} = \chi^2$ and $a(\pi) = \max(1,2a(\chi))$. Furthermore, the $L$-factor as well as the $\epsilon$-factor are given by
\begin{equation}
	L(s,\pi) = \begin{cases} L(s,\abs{\cdot}^{\frac{1}{2}}) &\text{ if }\chi = 1, \\ 1 &\text{ if } \chi \neq 1,   \end{cases} \text{ and } \epsilon(\frac{1}{2}, \pi) = \begin{cases} -1 &\text{ if } \chi = 1, \\  \epsilon(\frac{1}{2},\chi)^2 &\text{ if } \chi \neq 1. \end{cases} \nonumber
\end{equation}

\item \textbf{Principal series:} $\pi = \chi_1\boxplus\chi_2$, for unitary characters $\chi_1$ and $\chi_2$. In particular, $a(\pi) = a(\chi_1)+a(\chi_2)$ and $\omega_{\pi} = \chi_1\chi_1$. Concerning the $L$-factor we know
\begin{equation}
	L(s,\pi) = L(s,\chi_1)L(s,\chi_2) \text{ and } \epsilon(\frac{1}{2},\pi) = \epsilon(\frac{1}{2},\chi_1)\epsilon(\frac{1}{2}, \chi_2). \nonumber
\end{equation}

\item \textbf{Supercuspidal representations:} If $\pi$ is supercuspidal, then $L(s,\pi)=1$ and all the other invariants are more difficult to describe. However, if $q$ is odd, we know that every supercuspidal representation is dihedral. In this case we find a quadratic extension $E/F$ and an unitary multiplicative character $\xi$ of $E^{\times}$ such that $\pi = \omega_{\xi}$ is the dihedral supercuspidal representation associated to $(E,\xi)$. We then find that $a(\pi) = fa(\xi)+d$ and 
\begin{equation}
	\epsilon(\frac{1}{2},\pi) = \gamma\epsilon(\frac{1}{2},\xi), \nonumber
\end{equation}
for some $\gamma \in S^1$, given in \cite[Section~2]{JL70}, depending only on $E$. The behaviour of $\pi$ under $GL_1$-twists is described by $\chi \pi = \omega_{\xi \cdot (\chi\circ \Norm)}$ and the central character is $\omega_{\pi} = \chi_{E/F} \cdot \xi\vert_{F^{\times}}$.
\end{enumerate}

This list can be extracted from \cite{JL70} and \cite{Sc02}. We will sometimes also allow principal series associated to non-unitary characters $\chi_1$ and $\chi_2$ since these appear as local components of Eisenstein series.

It is well known, that for $G$ each admissible, irreducible, infinite dimensional representation is generic. In other words it admits a unique $\psi$-Whittaker model $\mathcal{W}(\pi)$.  This Whittaker model contains the \textit{special functions on $G$} that we seek to understand. We will focus on studying the distinguished new vector $W_{\pi}$ which we normalise by $W_{\pi}(1)=1$. This vector is well understood on the diagonal. Indeed we have
\begin{equation}
	W_{\pi}\left(\left(\begin{matrix} v\varpi^t & 0 \\ 0 & 1 \end{matrix} \right) \right) = \begin{cases} 
		q^{-t(s+1)} &\text{ if $t\geq 0$ and $\pi =\abs{\cdot}^s St$ , } \\
		\chi_1(v\varpi^t)q^{-\frac{t}{2}} &\text{ if $t\geq 0$ and $\pi = \chi_1\boxplus \chi_2$ with $a(\chi_1)>a(\chi_2)=0$,}\\
		\omega_{\pi}(v) &\text{ if $t=0$ and $L(s,\pi)=1$,} \\
		0 &\text{ else.} 
	\end{cases} \label{eq:whitt_on_diag}
\end{equation}
for $t\in \Z$ and $v\in \mathcal{O}^{\times}$. This follows from \cite{Sc02}, is stated in this form in \cite[Lemma~2.5]{Sa15_2} and is proven in \cite[Lemma~2.10]{CS16}.

Finally, we define the constants $c_{t,l}(\mu)$ via finite Fourier expansion:
\begin{equation}
	W_{\pi}(g_{t,l,v}) = \sum_{\mu\in \mathfrak{X}_l}c_{t,l}(\mu)\mu(v). \label{eq:fourier_exp}
\end{equation}

The main tool to calculate them is using the basic identity given in \cite[Propositon~2.23]{Sa15_2}:
\begin{eqnarray}
	&&\sum_{t=-\infty}^{\infty} q^{(t+a(\mu\pi))(\frac{1}{2}-s)}c_{t,l}(\mu) \nonumber \\
	&&\qquad= \omega_{\pi}(-1)\epsilon(\frac{1}{2},\mu\pi)^{-1}\frac{L(s,\mu\pi)}{L(1-s,\mu^{-1}\omega_{\pi}^{-1}\pi)}\sum_{a=0}^{\infty} W_{\pi}(a(\varpi^a))q^{-a(\frac{1}{2}-s)}G(\varpi^{a-l},\mu^{-1}). \label{eq:basic_identity}
\end{eqnarray}
Note that the proof of this formula holds for any $l\geq 0$. Important is only that $\omega_{\pi}(\varpi) = \mu(\varpi) = 1$, otherwise one would have to introduce a simple unramified twist on the right hand side.

\section{Calculating the coefficients $c_{t,l(\mu)}$} \label{se:ctlmu}

In this section we will prove explicit formulas for the constants $c_{t,l}(\mu)$ defined above. The following calculations use nothing more than the \textit{basic identity} \eqref{eq:basic_identity} which holds in great generality. Thus, throughout this section, $F$ is any non-archimedean local field, $\pi$ is some (not necessarily unitary) irreducible, admissible representation with unitary central character $\omega_{\pi} \in \mathfrak{X}_n$. We split this section in subsections dealing with each type of $GL(2)$-representation on its own.

\subsection{Supercuspidal representations}

In this subsection we assume $\pi$ to be a supercuspidal representation. Because $L(s,\pi)=1$, the basic identity takes the very simple form
\begin{equation}
	\sum_{t=-\infty}^{\infty} q^{(t+a(\mu\pi))(\frac{1}{2}-s)} c_{t,l}(\mu) = \omega_{\pi}(-1) \epsilon(\frac{1}{2},\mu\pi)^{-1}G(\varpi^{-l},\mu^{-1}). \label{eq:basic_identity_L1}
\end{equation}
By comparing coefficients we arrive at
\begin{equation}
	c_{t,l}(\mu) = \begin{cases}
		\omega_{\pi}(-1)\frac{G(\varpi^{-l},\mu^{-1})}{\epsilon(\frac{1}{2},\mu\pi)} &\text{ if $t=-a(\mu\pi)$,} \\ 0 &\text{else.}
	\end{cases} \nonumber
\end{equation}
Evaluation of the Gau\ss\  sum yields
\begin{equation}
	c_{t,l}(\mu) = \begin{cases}
		\epsilon(\frac{1}{2},\omega_{\pi}^{-1}\pi) &\text{ if $l=0$, $t=-n$, and $\mu =1$,} \\
		-\zeta_F(1)q^{-1}\epsilon(\frac{1}{2},\omega_{\pi}^{-1}\pi)&\text{ if $l=1$, $t=-n$, and $\mu=1$,} \\
		\zeta_F(1)q^{-\frac{l}{2}}\epsilon(\frac{1}{2},\mu)\epsilon(\frac{1}{2},\mu^{-1} \omega_{\pi}^{-1}\pi)&\text{ if $\mu\in \mathfrak{X}_l'$, $t=-a(\mu\pi)$, and $l>0$,} \\
		0 &\text{else.}
	\end{cases} \nonumber
\end{equation}

These expressions are already implicit in \cite[Section~2.7]{Sa15_2}.

\subsection{Twists of Steinberg}

Let $\pi = \chi St$. Recall from Section~\ref{se:notation} that all attached invariants can be described by the invariants attached to $\chi$. We will see that this case gets slightly more complicated since $L(s,St)$ is not trivial. 

\begin{lemma} \label{lm:twis_of_ST}
Let $l\in \N_0$ and $\mu\in \mathfrak{X}_l$. If $\pi = \chi St$ for $\chi \neq 1$ then the constants $c_{t,l}(\mu)$ are given by
\begin{equation}
	c_{t,l}(\mu) = \begin{cases}
		\epsilon(\frac{1}{2},\mu^{-1}\chi^{-1})^2G(\varpi^{-l},\mu^{-1}) &\text{ if $\mu\neq \chi^{-1}$ and $t=-2a(\mu\chi)$,} \\
		q^{-1}G(\varpi^{-l},\mu^{-1}) &\text{ if $\mu=\chi^{-1}$ and $t=-2$,} \\
		-\zeta_F(2)^{-1}q^{-1-t}G(\varpi^{-l},\mu^{-1})&\text{ if $\mu=\chi^{-1}$ and $t>-2$,} \\
		0 &\text{ else.}
	\end{cases} \nonumber
\end{equation}
If $\pi= St$, then we have
\begin{equation}
	c_{t,l}(\mu) = \begin{cases}
		\zeta_F(1)q^{-l+\frac{a(\mu)}{2}}\epsilon(\frac{1}{2},\mu)^{-1} = q^{a(\mu)-l}G(-\varpi^{-a(\mu)},\mu) &\text{ if $\mu\neq 1$ and $t=-l-a(\mu)$,} \\
		-q^{-t-1} &\text{ if $\mu=1$,$l=0$, and $t\geq -1$,} \\
		q^{-t-2l} &\text{ if $\mu=1$, $l\geq1$, and $t\geq -l$,} \\
		-\zeta_F(1)q^{-l}  &\text{ if $\mu=1$, $l\geq1$, and $t=-l-1$,} \\
		0 &\text{ else.}
	\end{cases} \nonumber
\end{equation}
\end{lemma}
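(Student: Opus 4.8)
The plan is to extract $c_{t,l}(\mu)$ from the basic identity \eqref{eq:basic_identity} by specializing $\pi = \chi St$, using the explicit $L$- and $\epsilon$-factors listed in Section~\ref{se:notation}, the diagonal values \eqref{eq:whitt_on_diag}, and the Gau\ss\ sum evaluation \eqref{eq:evaluation_of_GS}. The argument is really a matching of coefficients of $q^{-s\cdot(\text{something})}$ on both sides of \eqref{eq:basic_identity}, so the first task is to identify the generating function on the right-hand side in each case.

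\textbf{Case $\chi\neq 1$.} Here $L(s,\chi St)=1$, so the diagonal sum over $a$ in \eqref{eq:basic_identity} is truncated: by \eqref{eq:whitt_on_diag} the only surviving term is $a=0$, giving $W_{\pi}(a(1))=1$. Thus the right-hand side collapses to $\omega_\pi(-1)\epsilon(\tfrac12,\mu\chi St)^{-1} L(s,\mu\chi St) G(\varpi^{-l},\mu^{-1})/L(1-s,\mu^{-1}\omega_\pi^{-1}\chi St)$. Now $\mu\chi St$ is itself a twist of Steinberg with character $\mu\chi$, so $\epsilon(\tfrac12,\mu\chi St)=\epsilon(\tfrac12,\mu\chi)^2$ if $\mu\chi\neq 1$ and $=-1$ if $\mu\chi=1$ (i.e. $\mu=\chi^{-1}$), and similarly $a(\mu\chi St)=\max(1,2a(\mu\chi))$. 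When $\mu\neq\chi^{-1}$ both $L$-factors are trivial, the identity reduces to a single monomial in $q^{-s}$ sitting in degree $t+a(\mu\chi St)=0$, which after using $\epsilon(\tfrac12,\mu\chi)^{-2}=\epsilon(\tfrac12,(\mu\chi)^{-1})^2$ (a standard $\epsilon$-factor identity, $\epsilon(\tfrac12,\eta)\epsilon(\tfrac12,\eta^{-1})=\eta(-1)$, absorbed into $\omega_\pi(-1)$) yields the stated formula $c_{t,l}(\mu)=\epsilon(\tfrac12,\mu^{-1}\chi^{-1})^2 G(\varpi^{-l},\mu^{-1})$ at $t=-2a(\mu\chi)$. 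When $\mu=\chi^{-1}$ we have $\mu\chi St=St$ and $\mu^{-1}\omega_\pi^{-1}\chi St=\chi^{-1}\chi^{-2}\chi\,St=\chi^{-2}St=(\mu^2\chi^2)^{-1}\cdot\ldots$, which one checks is also $St$ up to the unramified normalisation; so $L(s,\mu\chi St)=L(s,\abs{\cdot}^{1/2})=\zeta_F(2s)$-type factor and $L(1-s,\cdots)$ contributes the denominator $\zeta_F$, and the ratio $L(s,\abs{\cdot}^{1/2})/L(-s+1,\abs{\cdot}^{1/2})^{-1}$-style expression expands as a geometric series in $q^{-s}$; matching coefficients in degrees $t=-2,-1,0,\dots$ produces the two-line answer ($q^{-1}G(\cdots)$ at $t=-2$, and $-\zeta_F(2)^{-1}q^{-1-t}G(\cdots)$ for $t>-2$).

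\textbf{Case $\chi=1$, i.e. $\pi=St$.} Now $L(s,St)=L(s,\abs{\cdot}^{1/2})$ is nontrivial, so the $a$-sum in \eqref{eq:basic_identity} is infinite: $W_{St}(a(\varpi^a))=q^{-a(s+1)}$ for $a\ge 0$ by \eqref{eq:whitt_on_diag} with $s=0$ there... more carefully, $W_{St}(a(\varpi^a))=q^{-a}$ (the $t(s+1)$ in \eqref{eq:whitt_on_diag} is evaluated at the representation $\abs{\cdot}^0 St$, so exponent $-a$). Then $\sum_{a\ge0} q^{-a} q^{-a(\frac12-s)} G(\varpi^{a-l},\mu^{-1})$ must be summed using \eqref{eq:evaluation_of_GS}. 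For $\mu=1$ the Gau\ss\ sums $G(\varpi^{a-l},1)$ are $1$ for $a\ge l$, $-\zeta_F(1)q^{-1}$ for $a=l-1$, and $0$ for $a<l-1$, producing a tail geometric series whose closed form, combined with the $L$-factor ratio $L(s,St)/L(1-s,\omega_\pi^{-1}St)$, must be expanded and matched coefficient-by-coefficient in $q^{-s}$ to give the four sub-cases ($-q^{-t-1}$ at $l=0$, $t\ge-1$; $q^{-t-2l}$ for $l\ge1$, $t\ge-l$; $-\zeta_F(1)q^{-l}$ for $l\ge1$, $t=-l-1$). For $\mu\neq1$ only $a=l-a(\mu)$... rather only $a$ with $a-l=-a(\mu)$ contributes, i.e. a single term $a=l-a(\mu)$ (which forces $l\ge a(\mu)$), and here $L(s,\mu St)=L(1-s,\ldots)=1$ so the right side is again a single monomial; reading off the degree $t+a(\mu St)=t+\max(1,2a(\mu))$ against $a=l-a(\mu)$ and simplifying the Gau\ss-sum factor via \eqref{eq:evaluation_of_GS} gives $c_{t,l}(\mu)=\zeta_F(1)q^{-l+a(\mu)/2}\epsilon(\tfrac12,\mu)^{-1}$ at $t=-l-a(\mu)$, with the alternative expression $q^{a(\mu)-l}G(-\varpi^{-a(\mu)},\mu)$ following from \eqref{eq:evaluation_of_GS} applied to $G(-\varpi^{-a(\mu)},\mu)$.

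\textbf{Main obstacle.} The routine part is the $\epsilon$- and $L$-factor bookkeeping; the genuinely delicate step is the $\chi=1$, $\mu=1$ computation, where one has a product of two geometric series (one from the infinite $a$-sum weighted by Gau\ss\ sums, one from the $L$-factor ratio $L(s,St)/L(1-s,St)^{-1}$) and must extract each Laurent coefficient in $q^{-s}$ cleanly, keeping track of the boundary terms $a=l-1$ and the split between $l=0$ and $l\ge1$ — this is where sign and power-of-$q$ errors creep in, and it is the source of the four-case answer. I would organise this by first writing the right-hand side of \eqref{eq:basic_identity} as an explicit rational function of $q^{-s}$, then doing partial fractions / power-series expansion, and only at the end re-indexing via $t\mapsto t+a(\mu\pi)$ to read off $c_{t,l}(\mu)$. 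A useful sanity check at each stage is specialising to $l=0$, where the formulas should recover the $c_{t,0}$ implicit in \cite[Section~2.7]{Sa15_2} and be consistent with \eqref{eq:whitt_on_diag} via \eqref{eq:fourier_exp}.
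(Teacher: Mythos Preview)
Your proposal is correct and follows essentially the same approach as the paper: specialise the basic identity \eqref{eq:basic_identity} case by case, use the explicit $L$-, $\epsilon$-factors and diagonal Whittaker values, and read off the Laurent coefficients in $q^{-s}$. The paper organises the $\chi=1$, $\mu=1$ computation slightly more cleanly than you anticipate---the $L$-factor ratio and the $a$-sum collapse to a single geometric series (for $l=0$ just $-L(s,\abs{\cdot}^{1/2})$, and for $l\geq 1$ to $-\zeta_F(1)q^{-l(3/2-s)}(1-q^{1/2-s})/(1-q^{-1/2-s})$) rather than a genuine product of two, so the coefficient extraction is in fact less delicate than you feared.
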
 
\begin{proof}
If $\chi\neq 1$ and $\mu\neq \chi^{-1}$, then the basic identity is as in \eqref{eq:basic_identity_L1}. It is easy to compare coefficients.

We continue by considering $\chi\neq 1$ and $\mu = \chi^{-1}$. In this case we have
\begin{equation}
	\sum_{t=-\infty}^{\infty} q^{(t+1)(\frac{1}{2}-s)}c_{t,l}(\chi^{-1}) = -\omega_{\pi}(-1)\frac{L(s,\abs{\cdot}^{\frac{1}{2}})}{L(1-s,\abs{\cdot}^{\frac{1}{2}})}G(\varpi^{-l},\chi) \nonumber
\end{equation}
For suitable $s$ one can expand
\begin{equation}
	\frac{L(s,\abs{\cdot}^{\frac{1}{2}})}{L(1-s,\abs{\cdot}^{\frac{1}{2}})} = -q^{-\frac{3}{2}+s}+\zeta_F(2)^{-1}\sum_{a=0}^{\infty}q^{-\frac{a}{2}-as}. \nonumber
\end{equation}
Inserting this expression together with the explicit evaluation of the Gau\ss\  sum and comparing coefficients completes this case.

Next we look at $\chi = 1$ and $\mu\neq 1$. Evaluating the Whittaker function and the Gau\ss\  sum using \eqref{eq:whitt_on_diag} and \eqref{eq:evaluation_of_GS} yields a basic identity of the form
\begin{equation}
	\sum_{t=-\infty}^{\infty} q^{(t+a(\mu\pi))(\frac{1}{2}-s)} c_{t,l}(\mu) = \zeta_F(1) q^{-(l-a(\mu))(\frac{3}{2}-s)-\frac{a(\mu)}{2}}\epsilon(\frac{1}{2},\mu)^{-1}. \nonumber
\end{equation}
Note that $a(\mu\pi) = 2a(\mu)$. Also $a(\pi)=n=1$. Since we are assuming $\mu\neq 1$, we must have $l\geq 1$. Thus, we are done after comparing coefficients.

Now we consider  $\chi = \mu = 1$ and $l=0$. In this case the basic identity simplifies to
\begin{equation}
	\sum_{t=-\infty}^{\infty} q^{(t+1)(\frac{1}{2}-s)}c_{t,0}(1) =- L(s,\abs{\cdot}^{\frac{1}{2}}) = -\sum_{a=0}^{\infty} q^{-a(\frac{1}{2}+s)}. \nonumber	
\end{equation}
Again we can compare coefficients.

The last case to check is  $\chi = \mu = 1$ and $l\geq 1$. One checks that the basic identity becomes
\begin{equation}
	\sum_{t=-\infty}^{\infty} q^{(t+1)(\frac{1}{2}-s)}c_{t,l}(1) = -q^{-l(\frac{3}{2}-s)}\zeta_F(1) \frac{1-q^{\frac{1}{2}-s}}{1-q^{-\frac{1}{2}-s}}. \nonumber
\end{equation}
Expanding the fraction into geometric series and comparing coefficients concludes the proof.
\end{proof}

\subsection{Irreducible principal series}

In this section we assume $\pi = \chi_1\boxplus \chi_2$. Again all the invariants of $\pi$ can be described by $\chi_1$ and $\chi_2$. Recall that we are always assuming $\omega_{\pi} \in \mathfrak{X}_n$. In particular $\chi_1\chi_2(\varpi) = 1$. In the proofs of \cite[Proposition~2.39,2.40]{Sa15_2} some values for $c_{t,l}(\mu)$ have been computed. In this section we refine and complete these computations in order to list precise expressions for all possible $t$, $l$ and $\mu$.

\begin{lemma} \label{lm:general_prinz_ser}
Let $\pi =\chi_1\boxplus \chi_2$ with $a(\chi_i) >0$ for $i=1,2$. If $\chi_1\vert_{\mathcal{O}^{\times}} \neq \chi_2\vert_{\mathcal{O}^{\times}}$ then
\begin{equation}
	c_{t,l}(\mu) = \begin{cases}&\epsilon(\frac{1}{2},\mu^{-1}\chi_1^{-1})\epsilon(\frac{1}{2},\mu^{-1}\chi_2^{-1})G(\varpi^{-l},\mu^{-1}) \\
			&\qquad\qquad \text{ if $a(\mu\chi_1),a(\mu\chi_2)\neq 0$ and $t=-a(\mu\chi_1)-a(\mu\chi_2)$,} \\
		&-q^{-\frac{1}{2}}\chi_i(\varpi^{-1})\epsilon(\frac{1}{2},\mu^{-1}\chi_j^{-1})G(\varpi^{-l},\mu^{-1}) \\
			&\qquad\qquad\text{ if $a(\mu\chi_j) \neq a(\mu\chi_i)=0$ for $\{j,i\} = \{1,2\}$, and $t=-a(\mu\chi_j)-1$, } \\
		&\zeta_F(1)^{-2}q^{-\frac{t}{2}}\chi_i(\varpi^{t})G(\varpi^{-a(\mu\chi_j)},\mu\chi_j)G(\varpi^{-l},\mu^{-1})\\
			&\qquad\qquad \text{ if $a(\mu\chi_j) \neq a(\mu\chi_i)=0$ for $\{j,i\} = \{1,2\}$, and $t\geq-a(\mu\chi_j)$, } \\
		&0 \\
			&\qquad\qquad\text{ else.}
	\end{cases} \nonumber
\end{equation}
If $\chi_1\vert_{\mathcal{O}^{\times}} = \chi_2\vert_{\mathcal{O}^{\times}}$ then
\begin{equation}
	c_{t,l}(\mu) = \begin{cases}&\epsilon(\frac{1}{2},\mu^{-1}\chi_1^{-1})\epsilon(\frac{1}{2},\mu^{-1}\chi_2^{-1})G(\varpi^{-l},\mu^{-1}) \\
			&\qquad\qquad \text{ if $a(\mu\chi_1),a(\mu\chi_2)\neq 0$ and $t=-a(\mu\chi_1)-a(\mu\chi_2)$,} \\
		&  q^{-1}  G(\varpi^{-l},\mu^{-1})\\
			&\qquad\qquad\text{ if $a(\mu\chi_1)=a(\mu\chi_2)=0$, and $t=-2$, } \\
		& -q^{-\frac{1}{2}}\zeta_F(1)^{-1}  G(\varpi^{-l},\mu^{-1})(\chi_1(\varpi)+\chi_2(\varpi))\\
			&\qquad\qquad\text{  if $a(\mu\chi_1)=a(\mu\chi_2)=0$, and $t=-1$,}\\
		& q^{-\frac{t}{2}}G(\varpi^{-l},\mu^{-1})\bigg(-q^{-1}\zeta_F(1)^{-1}(\chi_1(\varpi^{t+2})+\chi_2(\varpi^{t+2}))+\zeta_F(1)^{-2}\sum_{k=0}^t 	\chi_1(\varpi^k)\chi_2(\varpi^{t-k}) \bigg)\\
			&\qquad\qquad\text{  if $a(\mu\chi_1)=a(\mu\chi_2)=0$, and $t\geq 0$,}\\
		&0 \\
			&\qquad\qquad\text{ else.}
	\end{cases} \nonumber
\end{equation}
\end{lemma}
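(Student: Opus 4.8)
The plan is to derive everything from the basic identity \eqref{eq:basic_identity}, which for $\pi=\chi_1\boxplus\chi_2$ reads
\begin{equation*}
\sum_{t} q^{(t+a(\mu\pi))(\frac12-s)}c_{t,l}(\mu) = \omega_\pi(-1)\frac{\epsilon(\tfrac12,\mu\pi)^{-1}L(s,\mu\pi)}{L(1-s,\mu^{-1}\omega_\pi^{-1}\pi)}\sum_{a\ge 0}W_\pi(a(\varpi^a))q^{-a(\frac12-s)}G(\varpi^{a-l},\mu^{-1}).
\end{equation*}
Here $L(s,\mu\pi)=L(s,\mu\chi_1)L(s,\mu\chi_2)$ and $\epsilon(\tfrac12,\mu\pi)=\epsilon(\tfrac12,\mu\chi_1)\epsilon(\tfrac12,\mu\chi_2)$ factor, and $\omega_\pi^{-1}\pi = \chi_1^{-1}\boxplus\chi_2^{-1}$. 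I would split according to the ramification type of $\mu\chi_1$ and $\mu\chi_2$. The key simplification is that the Whittaker values on the diagonal, given by \eqref{eq:whitt_on_diag}, are nonzero only when at least one of $\mu\chi_1,\mu\chi_2$ is unramified (the third line of \eqref{eq:whitt_on_diag} applies since $L(s,\mu\pi)=1$ forces the $a=0$ term only), and otherwise one has a clean one-term right-hand side.

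First I would treat the generic case $a(\mu\chi_1),a(\mu\chi_2)\neq 0$: then $L(s,\mu\pi)=1$, the right-hand side collapses to $\omega_\pi(-1)\epsilon(\tfrac12,\mu\pi)^{-1}G(\varpi^{-l},\mu^{-1})$ (only $a=0$ survives by \eqref{eq:whitt_on_diag}), and comparing the single power of $q^{-s}$ on each side pins $t=-a(\mu\chi_1)-a(\mu\chi_2)=-a(\mu\pi)$ with the stated coefficient, after rewriting $\omega_\pi(-1)\epsilon(\tfrac12,\mu\chi_1)^{-1}\epsilon(\tfrac12,\mu\chi_2)^{-1}$ as $\epsilon(\tfrac12,\mu^{-1}\chi_1^{-1})\epsilon(\tfrac12,\mu^{-1}\chi_2^{-1})$ using the standard reflection $\epsilon(\tfrac12,\eta)\epsilon(\tfrac12,\eta^{-1})=\eta(-1)$ for $\eta$ ramified. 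The subtle dichotomy $\chi_1|_{\Of^\times}=\chi_2|_{\Of^\times}$ or not only changes which $\mu$ can make both $\mu\chi_i$ unramified simultaneously: if the restrictions differ, $a(\mu\chi_1)$ and $a(\mu\chi_2)$ cannot both vanish, so only the ``one ramified, one unramified'' subcase occurs; if they agree, a single $\mu$ (namely $\mu = \chi_1^{-1}|_{\Of^\times}$ extended by $\mu(\varpi)=1$) makes both unramified, producing the extra three-line formula.

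Next, for the mixed case $a(\mu\chi_i)=0\neq a(\mu\chi_j)$ with $\{i,j\}=\{1,2\}$, the factor $L(s,\mu\pi)=L(s,\mu\chi_i)=(1-\mu\chi_i(\varpi)q^{-s})^{-1}$ expands as a geometric series, $L(1-s,\mu^{-1}\omega_\pi^{-1}\pi)=L(1-s,\mu^{-1}\chi_i^{-1})=(1-\mu^{-1}\chi_i^{-1}(\varpi)q^{-(1-s)})^{-1}$ is a two-term polynomial, and the diagonal sum over $a$ is governed by the second line of \eqref{eq:whitt_on_diag}, namely $W_\pi(a(\varpi^a))=\chi_i(\varpi^a)q^{-a/2}$ (here $\chi_i$ is the unramified-up-to-$\varpi$ one, so this equals $q^{-a/2}$). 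I would multiply out these three ingredients — a geometric series, a binomial, and the Gauss-sum-weighted diagonal series, where $G(\varpi^{a-l},\mu^{-1})$ is nonzero only for $a=l-a(\mu\chi_j)$ (or small $a$ when $\mu$ is trivial) per \eqref{eq:evaluation_of_GS} — and collect the coefficient of $q^{(t+a(\mu\pi))(\frac12-s)}$. This book-keeping is exactly what produces the boundary term at $t=-a(\mu\chi_j)-1$ (from the polynomial factor $L(1-s,\cdot)$ in the denominator) and the tail $t\ge -a(\mu\chi_j)$ with the $\zeta_F(1)^{-2}q^{-t/2}\chi_i(\varpi^t)$ shape. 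For the fully unramified case (relevant only when the restrictions agree) both $L$-factors contribute genuine geometric series and one gets the convolution $\sum_{k=0}^t\chi_1(\varpi^k)\chi_2(\varpi^{t-k})$ together with lower-order corrections from the numerator $\epsilon$ and the denominator $L$; here I would mirror exactly the expansions done in the Steinberg case of Lemma~\ref{lm:twis_of_ST}, keeping careful track of the constant shift in exponent coming from $a(\mu\pi)$ versus $a(\pi)$.

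The main obstacle is purely organizational rather than conceptual: correctly aligning the $q$-exponents on the two sides (the left has the shift $a(\mu\pi)$ baked in, the right involves $a$ from $W_\pi(a(\varpi^a))$ and the valuation condition on the Gauss sum), and correctly merging the contributions of the three series in the mixed and unramified subcases so that the boundary terms and tails match the stated piecewise formula. A secondary technical point is handling the sub-subcase $\mu=1$ (or $\mu=\chi_i^{-1}$) separately, since then $G(\varpi^{a-l},\mu^{-1})$ takes the trivial-character values $1$ at $a\ge l$ and $-\zeta_F(1)q^{-1}$ at $a=l-1$ from \eqref{eq:evaluation_of_GS}, altering which $a$'s survive; one checks these special $\mu$ nonetheless yield the same closed forms listed. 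Once the alignment is fixed, every case reduces to comparing coefficients in a rational function identity in $q^{-s}$, which is routine.
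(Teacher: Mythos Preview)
Your overall strategy matches the paper's exactly: plug into the basic identity \eqref{eq:basic_identity}, use the factorisations $L(s,\mu\pi)=L(s,\mu\chi_1)L(s,\mu\chi_2)$ and $\epsilon(\tfrac12,\mu\pi)=\epsilon(\tfrac12,\mu\chi_1)\epsilon(\tfrac12,\mu\chi_2)$, expand the ratio of $L$-factors as a power series, and compare coefficients. The generic case and the case dichotomy $\chi_1|_{\mathcal O^\times}=\chi_2|_{\mathcal O^\times}$ versus $\neq$ are also handled correctly.

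However, there is a concrete error in your mixed case $a(\mu\chi_i)=0\neq a(\mu\chi_j)$. You write that ``the diagonal sum over $a$ is governed by the second line of \eqref{eq:whitt_on_diag}, namely $W_\pi(a(\varpi^a))=\chi_i(\varpi^a)q^{-a/2}$''. This is wrong: the Whittaker values appearing in \eqref{eq:basic_identity} are those of $\pi$ itself, not of $\mu\pi$, and the second line of \eqref{eq:whitt_on_diag} requires one of $a(\chi_1),a(\chi_2)$ to be zero. The hypothesis of the lemma is $a(\chi_1),a(\chi_2)>0$, so $L(s,\pi)=1$ and the \emph{third} line of \eqref{eq:whitt_on_diag} applies, giving $W_\pi(a(\varpi^a))=\delta_{a=0}$. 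Thus in \emph{every} case of this lemma the diagonal sum collapses to the single term $a=0$, and the right-hand side of the basic identity is just
\[
\omega_\pi(-1)\,\epsilon(\tfrac12,\mu\pi)^{-1}\,\frac{L(s,\mu\pi)}{L(1-s,\mu^{-1}\omega_\pi^{-1}\pi)}\,G(\varpi^{-l},\mu^{-1}).
\]
Your ``Gauss-sum-weighted diagonal series'' does not exist here (and your stated support condition $a=l-a(\mu\chi_j)$ for $G(\varpi^{a-l},\mu^{-1})$ is also off: the conductor controlling that Gau\ss\ sum is $a(\mu)$, not $a(\mu\chi_j)$; but the point is moot once only $a=0$ survives). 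With this correction the mixed case reduces to expanding the single quotient $L(s,\mu\chi_i)/L(1-s,\mu^{-1}\chi_i^{-1})$ exactly as in \eqref{eq:expansion_of_LFrac}, and the fully unramified case to the product of two such quotients; this is precisely what the paper does. So your plan is right, but the bookkeeping is simpler than you anticipated.
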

\begin{proof}
The case $\mu\neq \chi_1,\chi_2$ is straightforward. So we start by considering $\chi_1 = \mu^{-1}\abs{\cdot}^c \neq \chi_2\abs{\cdot}^{2c}$. The same calculation will work when we interchange the role of $\chi_1$ and $\chi_2$ in this case. The basic identity reads
\begin{equation}
	\sum_{t=-\infty}^{\infty} q^{(t+a(\mu\pi))(\frac{1}{2}-s)}c_{t,l}(\mu)=G(\varpi^{-l},\mu^{-1})\epsilon(\frac{1}{2},\mu^{-1}\chi_2^{-1}) \frac{L(s,\abs{\cdot}^c)}{L(1-s,\abs{\cdot}^{-c})}. \nonumber
\end{equation}
Expanding the quotient of $L$-factors into a power series and recalling $\chi_1(\varpi) = q^{-c}$ yields
\begin{equation}
	\frac{L(s,\abs{\cdot}^c)}{L(1-s,\abs{\cdot}^{-c})} = -q^{-1}\chi_1(\varpi)^{-1} q^s+\zeta_F(1)^{-1}\sum_{a=0}^{\infty} \chi_1(\varpi^a)q^{-as}. \label{eq:expansion_of_LFrac}
\end{equation}
Inserting this into the basic identity enables us to compare coefficients and conclude this case.

In the end we consider the situation where both, $\chi_1$ and $\chi_2$, are unramified twists of $\mu$. Since the central character is trivial on the uniformizer we have $\chi_1(\varpi) = \chi_2^{-1}(\varpi)=\abs{\varpi}_{F}^c$ for some $c \in \C$. The basic identity becomes
\begin{equation}
	\sum_{t=-\infty}^{\infty} q^{t(\frac{1}{2}-s)}c_{t,l}(\mu) = \omega_{\pi}(-1) G(\varpi^{-l},\mu^{-1})\frac{L(s,\abs{\cdot}^c)L(s,\abs{\cdot}^{-c})}{L(1-s,\abs{\cdot}^c)L(1-s,\abs{\cdot}^{-c})}. \nonumber
\end{equation} 
We use \eqref{eq:expansion_of_LFrac} twice to obtain
\begin{eqnarray}
	&&\frac{L(s,\abs{\cdot}^c)L(s,\abs{\cdot}^{-c})}{L(1-s,\abs{\cdot}^c)L(1-s,\abs{\cdot}^{-c})} = q^{-2}q^{2s} - q^{-1}\zeta_F(1)^{-1}(\chi_1(\varpi)+\chi_2(\varpi))q^s \nonumber \\
	&& + \sum_{a=0}^{\infty} \bigg(-q^{-1}\zeta_F(1)^{-1}(\chi_1(\varpi^{a+2})+\chi_2(\varpi^{a+2}))+\zeta_F(1)^{-2}\sum_{l=0}^a \chi_1(\varpi^l)\chi_2(\varpi^{a-l}) \bigg) q^{-as}. \nonumber
\end{eqnarray}
Now we may compare coefficients and conclude the proof.
\end{proof}

\begin{lemma} \label{lm:degenerate_principal}
Let $\pi =\chi_1\boxplus \chi_2$ with $n=a(\chi_1) > a(\chi_2) = 0$. Then we have
\begin{equation}
	c_{t,l}(\mu) = \begin{cases} 
		& \mu(-1)\zeta_F(1)q^{-\frac{l}{2}}\epsilon(\frac{1}{2},\mu^{-1}\omega_{\pi}^{-1})\chi_2(\varpi^{a(\mu\omega_{\pi})-l}) \\
			&\qquad\qquad\text{ if $\mu\neq \omega_{\pi}^{-1}$, $l>0$, and $t=-a(\mu\omega_{\pi})-l$, } \\
		&  \chi_2(\varpi^{t+2n})q^{-\frac{1}{2}(t+n)}\epsilon(\frac{1}{2},\omega_{\pi}^{-1}) \\
			&\qquad\qquad\text{ if $l=0$, and $t\geq -n$,} \\
		&-\omega_{\pi}(-1)\zeta_F(1)q^{-\frac{1}{2}(l+1)}\chi_2(\varpi^{-l+1}) \\
			&\qquad\qquad\text{ if $\mu=\omega_{\pi}^{-1}$ and $t= -l-1$,} \\
		&\omega_{\pi}(-1) q^{-\frac{t}{2}-l}\chi_2(\varpi^{-t-2l}) \\
			&\qquad\qquad\text{ if $\mu=\omega_{\pi}^{-1}$ and $t\geq -l$,} \\
		&0 \qquad \text{ else.}
	\end{cases} \nonumber
\end{equation}
Note that this also covers the case $a(\chi_2)>a(\chi_1)=0$.
\end{lemma}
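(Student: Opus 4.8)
The plan is to feed the explicit diagonal values \eqref{eq:whitt_on_diag} and the Gau\ss\ sum evaluation \eqref{eq:evaluation_of_GS} into the basic identity \eqref{eq:basic_identity} and to compare coefficients, exactly as in the proof of Lemma~\ref{lm:general_prinz_ser}, but now organised according to the three disjoint possibilities: $\mu$ ramified with $\mu\neq\omega_{\pi}^{-1}$; $\mu=1$; and $\mu=\omega_{\pi}^{-1}$. Throughout I would use that $\chi_2$ is unramified, so that $\epsilon(\frac{1}{2},\chi_2)=1$ and $\chi_2(\varpi)=\chi_1(\varpi)^{-1}$ (because $\omega_{\pi}(\varpi)=1$); the twisting rule $\epsilon(\frac{1}{2},\eta\nu)=\nu(\varpi)^{a(\eta)}\epsilon(\frac{1}{2},\eta)$ for unramified $\nu$; and the reflection formula $\epsilon(\frac{1}{2},\eta)\epsilon(\frac{1}{2},\eta^{-1})=\eta(-1)$. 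By \eqref{eq:whitt_on_diag} the sum over $a$ in \eqref{eq:basic_identity} is $\sum_{a\geq 0}\chi_1(\varpi)^{a}q^{-a(1-s)}G(\varpi^{a-l},\mu^{-1})$, and one also records $\epsilon(\frac{1}{2},\mu\pi)=\epsilon(\frac{1}{2},\mu\chi_1)\epsilon(\frac{1}{2},\mu\chi_2)$ together with $a(\mu\pi)=a(\mu\chi_1)+a(\mu)$.

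For $\mu$ ramified with $\mu\neq\omega_{\pi}^{-1}$ I would first note that $\mu\chi_1$ (equivalently $\mu\omega_{\pi}$) must be ramified: were it unramified, then $\mu\omega_{\pi}=\mu\chi_1\chi_2$ would be unramified and trivial on $\varpi$, hence trivial, forcing $\mu=\omega_{\pi}^{-1}$. Consequently $L(s,\mu\pi)=L(1-s,\mu^{-1}\omega_{\pi}^{-1}\pi)=1$, so the $L$-ratio drops out of \eqref{eq:basic_identity}; since $\mu\neq 1$ the Gau\ss\ sum pins the inner sum to $a=l-a(\mu)$, and \eqref{eq:basic_identity} becomes a single monomial. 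Reading off its coefficient, using $a(\mu\pi)=a(\mu\chi_1)+a(\mu)$ to obtain the exponent $t=-a(\mu\omega_{\pi})-l$ and the $\epsilon$-factor identities above to put the constant into the shape $\mu(-1)\zeta_F(1)q^{-l/2}\epsilon(\frac{1}{2},\mu^{-1}\omega_{\pi}^{-1})\chi_2(\varpi^{a(\mu\omega_{\pi})-l})$, gives the first line of the lemma in this range of $\mu$.

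The case $\mu=1$ (with $l$ arbitrary) is where the genuine subtlety lies. Now the $L$-ratio $L(s,\chi_2)/L(1-s,\chi_2^{-1})$ is a non-trivial rational function and $G(\varpi^{a-l},1)$ equals $1$ for every $a\geq l$ (with the boundary value $-\zeta_F(1)q^{-1}$ at $a=l-1$ when $l\geq 1$), so a priori both the $L$-ratio and the inner sum contribute infinite geometric series. The point is that their product collapses: for $l\geq 1$ the two geometric tails telescope against one another and leave a single monomial, so $c_{t,l}(1)$ is supported at the single value $t=-n-l$ — this is why the first case of the lemma, rather than a half-line of support, occurs here; for $l=0$ the cancellation is only partial and one is left with $\omega_{\pi}(-1)\epsilon(\frac{1}{2},\chi_1)^{-1}\sum_{k\geq 0}\chi_1(\varpi)^{-k}q^{-ks}$, whose coefficients give the half-line formula of the second case. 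In both cases the constant is converted to the stated form using $\epsilon(\frac{1}{2},\omega_{\pi}^{-1})=\omega_{\pi}(-1)\chi_2(\varpi)^{-n}\epsilon(\frac{1}{2},\chi_1)^{-1}$ and $\omega_{\pi}(-1)=\chi_1(-1)$.

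Finally, for $\mu=\omega_{\pi}^{-1}$ (which forces $l\geq n$, since $a(\omega_{\pi})=n$) one has $\mu\pi=\chi_2^{-1}\boxplus\chi_1^{-1}$ and $\mu^{-1}\omega_{\pi}^{-1}\pi=\pi$, so the $L$-ratio is $L(s,\chi_2^{-1})/L(1-s,\chi_2)$. Expanding it as $-\chi_2(\varpi)q^{-1}q^{s}+\zeta_F(1)^{-1}\sum_{k\geq 0}\chi_2(\varpi)^{-k}q^{-ks}$, and noting that the Gau\ss\ sum pins the inner sum to $a=l-n$, comparison of coefficients yields the $t=-l-1$ line from the isolated $q^{s}$-term and the $t\geq -l$ line from the geometric part; here $\epsilon(\frac{1}{2},\chi_1^{-1})^{-1}\epsilon(\frac{1}{2},\omega_{\pi}^{-1})=\chi_2(\varpi)^{-n}$ collapses the root numbers and the $q$-powers simplify to $q^{-t/2-l}$. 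For every triple $(t,l,\mu)$ not listed, the corresponding coefficient on the right-hand side of \eqref{eq:basic_identity} vanishes, so $c_{t,l}(\mu)=0$; and the closing assertion about $a(\chi_2)>a(\chi_1)=0$ is immediate from $\chi_1\boxplus\chi_2=\chi_2\boxplus\chi_1$. I expect the telescoping in the $\mu=1$ step — and the accompanying check that no spurious half-line of support survives — to be the part requiring the most care.
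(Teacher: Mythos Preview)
Your proposal is correct and follows exactly the approach the paper indicates: the paper's own proof consists solely of the remark that ``the proof uses the same ideas as the previous proof'' and leaves the details to the reader. You have supplied those details, organised along the natural trichotomy $\mu$ ramified with $\mu\neq\omega_\pi^{-1}$, $\mu=1$, and $\mu=\omega_\pi^{-1}$, and your identification of the telescoping in the $\mu=1$, $l>0$ case (where the numerator $1-\chi_2(\varpi)^{-1}q^{-1/2}X^{-1}$ of the $L$-ratio cancels against the geometric tail of the inner sum, collapsing the product to a single monomial) is precisely the one non-routine step.
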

The proof uses the same ideas as previous proof. Thus, we leave the details to the reader.

\section{The shape of Whittaker new vectors on $g_{t,l,v}$} \label{se:Wh_supp}

In this section we use our description of $c_{t,l}(\mu)$ to evaluate Whittaker new vectors $W_{\pi}$ on the special matrices $g_{t,l,v}$. We will obtain expressions for the local Whittaker functions featuring several ($p$-adic) special functions. These functions are analogues of well known special functions that appear in the archimedean representation theory of $GL_2$ and are interesting in their own right. Note that 
\begin{equation}
	W_{\pi}\left(\left(\begin{matrix} z & 0 \\ 0 & z \end{matrix}\right)n(x)g_{t,l,v}k\right) = \omega_{\pi}(z)\psi(x)W_{\pi}(g_{t,l,v}) \text{ for } k\in K_{1}(n). \nonumber
\end{equation}
Thus, by \eqref{eq:basic_decomp} understanding $W_{\pi}$ on $g_{t,l,v}$ determines it on the whole of $GL_2$.

Probably the most prestigious function we will encounter is the Kloosterman sum and its twisted generalisation (generalised Sali\'e sums). We define
\begin{equation}
	S_{\chi}(A,B,m) = \int_{\mathcal{O}^{\times}} \chi(x) \psi\left(\frac{Ax+Bx^{-1}}{\varpi^{m}}\right)d^{\times} x \text{ for } A,B \in \mathcal{O} \text{ and } m \in \N_0. \label{eq:def_Salie_sum}
\end{equation}
These are indeed representations of the classical Kloosterman sums as p-adic oscillatory integrals. One notes that due to the normalisation of $d^{\times}x$ the trivial bound is 1. If $\chi=1$, we will drop it from the notation. A more general function is 
\begin{equation}
	K(\xi,A,B) = \int_{\mathfrak{O}^{\times}}\xi(x) \psi(\Tr(Ax)+B\Norm(x))d_Ex, \label{eq:def_hyper_geo_sum}
\end{equation}
which we associated to a multiplicative character $\xi \colon E^{\times} \to S^1$ on some quadratic space $E$ over $F$. Here $A\in E$ and $B\in F$.

However, the focus of this section is to describe the support of the Whittaker functions as precisely as possible. This will help us later on to exclude several choices for $t$ and $l$ for which $W_{\pi}(g_{t,l,v})$ vanishes.

We consider each type of representation on its own. The case of supercuspidal representations has already been considered in \cite[Proposition~2.30]{Sa15_2}. However, in many cases the sums of $\epsilon$-factors simplify considerably.

The results in this section hold for any non-archimedean field $F$ and any irreducible, admissible, unitary, infinite dimensional representation $\pi$ with central character $\omega_{\pi} \in \mathfrak{X}_n$.

\subsection{Dihedral supercuspidals}

Here we will derive an expression of the Whittaker new vector for dihedral supercuspidal representations which goes beyond the one given in \cite[Proposition~2.30]{Sa15_2}. The following results hold for any dihedral representation even those for $2\mid q$. However, in this case not every supercuspidal representation is dihedral.   

If $\pi$ is dihedral supercuspidal, then so is $\tilde{\pi}$. Thus we can find a quadratic extension $E/F$ and a multiplicative character $\xi$ such that $\tilde{\pi}  = \omega_{\xi}$. We now use the properties of dihedral supercuspidal representations summarised in Section~\ref{se:notation} to calculate the Whittaker function.

\begin{lemma} \label{lm:supp_wh_supercusp}
If $\pi$ is dihedral supercuspidal and $k=\max(n,2l)$, then we have
\begin{equation}
	W_{\pi}(g_{t,l,v}) = \begin{cases}
		\epsilon(\frac{1}{2},\tilde{\pi}) &\text{ if } l=0 \text{ and } t=-k,\\
		\gamma q^{-\frac{t}{2}}K(\xi^{-1},\Omega^{\frac{t}{f}},v\varpi^{-l}) &\text{ if }l=\frac{n}{2} \text{ and }-k \leq t<0, \text{ or }l\neq 0,\frac{n}{2} \text{ and } t= -k,\\	
		0 &\text{ else.}
	\end{cases}	 \nonumber
\end{equation}
\end{lemma}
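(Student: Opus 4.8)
\emph{Proof plan.} The plan is to derive the formula from the finite Fourier expansion \eqref{eq:fourier_exp}, $W_{\pi}(g_{t,l,v})=\sum_{\mu\in\mathfrak{X}_l}c_{t,l}(\mu)\mu(v)$, together with the supercuspidal values of $c_{t,l}(\mu)$ established at the beginning of Section~\ref{se:ctlmu}, but keeping the coefficients in their un-evaluated shape $c_{t,l}(\mu)=\omega_{\pi}(-1)\epsilon(\tfrac{1}{2},\mu\pi)^{-1}G(\varpi^{-l},\mu^{-1})$ for $t=-a(\mu\pi)$ and $0$ otherwise. Using $G(\varpi^{-l},\mu^{-1})\mu(v)=G(v\varpi^{-l},\mu^{-1})$, this gives for every $\mu\in\mathfrak{X}_l$ (so the exceptional $\mu=1$ entries in the supercuspidal list, which only occur for $l\le 1$, are covered without extra work)
\[
c_{t,l}(\mu)\mu(v)=\omega_{\pi}(-1)\,\epsilon(\tfrac{1}{2},\mu\pi)^{-1}\,G(v\varpi^{-l},\mu^{-1})\,\mathbbm{1}\!\left[t=-a(\mu\pi)\right].
\]
For $l=0$ the only surviving term is $\mu=1$, and as $\omega_{\pi}^{-1}\pi=\tilde{\pi}$ and $k=n$ here, this is already the first case; so from now on $l>0$.

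Next I would insert the dihedral dictionary of Section~\ref{se:notation}: writing $\tilde{\pi}=\omega_{\xi}$, the twist $\mu^{-1}\tilde{\pi}=\omega_{\eta_{\mu}}$ with $\eta_{\mu}:=\xi\cdot(\mu^{-1}\circ\Norm)$, and since $\xi\neq\xi^{\sigma}$ one has $\eta_{\mu}|_{E^{1}}=\xi|_{E^{1}}\neq 1$, so $\eta_{\mu}$ is ramified and $\eta_{\mu}\neq\eta_{\mu}^{\sigma}$. Hence $a(\mu\pi)=a(\mu^{-1}\tilde{\pi})=fa(\eta_{\mu})+d$, and, combining the local functional equation $\epsilon(\tfrac{1}{2},\mu\pi)\epsilon(\tfrac{1}{2},\mu^{-1}\tilde{\pi})=\omega_{\pi}(-1)$ with the dihedral $\epsilon$-factor identity, $\epsilon(\tfrac{1}{2},\mu\pi)^{-1}$ is a $\mu$-independent constant times $\epsilon_{E}(\tfrac{1}{2},\eta_{\mu})$, the $\epsilon$-factor over $E$ attached to $\psi_{E}$. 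Now I realise both Gau\ss{}-theoretic ingredients as integrals: $G(v\varpi^{-l},\mu^{-1})=\int_{\mathcal{O}^{\times}}\psi(v\varpi^{-l}w)\mu^{-1}(w)\,d^{\times}w$, and, by the evaluation of Gau\ss{} sums over $E$ (the analogue of \eqref{eq:evaluation_of_GS}, keeping track of $n(\psi_{E})=-d/f$),
\[
\epsilon_{E}(\tfrac{1}{2},\eta_{\mu})\,\mathbbm{1}\!\left[t=-(fa(\eta_{\mu})+d)\right]=c(t)\int_{\mathfrak{O}^{\times}}\psi_{E}(\Omega^{t/f}z)\,\eta_{\mu}^{-1}(z)\,d^{\times}_{E}z,
\]
with $c(t)$ (a power of $q$ times a root of unity) independent of $\mu$; here I use that $\eta_{\mu}(\Omega^{t/f})=\xi(\Omega^{t/f})\mu^{-1}(\varpi^{t})=\xi(\Omega^{t/f})$ does not depend on $\mu$, and that the $E$-Gau\ss{} sum already vanishes unless $t=-(fa(\eta_{\mu})+d)$, so the indicator is redundant once one has passed to integrals; note $\eta_{\mu}^{-1}=\xi^{-1}\cdot(\mu\circ\Norm)$.

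Substituting both integral forms into $W_{\pi}(g_{t,l,v})=\sum_{\mu\in\mathfrak{X}_l}c_{t,l}(\mu)\mu(v)$, interchanging the finite sum with the two integrals and invoking orthogonality on $\mathcal{O}^{\times}/(1+\p^{l})$,
\[
\sum_{\mu\in\mathfrak{X}_l}\mu^{-1}(w)\,\mu(\Norm z)=\sum_{\mu\in\mathfrak{X}_l}\mu\!\left(\Norm(z)w^{-1}\right)=\lvert\mathfrak{X}_l\rvert\cdot\mathbbm{1}\!\left[w\in\Norm(z)(1+\p^{l})\right],
\]
collapses the $w$-integral: since $\psi$ is unramified and $v\Norm(z)\in\mathcal{O}^{\times}$, one gets $\int_{\Norm(z)(1+\p^{l})}\psi(v\varpi^{-l}w)\,d^{\times}w=\lvert\mathfrak{X}_l\rvert^{-1}\psi(v\varpi^{-l}\Norm z)$. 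What is left equals, up to the accumulated constant, $\int_{\mathfrak{O}^{\times}}\xi^{-1}(z)\psi\!\left(\Tr(\Omega^{t/f}z)+v\varpi^{-l}\Norm z\right)d_{E}z=K(\xi^{-1},\Omega^{t/f},v\varpi^{-l})$, and matching the normalisations of the three measures in play (the multiplicative measure on $F^{\times}$ as in \eqref{eq:evaluation_of_GS}, the one on $E^{\times}$, and the additive $\mu_{E}$ used in $K$), together with the $\epsilon$-factor and sign bookkeeping, leaves the constant $\gamma q^{-t/2}$, which is the asserted formula.

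Finally I would read off the support: a coefficient can be non-zero only for $t=-(fa(\eta_{\mu})+d)$ with $\mu\in\mathfrak{X}_l$, and $a(\eta_{\mu})=a(\xi\cdot(\mu^{-1}\circ\Norm))$ is governed by the conductor formula $a(\mu\circ\Norm)=e(a(\mu)-1)+1$ for $a(\mu)\ge 1$ (valid since $E/F$ is tame) together with $a(\xi)=(n-d)/f$. Unless $l=n/2$ — the critical size at which $a(\mu\circ\Norm)$ can meet $a(\xi)$, which in particular forces $n$ even — all contributing $\mu$ pin $a(\eta_{\mu})$ to a single value, yielding $t=-\max(n,2l)=-k$; for $l=n/2$ instead $a(\eta_{\mu})$ ranges over $1,\dots,a(\xi)$ (it is $\ge 1$ since $\eta_{\mu}|_{E^{1}}\neq 1$), so $t$ runs through the displayed range $-k\le t<0$, while outside these values every coefficient vanishes. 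The step I expect to be the main obstacle is precisely this uniform bookkeeping of the three Haar measures, the signs, and the non-trivial conductor $n(\psi_{E})=-d/f$ of $\psi_{E}$: it is what collapses all stray constants into the clean $\gamma q^{-t/2}$ and lets the ramified and unramified quadratic spaces $E$ be handled by one formula. The orthogonality collapse and the conductor comparison for the support are otherwise routine.
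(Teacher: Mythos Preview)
Your proposal is correct and follows essentially the same route as the paper: write $\omega_{\pi}(-1)\epsilon(\tfrac12,\mu\pi)^{-1}=\epsilon(\tfrac12,\mu^{-1}\tilde{\pi})=\gamma\,\epsilon_E(\tfrac12,\eta_\mu)$, realise both this $\epsilon$-factor over $E$ and the Gau\ss{} sum $G(v\varpi^{-l},\mu^{-1})$ as oscillatory integrals, and then collapse the $\mu$-sum by orthogonality on $\mathcal{O}^\times/(1+\p^l)$ to land on $K(\xi^{-1},\Omega^{t/f},v\varpi^{-l})$. The paper quotes \cite[Lemma~1.1.1]{Sc02} for the integral representation of $\epsilon_E(\tfrac12,\eta_\mu)$ and is terser about the support and the measure bookkeeping than you are, but the argument is the same.
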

\begin{proof}
First, we apply \cite[Lemma~1.1.1]{Sc02} in the setting of $E$ and obtain
\begin{eqnarray}
	&&\epsilon(\frac{1}{2},\xi\cdot(\mu^{-1}\circ \Norm)) \nonumber \\
	&& \quad = q^{\frac{f}{2}(n(\psi_E)-a(\xi\cdot(\mu^{-1}\circ \Norm))} \int_{\Omega^{n(\psi_E)-a(\xi\cdot(\mu^{-1}\circ\Norm))}\mathfrak{O}^{\times}}\xi^{-1}(x)\mu( \Norm(x)) \psi_E(x)d\mu_E(x). \nonumber
\end{eqnarray} 
Note that if $t=-a(\mu\pi)$ we compute
\begin{equation}
	n(\psi_E)-a(\xi\cdot(\mu^{-1}\circ \Norm)) = \frac{t}{f}. \nonumber
\end{equation}

With all this at hand we proceed calculating our Whittaker new vector. We obtain
\begin{eqnarray}
	W_{\pi}(g_{t,l,v}) &=& \sum_{t=-a(\mu\pi)} \epsilon(\frac{1}{2},\mu^{-1}\tilde{\pi})G(\varpi^{-l},\mu^{-1})\mu(v) \nonumber \\
	&=& \gamma q^{\frac{t}{2}}\int_{\Omega^{\frac{t}{f}}\mathfrak{O}^{\times}}\int_{\mathcal{O}^{\times}}\xi^{-1}(x)\psi(\Tr(x)+v\varpi^{-l}y)\sum_{\mu\in\mathfrak{X}_l}\mu(\Norm(x)y^{-1})d_Exd^{\times}y \nonumber \\
	&=&\gamma q^{-\frac{t}{2}}\int_{\mathfrak{O}^{\times}}\int_{\mathcal{O}^{\times}}\xi^{-1}(\Omega^{\frac{t}{f}}x)\psi(\Tr(\Omega^{\frac{t}{f}}x)+v\varpi^{-l}y)\sum_{\mu\in\mathfrak{X}_l}\mu(\Norm(x)y^{-1})d_Exd^{\times}y \nonumber \\
	&=&\gamma q^{-\frac{t}{2}}\int_{\mathfrak{O}^{\times}}\xi^{-1}(x)\psi(\Tr(\Omega^{\frac{t}{f}}x)+v\varpi^{-l}\Norm(x))d_Ex. \nonumber
\end{eqnarray}
\end{proof}

\subsection{Twists of Steinberg}

Throughout this subsection $E$ will denote the quadratic space $F\times F$. In this case any multiplicative character $\xi$ on $E^{\times}$ factors in $\xi = (\chi_1\circ \text{pr}_1)\cdot (\chi_2\circ\text{pr}_2)$ for two characters $\chi_1$ and $\chi_2$ of $F^{\times}$. However, at the moment we will only encounter the special situation where $\chi_1=\chi_2$. In other words, $\xi$ factors through the norm map. We will compute the local Whittaker functions in terms of $K(\xi,A,B)$ and other well known exponential sums.

We start of with the simplest case.

\begin{lemma} \label{lm:steinberg_exp}
For $\pi = St$ we have
\begin{equation}
	W_{\pi}(g_{t,l,v}) = \begin{cases} 
		-q^{-t-1} &\text{ if $t\geq -1$ and $l=0$,} \\
		q^{-t-2l} &\text{ if $t\geq -l$  and $l\geq 0$, }\\ 
		 q^{-t-2l} \psi(-\varpi^{l+t}v^{-1}) &\text{ if $l\geq 1$ and $-2l\leq t\leq -l-1$,} \\
		0 &\text{ else.}
	\end{cases} \nonumber
\end{equation}
\end{lemma}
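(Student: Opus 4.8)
The plan is to start from the finite Fourier expansion \eqref{eq:fourier_exp}, which for $\pi = St$ reads $W_{St}(g_{t,l,v}) = \sum_{\mu \in \mathfrak{X}_l} c_{t,l}(\mu)\mu(v)$, and to feed in the explicit values of $c_{t,l}(\mu)$ from the second half of Lemma~\ref{lm:twis_of_ST} (the case $\pi = St$). Those coefficients are supported on exactly two families: the trivial character $\mu = 1$, contributing the purely power-of-$q$ terms, and the ramified characters $\mu \neq 1$ with $a(\mu) = l$ at the single value $t = -l - a(\mu)$. So I would split the sum accordingly: the $\mu = 1$ part produces $-q^{-t-1}$ when $l = 0, t \geq -1$, and produces $q^{-t-2l}$ (for $t \geq -l$) together with $-\zeta_F(1)q^{-l}$ (for $t = -l-1$) when $l \geq 1$, while the ramified part contributes $\sum_{\mu \in \mathfrak{X}_l'} q^{a(\mu) - l} G(-\varpi^{-a(\mu)},\mu)\mu(v) = q^{l-l}\sum_{\mu \in \mathfrak{X}_l'} G(-\varpi^{-l},\mu)\mu(v)$ when $t = -2l$, i.e.\ the range $-2l \leq t \leq -l-1$ only meets the ramified support at $t = -2l$ and meets the $\mu=1$ support at $t = -l-1$.

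The key computational step is then to recognize the sum over ramified characters as a Gau\ss{} sum identity. Using the expression $c_{t,l}(\mu) = q^{a(\mu)-l} G(-\varpi^{-a(\mu)},\mu)$ for $\mu \in \mathfrak{X}_l'$ and $t = -2l$, and noting that $G(-\varpi^{-a(\mu)},\mu) = \int_{\mathcal{O}^\times}\psi(-\varpi^{-a(\mu)}y)\mu(y)d^\times y$ while for $\mu \notin \mathfrak{X}_l'$ with $\mu \neq 1$ one has $a(\mu) < l$ and the contribution vanishes at $t = -2l$ anyway, I would write
\begin{equation}
	W_{St}(g_{-2l,l,v}) = \sum_{\mu \in \mathfrak{X}_l}\left(\int_{\mathcal{O}^\times}\psi(-\varpi^{-l}y)\mu(y)d^\times y\right)\mu(v) = \sum_{y \in \mathcal{O}^\times/(1+\p^l)}\mathbbm{1}[y \equiv v^{-1} \bmod \p^l]\,\psi(-\varpi^{-l}y), \nonumber
\end{equation}
by orthogonality of characters of $\mathcal{O}^\times/(1+\p^l)$, which collapses to $\psi(-\varpi^{-l}v^{-1}) = \psi(-\varpi^{l+t}v^{-1})$ at $t = -2l$; one must be slightly careful that orthogonality picks out $y$ with $\mu(yv) = 1$ for all $\mu$ of conductor $\leq l$, i.e.\ $y \equiv v^{-1}$, and that the constant-character term there is already accounted for. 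For the intermediate values $-2l < t < -l-1$ (when $l \geq 2$) neither support condition is met, so the sum is empty and $W_{St}(g_{t,l,v}) = 0$; similarly for $t < -2l$ or ($l = 0$ and $t \leq -2$) everything vanishes.

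Finally I would assemble the pieces, paying attention to overlaps: at $t = -l$ with $l \geq 1$ only the $q^{-t-2l}$ term survives (consistent with the first two cases of the claim, since $t \geq -l$); at $t = -l-1$ with $l \geq 1$ only the $-\zeta_F(1)q^{-l}$ term from $\mu=1$ applies, and I would check this equals $q^{-t-2l}\psi(-\varpi^{l+t}v^{-1})$ only when... actually it does not in general, so the statement of the lemma must be using that $\psi$ is unramified and $\varpi^{l+t}v^{-1} = \varpi^{-1}v^{-1} \in \varpi^{-1}\mathcal{O}^\times$, giving $\psi(-\varpi^{-1}v^{-1})$ — here I would reconcile by noting $\zeta_F(1) = (1-q^{-1})^{-1}$ and that the Fourier-analytic computation at $t=-l-1$ actually produces the full sum $\sum_{y \equiv v^{-1}}\psi(-\varpi^{-1}y)$ over $\mathcal{O}^\times/(1+\p^{l-1})$-type cosets, reconstructing the claimed exponential; this bookkeeping at the boundary $t = -l-1$ is where I expect the main friction, because one must carefully combine the $\mu = 1$ coefficient $-\zeta_F(1)q^{-l}$ with the $a(\mu) = l-1, l$ pieces of $c_{t,l}(\mu)$ to see the clean answer $q^{-t-2l}\psi(-\varpi^{l+t}v^{-1})$ emerge, rather than a messier formula. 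The rest is routine accounting of which of the four cases in Lemma~\ref{lm:twis_of_ST} is nonzero for each $(t,l)$.
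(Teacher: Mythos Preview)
Your proposal has a genuine gap in the identification of the ramified support. You write that ``the ramified characters $\mu \neq 1$ with $a(\mu) = l$'' contribute ``at the single value $t = -l - a(\mu)$'', and then conclude that for intermediate $-2l < t < -l-1$ the sum is empty and $W_{St}(g_{t,l,v}) = 0$. But this contradicts the very statement you are proving, and the error is that $\mathfrak{X}_l$ contains characters of \emph{every} conductor $1 \leq a(\mu) \leq l$, not just $a(\mu) = l$. From Lemma~\ref{lm:twis_of_ST}, for each such $\mu$ the coefficient $c_{t,l}(\mu)$ is nonzero precisely at $t = -l - a(\mu)$; as $a(\mu)$ ranges over $1,\dots,l$ this sweeps out the whole interval $-2l \leq t \leq -l-1$. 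So at a fixed $t$ in that range the contributing characters are exactly those in $\mathfrak{X}_{-l-t}'$, not $\mathfrak{X}_l'$.

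The paper's argument runs as follows: for $l\geq 1$ and $t\leq -l-1$ one writes
\[
W_{St}(g_{t,l,v}) = q^{-t-2l}\sum_{\mu \in \mathfrak{X}_{-l-t}'} G(-\varpi^{l+t},\mu)\mu(v) \;-\; \delta_{t=-l-1}\,\zeta_F(1)q^{-l},
\]
then observes that extending the sum from $\mathfrak{X}_{-l-t}'$ to $\mathfrak{X}_{-l-t}$ costs nothing for $1\neq\mu$ of smaller conductor (those Gau\ss\ sums vanish) and for $\mu=1$ produces exactly the delta term (since $G(-\varpi^{-1}v^{-1},1) = -\zeta_F(1)q^{-1}$ when $t=-l-1$, and $0$ otherwise). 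The orthogonality $\sum_{\mu\in\mathfrak{X}_{-l-t}}\mu(\,\cdot\,)$ then localises the $y$-integral to $1+\p^{-l-t}$, collapsing everything to $q^{-t-2l}\psi(-\varpi^{l+t}v^{-1})$. Your computation at $t=-2l$ is essentially this argument specialised to that endpoint; the fix is simply to run it at the correct level $\mathfrak{X}_{-l-t}$ for each $t$, after which the boundary bookkeeping you worried about at $t=-l-1$ resolves itself automatically.
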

\begin{proof}
By the definition of $c_{t,l}(\mu)$ we have
\begin{equation}
	W_{\pi}(g_{t,l,v}) =  \sum_{\mu\in \mathfrak{X}_l} c_{t,l}(\mu) \mu(v). \nonumber
\end{equation}
We will now insert the result from Lemma~\ref{lm:twis_of_ST}. The interesting cases are obviously $t\leq -l-1$ and $l\geq 1$. Inserting the values for $c_{t,l}(\mu)$ calculated above yields
\begin{eqnarray}
	W(g_{t,l,v}) &=& q^{-t-2l}\sum_{\mu \in \mathfrak{X}_{-l-t}'} G(-\varpi^{l+t},\mu)\mu(v) -  \delta_{t=-l-1}\zeta_{F}(1)q^{-l} \nonumber \\
	&=& q^{-t-2l}\sum_{\mu\in \mathfrak{X}_{-l-t}} G(-\varpi^{-l-t}v^{-1}, \mu) \nonumber \\
	&=&  q^{-t-2l}\sharp\mathfrak{X}_{-l-t} \int_{1+\varpi^{-l-t}\mathcal{O}} \psi(-\varpi^{l+t}v^{-1}y) d^{\times}y \nonumber\\
	&=&  q^{-t-2l}\psi(-\varpi^{l+t}v^{-1}). \nonumber
\end{eqnarray}
\end{proof}

We now move on the the slightly more complicated situation of $\pi = \chi St$ for a non trivial $\chi$.

\begin{lemma} \label{lm:twist_steinberg_exp}
Let $\pi = \chi St$ where $\chi$ is a character such that $a(\chi)>0$ and $\chi(\varpi) = 1$. If $a(\chi)\geq 1$ and $l\neq a(\chi) = \frac{n}{2}$, we have
\begin{equation}
	W(g_{t,l,v}) = \begin{cases}
		\epsilon(\frac{1}{2},\tilde{\pi}) &\text{ if } t = -n \text{ and } l=0, \\
		q^{-\frac{t}{2}}\zeta_F(1)^{-2} K(\chi\circ \Norm,(\varpi^{\frac{t}{2}},\varpi^{\frac{t}{2}}),v\varpi^{-l}) &\text{ if } t = -\max(n,2l) \text{ and } 0<l<n, \\ 
		\chi^2(-v^{-1})\psi(-v^{-1}\varpi^{-l}) &\text{ if } t = -2l \text{ and } l\geq n,, \\
		0 &\text{ else.}
	\end{cases} \nonumber
\end{equation}

Finally, if $a(\chi)>1$ and $l=a(\chi)=\frac{n}{2}$, we have the degenerate situation
\begin{equation}
	W(g_{t,l,v}) = \begin{cases}
		-\zeta_F(2)^{-1}\zeta_F(1)q^{-1-\frac{a(\chi)}{2}-t} \chi(v^{-1})\epsilon(\frac{1}{2},\chi^{-1}) &\text{ if }t>-2, \\
		q\zeta_F(1)^{-2}K(\chi\circ \Norm, (\varpi^{-1},\varpi^{-1}),v\varpi^{-l}) &\text{ if } t=-2 \text{ and } l=1, \\
		\chi(v^{-1})\epsilon(\frac{1}{2},\chi^{-1})\zeta_F(1)^{-1}q^{1-\frac{a(\chi)}{2}} S(1,-b_{\chi}v^{-1},1)  &\text{ if $t=-2$ and $l>1$,} \\
		q^{-\frac{t}{2}}\zeta_F(1)^{-2} K(\chi\circ \Norm,(\varpi^{\frac{t}{2}},\varpi^{\frac{t}{2}}),v\varpi^{-l})  &\text{ if }-2l \leq t <-2 \text{ even}, \\
		0 &\text{ else.}
	\end{cases} \nonumber
\end{equation}
\end{lemma}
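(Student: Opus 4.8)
The plan is to expand $W_\pi(g_{t,l,v}) = \sum_{\mu\in\mathfrak{X}_l}c_{t,l}(\mu)\mu(v)$ using the explicit values of $c_{t,l}(\mu)$ from the first part of Lemma \ref{lm:twis_of_ST} (the $\chi\neq 1$ case), and in each range of $(t,l)$ recognize the resulting finite Fourier sum as one of the oscillatory integrals $K(\chi\circ\Norm,\cdot,\cdot)$, $S(\cdot,\cdot,\cdot)$ or an elementary $\psi$-value. Throughout, $\pi=\chi St$ has $\omega_\pi=\chi^2$ and $n=a(\pi)=2a(\chi)$, and the relevant value of $t$ for a given contributing $\mu$ is $t=-2a(\mu\chi)$ (when $\mu\neq\chi^{-1}$) or $t\geq-2$ (when $\mu=\chi^{-1}$); this already forces the support statements and pins down which $l$ can occur. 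I would organize the argument by first treating the generic case $l\neq a(\chi)=\tfrac n2$, then the degenerate case $l=a(\chi)$.

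For the generic case, I would split according to the size of $l$. When $l=0$ only $\mu=1$ can contribute (since $G(\varpi^0,\mu^{-1})=0$ unless $\mu=1$), $t=-2a(\chi)=-n$, and $c_{-n,0}(1)=\epsilon(\tfrac12,\chi^{-1})^2=\epsilon(\tfrac12,\tilde\pi)$ using $\tilde\pi=\chi^{-1}St$ and the $\epsilon$-factor formula for twists of Steinberg. When $0<l<n$, for each contributing $\mu\neq\chi^{-1}$ we have $t=-2a(\mu\chi)$, and the constraint that $G(\varpi^{-l},\mu^{-1})\neq 0$ forces $a(\mu)=l$ (or $l\le 1$, but those are degenerate/absorbed); writing out $\epsilon(\tfrac12,\mu^{-1}\chi^{-1})^2 G(\varpi^{-l},\mu^{-1})$ via \eqref{eq:evaluation_of_GS} and summing over $\mu$ with the substitution that converts a sum of $\epsilon$-factors into an integral over $\mathfrak{O}^\times = (\mathcal O^\times)^2$ — exactly the manipulation used in the proof of Lemma \ref{lm:supp_wh_supercusp}, now applied to $E=F\times F$ with $\xi=\chi\circ\Norm$ — collapses the sum into $q^{-t/2}\zeta_F(1)^{-2}K(\chi\circ\Norm,(\varpi^{t/2},\varpi^{t/2}),v\varpi^{-l})$, with $t=-\max(n,2l)$ coming from the interplay of the two conductors. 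When $l\geq n$ only $\mu=\chi^{-1}$ survives with $t=-2l\le -2$, and the third and fourth lines of Lemma \ref{lm:twis_of_ST} give $c_{-2l,l}(\chi^{-1})$ proportional to $G(\varpi^{-l},\chi)$; summing $\sum_\mu$ then, as in Lemma \ref{lm:steinberg_exp}, reduces a Gauss sum over a full character group to a single $\psi$-value, producing $\chi^2(-v^{-1})\psi(-v^{-1}\varpi^{-l})$ after tracking $\omega_\pi=\chi^2$.

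For the degenerate case $l=a(\chi)=\tfrac n2$ (so necessarily $a(\chi)>1$, i.e. $l>1$, except for the separate $l=1$ sub-line), the new feature is that $\mu=\chi^{-1}$ lies in $\mathfrak{X}_l$ and its coefficient is governed by the third line of Lemma \ref{lm:twis_of_ST}, valid for all $t>-2$, which immediately yields the first case $-\zeta_F(2)^{-1}\zeta_F(1)q^{-1-a(\chi)/2-t}\chi(v^{-1})\epsilon(\tfrac12,\chi^{-1})$ once $G(\varpi^{-l},\chi)=\zeta_F(1)q^{-a(\chi)/2}\epsilon(\tfrac12,\chi^{-1})\chi^{-1}(\varpi^{-l})\chi(v^{-1})$ is substituted. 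The subtle point is $t=-2$: here the $\mu=\chi^{-1}$ term (second line of Lemma \ref{lm:twis_of_ST}, value $q^{-1}G(\varpi^{-l},\chi)$) must be combined with all the $\mu\neq\chi^{-1}$ terms having $a(\mu\chi)=1$, i.e. $\mu$ ranging over characters with $\mu\chi$ of conductor exactly $1$; extracting the leading behaviour of $\chi$ (writing $\chi(1+x)=\psi(b_\chi x/\varpi^{a(\chi)})$-type linearization on $1+\mathfrak p^{a(\chi)-1}$, which is where the constant $b_\chi$ enters) turns this combined sum into the Sali\'e/Kloosterman sum $S(1,-b_\chi v^{-1},1)$ for $l>1$, and separately into $K(\chi\circ\Norm,(\varpi^{-1},\varpi^{-1}),v\varpi^{-l})$ when $l=1$ so that no such linearization is needed. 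Finally the even $t$ with $-2l\le t<-2$ are handled exactly as in the generic $0<l<n$ range, giving the $K(\chi\circ\Norm,\ldots)$ expression; odd $t$ and all other $t,l$ give vanishing because no $\mu$ contributes.

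I expect the main obstacle to be the $t=-2$, $l>1$ line: unlike every other case, it does not reduce to a clean single residue-field sum but requires pairing the exceptional $\mu=\chi^{-1}$ coefficient against the family of conductor-one twists and invoking the standard linearization of the ramified character $\chi$ near $1$ (introducing $b_\chi$), after which the two pieces must be recognized together as a single Sali\'e sum $S(1,-b_\chi v^{-1},1)$ — the bookkeeping of the constants $\zeta_F(1),\zeta_F(2),q^{-a(\chi)/2}$ and the sign is the delicate part, and one should double-check consistency with the $t>-2$ line by a limiting/compatibility argument.
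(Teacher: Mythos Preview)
Your overall strategy matches the paper's, but there is a genuine error in the $l\geq n$ case. You write that ``only $\mu=\chi^{-1}$ survives with $t=-2l$,'' and point to the third and fourth lines of Lemma~\ref{lm:twis_of_ST}. This is backwards: since $l\geq n=2a(\chi)>a(\chi)$, the Gau\ss\ sum $G(\varpi^{-l},\chi)$ vanishes, so the $\mu=\chi^{-1}$ term contributes \emph{nothing} (and in any case those lines require $t\geq -2$, not $t=-2l\leq -n$). The nonzero contributions come from the many characters $\mu\in\mathfrak{X}_l'$, each satisfying $a(\mu\chi)=l$ and hence $t=-2l$; these are covered by the \emph{first} line of Lemma~\ref{lm:twis_of_ST}. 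The paper handles this by first packaging the full $\mu$-sum into $K(\chi\circ\Norm,(\varpi^{-l},\varpi^{-l}),v\varpi^{-l})$ as in the $0<l<n$ range, and then evaluating the inner Gau\ss\ sum in $K$ directly: the condition $v(\varpi^{-l}(1+y_1 v))=-a(\chi)$ localises $y_1$ to $-v^{-1}+\varpi^{l-a(\chi)}\mathcal O^{\times}$, after which a change of variables and one more Gau\ss\ sum evaluation collapse everything to $\chi^2(-v^{-1})\psi(-v^{-1}\varpi^{-l})$.

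You also gloss over the vanishing statements for $0<l<n$, $l\neq a(\chi)$: the claim that the only nonzero $t$ is $t=-\max(n,2l)$ does not follow from conductor counting alone (for a given $t$ there may well be $\mu\in\mathfrak X_l$ with $a(\mu\chi)=-t/2$). The paper obtains it by writing $K$ as an iterated integral, evaluating the inner integral as $G(\varpi^{t/2}+y_1 v\varpi^{-l},\chi)$, and observing that for $l\neq -t/2$ this Gau\ss\ sum vanishes unless $\max(-t/2,l)=a(\chi)$. The same device also proves $W_\pi(g_{-2,1,v})=0$ when $a(\chi)>1$. Without this step you do not get the ``else $0$'' clauses of the lemma.
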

\begin{proof}
We start by expanding 
\begin{equation}
	W_{\pi}(g_{t,l,v}) =  \sum_{\mu\in \mathfrak{X}_l} c_{t,l}(\mu) \mu(v). \nonumber
\end{equation}
Using Lemma~\ref{lm:twis_of_ST} we first observe that if $t>-2$, the only character $\mu\in \mathfrak{X}_l$ with nonzero $c_{t,l}(\mu)$ is $\mu = \chi^{-1}$. Thus we can move on to the other cases.

If $l=0$, the only character to consider is $\mu =1$ which contributes only if $t=-2a(\chi)=-a(\pi)$.  Thus from now on we will assume $l>0$.

If $t= -2$, we obtain
\begin{equation}
	W_{\pi}(g_{-2,l,v}) = q^{-1}G(\varpi^{-l},\chi)\chi^{-1}(v) + \sum_{\substack{\mu\in \mathfrak{X}_l, \\ a(\mu\chi)=1}} \epsilon(\frac{1}{2},\mu^{-1}\chi^{-1})^2G(\varpi^{-l},\mu^{-1})\mu(v). \nonumber
\end{equation}
Reversing the evaluation of the Gau\ss\  sum given in \eqref{eq:evaluation_of_GS} yields the compact form
\begin{eqnarray}
	W_{\pi}(g_{-2,l,v}) &=& q\zeta_F(1)^{-2} \sum_{\substack{\mu \in \mathfrak{X}_l, \\ a(\mu\chi) \leq 1}} G(\varpi^{-1},\mu\chi)^2 G(v\varpi^{-l},\mu^{-1}) \label{eq:twist_st_-2}\\
	&=&  q\zeta_F(1)^{-2}\sum_{\mu \in \mathfrak{X}_l} G(\varpi^{-1},\mu\chi)^2 G(v\varpi^{-l},\mu^{-1}). \nonumber
\end{eqnarray}
To exploit cancellation in the $\mu$-average we write the Gau\ss\  sums as an integral. This leads to
\begin{equation}
	W_{\pi}(g_{-2,l,v}) = q\zeta_F(1)^{-2} \int_{ (\mathcal{O}^{\times})^3}\chi(y_1y_2)\psi(y_1\varpi^{-1}+y_2\varpi^{-1}+y_3 v \varpi^{-l}) \sum_{\mu\in\mathfrak{X}_l} \mu(y_1y_2y_3^{-1})d^{\times}y_3d^{\times}y_2d^{\times}y_1. \nonumber
\end{equation}
We observe
\begin{equation}
	 \sum_{\mu\in\mathfrak{X}_l} \mu(y_1y_2y_3^{-1}) =\begin{cases}
		\sharp\mathfrak{X}_l &\text{ if } y_1y_2y_3^{-1}\in 1+\varpi^l \mathcal{O}, \\
		0 &\text{ else.}
	\end{cases} \nonumber
\end{equation}
Using this to simplify the integral we obtain
\begin{eqnarray}
	&&W_{\pi}(g_{-2,l,v}) \nonumber \\
	&& \quad = q\zeta_F(1)^{-2}\sharp\mathfrak{X}_l\vol(1+\varpi^l \mathcal{O},d^{\times}) \int_{(\mathcal{O}^{\times})^2} \chi(y_1y_2) \psi(y_1\varpi^{-1}+y_2\varpi^{-1}+y_1y_2 v \varpi^{-l}) d^{\times} y_1 d^{\times}y_2 \nonumber \\
	&& \quad = \zeta_F(1)^{-2}q K(\chi\circ \Norm, (\varpi^{-1},\varpi^{-1}),v\varpi^{-l}). \nonumber
\end{eqnarray}
If $l=1=a(\chi)$, we will leave this expression as it is. However, in the other cases we write
\begin{equation}
	W_{\pi}(g_{-2,l,v}) = q\zeta_F(1)^{-2} \int_{\mathcal{O}^{\times}} \chi(y_1)\psi(y_1\varpi^{-1})G(\varpi^{-1}+y_1v\varpi^{-l},\chi)d^{\times}y_1 \nonumber
\end{equation}
instead. Here we have to consider two different cases. First observe that if $l>1$, the Gau\ss\  sum vanishes unless $l=a(\chi)$ (which would also imply $a(\chi)>1$). Thus, if $l=a(\chi)$, we obtain
\begin{eqnarray}
	W_{\pi}(g_{-2,a(\chi),v}) &=& q^{1-\frac{a(\chi)}{2}}\zeta_F(1)^{-1}\epsilon(\frac{1}{2},\chi^{-1}) \int_{\mathcal{O}^{\times}} \chi(y_1)\chi^{-1}(y_1v+\varpi^{a(\chi)-1})\psi(y_1\varpi^{-1})d^{\times}y_1 \nonumber\\
	 &=& \chi(v^{-1})\epsilon(\frac{1}{2},\chi^{-1})\zeta_F(1)^{-1}q^{1-\frac{a(\chi)}{2}} \int_{\mathcal{O}^{\times}} \chi^{-1}(1+v^{-1}y_1^{-1}\varpi^{a(\chi)-1})\psi(y_1\varpi^{-1})d^{\times}y_1 \nonumber \\
	 &=& \chi(v^{-1})\epsilon(\frac{1}{2},\chi^{-1})\zeta_F(1)^{-1}q^{1-\frac{a(\chi)}{2}} S(1,-b_{\chi}v^{-1},1) \text{ for some } b_{\chi}\in \mathcal{O}^{\times}. \nonumber
\end{eqnarray}
In the last step we observed that, if $1<l=a(\chi)$, we have $a(\chi)-1 \geq \frac{a(\chi)}{2}$ and thus Lemma~\ref{lm:char_trick} below can be used to find the desired $b_{\chi} \in \mathcal{O}^{\times}$.

If $l=1$, the situation is completely different. In this case we have
\begin{equation}
	W_{\pi}(g_{-2,1,v}) = q^{1-\frac{a(\chi)}{2}}\zeta_F(1)^{-1}\epsilon(\frac{1}{2},\chi^{-1}) \int_{-v^{-1}+\varpi^{1-a(\chi)}\mathcal{O}^{\times}} \chi(y_1)\chi^{-1}(y_1v+\varpi^{a(\chi)-1})\psi(y_1\varpi^{-1})d^{\times}y_1. \nonumber
\end{equation}
If $a(\chi)>1$, we can rewrite this as follows.
\begin{eqnarray}
	&&\int_{-v^{-1}+\varpi^{1-a(\chi)}\mathcal{O}^{\times}} \chi(y_1)\chi^{-1}(y_1v+\varpi^{a(\chi)-1})\psi(y_1\varpi^{-1})d^{\times}y_1 \nonumber \\
	&=& \zeta_F(1)\psi(-v^{-1}\varpi^{-1})\int_{\varpi^{1-a(\chi)}\mathcal{O}^{\times}} \chi(y_1-v^{-1})\chi^{-1}(y_1v-1+\varpi^{a(\chi)-1})\psi(y_1\varpi^{-a(\chi)})\frac{dy_1}{\abs{y_1-v^{-1}}} \nonumber  \\
	&=&\psi(-v^{-1}\varpi^{-1})\int_{\mathcal{O}^{\times}}\underbrace{\chi(y_1\varpi^{1-a(\chi)}-v^{-1})\chi^{-1}(y_1\varpi^{1-a(\chi)}v-1+\varpi^{a(\chi)-1})}_{=1}\psi(y_1\varpi^{-a(\chi)})d^{\times}y_1 = 0. \nonumber
\end{eqnarray}
This implies that, if $a(\chi)>1$ we have
\begin{equation}
	W_{\pi}(g_{-2,1,v})=0. \nonumber
\end{equation}

Similarly, if $t<-2$, we get
\begin{eqnarray}
	W_{\pi}(g_{t,l,v}) &=& \sum_{\substack{\mu\in\mathfrak{X}_l, \\ t = -2a(\mu\chi)}} \epsilon(\frac{1}{2},\mu^{-1}\chi^{-1})^2G(\varpi^{-l},\mu^{-1}) \mu(v) \label{eq:tgeq2_original} \\
	&=&  q^{-\frac{t}{2}}\zeta_F(1)^{-2} \sum_{\mu\in \mathfrak{X}_l} G(\varpi^{\frac{t}{2}},\mu\chi)^2G(\varpi^{-l}v,\mu^{-1}). \nonumber
\end{eqnarray}
At this point we expand the Gau\ss\  sums into integrals and use cancellation between the characters $\mu\in\mathfrak{X}_l$. This yields
\begin{eqnarray}
	W_{\pi}(g_{t,l,v}) &=&  q^{-\frac{t}{2}}\zeta_F(1)^{-2} \int_{\mathcal{O}^{\times}}\int_{\mathcal{O}^{\times}} \chi(y_1y_2)\psi(y_1\varpi^{\frac{t}{2}}+y_2\varpi^{\frac{t}{2}}+y_1y_2v\varpi^{-l})d^{\times}y_1d^{\times}y_2\nonumber  \\
	&=& q^{-\frac{t}{2}}\zeta_F(1)^{-2} K(\chi\circ \Norm,(\varpi^{\frac{t}{2}},\varpi^{\frac{t}{2}}),v\varpi^{-l}). \label{eq:complete_sume_ts} 
\end{eqnarray}

In several cases we can obtain further simplification by writing
\begin{equation}
	W_{\pi}(g_{t,l,v})= q^{-\frac{t}{2}}\zeta_F(1)^{-2} \int_{\mathcal{O}^{\times}}\chi(y_1) \psi(y_1\varpi^{\frac{t}{2}})G(\varpi^{\frac{t}{2}}+y_1v\varpi^{-l},\chi)d^{\times}y_1. \nonumber
\end{equation}
Now let us assume for the moment that $-l\neq \frac{t}{2}$. Then obviously $\abs{y_1^{-1}\varpi^{\frac{t}{2}}+v\varpi^{-l}} = \max(q^l,q^{-\frac{t}{2}})$. And the Gau\ss\  sum vanishes whenever $\max(-\frac{t}{2},l) \neq a(\chi)$. This implies that if the local Whittaker function is non zero, we must have $a(\chi) = l > -\frac{t}{2}$ or $a(\chi) = -\frac{t}{2}>l$.

At last we consider $l=-\frac{t}{2}$. In this case we deduce from  \eqref{eq:tgeq2_original} that $a(\mu\chi)=l$. Since the support of the Gau\ss\  sum implies $\mu\in \mathfrak{X}_l'$ we can assume $l\geq a(\chi)$. Whenever $l<n$ we are happy with the expression given in \eqref{eq:complete_sume_ts}. But if $l\geq n$, we can evaluate the Gau\ss\  sum and calculate
\begin{eqnarray}
	&&W_{\pi}(g_{t,l,v}) \nonumber \\
	&& \quad = q^{\frac{-t-a(\chi)}{2}}\zeta_F(1)^{-1} \epsilon(\frac{1}{2},\chi^{-1}) \int_{-v^{-1}+\varpi^{l-a(\chi)}\mathcal{O}^{\times}} \chi(y_1)\chi^{-1}(1+y_1 v) \psi(y_1\varpi^{\frac{t}{2}}) d^{\times}y_1 \nonumber \\
	&& \quad = q^{\frac{a(\chi)}{2}}\zeta_{F}(1)^{-1}\epsilon(\frac{1}{2},\chi^{-1})\chi(v^{-1})\psi(-v^{-1}\varpi^{-l}) \int_{\mathcal{O}^{\times}} \underbrace{\chi(\varpi^{l-a(\chi)}-v^{-1}y_1^{-1})}_{=\chi^{-1}(-vy_1)}\psi(y_1\varpi^{-a(\chi)})d^{\times}y_1.\nonumber
\end{eqnarray} 
This reduces to a Gau\ss\  sum which can be evaluated and almost everything cancels out. 
\end{proof}

\subsection{Irreducible Principal Series}

In this subsection we will treat the Whittaker functions associated to irreducible principal series representations. In this case we work with the quadratic space $E=F\times F$. For two characters $\chi_1$ and $\chi_2$ on $F^{\times}$ we write $\chi_1\otimes \chi_2$ for the obvious character on $E^{\times}$. 

We start with the most degenerate case. Obviously we are talking about $\pi=\omega_{\pi}\abs{\cdot}^{s}\boxplus \abs{\cdot}^{-s}$. For notational simplicity we will sometimes write $\chi_1 = \omega_{\pi}\abs{\cdot}^{s}$ and $\chi_2=\abs{\cdot}^{-s}$. In this case we exploit that $K(\omega_{\pi}^{-1}\otimes 1,\cdot,\cdot)$ degenerates to a one dimensional object which is related to the incomplete Gau\ss\  sum
\begin{equation} 
	G_l(y, \omega_{\pi}^{-1})= \int_{1+\varpi^l\mathcal{O}}\omega_{\pi}^{-1}(x)\psi(yx) d^{\times} x, \nonumber
\end{equation}
which is a special case of $K(\omega_{\pi}^{-1}\otimes 1,\cdot,\cdot)$.

\begin{lemma} \label{lm:unbalanced_ps}
Let $\pi=\omega_{\pi}\abs{\cdot}^{s}\boxplus \abs{\cdot}^{-s}$. Then $n=a(\pi)=a(\omega_{\pi})$ and we have
\begin{equation}
	W_{\pi}(g_{t,l,v}) = \begin{cases}
		\chi_2(\varpi^{t+2n})q^{-\frac{t+n}{2}}\epsilon(\frac{1}{2},\omega_{\pi}^{-1}) &\text{ if $l=0$ and $t\geq -n$,}\\
		\omega_{\pi}(-v^{-1})\chi_2(\varpi^{-t-2l}) \zeta_F(1)^{-1}q^{-\frac{t}{2}}G_l(-v^{-1}\varpi^{t+l},\omega_{\pi}) &\text{ if $0<l<n$ and $t=-n-l$,} \\
		\omega_{\pi}(-v^{-1})q^{-\frac{t+2l}{2}}\chi_2(\varpi^{-t-2l})\psi(-v^{-1}\varpi^{t+l}) &\text{ if $l \geq n>0$ and $t\geq -2l$,} \\
		0 &\text{ else.}
	\end{cases} \nonumber
\end{equation}
\end{lemma}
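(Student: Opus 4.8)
The plan is to start from the finite Fourier expansion \eqref{eq:fourier_exp}, $W_{\pi}(g_{t,l,v}) = \sum_{\mu\in\mathfrak{X}_l}c_{t,l}(\mu)\mu(v)$, and feed in the explicit coefficients from Lemma~\ref{lm:degenerate_principal} (applied with $\chi_1 = \omega_{\pi}\abs{\cdot}^s$ of conductor $n$ and $\chi_2 = \abs{\cdot}^{-s}$ unramified, so that $a(\chi_2)=0$ and the lemma applies directly). First I would record the trivial facts $n=a(\pi)=a(\omega_{\pi})$, which follow from $a(\pi)=a(\chi_1)+a(\chi_2)=a(\chi_1)$ and $\omega_\pi=\chi_1\chi_2$ having the same conductor as $\chi_1$. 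Then I would split into the three support regimes appearing in Lemma~\ref{lm:degenerate_principal}, according to whether $l=0$, $\mu=\omega_{\pi}^{-1}$, or $\mu\neq\omega_{\pi}^{-1}$ with $l>0$.

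The case $l=0$ is immediate: the only character is $\mu=1$, and $c_{t,0}(1)=\chi_2(\varpi^{t+2n})q^{-\frac12(t+n)}\epsilon(\frac12,\omega_\pi^{-1})$ for $t\geq -n$ and zero otherwise, which is exactly the first line. For $l\geq n$, the relevant contribution in Lemma~\ref{lm:degenerate_principal} comes from $\mu=\omega_\pi^{-1}$ (note $\omega_\pi^{-1}\in\mathfrak{X}_l$ precisely when $l\geq n$), giving either $-\omega_\pi(-1)\zeta_F(1)q^{-\frac12(l+1)}\chi_2(\varpi^{-l+1})$ at $t=-l-1$ or $\omega_\pi(-1)q^{-t/2-l}\chi_2(\varpi^{-t-2l})$ for $t\geq -l$; I would then check that these two subcases combine, via $\chi_2(\varpi)^{-1}=q^{s}$ and the identity $\psi(-v^{-1}\varpi^{t+l})=1$ when $t+l\geq 0$ and $\psi(-v^{-1}\varpi^{-1})$ vanishes on averaging appropriately, into the single clean formula $\omega_\pi(-v^{-1})q^{-(t+2l)/2}\chi_2(\varpi^{-t-2l})\psi(-v^{-1}\varpi^{t+l})$ valid for all $t\geq -2l$. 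The slightly delicate point here is reconciling the $t=-l-1$ term (where $\psi(-v^{-1}\varpi^{-1})$ is a nontrivial additive character value, not $1$) with the claimed formula; this should work out because for $t=-l-1$ one has $\psi(-v^{-1}\varpi^{t+l})=\psi(-v^{-1}\varpi^{-1})$ and the powers of $q$ match after using $\zeta_F(1)=1/(1-q^{-1})$ and the fact that only $\mu=\omega_\pi^{-1}$ contributes when $l\geq n$.

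The genuinely interesting case is $0<l<n$, where $\mu$ ranges over the many characters with $\mu\neq\omega_\pi^{-1}$, $t=-a(\mu\omega_\pi)-l$, and the coefficient is $\mu(-1)\zeta_F(1)q^{-l/2}\epsilon(\frac12,\mu^{-1}\omega_\pi^{-1})\chi_2(\varpi^{a(\mu\omega_\pi)-l})$. Here I would substitute $t=-a(\mu\omega_\pi)-l$, so that $a(\mu\omega_\pi)=-t-l$ is fixed, sum over all $\mu\in\mathfrak{X}_l$ with $a(\mu\omega_\pi)=-t-l$, and reverse the Gauss-sum evaluation \eqref{eq:evaluation_of_GS} to rewrite $\epsilon(\frac12,\mu^{-1}\omega_\pi^{-1})$ as (a constant times) an integral $\int\omega_\pi(y)\mu(y)\psi(\cdot)d^\times y$. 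The $\mu$-average then collapses the sum to an integral over a coset of $1+\varpi^l\mathcal{O}$, producing the incomplete Gauss sum $G_l(-v^{-1}\varpi^{t+l},\omega_\pi)$ defined just above the statement. This is essentially the same character-orthogonality manoeuvre used in the proof of Lemma~\ref{lm:twist_steinberg_exp} (writing $\sum_{\mu\in\mathfrak{X}_l}\mu(\cdot)$ as a sum supported on $1+\varpi^l\mathcal{O}$), and the main obstacle is bookkeeping: tracking the powers of $q$, the sign $\omega_\pi(-v^{-1})$, the factor $\chi_2(\varpi^{-t-2l})$, and the normalisation constant $\zeta_F(1)^{-1}$ so that everything lands exactly on the stated second line. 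Finally I would note that all $(t,l)$ not covered by these three regimes give $c_{t,l}(\mu)=0$ for every $\mu$, hence $W_\pi(g_{t,l,v})=0$, and that the parenthetical remark about $a(\chi_2)>a(\chi_1)=0$ follows by the symmetry $\chi_1\leftrightarrow\chi_2$ already noted in Lemma~\ref{lm:degenerate_principal}.
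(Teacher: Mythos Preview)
Your overall strategy matches the paper's, and the cases $l=0$ and $0<l<n$ are handled correctly. But the treatment of $l\geq n$ has a genuine gap.

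You assert that for $l\geq n$ ``only $\mu=\omega_\pi^{-1}$ contributes.'' This is false once $t\leq -l-1$. Looking at Lemma~\ref{lm:degenerate_principal}, the first branch (with $\mu\neq\omega_\pi^{-1}$, $l>0$, $t=-a(\mu\omega_\pi)-l$) is active for \emph{every} $\mu\in\mathfrak{X}_l$ with $a(\mu\omega_\pi)=-t-l$. When $l\geq n$ and $-2l\leq t\leq -l-1$, one has $1\leq -t-l\leq l$, and since both $a(\mu)\leq l$ and $a(\omega_\pi)=n\leq l$, there are many such $\mu$. Your formula using only the $\mu=\omega_\pi^{-1}$ term would give, at $t=-l-1$ for instance, the value $-\omega_\pi(-v^{-1})\zeta_F(1)q^{-\frac{l+1}{2}}\chi_2(\varpi^{1-l})$, which does not equal the claimed $\omega_\pi(-v^{-1})q^{-\frac{l-1}{2}}\chi_2(\varpi^{1-l})\psi(-v^{-1}\varpi^{-1})$; the $\psi$ factor cannot appear from a single character. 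It is precisely the sum over all the other $\mu$ (those with $a(\mu\omega_\pi)=-t-l$) that produces the additive character $\psi(-v^{-1}\varpi^{t+l})$.

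The fix is to do what the paper does: treat all $l>0$, $t<-l$ uniformly. Reverse the $\epsilon$-factor evaluation for every $\mu$ in $\mathfrak{X}_l$ (including $\mu=\omega_\pi^{-1}$ when present, whose contribution becomes the $G(-v^{-1}\varpi^{-1},1)=-\zeta_F(1)q^{-1}$ term of \eqref{eq:evaluation_of_GS} and thereby absorbs your ``delicate'' delta term), obtaining
\[
W_\pi(g_{t,l,v})=\omega_\pi(-v^{-1})\chi_2(\varpi^{-t-2l})q^{-\frac{t+2l}{2}}\sum_{\mu\in\mathfrak{X}_l}G(-v^{-1}\varpi^{t+l},\mu\omega_\pi).
\]
Now the orthogonality trick collapses this to $G_l(-v^{-1}\varpi^{t+l},\omega_\pi)$ times the right constants, valid for all $l>0$. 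Only \emph{then} does one specialise: for $l\geq n$ the character $\omega_\pi$ is constant on $1+\varpi^l\mathcal{O}$, the incomplete Gau\ss\ sum becomes a pure additive integral, and one reads off both the factor $\psi(-v^{-1}\varpi^{t+l})$ and the support condition $t\geq -2l$.
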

\begin{proof}
The strategy is as usual to use Lemma~\ref{lm:degenerate_principal} in the finite Fourier expansion, \eqref{eq:fourier_exp}, of $W_{\pi}$. One sees directly that, if $l=0$, there is only one contribution. The same is true for $l>0$ and $t>-l$. Therefore, let us from now on assume $l>0$ and $t<-l$.
We obtain
\begin{eqnarray}
	W_{\pi}(g_{t,l,v}) &=& \sum_{\substack{\mu\in\mathfrak{X}_l\setminus \{\omega_{\pi}^{-1}\}, \\ t+l=-a(\mu\omega_{\pi})}}\zeta_F(1)q^{-\frac{l}{2}}\chi_2(\varpi^{-t-2l})\epsilon(\frac{1}{2},\mu^{-1}\omega_{\pi}^{-1})\mu(-v) \nonumber \\
	&&-\delta_{\substack{t=-l-1,\\ \mu=\omega_{\pi}^{-1}}}\omega_{\pi}(-v^{-1})\zeta_F(1)q^{-\frac{l+1}{2}}\chi_2(\varpi^{-l+1}) \nonumber\\
	&=& \omega_{\pi}(-v^{-1})\chi_2(\varpi^{-t-2l})q^{-\frac{t+2l}{2}}\sum_{\mu\in\mathfrak{X}_l}G(-v^{-1}\varpi^{t+l},\mu\omega_{\pi}) \label{eq:unbalanced_ps_as_epss}
\end{eqnarray}
Now we can write the Gau\ss\  sum as integral and take the character sum inside the integral. This leads to
\begin{equation}
	W_{\pi}(g_{t,l,v}) = \omega_{\pi}(-v^{-1})\chi_2(\varpi^{-t-2l}) \zeta_F(1)^{-1}q^{-\frac{t}{2}} \int_{1+\varpi^l\mathcal{O}}\omega_{\pi}(x)\psi(-xv^{-1}\varpi^{t+l})d^{\times}x. \nonumber
\end{equation}

If $l\geq n$, then the character is constant in the range of integration. We obtain
\begin{eqnarray}
	W_{\pi}(g_{t,l,v}) &=& \omega_{\pi}(-v^{-1})\chi_2(\varpi^{-t-2l}) \zeta_F(1)^{-1}q^{-\frac{t}{2}} \int_{1+\varpi^l\mathcal{O}}\psi(-xv^{-1}\varpi^{t+l})d^{\times}x \nonumber \\
	&=& \omega_{\pi}(-v^{-1})\chi_2(\varpi^{-t-2l})q^{-\frac{t}{2}} \int_{1+\varpi^l\mathcal{O}}\psi(-xv^{-1}\varpi^{t+l})dx \nonumber \\
	&=&  \omega_{\pi}(-v^{-1})\chi_2(\varpi^{-t-2l})q^{-\frac{t}{2}-l}\psi(-v^{-1}\varpi^{t+l}) \underbrace{\int_{\mathcal{O}}\psi(-xv^{-1}\varpi^{t+2l})dx}_{=\delta_{t\geq -2l}}. \nonumber 
\end{eqnarray}

\end{proof}

Next we will look at another degenerate situation.

\begin{lemma}\label{lm:equiv_ps_expwh}
Let $\pi = \chi\abs{\cdot}^s\boxplus \chi\abs{\cdot}^{-s}$ for a non trivial character $\chi$. If $a(\chi)\geq 1$ and $l\neq a(\chi) = \frac{n}{2}$, we have
\begin{equation}
	W(g_{t,l,v}) = \begin{cases}
		\epsilon(\frac{1}{2},\tilde{\pi}) &\text{ if } t = -n \text{ and } l=0, \\
		q^{-\frac{t}{2}}\zeta_F(1)^{-2} K(\chi\circ \Norm,(\varpi^{\frac{t}{2}},\varpi^{\frac{t}{2}}),v\varpi^{-l}) &\text{ if } t = -\max(n,2l) \text{ and } 0<l<n, \\ 
		\chi^2(-v^{-1})\psi(-v^{-1}\varpi^{-l}) &\text{ if } t = -2l \text{ and } l\geq n,, \\
		0 &\text{ else.}
	\end{cases} \nonumber
\end{equation}

Finally, if $a(\chi)>1$ and $l=a(\chi)=\frac{n}{2}$, we have the degenerate situation
\begin{equation}
	W(g_{t,l,v}) = \begin{cases}
		q^{-\frac{t}{2}}G(\varpi^{-l},\chi)\big(-q^{-1}\zeta_F(1)^{-1}(q^{s(t+2)}+q^{-s(t+2)})   +&\zeta_F(1)^{-2}\sum_{k=0}^t q^{s(t-2k)} \big) 	\\
		&\text{ if $t\geq 0$,} \\
		-q^{-\frac{1}{2}}\zeta_F(1)^{-1}G(\varpi^{-l}, \chi)(q^s+q^{-s}) &\text{ if $t = -1$,} \\
		q\zeta_F(1)^{-2}K(\chi\circ \Norm, (\varpi^{-1},\varpi^{-1}),v\varpi^{-l}) &\text{ if }t=-2 \text{ and } l=1, \\
		\chi(v^{-1})\epsilon(\frac{1}{2},\chi^{-1})\zeta_F(1)^{-1}q^{1-\frac{a(\chi)}{2}} S(1,-b_{\chi}v^{-1},1)  &\text{ if $t=-2$ and $l>1$,} \\
		q^{-\frac{t}{2}}\zeta_F(1)^{-2} K(\chi\circ \Norm,(\varpi^{\frac{t}{2}},\varpi^{\frac{t}{2}}),v\varpi^{-l})  &\text{ if }-2l \leq t <-2 \text{ even}, \\
		0 &\text{ else.}
	\end{cases} \nonumber
\end{equation}
\end{lemma}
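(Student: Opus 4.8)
The plan is to compute $W_\pi(g_{t,l,v}) = \sum_{\mu\in\mathfrak{X}_l}c_{t,l}(\mu)\mu(v)$ by inserting the values of $c_{t,l}(\mu)$ for $\pi = \chi St$ from Lemma~\ref{lm:twis_of_ST}, exactly as in the proof of Lemma~\ref{lm:twist_steinberg_exp}. The key observation is that $\pi = \chi\abs{\cdot}^s\boxplus\chi\abs{\cdot}^{-s}$ and $\chi St$ have the same central character $\omega_\pi = \chi^2$ and the same supercuspidal-type data, so the $\epsilon$-factor pieces contributing to $c_{t,l}(\mu)$ for $\mu\neq\chi^{-1}$ (respectively $\mu\neq\chi_1^{-1},\chi_2^{-1}$) are identical; the only difference between the two representations is the $L$-factor, which is $L(s,\abs{\cdot}^{1/2})$ for $St$ but $L(s,\abs{\cdot}^s)L(s,\abs{\cdot}^{-s})$ for the balanced principal series. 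Since $L(s,\pi)=1$ away from the "diagonal" character, all cases with $t\leq -2$ (equivalently $\mu\in\mathfrak{X}_l'$, $t = -\max(n,2l)$, or $-2l\leq t<-2$ even) proceed verbatim as in Lemma~\ref{lm:twist_steinberg_exp}: expand the product of $\epsilon$-factors and Gauß sums, reverse the Gauß-sum evaluation \eqref{eq:evaluation_of_GS}, insert the character sum over $\mathfrak{X}_l$, and collapse to $K(\chi\circ\Norm,(\varpi^{t/2},\varpi^{t/2}),v\varpi^{-l})$. Likewise the $l=0$ case yields $\epsilon(\tfrac12,\tilde\pi)$ at $t=-n$ just as before, and the large-level case $l\geq n$ with $t=-2l$ reduces to a Gauß sum giving $\chi^2(-v^{-1})\psi(-v^{-1}\varpi^{-l})$.

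The genuinely new input is the behaviour for $t\geq -1$, which comes from the contribution of the characters $\mu\chi_1,\mu\chi_2$ being unramified — here the difference in $L$-factors matters. I would use Lemma~\ref{lm:general_prinz_ser} in the subcase $\chi_1\vert_{\mathcal{O}^\times} = \chi_2\vert_{\mathcal{O}^\times}$ (which is our situation, since $\chi_1 = \chi\abs{\cdot}^s$, $\chi_2 = \chi\abs{\cdot}^{-s}$ agree on units), rather than Lemma~\ref{lm:twis_of_ST}. That lemma already records $c_{t,l}(\mu)$ for $a(\mu\chi_1)=a(\mu\chi_2)=0$, i.e. $\mu = \chi^{-1}$: at $t=-1$ it gives $-q^{-1/2}\zeta_F(1)^{-1}G(\varpi^{-l},\mu^{-1})(\chi_1(\varpi)+\chi_2(\varpi)) = -q^{-1/2}\zeta_F(1)^{-1}G(\varpi^{-l},\chi)(q^s+q^{-s})$ after multiplying by $\mu(v) = \chi^{-1}(v)$ absorbed into the Gauß sum, and at $t\geq 0$ it gives the stated expression with $\sum_{k=0}^t\chi_1(\varpi^k)\chi_2(\varpi^{t-k}) = \sum_{k=0}^t q^{s(t-2k)}$ and $\chi_i(\varpi^{t+2}) = q^{\mp s(t+2)}$. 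One just has to check that for $t\geq -1$ no other $\mu\in\mathfrak{X}_l$ contributes — but any other $\mu$ forces $a(\mu\chi_1)$ or $a(\mu\chi_2)$ positive, pushing the support to $t\leq -1$ with equality only in degenerate overlaps that are handled by the $t=-1$ or $t=-2$ rows; this is the bookkeeping step requiring care in the degenerate regime $l=a(\chi)=\tfrac n2$.

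For the $t=-2$ rows in the degenerate case $l=a(\chi)$, I would follow the same manoeuvre as in Lemma~\ref{lm:twist_steinberg_exp}: separate the $\mu=\chi^{-1}$ term (coefficient $q^{-1}G(\varpi^{-l},\mu^{-1})$, identical in both lemmas since this is the "$a(\mu\chi)=0$, $t=-2$" entry) from the $a(\mu\chi)=1$ terms, recombine via \eqref{eq:twist_st_-2} into $q\zeta_F(1)^{-2}\sum_{\mu\in\mathfrak{X}_l}G(\varpi^{-1},\mu\chi)^2G(v\varpi^{-l},\mu^{-1})$, and then either leave it as $K(\chi\circ\Norm,(\varpi^{-1},\varpi^{-1}),v\varpi^{-l})$ when $l=1$, or when $l>1$ write one Gauß sum as an integral, evaluate the inner one (which forces $l=a(\chi)$), apply Lemma~\ref{lm:char_trick} to linearise $\chi^{-1}(1+v^{-1}y_1^{-1}\varpi^{a(\chi)-1})$ into $\psi(-b_\chi v^{-1}y_1^{-1}\varpi^{-1})$, and collapse to $S(1,-b_\chi v^{-1},1)$. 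The $\psi$-integral over $-v^{-1}+\varpi^{1-a(\chi)}\mathcal{O}^\times$ shows the $l=1$ sub-subcase vanishes when $a(\chi)>1$, consistent with the hypothesis $l\neq a(\chi)$ excluding $l=1$ there. The main obstacle is purely organisational: carefully tracking which pairs $(t,l,\mu)$ have nonzero coefficients across the four support regimes and verifying the degenerate boundary $l = a(\chi) = \tfrac n2$ separately, since there the new "$t\geq -1$" contributions and the old "$t\leq -2$" contributions nearly collide; no new analytic idea beyond the Gauß-sum gymnastics and Lemma~\ref{lm:char_trick} already used for $\chi St$ is needed.
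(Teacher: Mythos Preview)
Your proposal is correct and follows essentially the same approach as the paper: observe that for $t\leq -2$ the coefficients $c_{t,l}(\mu)$ coming from Lemma~\ref{lm:general_prinz_ser} (case $\chi_1\vert_{\mathcal{O}^\times}=\chi_2\vert_{\mathcal{O}^\times}$) coincide with those for $\chi St$, so the entire manipulation of Lemma~\ref{lm:twist_steinberg_exp} carries over verbatim, while the new rows $t\geq -1$ in the degenerate regime $l=a(\chi)$ are read off directly from the $\mu=\chi^{-1}$ entry of Lemma~\ref{lm:general_prinz_ser}. The paper's own proof is a two-line sketch pointing to exactly these two ingredients and leaving the rest to the reader; your write-up simply fills in the bookkeeping.
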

Not surprisingly we will encounter many similarities to the case $\pi=\chi St$. Thus we will be quite brief in the proof.
\begin{proof}
Interesting situations occur only for $t\leq -2$. For those cases we have
\begin{eqnarray}
	W_{\pi}(g_{t,l,v})  &=& \sum_{\substack{\mu\in \mathfrak{X}_l, \\ t=-2a(\mu\chi)}} \epsilon(\frac{1}{2},\mu^{-1}\chi^{-1})^2 G(v\varpi^{-l},\mu^{-1}) +\delta_{t=-2} q^{-1} G(v\varpi^{-l},\chi) \nonumber \\
	&=& \zeta_F(1)^{-2}q^{-\frac{t}{2}}\sum_{\mu\in \mathfrak{X}_l} G(\varpi^{\frac{t}{2}},\mu\chi)^2 G(v\varpi^{-l},\mu^{-1}). \nonumber
\end{eqnarray}
We have seen this exact sum already in \eqref{eq:twist_st_-2} and \eqref{eq:tgeq2_original}. The rest of the proof is left to the reader.
\end{proof}

Finally, we treat the 'generic' irreducible principal series.
 
\begin{lemma} \label{lm:generic_ps_shape_Wf}
Let $\pi = \chi_1\abs{\cdot}^s\boxplus \chi_2\abs{\cdot}^{-s}$such that $a(\chi_1)\geq a(\chi_2)>0$. If $l\not\in \{a(\chi_1),a(\chi_2)\}$, then
\begin{equation}
	W_{\pi}(g_{t,l,v}) = \begin{cases}
		&\epsilon(\frac{1}{2}, \tilde{\pi})\\
			&\qquad\qquad\qquad\qquad\qquad\qquad\qquad\qquad\qquad\text{if $l=0$ and $t=-n$,} \\
		&\zeta_F(1)^{-2}q^{-\frac{t}{2}}q^{s(a(\chi_1)-a(\chi_2))}K(\chi_1\otimes \chi_2,(\varpi^{-a(\chi_1)},\varpi^{-a(\chi_2)}),v\varpi^{-l}) \\
			&\qquad\qquad\qquad\qquad\qquad\qquad\qquad\qquad\qquad\text{ if $t=-n$ and $0<l<a(\chi_2)$,}\\
		&\zeta_F(1)^{-2}q^{-\frac{t}{2}}q^{s(a(\chi_1)-l)}K(\chi_1\otimes \chi_2,(\varpi^{-a(\chi_1)},\varpi^{-l}),v\varpi^{-l})\\
			&\qquad\qquad\qquad\qquad\qquad\qquad\qquad\qquad\qquad\text{ if $t=-l-a(\chi_1)$ and $a(\chi_2)<l<a(\chi_1)$,}\\
		&\zeta_F(1)^{-2}q^{-\frac{t}{2}}K(\chi_1\otimes \chi_2,(\varpi^{-l},\varpi^{-l}),v\varpi^{-l}) \\
			&\qquad\qquad\qquad\qquad\qquad\qquad\qquad\qquad\qquad\text{ if $t=-2l$ and $a(\chi_1)<l<n$,}\\
		&\omega_{\pi}(-v^{-1})\psi(-v^{-1}\varpi^{-l}) \\
			&\qquad\qquad\qquad\qquad\qquad\qquad\qquad\qquad\qquad\text{ if $t=-2l$ and  $l\geq n$ ,}\\
		&0 \\
			&\qquad\qquad\qquad\qquad\qquad\qquad\qquad\qquad\qquad\text{ else.}
	\end{cases} \nonumber
\end{equation}

If $l=a(\chi_i) \neq a(\chi_j)$ with $\{i,j\} = \{1,2\}$, then
\begin{equation}
	W_{\pi}(g_{t,l,v}) = \begin{cases}
		&\zeta_F(1)^{-2}q^{-\frac{t}{2}}q^{s(2a(\chi_1)+t)}K(\chi_1\otimes \chi_2,(\varpi^{-a(\chi_1)},\varpi^{a(\chi_1)+t}),v\varpi^{-l})\\		
				&\qquad\qquad\qquad\qquad\qquad\qquad \qquad\qquad\text{ if $l=a(\chi_2)$ and $-n\leq t< -a(\chi_1)$,}\\
		&\zeta_F(1)^{-2}q^{-\frac{t}{2}}q^{st}G(\varpi^{-a(\chi_1)},\chi_2^{-1}\chi_1)G(v\varpi^{a(\chi_2)},\chi_2) \\
				&\qquad\qquad\qquad\qquad\qquad\qquad \qquad\qquad\text{ if $l=a(\chi_2)$ and $t\geq -a(\chi_1)$,} \\
		&\zeta_F(1)^{-2}q^{-\frac{t}{2}}q^{s(-t-2l)}K(\chi_1\otimes \chi_2,(\varpi^{l+t},\varpi^{-l}),v\varpi^{-l})\\		
				&\qquad\qquad\qquad\qquad\qquad\qquad \qquad\qquad\text{ if $l=a(\chi_1)\neq a(\chi_2)$ and $-2l\leq t<-a(\chi_1)$,}\\
		&\zeta_F(1)^{-2}q^{-\frac{t}{2}}q^{-st}G(\varpi^{-a(\chi_1)},\chi_2\chi_1^{-1})G(v\varpi^{a(\chi_1)},\chi_1) \\
				&\qquad\qquad\qquad\qquad\qquad\qquad \qquad\qquad\text{ if $l=a(\chi_1)$ and $t\geq -a(\chi_1)$,} \\
		&0 \\
				&\qquad\qquad\qquad\qquad\qquad\qquad \qquad\qquad\text{ else.}
	\end{cases} \nonumber
\end{equation}

And if $l=a(\chi_1) = a(\chi_2)$, then
\begin{equation}
	W_{\pi}(g_{t,l,v}) = \begin{cases}
		&\zeta_F(1)^{-2}q^{-\frac{t}{2}} \chi_1(v^{-1})\epsilon(\frac{1}{2},\chi_2^{-1})\sum_{l_2=1}^{l} q^{s(-t-2l_2)}K(\chi_1\otimes \chi_2, (\varpi^{t+l_2},\varpi^{-l_2}), v\varpi^{-l})  \\
			&\qquad+ \delta_{t\geq -a(\chi_1^{-1}\chi_2)} \zeta_F(1)^{-2}q^{-\frac{t}{2}}\bigg[ G(v\varpi^{-l},\chi_1)G(\varpi^{-a(\chi_1^{-1}\chi_2)},\chi_1^{-1}\chi_2)q^{-st}  \\
			&\qquad\qquad\qquad\qquad\qquad\qquad\qquad+ G(v\varpi^{-l},\chi_2)G(\varpi^{-a(\chi_2^{-1}\chi_1)},\chi_2^{-1}\chi_1)q^{st}\bigg] \\
				&\qquad\qquad\qquad\qquad\qquad\qquad \qquad\qquad\qquad\qquad \qquad\qquad \text{ if $-n \leq t\leq -2$},\\
		&\sum_{\{i,j\}=\{1,2\}}\chi_i(v^{-1}\varpi^{t+l})q^{-\frac{t+a(\chi_i^{-1}\chi_j)+l}{2}}\epsilon(\frac{1}{2},\chi_i)\epsilon(\frac{1}{2},\chi_i^{-1}\chi_j), \\
				&\qquad\qquad\qquad\qquad\qquad\qquad \qquad\qquad\qquad\qquad \qquad\qquad\text{ if }t>-2\\
		&0 \\
				&\qquad\qquad\qquad\qquad\qquad\qquad \qquad\qquad\qquad\qquad \qquad\qquad\text{ else.}
	\end{cases} \nonumber
\end{equation}
\end{lemma}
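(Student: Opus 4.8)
The plan is to follow exactly the template used in the proofs of Lemma~\ref{lm:unbalanced_ps} and Lemma~\ref{lm:equiv_ps_expwh}. I would start from the finite Fourier expansion \eqref{eq:fourier_exp}, insert the explicit values of $c_{t,l}(\mu)$ supplied by Lemma~\ref{lm:general_prinz_ser} (the branch with $a(\chi_1),a(\chi_2)>0$, taking the ``$\chi_i$'' there to be $\chi_i\abs{\cdot}^{\pm s}$ here), and exploit the orthogonality relation
\[
	\sum_{\mu\in\mathfrak{X}_l}\mu(x)=\begin{cases}\sharp\mathfrak{X}_l & \text{if }x\in 1+\varpi^l\mathcal{O},\\ 0 & \text{else,}\end{cases}
\]
after reversing the evaluation \eqref{eq:evaluation_of_GS} of the Gau\ss\ sums so that each factor $\epsilon(\frac12,\mu^{-1}\chi_i^{-1}\abs{\cdot}^{\mp s})$ is rewritten as $\zeta_F(1)^{-1}q^{\frac{a(\mu\chi_i)}{2}}q^{\pm s\, a(\mu\chi_i)}\mu\chi_i(\varpi^{-a(\mu\chi_i)})G(\varpi^{-a(\mu\chi_i)},\mu\chi_i)$ --- the unramified twists $\abs{\cdot}^{\pm s}$ being precisely the source of the $q^{s(\cdots)}$ prefactors appearing in the statement. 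First I would record, for each fixed pair $(t,l)$, which $\mu\in\mathfrak{X}_l$ contribute: by the support conditions in Lemma~\ref{lm:general_prinz_ser} this is governed by $t=-a(\mu\chi_1)-a(\mu\chi_2)$ (and its degenerations when one of $a(\mu\chi_i)$ vanishes), so only characters with $\mu\chi_1,\mu\chi_2$ of prescribed conductors survive, and this is what produces the various constraints on $t$ in each line.

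The bulk of the work is the ``interior'' regime, where all surviving $\mu$ satisfy $a(\mu\chi_1),a(\mu\chi_2)>0$ and hence only the first line of Lemma~\ref{lm:general_prinz_ser} is in play. Here I would write all three Gau\ss\ sums $G(\varpi^{-a(\mu\chi_1)},\mu\chi_1)$, $G(\varpi^{-a(\mu\chi_2)},\mu\chi_2)$ and $G(\varpi^{-l},\mu^{-1})$ as integrals over $\mathcal{O}^{\times}$, pull the $\mu$-sum inside, and apply orthogonality; the triple integral over $(\mathcal{O}^{\times})^3$ collapses to a double integral over $(\mathcal{O}^{\times})^2$ which by \eqref{eq:def_hyper_geo_sum} is $K(\chi_1\otimes\chi_2,(\varpi^{a},\varpi^{b}),v\varpi^{-l})$ for the appropriate exponents $a,b$. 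Matching these exponents against the ranges $0<l<a(\chi_2)$, $a(\chi_2)<l<a(\chi_1)$, $a(\chi_1)<l<n$ (with $l=0$ giving $\epsilon(\frac12,\tilde\pi)$, and $l\geq n$ where the inner integral degenerates to a single additive character as in Lemma~\ref{lm:unbalanced_ps}) yields the first displayed formula and the generic lines of the second.

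The boundary cases $l\in\{a(\chi_1),a(\chi_2)\}$ are where the remaining branches of Lemma~\ref{lm:general_prinz_ser} enter: for $\mu=\chi_i^{-1}$ one has $a(\mu\chi_i)=0$, so $c_{t,l}(\mu)$ is given by the geometric-in-$t$ terms, and after summing against $\mu(v)$ and re-expanding the single surviving Gau\ss\ sum $G(\varpi^{-l},\mu^{-1})$ one gets the product terms $G(v\varpi^{-l},\chi_i)G(\varpi^{-a(\chi_i^{-1}\chi_j)},\chi_i^{-1}\chi_j)q^{\mp st}$, the constraint $t\geq -a(\chi_i^{-1}\chi_j)$ being the support condition of that Gau\ss\ sum. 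In the fully degenerate case $l=a(\chi_1)=a(\chi_2)$ I would stratify the surviving $\mu$ by the value of $a(\mu\chi_1)\in\{0,1,\dots,l\}$ (equivalently $a(\mu\chi_2)=l-\cdots$); each stratum with $a(\mu\chi_1)=l_2\geq 1$ contributes, exactly as in the interior case, one copy of $K(\chi_1\otimes\chi_2,(\varpi^{t+l_2},\varpi^{-l_2}),v\varpi^{-l})$, summing to the displayed sum over $l_2=1,\dots,l$, while the $l_2=0$ stratum gives the extra product term. I expect the main obstacle to be purely organisational: tracking across all three $l$-regimes, and with the $\pm s$-twists attached, exactly which $(t,\mu)$ pairs contribute and the precise exponents inside each $K(\cdots)$, since any sign slip in an exponent propagates through the orthogonality step; the analytic content --- reversing Gau\ss\ sums and applying orthogonality --- is the same as in the proof of Lemma~\ref{lm:twist_steinberg_exp}, so I would be brief there.
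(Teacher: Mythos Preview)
Your proposal is correct and follows essentially the same route as the paper: insert the $c_{t,l}(\mu)$ from Lemma~\ref{lm:general_prinz_ser} into \eqref{eq:fourier_exp}, reverse the $\epsilon$-factors into Gau\ss\ sums, and use orthogonality over $\mathfrak{X}_l$ to collapse the triple integral to $K(\chi_1\otimes\chi_2,\cdot,\cdot)$, with the degenerate branches of Lemma~\ref{lm:general_prinz_ser} (where $a(\mu\chi_i)=0$) producing the extra product-of-Gau\ss-sums terms. The paper organises the interior regime slightly differently---writing a single sum over all $(l_1,l_2)$ with $l_1+l_2=-t$, $l_1,l_2>0$, then using the Gau\ss-sum support inside each $K$ to see which $(l_1,l_2)$ survive in each $l$-range---but this is equivalent to your stratification by conductor; note only that in the $l=a(\chi_1)=a(\chi_2)$ case the index $l_2$ is tied to $a(\mu\chi_2)$ via $a(\mu\chi_1)=-t-l_2$, not via $l-l_2$ as you wrote.
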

\begin{proof}
Let us consider the interesting situation $l>0$. For $t>-2$ the only contribution comes from the characters $\mu\in \{\chi_1^{-1},\chi_2^{-1}\}$ which is easily written down. We thus assume $t\leq -2$. Applying the usual tricks we end up with
\begin{eqnarray}
	W_{\pi}(g_{t,l,v})  &=&  \zeta_F(1)^{-2}q^{-\frac{t}{2}} \sum_{\substack{t=-l_1-l_2, \\ l_1,l_2>0}}q^{s(l_1-l_2)}\sum_{\mu\in \mathfrak{X}_l}G(\varpi^{-l_1},\mu\chi_1)G(\varpi^{-l_2},\mu\chi_2)G(v\varpi^{-l},\mu^{-1}) \nonumber \\
	&&+ \delta_{t\geq -a(\chi_1^{-1}\chi_2)} \zeta_F(1)^{-2}q^{-\frac{t}{2}}\bigg[ G(v\varpi^{-l},\chi_1)G(\varpi^{-a(\chi_1^{-1}\chi_2)},\chi_1^{-1}\chi_2)q^{-st} \nonumber \\
	&&\qquad\qquad\qquad\qquad+ G(v\varpi^{-l},\chi_2)G(\varpi^{-a(\chi_2^{-1}\chi_1)},\chi_2^{-1}\chi_1)q^{st}\bigg]. \nonumber
\end{eqnarray}
As earlier we can compress the $\mu$-sum to $K(\chi_1\otimes \chi_2, (\varpi^{-l_1},\varpi^{-l_2}), v\varpi^{-l})$. This gives
\begin{eqnarray}
	W_{\pi}(g_{t,l,v})  &=&  \zeta_F(1)^{-2}q^{-\frac{t}{2}} \sum_{\substack{t=-l_1-l_2, \\ l_1,l_2>0}}q^{s(l_1-l_2)} K(\chi_1\otimes \chi_2, (\varpi^{-l_1},\varpi^{-l_2}), v\varpi^{-l})\nonumber \\
	&&+ \delta_{t\geq a(\chi_1^{-1}\chi_2)} \zeta_F(1)^{-2}q^{-\frac{t}{2}}\bigg[ G(v\varpi^{-l},\chi_1)G(\varpi^{-a(\chi_1^{-1}\chi_2)},\chi_1^{-1}\chi_2)q^{-st} \nonumber \\
	&&\qquad\qquad\qquad\qquad+ G(v\varpi^{-l},\chi_2)G(\varpi^{-a(\chi_2^{-1}\chi_1)},\chi_2^{-1}\chi_1)q^{st}\bigg].  \label{eq:orig_generic_contribution} 
\end{eqnarray}
The statement follows from standard computation which we leave to the reader. 
\end{proof}

\begin{rem}
If $a(\chi_1)=a(\chi_2)=l$, one can see that Lemma~\ref{lm:generic_ps_shape_Wf} fails to provide a simple integral representation of $W_{\pi}(g_{t,l,v})$. Instead we end up having a sum of several integrals. However, later it will turn out that all but one are zero. 
\end{rem}

\section{Evaluation of oscillatory integrals}

In this Intermezzo we provide the necessary tools which are needed in the next section to estimate the numerous oscillatory integrals. Basically, this relies on rephrasing well known methods used to evaluate complete exponential sums with large moduli. In our framework these methods will be closely related to the classical method of stationary phase over the real numbers. The attentive reader will notice many similarities to well established results from \cite{BM15} and \cite{IK04}. In some results we will restrict ourselves to odd $q$. This simplifies many calculations avoiding problems coming from finite fields of characteristic 2.

We start by recalling how the $p$-adic logarithm can be used to transform multiplicative oscillations in additive ones.

\begin{lemma} \label{lm:char_trick}
Let $e=e(F/\Q_p)$ be the absolute ramification index. We define $\kappa_F = \lceil\frac{e}{p-1}\rceil$. Let $\chi$ be an multiplicative character such that $a(\chi) \geq \kappa_F$. There is $b_{\chi} \in \mathcal{O}^{\times}$ uniquely determined modulo  $\p^{a(\chi)-\kappa_F}$ such that
\begin{equation}
	\chi(1+z\varpi^{\kappa_F}) = \psi\left(\frac{b_{\chi}}{\varpi^{a(\chi)}}\log_F(1+z\varpi^{\kappa_F})\right) \text{ for all } z\in \mathcal{O}. \nonumber
\end{equation}
Furthermore, if $\frac{a(\chi)}{3} \leq \alpha \in \N$, then there is $b_{\xi} \in \mathcal{O}^{\times}$ such that
\begin{equation}
	\chi(1+z\varpi^{\alpha}) = \psi\left( \frac{b_{\chi}}{\varpi^{a(\chi)}}\left(z\varpi^{\alpha}-\frac{z^2}{2}\varpi^{2\alpha}\right)\right) \text{ for all } z\in \mathcal{O}. \nonumber
\end{equation}
In particular, if $\frac{a(\chi)}{2}\leq \alpha \in \N$, then
\begin{equation}
	\chi(1+z\varpi^{\alpha}) = \psi(z b_{\xi}\varpi^{\alpha-a(\chi)}) \text{ for all } z\in \mathcal{O}. \nonumber
\end{equation}
Note that in the last two cases we do not make any assumption on $\kappa_F$.
\end{lemma}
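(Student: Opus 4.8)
The statement comprises three identities of decreasing strength, so I would treat them in order, deriving the last one as a degeneration of the second, and I would invoke the $p$-adic logarithm only for the first. Throughout, $\phi$ will denote the map $z\mapsto\chi(1+z\varpi^{\alpha})$ on $\mathcal{O}$, and the central mechanism is that, thanks to the lower bounds on $\alpha$, multiplying out $(1+z\varpi^{\alpha})(1+z'\varpi^{\alpha})$ produces error factors lying in $1+\p^{a(\chi)}$, on which $\chi$ is trivial by definition of the conductor.

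For the first identity the hypothesis $a(\chi)\geq\kappa_F$ is exactly what is needed to move $\chi$ to the additive side. Recall that $\log_F$ is an isomorphism of topological groups from $1+\p^{\kappa_F}$ onto $\p^{\kappa_F}$, with inverse $\exp_F$, carrying $1+\p^{k}$ onto $\p^{k}$ for all $k\geq\kappa_F$ (the standard statement for $F=\Q_p$, $p$ odd, and more generally whenever $(p-1)\nmid e$). Hence $\chi\circ\exp_F$ is an additive character of $\p^{\kappa_F}$ which, since $\chi$ kills $1+\p^{a(\chi)}$ but not $1+\p^{a(\chi)-1}$, factors through the finite quotient $\p^{\kappa_F}/\p^{a(\chi)}$ and is nontrivial on $\p^{a(\chi)-1}/\p^{a(\chi)}$. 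Pontryagin duality for this group, realised through $\psi$ by the pairing $(\bar x,\bar c)\mapsto\psi(cx)$ with $\bar c$ running over $\p^{-a(\chi)}/\p^{-\kappa_F}$, presents $\chi\circ\exp_F$ by a unique class, which I write as $b_\chi\varpi^{-a(\chi)}$ with $b_\chi\in\mathcal{O}/\p^{a(\chi)-\kappa_F}$; the nontriviality just noted forces $v(b_\chi)=0$. Composing with $\log_F$ then gives $\chi(1+z\varpi^{\kappa_F})=\psi\bigl(b_\chi\varpi^{-a(\chi)}\log_F(1+z\varpi^{\kappa_F})\bigr)$ (when $a(\chi)=\kappa_F$ the dual group is trivial and the ambiguity is vacuous, so one simply picks any unit).

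For the second identity I would avoid $\log_F$ entirely, which is why no condition on $\kappa_F$ appears. First, $(1+w\varpi^{2\alpha})(1+w'\varpi^{2\alpha})$ differs from $1+(w+w')\varpi^{2\alpha}$ by an element of $1+\p^{4\alpha}\subseteq 1+\p^{a(\chi)}$ (using $4\alpha\geq a(\chi)$), so $\psi_2(w):=\chi(1+w\varpi^{2\alpha})$ is an additive character of $\mathcal{O}$ trivial on $\p^{a(\chi)-2\alpha}$, hence $\psi_2(w)=\psi(\beta w\varpi^{2\alpha-a(\chi)})$ for some $\beta\in\mathcal{O}$. Next, $(1+z\varpi^{\alpha})(1+z'\varpi^{\alpha})$ differs from $(1+(z+z')\varpi^{\alpha})(1+zz'\varpi^{2\alpha})$ by an element of $1+\p^{3\alpha}\subseteq 1+\p^{a(\chi)}$ (using $3\alpha\geq a(\chi)$), which yields the functional equation $\phi(z)\phi(z')=\phi(z+z')\,\psi_2(zz')$. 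The Gau\ss-type function $\Phi(z)=\psi\bigl(-\tfrac{\beta}{2}z^{2}\varpi^{2\alpha-a(\chi)}\bigr)$, well defined since $q$ is odd, obeys the very same functional equation, so $\phi/\Phi$ is an honest additive character of $\mathcal{O}$, trivial on $\p^{a(\chi)-\alpha}$, whence $\phi(z)/\Phi(z)=\psi(b_\chi z\varpi^{\alpha-a(\chi)})$ for some $b_\chi\in\mathcal{O}$. Plugging the identity $\psi_2(w)=\phi(\varpi^{\alpha}w)$ into this and using $4\alpha\geq a(\chi)$ shows $b_\chi\equiv\beta$ modulo $\p^{a(\chi)-2\alpha}$, which is exactly the accuracy needed to replace $\beta$ by $b_\chi$ inside $\Phi$ without altering it; this turns the formula for $\phi$ into $\chi(1+z\varpi^{\alpha})=\psi\bigl(\tfrac{b_\chi}{\varpi^{a(\chi)}}(z\varpi^{\alpha}-\tfrac{z^{2}}{2}\varpi^{2\alpha})\bigr)$. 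Finally, testing this at $z=z_0\varpi^{a(\chi)-1-\alpha}$ for a suitable $z_0\in\mathcal{O}^{\times}$ and using that $\chi$ has exact conductor $a(\chi)$ forces $v(b_\chi)=0$, the degenerate range $\alpha\geq a(\chi)$ being trivial.

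The third identity is then immediate: if in addition $2\alpha\geq a(\chi)$, the quadratic term $\tfrac{b_\chi}{\varpi^{a(\chi)}}\tfrac{z^{2}}{2}\varpi^{2\alpha}$ lies in $\mathcal{O}$, so $\psi$ annihilates it and the second identity collapses to $\chi(1+z\varpi^{\alpha})=\psi(b_\chi z\varpi^{\alpha-a(\chi)})$. The routine ingredients are the coboundary computations and the duality bookkeeping; the one genuinely delicate point is the ``merge'' in the second identity --- reconciling the linear coefficient coming from $\phi/\Phi$ with the coefficient $\beta$ produced by $\psi_2$, so that a single unit $b_\chi$ serves both --- which is precisely where the inequalities $3\alpha\geq a(\chi)$ and $4\alpha\geq a(\chi)$ have to be tracked with care.
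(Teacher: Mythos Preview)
Your proof is correct. The paper itself does not prove this lemma but simply refers to \cite{BM15}, so there is no in-paper argument to compare against.

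Your treatment of the first identity via $\log_F$/$\exp_F$ and Pontryagin duality is the standard one (and your parenthetical caveat about the case $(p-1)\mid e$ is well placed, though irrelevant once the paper specialises to $F=\Q_p$ with $p$ odd). For the second identity your coboundary argument---establishing the functional equation $\phi(z)\phi(z')=\phi(z+z')\psi_2(zz')$, matching it with the explicit Gaussian $\Phi$, and then merging the linear and quadratic coefficients via $\psi_2(w)=\phi(\varpi^{\alpha}w)$---is a clean and entirely elementary route that avoids $\log_F$ altogether. This is slightly sharper than the alternative of deriving the second identity from the first by truncating the Taylor expansion of $\log_F$, since that route would a priori need $\alpha\geq\kappa_F$ just to make sense of the logarithm, whereas your argument explains transparently why no hypothesis on $\kappa_F$ is required. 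The third identity is, as you note, an immediate degeneration.
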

The proof of this result can be found in \cite{BM15}.

Next, we will recall some results on quadratic congruences. The following lemma is basically \cite[Lemma~9.6]{KL13}.

\begin{lemma} \label{lm:quad_cong}
Let $a,b,c \in \mathcal{O}$. We set
\begin{eqnarray}
	S &=& \{ x\in \mathcal{O}/\p^n\colon ax^2+bx+c \in \p^n \}, \nonumber\\
	\Delta &=& b^2-4ac = \Delta'\varpi^{\delta_0} \text{ for } \Delta'\in \mathcal{O}^{\times}. \nonumber
\end{eqnarray}
If $v(a)=0$, we have
\begin{equation}
	S= \left\{ -\frac{b}{2a}\pm \frac{Y}{2a} \varpi^{\delta} + \alpha \varpi^{n-\delta} \colon \alpha \in \mathcal{O}/\p^{\delta}\right\} \label{eq:para_quad_cong}
\end{equation}
with
\begin{equation}
	Y = \begin{cases} 0 &\text{ if } \delta_0 \geq n,\\  Y_0 &\text{ if $Y_0^2 = \Delta'$ and $\delta_0<n$ is even,} \end{cases} \text{ and } \delta = \begin{cases} \lfloor \frac{n}{2} \rfloor &\text{ if $\delta_0\geq n$,}  \\ \delta' &\text{ if } \delta_0 = 2\delta' <n. \end{cases} \nonumber
\end{equation}
In particular,
\begin{equation}
	 \sharp S \leq 2 q^{\delta}. \nonumber
\end{equation}
If $v(a)>0$ and $v(b)=0$, we have $\sharp S = 1$. Furthermore, the solution $x_0\in S$ has valuation $v(c)$.
\end{lemma}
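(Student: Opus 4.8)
The plan is to reduce everything to the standard theory of solving $ax^2+bx+c\equiv 0 \pmod{\p^n}$ by completing the square, taking care of the places where dividing by $2$ or by $a$ is allowed (recall $q$ odd, so $2\in\mathcal{O}^{\times}$). First I would treat the case $v(a)=0$. Here $a$ is a unit, so multiplying the congruence by $(2a)^{-2}$ (legitimate since $2a\in\mathcal{O}^{\times}$) and writing $x' = 2ax+b$ turns $ax^2+bx+c\in\p^n$ into $x'^2 \equiv \Delta \pmod{\p^n}$, i.e. $x'^2 \equiv \Delta'\varpi^{\delta_0}\pmod{\p^n}$. The point is then to count square roots of $\Delta'\varpi^{\delta_0}$ modulo $\p^n$. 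If $\delta_0\geq n$ then $\Delta\in\p^n$ and the condition is $x'^2\in\p^n$, whose solutions are exactly $x'\in\p^{\lceil n/2\rceil}$; translating back through $x = (2a)^{-1}(x'-b)$ gives the parametrization \eqref{eq:para_quad_cong} with $Y=0$ and $\delta = \lfloor n/2\rfloor$ (one checks the count $q^{\lceil n/2\rceil}$ of such $x'$ matches $q^{\delta}\cdot(\text{number of }\pm\text{ choices})$ after noting the $\pm$ collapses when $Y=0$; here the slightly delicate parity bookkeeping between $\lceil n/2\rceil$ and $\lfloor n/2\rfloor$ needs to be done honestly). If $\delta_0<n$ and $\delta_0$ is odd, then $v(x'^2)$ would have to be the odd number $\delta_0$, impossible, so $S=\emptyset$ — consistent with the statement, which only asserts the parametrization when $\delta_0$ is even. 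If $\delta_0 = 2\delta'<n$ is even, factor $x' = \varpi^{\delta'}y$ with the requirement $y^2\equiv \Delta'\pmod{\p^{n-2\delta'}}$; since $\Delta'$ is a unit and $q$ is odd, Hensel's lemma says this has a solution $Y_0$ iff $\Delta'$ is a square, and then exactly the two solutions $\pm Y_0$ modulo $\p^{n-2\delta'}$, hence $y \in \pm Y_0 + \p^{n-2\delta'}$ modulo $\p^{n-\delta'}$, i.e. $2\,q^{\delta'}$ values of $y$, giving $x' = \pm Y_0\varpi^{\delta'} + \alpha\varpi^{n-\delta'}$, $\alpha\in\mathcal{O}/\p^{\delta'}$. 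Substituting back $x = (2a)^{-1}(x'-b) = -\tfrac{b}{2a} \pm \tfrac{Y_0}{2a}\varpi^{\delta'}+\tfrac{\alpha}{2a}\varpi^{n-\delta'}$ and absorbing the unit $(2a)^{-1}$ into the free parameter $\alpha$ yields exactly \eqref{eq:para_quad_cong} with $\delta=\delta'$, $Y=Y_0$. The bound $\sharp S\leq 2q^{\delta}$ is then immediate from the parametrization, uniformly over the cases (including $S=\emptyset$ and $\Delta'$ a non-square, where $\sharp S=0$).

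For the case $v(a)>0$, $v(b)=0$, I would argue directly that the map $x\mapsto ax^2+bx+c$ has unit derivative: its formal derivative is $2ax+b$, and since $v(a)>0$ while $v(b)=0$ this is a unit for every $x\in\mathcal{O}$. Hence the function is, modulo $\p^n$, a bijection of $\mathcal{O}/\p^n$ onto a translate — more precisely, Hensel/Newton iteration shows that for each target value there is a unique $x\in\mathcal{O}/\p^n$ attaining it, so in particular $\sharp S=1$. To locate the unique solution $x_0$, note that modulo $\p$ the congruence reads $bx+c\equiv 0$, forcing $x_0\equiv -b^{-1}c$, so $v(x_0)=v(c)$ (using $v(b)=0$); one then checks the Newton iteration started at $-b^{-1}c$ stays at this valuation, or simply observes that $ax_0^2$ has valuation $\geq v(a)+2v(c) > v(c)+v(b)=v(bx_0)$ so the equation $bx_0 = -c - ax_0^2$ forces $v(bx_0)=v(c)$, i.e. $v(x_0)=v(c)$ — with the understanding that if $v(c)\geq n$ then $x_0\equiv 0$ and the valuation statement is read in $\mathcal{O}/\p^n$.

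The main obstacle I expect is not conceptual but bookkeeping: getting the ceiling/floor in $\delta=\lfloor n/2\rfloor$ versus $\delta_0\ge n$ exactly right, and verifying that the $2q^{\delta}$ count produced by the $\pm$-and-shift parametrization genuinely coincides with (or dominates) the number of square roots of $\Delta$ modulo $\p^n$ in every parity and square/non-square subcase, so that the clean uniform bound $\sharp S\leq 2q^{\delta}$ holds without exceptions. Since the lemma is quoted as essentially \cite[Lemma~9.6]{KL13}, I would reference that for the detailed case analysis and present here only the completing-the-square reduction plus the Hensel step, which are the conceptual core.
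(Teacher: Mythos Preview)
Your proposal is correct and follows exactly the standard route: completing the square to reduce to counting square roots of $\Delta$ modulo $\p^n$, then Hensel for the unit case. The paper in fact gives no proof at all for this lemma --- it simply introduces it as ``basically \cite[Lemma~9.6]{KL13}'' and moves on --- so your write-up is already more than the paper provides, and it is the natural argument one would expect the cited source to contain.

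One small point to tidy: your valuation argument in the $v(a)>0$, $v(b)=0$ case is slightly circular as written. You claim $v(ax_0^2)\geq v(a)+2v(c)$, which presupposes $v(x_0)\geq v(c)$, the very thing under discussion; likewise, ``$x_0\equiv -b^{-1}c \pmod{\p}$ forces $v(x_0)=v(c)$'' is only immediate when $v(c)=0$. A clean fix is the direct valuation comparison: for any $x_0\in\mathcal{O}$ one has $v(ax_0^2)=v(a)+2v(x_0)>v(x_0)=v(bx_0)$, so $v(ax_0^2+bx_0)=v(x_0)$; then $ax_0^2+bx_0+c\in\p^n$ forces $v(x_0)=v(c)$ (or both $\geq n$, which in $\mathcal{O}/\p^n$ is the same statement). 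Alternatively, your Newton-iteration remark already does the job once spelled out: the iterate $x^{(1)}=-b^{-1}c$ has valuation $v(c)$, and each correction $x^{(k+1)}-x^{(k)}$ has strictly larger valuation.
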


Finally, we will briefly introduce the method of stationary phase and try to gain some intuition for the size of oscillatory integrals. Let us start by introducing some notation. For a Schwartz function $\Phi$ with support in $\mathcal{O}^n$ and a function $f\colon \text{supp}(\Phi)\to \mathcal{O}$ we define
\begin{equation}
	S_{\Phi}(f,m) = \int_{\mathcal{O}^{n}} \Phi(\du{x})\psi(\varpi^{-m}f(\du{x}))d\du{x}, \label{eq:def_of_S(fm)}
\end{equation}
where $d\du{x}$ denotes integration with respect to the product measure on $\mathcal{O}^{n}$. If $\Phi = \mathbbm{1}_{\mathcal{O}^n}$, we drop it from the notation. The method of stationary phase provides a very general tool to understand the size of such integrals.

For $m=1$ the basic estimates rely on algebraic methods and are highly non trivial. Indeed, $S(f,1)$ reduces to a sum over the finite field $\mathcal{O}/\p$. In the one dimensional situation we have the following very strong bound due to Weil, \cite{We48}. Let $g(x) = \prod_{i=1}^d (x-\xi_i)^{a_i}$ be a rational function with coefficients in $\mathcal{O}/\p$. Furthermore let $\chi$ be a multiplicative character of $(\mathcal{O}/\p)^{\times}$ such that no $a_i$ is a multiple of the order of $\chi$. Then we have 
\begin{equation}
	\abs{\sum_{\substack{x\in \mathcal{O}/\p, \\ x\neq \xi_i \text{ for } 1\leq i \leq d}} \chi(g(x))\psi(\varpi^{-1}f(x))} \leq (N+d-1)\sqrt{q}, \label{eq:Weils_bound}  
\end{equation}
for each polynomial $f\in (\mathcal{O}/\p)[x]$ of degree $N$ satisfying $f(0)=0$. For $n>1$ we have to involve some heavy machinery and we need some additional assumptions. Let us assume that $f\in \mathcal{O}[X_1,\dots,X_n]$ has degree $d_f$ co-prime to $p$. We write $d_{f,\p}$ for the degree of the reduced polynomial $\tilde{f} \in (\mathcal{O}/\p)[X_1,\dots,X_n]$ and assume that the homogeneous part of degree $d_{f,\p}$ defines a smooth projective hypersurface. Then \cite[Example 19.(5)]{Ko10} yields
\begin{equation}
	\abs{S(f,1)} \leq (d_{f,\p}-1)^nq^{-\frac{n}{2}}. \label{eq:Deligns_bound}
\end{equation}

If $m>1$, the situation is completely different because there are several elementary methods that can be used for the evaluation. The basic idea is parallel to the classical method of stationary phase. More precisely one will split the integral in suitable pieces each of which can be expressed in terms of the  Gau\ss\  sum
\begin{equation}
	G(A\varpi^{-s},B) = \int_{\mathcal{O}^n}\psi({}^{t}x A x \varpi^{-s}+B\cdot x)dx \text{ for }A\in GL_n(\mathcal{O}) \text{ and }B\in F^n, \nonumber
\end{equation}
which can be evaluated in great generality. A basic result in this direction is the following.

\begin{lemma} \label{lm:eval_quad_GS}
Let $\rho \geq 0$,  $A\in \mathcal{O}^{\times}$, and $B\in F$. Assuming $q$ to be odd we have
\begin{equation}
	G(A\varpi^{-\rho},B) = \begin{cases} 
		q^{-\frac{\rho}{2}}\gamma_F(A,\rho) \psi(-\frac{\varpi^{\rho}B^2}{4A}) &\text{ if } B\in \p^{-\rho},\\
		0 &\text{ else} 
	\end{cases} \nonumber
\end{equation}
where
\begin{equation}
	\gamma_F(A,\rho) =\begin{cases} \chi_F(A)\epsilon(\frac{1}{2},\chi_F) &\text{ if }\rho > 0  \text{ is odd}, \\ 1 &\text{ if }\rho = 0\text{ or } \rho \text{ is even}.\end{cases} \nonumber	
\end{equation}

Let $A\in\text{Mat}_{2\times 2}(\mathcal{O})$ be a symmetric matrix and let $B\in F^2$. Then
\begin{equation}
	\abs{G(\frac{\varpi^{-\rho}}{2}A, B)} \leq \begin{cases}
		q^{-\frac{\text{rk}(A_{\p})}{2}} &\text{ if $\rho = 1$ and } \varpi B\in \mathcal{O}^{2}, \\
		1 &\text{ if $\rho = 0$ and }B\in \mathcal{O}^2, \\
		0 &\text{ else} 
	\end{cases} \nonumber
\end{equation}
where $A_{\p}$ is the image of $A$ in $A\in\text{Mat}_{2\times 2}(\mathcal{O}/\p)$.
\end{lemma}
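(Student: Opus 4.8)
The plan is to handle the one-dimensional statement first, reducing $G(A\varpi^{-\rho},B)$ to the pure quadratic integral $I_\rho := \int_{\mathcal{O}}\psi(A\varpi^{-\rho}y^2)\,dy$ by completing the square, and then to deduce the two-dimensional bound by diagonalising the symmetric form. Oddness of $q$ enters immediately: $2$ and $4A$ are units, so $A\varpi^{-\rho}x^2+Bx = A\varpi^{-\rho}(x+c_0)^2-\tfrac{\varpi^\rho B^2}{4A}$ with $c_0=\tfrac{\varpi^\rho B}{2A}$. When $B\in\p^{-\rho}$ one has $c_0\in\mathcal{O}$, the translation $x\mapsto x-c_0$ preserves $\mathcal{O}$ and its measure, and $G(A\varpi^{-\rho},B)=\psi(-\tfrac{\varpi^\rho B^2}{4A})\,I_\rho$. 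So two tasks remain: (i) prove $G(A\varpi^{-\rho},B)=0$ when $B\notin\p^{-\rho}$, and (ii) evaluate $I_\rho$.

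For (i) I would run a non-stationary-phase argument. Put $k=\lceil\rho/2\rceil$ and average the integral over the translations $x\mapsto x+u$, $u\in\varpi^k\mathcal{O}$; since $v(A\varpi^{-\rho}u^2)\ge 2k-\rho\ge0$ the pure quadratic term in $u$ is absorbed by $\psi$, and Fubini rewrites $G(A\varpi^{-\rho},B)$ as $q^k\int_{\mathcal{O}}\psi(A\varpi^{-\rho}x^2+Bx)\big(\int_{\varpi^k\mathcal{O}}\psi(u(2A\varpi^{-\rho}x+B))\,du\big)\,dx$. The inner integral is $0$ unless $v(2A\varpi^{-\rho}x+B)\ge-k$; but when $B\notin\p^{-\rho}$ we have $v(B)\le-\rho-1<v(2A\varpi^{-\rho}x)$ for every $x\in\mathcal{O}$, so $v(2A\varpi^{-\rho}x+B)=v(B)\le-k-1<-k$ throughout, and the whole integral vanishes.

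For (ii), note $I_0=1$ and assume $\rho\ge1$, again with $k=\lceil\rho/2\rceil$. Partition $\mathcal{O}=\bigsqcup_{c\bmod\varpi^k}(c+\varpi^k\mathcal{O})$ and expand $A\varpi^{-\rho}(c+\varpi^k t)^2$; the $t^2$-term lies in $\mathcal{O}$ and drops, so the block at $c$ contributes $q^{-k}\psi(A\varpi^{-\rho}c^2)$ exactly when $v(c)\ge\rho-k$. For $\rho$ even only $c\equiv 0$ survives, giving $I_\rho=q^{-k}=q^{-\rho/2}$. For $\rho=2k-1$ odd, only $c=\varpi^{k-1}c'$ with $c'\in\mathcal{O}/\p$ survive, $A\varpi^{-\rho}c^2=A\varpi^{-1}c'^2$, and $I_\rho=q^{-k}\sum_{c'\in\mathbb{F}_q}\psi(A\varpi^{-1}c'^2)$, an honest finite-field quadratic Gauss sum. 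Splitting off $c'=0$ and using that $1+\chi_F(u)$ counts the square roots of $u\in\mathbb{F}_q^\times$ collapses this to $\sum_{u\in\mathbb{F}_q^\times}\chi_F(u)\psi(A\varpi^{-1}u)=(q-1)G(A\varpi^{-1},\chi_F)$; inserting the value of $G(A\varpi^{-1},\chi_F)$ from \eqref{eq:evaluation_of_GS} (here $a(\chi_F)=1$, so $v(A\varpi^{-1})=-a(\chi_F)$) and using $(q-1)\zeta_F(1)=q$ gives $I_\rho=q^{-\rho/2}\chi_F(A)\epsilon(\tfrac12,\chi_F)=q^{-\rho/2}\gamma_F(A,\rho)$.

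For the two-dimensional bound (the substantive cases being $\rho\in\{0,1\}$) I would diagonalise: since $q$ is odd, $A_\p$ is $\mathcal{O}/\p$-diagonalisable, and lifting a diagonalising transformation to $P\in GL_2(\mathcal{O})$ gives ${}^tPAP=\operatorname{diag}(d_1,d_2)+\varpi R$ with $R$ symmetric over $\mathcal{O}$ and $\#\{i:d_i\in\mathcal{O}^\times\}=\text{rk}(A_\p)$. The substitution $x\mapsto Px$ preserves the measure, and for $\rho\le1$ the correction $\tfrac{\varpi^{1-\rho}}{2}\,{}^tyRy$ is integral and killed by $\psi$, so $G(\tfrac{\varpi^{-\rho}}{2}A,B)=\prod_{i=1}^{2}\int_{\mathcal{O}}\psi(\tfrac{d_i}{2}\varpi^{-\rho}y_i^2+({}^tPB)_i y_i)\,dy_i$; applying the one-dimensional result factorwise — each factor with $d_i\in\mathcal{O}^\times$ has modulus $q^{-\rho/2}$ and vanishes unless $({}^tPB)_i\in\p^{-\rho}$, while each factor with $v(d_i)\ge1$ has $\tfrac{d_i}{2}\varpi^{-\rho}\in\mathcal{O}$ hence equals $\int_{\mathcal{O}}\psi(({}^tPB)_i y_i)\,dy_i\in\{0,1\}$ — and noting that, as $P\in GL_2(\mathcal{O})$, the membership conditions on ${}^tPB$ are equivalent to $B\in\mathcal{O}^2$ for $\rho=0$ and to $\varpi B\in\mathcal{O}^2$ for $\rho=1$, one multiplies the factors and obtains the stated bound, with $G=0$ as soon as some condition fails. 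The step I expect to be the main obstacle is (ii) for odd $\rho$: isolating the finite-field quadratic Gauss sum cleanly and matching its $\epsilon$-factor and sign (including the effect of the $\varpi^{-1}$) against \eqref{eq:evaluation_of_GS}; the remaining pieces — the averaging in (i), the block decomposition in (ii), and the bookkeeping of the $\p$-adic error term in the diagonalisation — are routine once $q$ is odd, the characteristic-$2$ phenomena in the residue field being precisely what the hypothesis excludes.
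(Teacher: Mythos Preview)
Your proof is correct and follows the same reduction as the paper: both settle the two-dimensional bound by a congruence transformation that factors the integral into one-dimensional Gau\ss\ sums. The differences are cosmetic. For the one-dimensional statement the paper simply cites \cite[Lemma~6]{BM15}, whereas you supply a self-contained argument (completing the square, non-stationary-phase vanishing, and the block decomposition isolating the finite-field quadratic sum); this is fine and your matching against \eqref{eq:evaluation_of_GS} goes through as written. For the two-dimensional part the paper proceeds by an explicit case split on which entries of $A_\p=\begin{pmatrix}a&b\\b&c\end{pmatrix}$ are nonzero, using the concrete decompositions
\[
A_\p=\begin{pmatrix}1&0\\ba^{-1}&1\end{pmatrix}\begin{pmatrix}a&0\\0&a^{-1}\det A_\p\end{pmatrix}\begin{pmatrix}1&a^{-1}b\\0&1\end{pmatrix}
\quad\text{or}\quad
\begin{pmatrix}0&b\\b&0\end{pmatrix}=\begin{pmatrix}-1&1\\1&1\end{pmatrix}\begin{pmatrix}b/2&0\\0&b/2\end{pmatrix}\begin{pmatrix}-1&1\\1&1\end{pmatrix},
\]
which additionally yield exact evaluations (with explicit phases $\gamma(A_\p)$), not merely the stated upper bound. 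Your general diagonalisation over $\mathcal{O}/\p$ lifted to $P\in GL_2(\mathcal{O})$ is cleaner but only recovers the bound; since the lemma as stated asks only for the bound, this is sufficient. One small imprecision: when $\rho=1$ and $\mathrm{rk}(A_\p)<2$, the non-vanishing conditions on the individual factors are genuinely stronger than $\varpi B\in\mathcal{O}^2$, but this only helps---it is still true that $\varpi B\notin\mathcal{O}^2$ forces some factor to vanish, and when $\varpi B\in\mathcal{O}^2$ the product is bounded by $q^{-\mathrm{rk}(A_\p)/2}$ regardless.
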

The one-dimensional statement is essentially \cite[Lemma~6]{BM15}.  In \cite{DF97} one can find a very general evaluation of multi-dimensional Gau\ss\  sums over $\Q_p$ which carries over to general $F$. But since we will make heavy use of the two dimensional case we provide a quick proof.
\begin{proof}
Here we only deal with the two dimensional case. First we write $A_{\p}$ for the image of $A$ in $\text{Mat}_{2\times 2}(\mathcal{O}/\p)$. Since $\rho \in \{0,1\}$ the quadratic Gau\ss\  sum depends only on $A_{\p}$ and not on $A$. Next we observe that if $\rho = 0$ or $A_{\p} = 0$ then we are simply dealing with a linear sum and the statement is obvious. Thus, assume $\rho =1$ and $A_{\p} \neq 0$. First we write
\begin{equation}
	A_{\p} =  \left( \begin{matrix} a & b \\ b & c 	\end{matrix}\right) \text{ for } a,b,c\in \mathcal{O}/\p. \nonumber
\end{equation}
If $a\neq 0$, we have
\begin{equation}
	A_{\p} = \left( \begin{matrix} 1 & 0 \\ ba^{-1} & 1 \end{matrix}\right)\left( \begin{matrix} a & 0 \\ 0 & a^{-1}\det(A_{\p}) 	\end{matrix}\right)\left( \begin{matrix} 1 & a^{-1}b \\ 0 & 1 	\end{matrix}\right). \nonumber
\end{equation}  
Therefore, by making a linear change of variables yields
\begin{eqnarray}
	G\left(\frac{\varpi^{-1}}{2}A,B\right) &=& G\left(\frac{\varpi^{-1}}{2}\left(\begin{matrix} a& 0 \\ 0 &a^{-1}\det(A_{\p})	\end{matrix} \right), \left( \begin{matrix} 1& 0 \\ -a^{-1}b & 1\end{matrix}\right) B \right) \nonumber \\
	&=& G\left(\frac{\varpi^{-1}a}{2}, B_1\right)G\left(\frac{\varpi^{-1}\det(A_{\p})}{2a}, B_2-\frac{b}{a}B_1 \right). \nonumber
\end{eqnarray}
In particular, we can evaluate the remaining one dimensional Gau\ss\  sums and obtain
\begin{equation}
	G\left(\frac{\varpi^{-1}}{2}A,B\right) = \begin{cases}
		\gamma(A_{\p}) \psi\left( -\frac{\omega B_1^2}{2a}\right)q^{-\frac{1}{2}} \\
		\qquad\qquad\text{ if } \det(A_{\p}) = 0,\text{ } B_1,B_2 \in \p^{-1} \text{ and } B_2-\frac{b}{a}B_1 \in \mathcal{O}, \\
		\gamma(A_{\p}) \psi\left(\frac{-\varpi}{2\det(A_{\p})}\left(aB_1^2-2bB_1B_2+cB_2^2\right)\right)q^{-1} \\
		\qquad\qquad\text{ if } \det(A_{\p})\in (\mathcal{O}/\p)^{\times},\text{ and } B_1,B_2 \in \p^{-1} , \\
		0 \\
		\qquad\qquad\text{ else}
	\end{cases} \nonumber
\end{equation}  
with 
\begin{equation}
	\gamma(A_{\p}) = \begin{cases}
		\chi_F(\frac{a}{2})\epsilon(\frac{1}{2},\chi_F) &\text{ if } \text{rk}(A_{\p}) = 1,\\
		\chi_F(\det(A_{\p}))\epsilon(\frac{1}{2},\chi_F)^2 &\text{ if } \text{rk}(A_{\p}) = 2.
	\end{cases} \nonumber
\end{equation} 

If $a=0$ but $c\neq 0$, then the argument is essentially the same, one simply exchanges the roles of $a$ and $c$ as well as $B_1$ and $B_2$.

If $a=c=0$, then we must have $b\neq 0$. Observing
\begin{equation}
	\left(\begin{matrix} 0 & b\\ b & 0 \end{matrix}\right) = 	\left(\begin{matrix} -1 & 1\\ 1 & 1 \end{matrix}\right)	\left(\begin{matrix} \frac{b}{2} & 0\\ 0 & \frac{b}{2} \end{matrix}\right)	\left(\begin{matrix} -1 & 1\\ 1 & 1 \end{matrix}\right)	\nonumber
\end{equation}
and making a linear change of variables yields
\begin{eqnarray}
	G\left(\frac{\varpi^{-1}}{2}A,B\right) &=& G(\varpi^{-1}b,-B_1+B_2)G(\varpi^{-1}b,B_1+B_2) \nonumber \\
	&=& \gamma(A_{\p})\psi(\frac{-\varpi}{2b}(B_1^2+B_2^{2}))q^{-1}. \nonumber
\end{eqnarray}
The bounds for Gau\ss\  sums are special cases of these explicit evaluations.
\end{proof}

In the one dimensional situation over $F=\Q_p$ an estimate for $S(f,m)$ allowing very general phase functions $f$ is given in \cite[(5.3)]{CZ02}. We now translate this result in our setting.

\begin{lemma} \label{lm:quoted_qp_bound}
Let $F=\Q_p$ for $p>2$. Furthermore, let $f$ be a polynomial, with degree $d_{\p}>0$ modulo $\p$. If $\tau=v(f')$ and every $\alpha$ solving the critical point congruence 
\begin{equation}
	\varpi^{-\tau}f'(\alpha) \in \p \label{eq:critical_point_cong}
\end{equation}
has multiplicity less then $M$, then we have
\begin{equation}
	\abs{S(f,m)} \leq (d_{\p}-1) q^{-\frac{1}{M+1}(m-\tau)} \nonumber 
\end{equation}
for all $m\geq \tau+2$.
\end{lemma}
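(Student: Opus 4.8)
The plan is to prove this by the $p$-adic analogue of the method of stationary phase, carried out recursively, which is the strategy behind \cite[(5.3)]{CZ02}. Since $\psi(\varpi^{-m}f(0))$ pulls out of $S(f,m)$, one may assume $f(0)=0$; one may further reduce to $\tau=0$ by factoring the content out of $f$ (this is transparent when $\deg f$ is prime to $p$ and needs a short separate argument otherwise). The whole estimate is then obtained by induction on the quantity $m-\tau\ge1$, with base case $m=\tau+1$: here $S(f,\tau+1)=q^{-1}\sum_{x\in\mathcal{O}/\p}\psi(\varpi^{-\tau-1}f(x))$, which vanishes when $\varpi^{-\tau}f'$ has no zero modulo $\p$ and otherwise is bounded by Weil's estimate \eqref{eq:Weils_bound} applied to the reduced phase, of degree at most $d_\p-1$; since $q^{-1/2}\le q^{-1/(M+1)}$ this is compatible with, and stronger than, the asserted bound at $m=\tau+1$. (When $d_\p\le1$ there are no critical points and $S(f,m)=0$, so we may assume $d_\p\ge2$.)

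For the inductive step $m\ge\tau+2$, split $\mathcal{O}=\bigsqcup_{a\bmod\p}(a+\varpi\mathcal{O})$ and substitute $x=a+\varpi y$ on each class. If $a$ is \emph{not} a solution of the critical point congruence, a Taylor expansion shows that the phase increment $\varpi^{-m}\bigl(f(a+\varpi y)-f(a)\bigr)$ has linear term of valuation $1-m\le-1$ strictly dominating every higher term --- it is precisely here that the reduction to $\tau=0$ is used --- so a unit-Jacobian change of variables collapses the class integral to $\int_{\mathcal{O}}\psi(\varpi^{j}u)\,du=0$ for some $j\le-1$. Thus only the at most $d_\p-1$ critical classes $a=\alpha$ contribute, and on each
\begin{equation}
	\int_{\alpha+\varpi\mathcal{O}}\psi(\varpi^{-m}f(x))\,dx = q^{-1}\psi(\varpi^{-m}f(\alpha))\,S\bigl(g_\alpha,\,m-c_\alpha\bigr), \nonumber
\end{equation}
where $g_\alpha(y)=\varpi^{-c_\alpha}\bigl(f(\alpha+\varpi y)-f(\alpha)\bigr)$, $c_\alpha$ is its content exponent, and $\tau_\alpha=v(g_\alpha')$. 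Reading off the Taylor coefficients of $f'$ at $\alpha$ and using that the multiplicity $\nu_\alpha$ of $\alpha$ as a zero of $\varpi^{-\tau}f'\bmod\p$ is less than $M$, one verifies the two crucial facts: (i) the induction parameter drops by $1+\bigl(v(\mathrm{cont}\,f'(\alpha+\varpi\,\cdot\,))-\tau\bigr)$, an integer lying in $[2,\nu_\alpha+1]\subseteq[2,M]$, so $(m-c_\alpha)-\tau_\alpha\ge(m-\tau)-M$ and the parameter genuinely decreases; and (ii) the degree of $g_\alpha$ modulo $\p$ satisfies $d_{g_\alpha,\p}-1\le\nu_\alpha$, while $\sum_{\alpha}\nu_\alpha\le d_\p-1$, so the combinatorial factor does not accumulate. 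Feeding the inductive bound $|S(g_\alpha,m-c_\alpha)|\le(d_{g_\alpha,\p}-1)\,q^{-\frac{1}{M+1}((m-c_\alpha)-\tau_\alpha)}$ into $|S(f,m)|\le\sum_\alpha q^{-1}|S(g_\alpha,m-c_\alpha)|$ and combining (i) with $\sum_\alpha(d_{g_\alpha,\p}-1)\le d_\p-1$ and $q^{-1+\frac{M}{M+1}}=q^{-\frac{1}{M+1}}\le1$ gives exactly $|S(f,m)|\le(d_\p-1)\,q^{-\frac{1}{M+1}(m-\tau)}$; when a factor $(m-c_\alpha)-\tau_\alpha$ has already dropped to $1$ or below, one invokes the base case (or the trivial bound $|S|\le1$) instead, and the result still lies below the target.

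The main obstacle is the bookkeeping behind (i) and (ii): one must pin down the content of $f'(\alpha+\varpi\,\cdot\,)$ --- its valuation sitting between $\tau+1$ and $\tau+\nu_\alpha$ --- the degree of the reduced derivative of $g_\alpha$, and, most delicately, the transformation of multiplicities, namely that the critical points of the new phase $g_\alpha$ again have multiplicity $<M$; all of this rests on a careful analysis of the divided differences of $f$, which is clean because $p>2$ (and, in the applications in this paper, $\deg f$ stays small compared to $p$, so no Hasse-derivative anomalies arise), with an auxiliary statement of the type of Lemma~\ref{lm:quad_cong} controlling the relevant solution counts. The one genuinely awkward case is $\tau_\alpha\ge1$, where the reduced derivative of $g_\alpha$ vanishes and $g_\alpha$ is a $p$-th power modulo $\p$; this has to be handled by an extra substitution or an a priori size bound, and it is essentially the reason why one settles for the slightly lossy exponent $\frac{1}{M+1}$ rather than the $\frac{1}{\nu+1}$ suggested by the degree-$(\nu+1)$ Gauss integral $\int\psi(c\varpi^{-m}u^{\nu+1})\,du$ that governs a multiplicity-$\nu$ critical point.
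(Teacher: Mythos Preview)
The paper's own proof is a single sentence: it cites \cite[(5.3)]{CZ02} and remarks that one only has to normalise correctly. Your proposal is not an alternative proof of the lemma but rather an attempt to sketch the argument \emph{behind} that citation, namely the recursive $p$-adic stationary phase (Hua/Cochrane--Zheng) machinery. In that sense you are not taking a different route; you are trying to unpack the same route that the paper deliberately leaves packed.

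As a sketch your outline is broadly on target, but it is not a proof. Several of the steps you flag as ``bookkeeping'' are where the actual work lies and are not fully justified in what you wrote. Two points in particular: (a) your reduction to $\tau=0$ by ``factoring the content out of $f$'' is not transparent merely when $\deg f$ is prime to $p$; one needs $v(a_i)\ge\tau$ for every coefficient, which can fail whenever $p\mid i$ for some intermediate index, so the clean reduction really wants $p>\deg f$ (or a genuinely separate argument). (b) For the induction to close you must verify that the critical points of each new phase $g_\alpha$ again have multiplicity $<M$; you mention this but do not argue it, and in general multiplicities can behave badly under the substitution $x\mapsto\alpha+\varpi y$. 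Cochrane--Zheng avoid this issue by organising the recursion differently (tracking an invariant of the Newton polygon rather than inducting naively on $m-\tau$), which is precisely why the paper cites them rather than reproving the estimate.

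For the purposes of this paper the one-line citation is the appropriate proof. If you want a self-contained argument you would have to reproduce a nontrivial portion of \cite{CZ02}, and your sketch, while headed in the right direction, does not yet do that.
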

\begin{proof}
This follows immediately from \cite[(5.3)]{CZ02} after normalizing correctly. 
\end{proof}

Points satisfying the congruence \eqref{eq:critical_point_cong} are called \textit{critical points}. If they have multiplicity one, they are referred to as \textit{non-degenerate critical points}. The contribution of non-degenerate critical points is well behaved. Indeed, if there are only such critical points one can evaluate the corresponding oscillatory integral explicitly. On the other hand, if there are critical points with multiplicity bigger than one, the situation becomes more complicated. Such critical points are called \textit{degenerate critical points}, their existence usually destroys square root cancellation in $S(f,m)$.

We now provide some concrete examples for complete exponential sums. A first example, which we frequently encounter, is the twisted Kloosterman sum $S_{\chi}(A,B, m)$. If $A,B\in \mathcal{O}^{\times}$, $m=1$, and $a(\chi)\leq 1$ we have the strong bound
\begin{equation}
	\abs{S_{\chi}(A,B,1)} \leq 2\zeta_F(1)q^{-\frac{1}{2}}, \label{eq:finite_field_KS}
\end{equation}
which is essentially due to Weil. See \cite[Chapter~11, Exercise~1]{IK04}. If $m>1$ and $a(\chi)<\frac{m}{2}$, one can evaluate this sum explicitly and obtain
\begin{equation}
	S_{\chi}(1,u,m) = \begin{cases}
		2\zeta_F(1) q^{-\frac{m}{2}}\Re(\alpha(u)) &\text{ if }u\in \mathcal{O}^{\times 2},  \\
		0 &\text{ else,}
	\end{cases} \nonumber
\end{equation}
where $\alpha(u)$ is some root of unity depending on $F$, $\chi$, and $u$. For arbitrary characters $\chi$ square root cancellation might fail. However, one can still derive estimates for the sum. See for example \cite{KL13}. Over $\Q_p$ a general sharp bound for $S_{\chi}(A,B,m)$ is given in \cite[Section~5]{CZ00}. One can prove a slightly more general estimate.
 
\begin{lemma} \label{lm:est_TKS}
Let $q$ be odd. Let $\chi$ be a multiplicative character, $a\in \mathcal{O}^{\times}$ a unit and $l,m\in \N_0$. If $m>a(\chi)$, then 
\begin{equation}
	\abs{S_{\chi}(1,a\varpi^l,m)} \leq \begin{cases}
		2\zeta_F(1)q^{-\frac{m}{2}} &\text{ if } l=0 \text{ and } a\in \mathcal{O}^{\times 2},\\
		\zeta_F(1)q^{-1}  &\text{ if } m=1 \text{ and } l \geq 1,\\
		0 &\text{else.}
	\end{cases} \nonumber
\end{equation}
In the opposite situation, $m<a(\chi)$, we have
\begin{equation}
	S_{\chi}(1,a\varpi^l,m) = 0.
\end{equation}
Finally, if $m=a(\chi)$, then
\begin{equation}
	\abs{S_{\chi}(1,a\varpi^l,m)} \leq \begin{cases}
		2\zeta_F(1)q^{-\frac{m}{2}} &\text{ if } l>0 \text{ or } l=0 \text{ and }  a\not \in \frac{b_{\chi}^2}{4}+\p,\\
		2\zeta_F(1)q^{-\frac{m}{4}} &\text{ else.}
	\end{cases} \nonumber
\end{equation}
If we assume additionally that $F= \Q_p$, then we have the stronger bound
\begin{equation}
	\abs{S_{\chi}(1,a\varpi^l,m)} \leq 2q^{-\frac{m}{3}}. \nonumber
\end{equation}
\end{lemma}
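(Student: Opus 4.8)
The plan is to split according to the three regimes $m<a(\chi)$, $m>a(\chi)$ and $m=a(\chi)$, and in each case to rewrite $S_{\chi}(1,a\varpi^l,m)$ as an oscillatory integral amenable to $p$-adic stationary phase. The uniform device is the following: on a coset $x_0(1+\varpi^{\alpha}\mathcal{O})$ with $\alpha\geq a(\chi)/2$, Lemma~\ref{lm:char_trick} lets one write $\chi(x)=\chi(x_0)\psi\bigl(b_{\chi}\varpi^{\alpha-a(\chi)}u\bigr)$, so the twisted integrand becomes purely additive with effective phase $\Phi(x)=\varpi^{-m}\bigl(x+a\varpi^l x^{-1}\bigr)+b_{\chi}\varpi^{-a(\chi)}\log_F(x)$. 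One then studies critical points of $\Phi$, i.e.\ the solutions of $\varpi^{m}x^2\Phi'(x)\equiv 0$, which up to harmless higher-order terms of $\log_F$ is a quadratic congruence in $x$, analysed via Lemma~\ref{lm:quad_cong}; the local contribution of each (clump of) critical point(s) is then computed with Lemma~\ref{lm:eval_quad_GS}. Note also the trivial reduction that for $l\ge m$ one has $\psi(a\varpi^{l-m}x^{-1})=1$, so $S_{\chi}(1,a\varpi^l,m)=G(\varpi^{-m},\chi)$ and \eqref{eq:evaluation_of_GS} applies directly.

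For $m<a(\chi)$ (so $a(\chi)\geq 1$): decompose $\mathcal{O}^{\times}=\bigsqcup x_0\bigl(1+\varpi^{\max(1,a(\chi)-1)}\mathcal{O}\bigr)$. On each coset the $u$-dependence of $\varpi^{-m}(x+a\varpi^l x^{-1})$ has non-negative valuation (since $a(\chi)-1-m\geq 0$), hence $\psi$ of it is trivial, whereas $\chi$ restricts to a non-trivial character of $\mathcal{O}/\p$; orthogonality gives $S_{\chi}=0$ (when $a(\chi)=1$ one has $m=0$ and the integral is $\int_{\mathcal{O}^{\times}}\chi=0$). For $m>a(\chi)$ the twist is of strictly lower order, so critical points are governed by $f(x)=x+a\varpi^l x^{-1}$ with $f'(x)=1-a\varpi^l x^{-2}$. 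If $l\geq 1$ then $f'$ is a unit everywhere, there are no critical points, and folding at level $\lceil m/2\rceil$ gives $S_{\chi}=0$ for $m\geq 2$; for $m=1$ one computes directly $S_{\chi}(1,a\varpi^l,1)=G(\varpi^{-1},\chi)$ with $a(\chi)=0$, of modulus $\zeta_F(1)q^{-1}$. If $l=0$ the critical point congruence is $x^2\equiv a\pmod{\p}$ (the $\chi$-perturbation does not change solubility since $a(\chi)<m$): when $a\notin\mathcal{O}^{\times 2}$ there are none and $S_{\chi}=0$; when $a\in\mathcal{O}^{\times 2}$ there are two non-degenerate critical points (note the product of the roots is a unit, so both lie in $\mathcal{O}^{\times}$), and Taylor-expanding $\Phi$ to second order and invoking Lemma~\ref{lm:eval_quad_GS} bounds each contribution by $\zeta_F(1)q^{-m/2}$, hence $|S_{\chi}|\leq 2\zeta_F(1)q^{-m/2}$.

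For $m=a(\chi)$ the twist and the additive oscillation are balanced and the critical points are the roots of a quadratic $Q(x)$ whose discriminant equals $b_{\chi}^2+4a\varpi^l$ up to sign conventions. If $l>0$ then this discriminant is a unit square, so $Q$ has two simple roots whose product has valuation $l\geq 1$; exactly one lies in $\mathcal{O}^{\times}$, giving a single non-degenerate critical point and $|S_{\chi}|\leq 2\zeta_F(1)q^{-m/2}$. If $l=0$ and $a$ lies outside the exceptional residue class singled out in the statement, the discriminant is again a unit, either a non-square (no critical points, $S_{\chi}=0$) or a square (two non-degenerate critical points, both in $\mathcal{O}^{\times}$), so $|S_{\chi}|\leq 2\zeta_F(1)q^{-m/2}$. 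The remaining case $l=0$ with $a$ in the exceptional class is the only source of a degenerate critical point: the discriminant has valuation $\delta_0\geq 1$, Lemma~\ref{lm:quad_cong} describes the critical set as a clump of size $\leq q^{\delta}$ with $\delta=\min(\delta'',\lfloor m/2\rfloor)$, and grouping the integral over this clump and estimating trivially yields $|S_{\chi}|\leq 2\zeta_F(1)q^{-m/4}$, the worst case being $\delta_0\geq m$. Over $F=\Q_p$ one argues instead with Lemma~\ref{lm:quoted_qp_bound}, working over Teichmüller cosets where $\Phi$ (after truncating $\log_F$ modulo $\p^m$) is polynomial: the critical points have multiplicity at most $2$ since $Q$ is quadratic, so taking $M=2$ gives the uniform bound $|S_{\chi}(1,a\varpi^l,m)|\leq 2q^{-m/3}$, which is also the content of \cite[Section~5]{CZ00}.

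The main obstacle is the bookkeeping in the genuinely degenerate case $m=a(\chi)$, $l=0$, $a$ in the exceptional class: pinning down $\delta_0=v(b_{\chi}^2+4a)$ against $m$, matching the clump structure of Lemma~\ref{lm:quad_cong} to the right power of $q$, and tracking the measure-normalisation factors $\zeta_F(1)$ so that the stated constants come out. Everything else is a routine critical-point computation with Lemmas~\ref{lm:char_trick}, \ref{lm:eval_quad_GS} and \ref{lm:quad_cong}, and the sharper $\Q_p$ estimate is essentially a citation of \cite{CZ00} once Lemma~\ref{lm:quoted_qp_bound} is in place.
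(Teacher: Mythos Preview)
Your approach is essentially the same as the paper's: reduce to Gau\ss\ sums when $l\ge m$, use orthogonality for $m<a(\chi)$, and apply $p$-adic stationary phase (as in \cite[Section~9]{KL13}) for the remaining cases, with the $\Q_p$ refinement quoted from \cite{CZ00}. The paper's proof is in fact just a three-line sketch pointing to these references; you have written out the mechanics that the paper leaves implicit.

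One small gap to flag: your stationary-phase argument (folding at level $\lceil m/2\rceil$ and analysing the quadratic critical-point congruence) tacitly requires $m\ge 2$. The cases $m=1$ with $l=0$---both $m=1>a(\chi)=0$ and $m=a(\chi)=1$---are genuine finite-field twisted Kloosterman sums and are not covered by the coset-folding; they need the Weil bound \eqref{eq:finite_field_KS} directly, which the paper singles out explicitly (``If $m=1$ and $a(\chi)\leq 1$, then this is a slight extension of Weil's bound''). You should insert that one-line case distinction before invoking Lemma~\ref{lm:quad_cong}.
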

\begin{proof}
First, if $l\geq m$, this reduces to the Gau\ss\  sum evaluation \eqref{eq:evaluation_of_GS}. If $m=0$, we have a pure (multiplicative) character sum and the claim follows by orthogonality. If $m=1$ and $a(\chi)\leq 1$, then this is a slight extension of Weil's bound for Kloosterman sums. In all the other cases we can apply the method of stationary phase and conclude by estimating the remaining sum trivially as in \cite[Section~9]{KL13}. 

The last bound can be found in \cite[Section~5]{CZ00} and turns out to be sharp.
\end{proof}

Another example is an exponential sum involving a cubic polynomial.  In analogy to the real place we define
\begin{equation}
	\text{Ai}_{\psi} (a;b) = q^{\frac{v(a)}{3}}\int_{\mathcal{O}} \psi(ax^3+bx)dx. \nonumber
\end{equation}	
Note that $\text{Ai}_{\psi}(a;b)= 0$ if $v(b)<\min(0,v(a))$. Thus, by Lemma~\ref{lm:est_TKS} we have the bound 
\begin{equation}
	\abs{\text{Ai}_{\psi}(a,b)}\leq 2. \nonumber
\end{equation}

An important two dimensional example that lies at the heart of the next section is the integral $K(\chi_1\otimes \chi_2,(\varpi^{-l_1}, \varpi^{-l_2}), v\varpi^{-l})$ attached to the quadratic space $E=F\times F$.

\begin{lemma} \label{lm:prototype_K_stat}
Suppose $\chi_1$ and $\chi_2$ are characters on $F^{\times}$ such that $a(\chi_1)\geq a(\chi_2)\geq 1$. Put $k=\max(a(\chi_1),l)=2r+\rho$ for some $r\in \N_0$ and $\rho\in \{0,1\}$. Then for $0<l_1,l_2 \leq l$ and $r>0$ we have
\begin{eqnarray}
	&&K(\chi_1\otimes \chi_2,(\varpi^{-l_1}, \varpi^{-l_2}), v\varpi^{-l}) \nonumber \\
	&&\qquad\qquad  = \zeta_F(1)^2 q^{-2r} \sum_{(x_1,x_2)\in S} \chi_1(x_1)\chi_2(x_2) \psi(x_1\varpi^{-l_1}+x_2\varpi^{-l_2}+vx_1x_2\varpi^{-l}) \nonumber \\
	&&\qquad\qquad \qquad\qquad \qquad \cdot G\left( \frac{\varpi^{-\rho}}{2}A_{x_1,x_2}, \varpi^{-r-\rho}B_{x_1,x_2}\right) \nonumber
\end{eqnarray}	
for 
\begin{eqnarray}
	A_{x_1,x_2} &=& \left( \begin{matrix} -b_1\varpi^{k-a(\chi_1)} & vx_1x_2\varpi^{k-l} \\ vx_1x_2\varpi^{k-l} & -b_2\varpi^{k-a(\chi_2)} \end{matrix} \right), B_{x_1,x_2} = \left(\begin{matrix} b_1\varpi^{k-a(\chi_1)} + x_1\varpi^{k-l_1}+vx_1x_2 \varpi^{k-l} \\  b_2\varpi^{k-a(\chi_2)} + x_2\varpi^{k-l_2}+vx_1x_2 \varpi^{k-l} \end{matrix}\right),  \nonumber\\
	S &=& \{ x_1,x_2 \in (\mathcal{O}/\p^r)^{\times} \colon B_{x_1,x_2 } \in (\p^r)^2  \} \nonumber
\end{eqnarray}
where $b_1$ and $b_2$ are the constants associated to the characters $\chi_1$ and $\chi_2$ using Lemma~\ref{lm:char_trick}. In particular we have
\begin{equation}
	\abs{K(\chi_1\otimes \chi_2,(\varpi^{-l_1}, \varpi^{-l_2}), v\varpi^{-l}) } \leq \zeta_F(1)^2 q^{-2r} \sharp S \sup_{x_1,x_2\in S} \abs{G\left( \frac{\varpi^{-\rho}}{2}A_{x_1,x_2}, \varpi^{-r-\rho}B_{x_1,x_2}\right)}. \nonumber
\end{equation}
\end{lemma}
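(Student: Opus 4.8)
The plan is to compute $K(\chi_1\otimes\chi_2,(\varpi^{-l_1},\varpi^{-l_2}),v\varpi^{-l})$ by decomposing the unit group $(\mathcal{O}^\times)^2 = \mathfrak{O}^\times$ along cosets modulo $\p^r$ and applying the $p$-adic Taylor expansion of the characters supplied by Lemma~\ref{lm:char_trick}. Writing $y_i = x_i(1+w_i\varpi^r)$ with $x_i$ running over representatives of $(\mathcal{O}/\p^r)^\times$ and $w_i\in\mathcal{O}/\p^{k-r}$ (using $k-r=r+\rho\ge r$), I would split the integral into a finite sum over the $x_i$ of integrals over the $w_i$. Since $a(\chi_i)\le k$ and $k-r\ge a(\chi_i)/2$ is not quite automatic, the right split is rather $w_i$ ranging so that $r$ is at least $a(\chi_i)/2$; one checks $k\ge a(\chi_1)$ forces $r\ge\lfloor a(\chi_1)/2\rfloor$, and the quadratic form of Lemma~\ref{lm:char_trick} (the version requiring $a(\chi)/3\le\alpha$) applies at the scale $\varpi^r$. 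Expanding $\chi_i(x_i(1+w_i\varpi^r)) = \chi_i(x_i)\psi\!\left(\tfrac{b_i}{\varpi^{a(\chi_i)}}(w_i\varpi^r - \tfrac{w_i^2}{2}\varpi^{2r})\right)$ and likewise Taylor-expanding the additive phase $x_1\varpi^{-l_1}+x_2\varpi^{-l_2}+vx_1x_2\varpi^{-l}$ to second order in $w_1,w_2$ (exact, since it is a polynomial) produces, for each fixed $(x_1,x_2)$, a two-dimensional Gauss integral in $(w_1,w_2)$ of exactly the shape $G(\tfrac{\varpi^{-\rho}}{2}A_{x_1,x_2},\varpi^{-r-\rho}B_{x_1,x_2})$, where the linear term assembles into $B_{x_1,x_2}$ and the quadratic (Hessian) term into $A_{x_1,x_2}$.

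Concretely, the key steps in order are: (1) rescale the defining integral \eqref{eq:def_hyper_geo_sum} for $E=F\times F$ into $\int_{(\mathcal{O}^\times)^2}$ and perform the coset decomposition $y_i\mapsto x_i(1+w_i\varpi^r)$, noting the Jacobian/measure bookkeeping gives the prefactor $\zeta_F(1)^2 q^{-2r}$ (the measure of each coset of $1+\p^r$ in $\mathcal{O}^\times$ being $\zeta_F(1)^{-1}q^{-r}$... one has to be careful with the normalization $\mu^\times(\mathcal{O}^\times)=1$, so the combinatorial factor $q^{2r}$ from the number of cosets cancels against coset volumes in a way that leaves $\zeta_F(1)^2q^{-2r}$); (2) apply Lemma~\ref{lm:char_trick} to linearize-and-quadratize each $\chi_i$ on $1+\p^r$, and Taylor-expand the polynomial additive phase exactly; (3) collect the degree-one part of the resulting phase in $(w_1,w_2)$ into the vector $\varpi^{-r-\rho}B_{x_1,x_2}$ and the degree-two part into the symmetric matrix $\tfrac{\varpi^{-\rho}}{2}A_{x_1,x_2}$, identifying the cross term $vx_1x_2\varpi^{k-l}$ from differentiating the $x_1x_2$ monomial and the diagonal terms from the character Hessians and the self-interactions; (4) recognize that the $w$-integral vanishes unless $B_{x_1,x_2}\in(\p^{r})^2$ (from Lemma~\ref{lm:eval_quad_GS}, the linear part must be integral after the Gauss evaluation), which is precisely the condition cutting out $S$; (5) bound trivially: $\abs{K}\le \zeta_F(1)^2 q^{-2r}\,\sharp S\,\sup_{S}\abs{G(\cdots)}$.

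The main obstacle I expect is the bookkeeping in step (3): making sure that the powers $\varpi^{k-a(\chi_i)}$, $\varpi^{k-l_i}$, $\varpi^{k-l}$ in $A_{x_1,x_2}$ and $B_{x_1,x_2}$ come out exactly as stated, and in particular that every entry is genuinely integral (so that reduction mod $\p$ makes sense and Lemma~\ref{lm:eval_quad_GS} applies with modulus $\varpi^{-\rho}$). This requires using $l_1,l_2\le l$, $l\le k$, and $a(\chi_i)\le k$ to see $k-l_i\ge 0$, $k-l\ge 0$, $k-a(\chi_i)\ge 0$; one must also verify that the overall scale of the $w$-phase really is $\varpi^{-r-\rho}$ for the linear part and $\varpi^{-\rho}$ for the quadratic part — i.e.\ that after pulling out $\varpi^r$ from $w_i$ everywhere, the leftover powers of $\varpi$ match. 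A secondary subtlety is the case-checking hidden in ``$r>0$'': when $r=0$ the decomposition is empty and the statement is vacuous/degenerate, which is why it is excluded. A final point of care is that Lemma~\ref{lm:char_trick}'s quadratic expansion needs $r\ge a(\chi_i)/3$; since $k=\max(a(\chi_1),l)\ge a(\chi_1)$ and $r=\lfloor k/2\rfloor$, we get $r\ge a(\chi_1)/2 - 1/2 \ge a(\chi_i)/3$ for $a(\chi_i)\ge 2$, and the boundary cases $a(\chi_i)=1$ are handled by the coarser linear expansion — this is the kind of small-conductor check that is routine but must be stated. Once these alignments are confirmed, steps (1)–(2) and (4)–(5) are formal, and the displayed bound is immediate from the triangle inequality.
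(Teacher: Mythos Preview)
Your proposal is correct and follows exactly the same route as the paper: decompose $(\mathcal{O}^\times)^2$ into cosets modulo $\p^r$, apply Lemma~\ref{lm:char_trick} to linearize/quadratize the characters on $1+\p^r$, recognize the resulting $w$-integral as the two-dimensional Gau\ss\ sum $G(\tfrac{\varpi^{-\rho}}{2}A_{x_1,x_2},\varpi^{-r-\rho}B_{x_1,x_2})$, and then invoke Lemma~\ref{lm:eval_quad_GS} to restrict the outer sum to $S$. The paper's own proof is in fact much terser than yours---it states only these two sentences and leaves all the bookkeeping you describe to the reader---so your step-by-step breakdown of the measure factor $\zeta_F(1)^2 q^{-2r}$ (which comes from $d^\times y_i=\zeta_F(1)\,dy_i$ on $\mathcal{O}^\times$ together with $dy_i=q^{-r}\,dw_i$), the integrality of the entries of $A_{x_1,x_2}$ and $B_{x_1,x_2}$ via $l_1,l_2\le l\le k$ and $a(\chi_i)\le k$, and the conductor thresholds needed for the quadratic expansion in Lemma~\ref{lm:char_trick}, are all exactly the checks one would want to fill in.
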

\begin{proof}
First we use Lemma~\ref{lm:char_trick} to rewrite the integral as
\begin{eqnarray}
	&&K(\chi_1\otimes \chi_2,(\varpi^{-l_1}, \varpi^{-l_2}), v\varpi^{-l}) \nonumber \\
		&&\qquad\qquad  = \zeta_F(1)^2 q^{-2r} \sum_{x_1,x_2\in (\mathcal{O}/\p^r)^{\times}} \chi_1(x_1)\chi_2(x_2) \psi(x_1\varpi^{-l_1}+x_2\varpi^{-l_2}+vx_1x_2\varpi^{-l}) \nonumber \\
		&&\qquad\qquad \qquad\qquad \qquad \cdot G\left( \frac{\varpi^{-\rho}}{2}A_{x_1,x_2}, \varpi^{-r-\rho}B_{x_1,x_2}\right) \nonumber
\end{eqnarray}
Using the support properties of the Gau\ss\  sum contained in Lemma~\ref{lm:eval_quad_GS} it is clear that we can restrict the summation to $S$.
\end{proof}

Whenever the set $S$ is small, this usually means $\sharp S \leq 2$, then the estimate given above is good enough and almost sharp. Otherwise it is possible to find a suitable parametrization for $S$ which reduces everything to a one-dimensional integral to which we can apply classical results. Over the next few sections we will investigate the behaviour of $K$ in several situations and deduce the size of the local Whittaker functions $W_{\pi}$.

\section{The size of Whittaker new vectors} \label{se:case_by_case}

In this section we estimate the size of Whittaker new vectors. To do so we will built on Section~\ref{se:Wh_supp} in which we basically reduced the problem to estimate $K(\xi,A,B)$ in several situations. Due to the generality of this sum there are many cases which seem quite different in nature and the estimation turns out to be Sisyphus work. This upcoming case study relies heavily on repeated use of the method of stationary phase as described in the previous section. Throughout this section we assume that $F$ has odd residual characteristic.

\subsection{Dihedral supercuspidals}

There are two slightly different types of dihedral supercuspidal representations. We start with representations associated to unramified quadratic extensions of $E/F$.

\begin{lemma} \label{lm:unramified_sc_bound}
Let $\pi$ be a dihedral supercuspidal representation associated to an unramified quadratic extension $E/\Q_p$ and a character $\xi\colon E^{\times}\to S^1$. Then we have
\begin{equation}
	\abs{W_{\pi}(g)} \leq 2\max(q^{\frac{n}{12}},\sqrt{q}). \nonumber
\end{equation}
\end{lemma}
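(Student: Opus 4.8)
The plan is to combine the explicit formula for $W_{\pi}(g_{t,l,v})$ from Lemma~\ref{lm:supp_wh_supercusp} with an estimate for the hyper-Kloosterman-type integral $K(\xi^{-1},\Omega^{t/f},v\varpi^{-l})$ in the unramified case. Since $E/\Q_p$ is unramified we have $f=2$, $e=1$, $d=0$, so $n=a(\pi)=2a(\xi)$; in particular $n$ is even and the residue field of $E$ has $q^2=p^2$ elements. By \eqref{eq:basic_decomp} it suffices to bound $\abs{W_{\pi}(g_{t,l,v})}$ over the relevant ranges of $t$ and $l$. Lemma~\ref{lm:supp_wh_supercusp} shows $W_{\pi}$ vanishes unless either $l=0$, $t=-k$ (where the value is $\epsilon(\tfrac12,\tilde\pi)$, of absolute value $1$, so trivially bounded by $2\sqrt{q}$), or $l=\tfrac n2$ with $-k\le t<0$, or $l\neq 0,\tfrac n2$ with $t=-k$; in all the non-trivial cases the value is $\gamma q^{-t/2}K(\xi^{-1},\Omega^{t/f},v\varpi^{-l})$ with $\abs{\gamma}=1$. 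So the whole problem reduces to showing
\begin{equation}
	q^{-t/2}\abs{K(\xi^{-1},\Omega^{t/f},v\varpi^{-l})} \leq 2\max(q^{n/12},\sqrt q) \nonumber
\end{equation}
for the admissible pairs $(t,l)$.

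Next I would analyze $K(\xi^{-1},A,B)=\int_{\mathfrak{O}^{\times}}\xi^{-1}(x)\psi(\Tr(Ax)+B\Norm(x))d_Ex$ by the $p$-adic method of stationary phase over the quadratic extension $E$. Using Lemma~\ref{lm:char_trick} applied to the character $\xi^{-1}$ of $E^{\times}$ (with its own $\kappa_E$, which equals $\kappa_{\Q_p}$ up to the unramified shift since $e(E/\Q_p)=1$), one linearizes the multiplicative oscillation on a congruence class and is left with a Gauss sum over $E$, whose modulus is controlled by Lemma~\ref{lm:eval_quad_GS} together with the quadratic congruence count in Lemma~\ref{lm:quad_cong}. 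The phase here is $\Tr(Ax)+B\Norm(x)$, i.e. as a function on the two-dimensional $F$-vector space $E$ it is a quadratic form perturbed by linear terms and the $\xi$-part; its Hessian is essentially the norm form $\Norm$, which is anisotropic since $E/F$ is a field, hence non-degenerate. The key point is to locate the critical points: differentiating the phase $x\mapsto \Tr(A x)+B\Norm(x)$ (with the $\xi$ contribution) gives a single equation in $E$, and because the quadratic part is non-degenerate the critical locus is generically a single point or empty, giving square-root cancellation $\abs{K}\ll q^{-(\text{something})}$. The worst case is when a critical point becomes degenerate — this happens only when $v_E(B)$, $v_E(A)$, and $a(\xi)$ fall into a particular alignment — and then the standard $p$-adic van der Corput / Chen–Zhao type bound (Lemma~\ref{lm:quoted_qp_bound}, or the cubic-vanishing mechanism behind $\mathrm{Ai}_\psi$) yields a loss of at most a cube root rather than a square root, producing the exponent $n/12$: indeed a cube-root loss against a modulus of size roughly $q^{n/2}$ in the two-dimensional sum over $E$ with residue field $q^2$ gives precisely $q^{2\cdot(n/2)/3 \cdot \frac12}\cdot(\text{normalization}) \sim q^{n/12}$ after the $q^{-t/2}$ normalization.

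Concretely, the steps in order would be: (1) record $f=2$, $d=0$, $n=2a(\xi)$, $k=\max(n,2l)$, and reduce via Lemma~\ref{lm:supp_wh_supercusp} to bounding $q^{-t/2}\abs{K(\xi^{-1},\Omega^{t/f},v\varpi^{-l})}$; (2) when $l=0$ handle the $\epsilon$-factor case trivially; (3) for $l=\tfrac n2$ and each $t$ with $-k\le t<0$, and for $l\neq 0,\tfrac n2$ with $t=-k$, apply Lemma~\ref{lm:char_trick} to linearize $\xi^{-1}$ and rewrite $K$ as a sum of $E$-Gauss sums indexed by a solution set $S$ of a quadratic congruence over $\mathcal{O}$ (as in the structure of Lemma~\ref{lm:prototype_K_stat} but for a field $E$); (4) bound $\sharp S$ by Lemma~\ref{lm:quad_cong} and each Gauss sum by Lemma~\ref{lm:eval_quad_GS}, distinguishing the non-degenerate case (square-root cancellation, giving $\le 2\sqrt q$) from the degenerate case; (5) in the degenerate case apply the stationary-phase bound with multiplicity $M\le 2$ (Lemma~\ref{lm:quoted_qp_bound}), which over $\Q_p$ with residue field of size $q$ in each variable gives the cube-root saving and hence the $q^{n/12}$ term; (6) assemble the two cases into $\abs{W_\pi(g)}\le 2\max(q^{n/12},\sqrt q)$. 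The main obstacle I anticipate is step (5): pinning down exactly when the critical point of the phase $\Tr(\Omega^{t/2}x)+v\varpi^{-l}\Norm(x)$ degenerates and verifying that the multiplicity never exceeds $2$, so that the van der Corput loss is at worst a cube root — this requires carefully tracking $v_E(\Omega^{t/2})=t$, $v(v\varpi^{-l})=-l$, and $a(\xi^{-1})=n/2$ through the congruence, and checking the boundary values of $t$ in the range $-k\le t<0$ where the quadratic term's scale matches the conductor scale.
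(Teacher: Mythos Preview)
Your proposal is correct and follows essentially the same route as the paper: reduce via Lemma~\ref{lm:supp_wh_supercusp}, realize $E=F(\sqrt{\zeta})$ as a two-dimensional $F$-space, apply stationary phase (Lemma~\ref{lm:char_trick}) to obtain a sum over a critical set $S$ controlled by Lemma~\ref{lm:quad_cong} together with two-dimensional Gau\ss\ sums from Lemma~\ref{lm:eval_quad_GS}, and in the degenerate case (which occurs only at $l=\tfrac{n}{2}$ when the discriminant $\Delta(v)=1+4v^2b_2^2\zeta+4vb_1$ falls into $\p$) parametrize $S$ as a one-parameter family and verify that the resulting one-variable phase has non-vanishing cubic coefficient, yielding the $q^{n/12}$ loss. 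The paper carries this out by an explicit case split on $l<\tfrac{n}{2}$, $l=\tfrac{n}{2}$, $l>\tfrac{n}{2}$ and a further subdivision of the middle case according to $v(b_2)$ and $t$, which is exactly the ``step~(5) obstacle'' you anticipate.
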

\begin{proof}
We start by recalling some facts concerning the extension $E/F$. Since it is unramified we have $e=1$, $f=2$, and $d=0$. Thus $n=2a(\xi)$, $a(\psi_E)=0$, and $\vol(\mathfrak{O},d_E) =1$. Furthermore, we find an element $\zeta\in \mathcal{O}^{\times} \setminus \mathcal{O}^{\times 2}$ such that $E=F(\sqrt{\zeta})$. Note that $\mathfrak{O} = \mathcal{O}\oplus \mathcal{O}\sqrt{\zeta}$ and $\mathfrak{O}^{\times} = ( \mathcal{O}^{\times}\oplus \mathcal{O}\sqrt{\zeta})\cup( \mathcal{O}\oplus \mathcal{O}^{\times}\sqrt{\zeta})$. We choose uniformizers such that $\Omega=\varpi$. For $x=a+b\sqrt{\zeta}$ we compute
\begin{equation}
	\Norm(1+x) = (1+a+b\sqrt{\zeta})(1+a-b\sqrt{\zeta}) = 1+\Tr(x)+\Norm(x).\label{eq:important_norm_formula}
\end{equation}

We put $k=\max(2l,n)$. Then Lemma~\ref{lm:supp_wh_supercusp} tells us that we only need to consider $t=-k$ if $l\neq \frac{n}{2}$ and $0>t\geq -k$ if $l=\frac{n}{2}$, since otherwise $W_{\pi}(g_{t,l,v})$ vanishes. In these cases we have 
\begin{equation}
	W_{\pi}(g_{t,l,v}) = \gamma q^{-\frac{t}{2}} \int_{\mathfrak{O}^{\times}} \xi^{-1}(x) \psi(\varpi^{\frac{t}{2}}\Tr(x)+\varpi^{-l}v\Norm(x))d_Ex. \label{eq:orig_whi_sc}
\end{equation}

We write $\frac{k}{2}=2r+\rho$ for some $r\in \N_0$ and $\rho \in \{0,1\}$. First, we note that if $r=0$, then we must have $\rho=1$ and thus $a(\xi) = l = -\frac{t}{2}=1$. By \cite[Corollary~2.35]{Sa15_2} we have
\begin{equation}
	W_{\pi}(g_{t,l,v}) \leq \sqrt{2q}. \nonumber
\end{equation} 

From now on we assume $r\geq 1$. In this case we can calculate
\begin{eqnarray}
	W_{\pi}(g_{t,l,v}) &=& \gamma q^{-\frac{t}{2}} \int_{\mathfrak{O}^{\times}} \xi^{-1}(x) \psi\left(\varpi^{\frac{t}{2}}\Tr(x)+\varpi^{-l}v\Norm(x)\right)d_Ex \nonumber \\
	&=& \gamma q^{-2r-\frac{t}{2}} \sum_{x\in (\mathfrak{O}/\mathfrak{P}^r)^{\times}} \xi^{-1}(x) \psi\left(\varpi^{\frac{t}{2}}\Tr(x)\right) \nonumber \\
	&& \qquad\cdot \int_{\mathfrak{O}} \xi^{-1}\left(1+\frac{y}{x}\Omega^r\right)\psi_E\left(\Omega^{r+\frac{t}{2}}y+\frac{v\Omega^{-l}}{2}\Norm(x+y\Omega^r)\right)d_E y\nonumber\\
	&=& \gamma q^{-2r-\frac{t}{2}} \sum_{x\in (\mathfrak{O}/\mathfrak{P}^r)^{\times}} \xi^{-1}(x) \psi\left(\varpi^{\frac{t}{2}}\Tr(x)+v\varpi^{-l}\Norm(x)\right) \nonumber \\
	&&\qquad\cdot\int_{\mathfrak{O}} \psi_E\bigg(\left(-\frac{b_{\xi}}{x} \Omega^{\frac{k}{2}-a(\xi)}+\Omega^{\frac{k}{2}+\frac{t}{2}}+\frac{v \Norm(x)}{x}\Omega^{\frac{k}{2}-l}\right)y\Omega^{-r-\rho} \nonumber \\
	&&\qquad\qquad\qquad\qquad\qquad +\left(\frac{v\Omega^{\frac{k}{2}-l}}{2}\Norm(y)+\frac{b_{\xi}\Omega^{\frac{k}{2}-a(\xi)}}{2x^2} y^2 \right) \varpi^{-\rho} \bigg) d_E y. \nonumber
\end{eqnarray}
Next we will rewrite the remaining integral as a Gau\ss\  sum. We recall that $\mathfrak{O}=\mathcal{O}\oplus\mathcal{O}\sqrt{\zeta}$ then we can view the integral as an two dimensional integral. The quadratic term is
\begin{equation}
	\Tr\left(\frac{v\varpi^{\frac{k}{2}-l}}{2}\Norm(y)+\frac{b_{\xi}\varpi^{\frac{k}{2}-a(\xi)}}{2x^2}y^2\right) = {}^t y\bigg(v\varpi^{\frac{k}{2}-l}A_1+\varpi^{\frac{k}{2}-a(\xi)}A_2\bigg)y \nonumber
\end{equation}
for
\begin{equation}
	A_1 = \left(\begin{matrix}1 & 0 \\ 0  & -\zeta \end{matrix}\right),  A_2 = \Norm(x)^{-2}\left(\begin{matrix} (b_1x_1^2+b_1 \zeta x_2^2-2b_2\zeta x_1x_2)& \zeta (b_2 x_1^2+b_2\zeta x_2^2-2b_1 x_1x_2)\\ \zeta (b_2 x_1^2+b_2\zeta x_2^2-2b_1 x_1x_2) & \zeta(b_1x_1^2+b_1 \zeta x_2^2-2b_2\zeta x_1x_2) \end{matrix}\right),  \nonumber
\end{equation}
and $y\in \mathcal{O}^2$. This can be checked by a brute force calculation. If we write $\Im(a_1+a_2\sqrt{\zeta}) = a_2$ and $\Re(a_1+a_2\sqrt{\zeta}) = a_1$, we obtain the more compact form
\begin{equation}
	A_2 = \left( \begin{matrix} \Re(\frac{b}{x^2}) & \zeta \Im(\frac{b}{x^2}) \\ \zeta\Im(\frac{b}{x^2}) & \zeta\Re(\frac{b}{x^2}) \end{matrix} \right). \nonumber
\end{equation} 
In particular $\det(A_2) = \zeta \Norm(\frac{b}{x^2})$. Since $\frac{b}{x^2} \in \mathfrak{O}^{\times}$, we have $\Re(\frac{b}{x^2})\in \mathcal{O}^{\times}$ or $\Im(\frac{b}{x^2})\in \mathcal{O}^{\times}$. Using this we can argue that at least one entry of $vA_1+A_2$ is a unit. Indeed, if $\Im(\frac{b}{x^2})$ is a unit, we are done. Otherwise, assume $v+\Re(\frac{b}{x^2}) \in \p$ then $\zeta(-v+\Re(\frac{b}{x^2}))\in \mathcal{O}^{\times}$ which establishes what was claimed.  

Similarly we can write the linear term as
\begin{equation}
	\Tr\left(\left(-\frac{b_{\xi}}{x}\varpi^{\frac{k}{2}-a(\xi)} + \varpi^{\frac{k}{2}+\frac{t}{2}}+\frac{v\Norm(x)}{x}\varpi^{\frac{k}{2}-l}\right)y\right) = 2 { }^tB y\nonumber
\end{equation}
for
\begin{equation}
	B= \left(\begin{matrix} (-b_1x_1+\zeta b_2x_2)\Norm(x)^{-1}+v\varpi^{\frac{k}{2}-l}x_1+\varpi^{\frac{k}{2}+\frac{t}{2}} \\ (-b_2x_1+b_1x_2)\zeta\Norm(x)^{-1}-v\zeta\varpi^{\frac{k}{2}-l} x_2 \end{matrix}\right). \nonumber
\end{equation}
In this notation we obtain
\begin{eqnarray}
	W_{\pi}(g_{t,l,v}) &=& \gamma q^{-2r-\frac{t}{2}} \sum_{x\in S^{\times}}\xi^{-1}(x)\psi(\varpi^{\frac{t}{2}}\Tr(x)+v\varpi^{-l}\Norm(x)) \nonumber \\
	&&\qquad \cdot G\left( \varpi^{-\rho}\Big(v\varpi^{\frac{k}{2}-l}A_1+\varpi^{\frac{k}{2}-a(\xi)}A_2\Big),2\varpi^{-r-\rho}B\right). \label{eq:unram_sc_reduced_S}
\end{eqnarray}
Here we restricted the sum to 
\begin{equation}
	x\in S^{\times} =  \left\{x \in (\mathfrak{O}/\p_E^r)^{\times}\colon - \frac{b_{\xi}}{x} \Omega^{\frac{k}{2}-a(\xi)}+\Omega^{\frac{k}{2}+\frac{t}{2}}+\frac{v \Norm(x)}{x}\Omega^{\frac{k}{2}-l} \in\mathfrak{P}^r \right\}, \nonumber
\end{equation} 
since otherwise the Gau\ss\  sum vanishes due to Lemma~\ref{lm:eval_quad_GS}. Writing $x= x_1+x_2 \sqrt{\zeta}$ and $b_{\xi} = b_1+b_2\sqrt{\zeta}$ we restate the congruences defining $S^{\times}$ as
\begin{eqnarray}
	-b_1\varpi^{\frac{k}{2}-a(\chi)}+x_1\varpi^{\frac{k}{2}+\frac{t}{2}}+v(x_1^2-\zeta x_2^2)\varpi^{\frac{k}{2}-l} \in \p^r,\nonumber\\
	-b_2\varpi^{\frac{k}{2}-a(\chi)} + x_2\varpi^{\frac{k}{2}+\frac{t}{2}} \in \p^r, \nonumber
\end{eqnarray}
for $x_1$ or $x_2$ is in $\mathcal{O}^{\times}$.

We will compute the set $S^{\times}$ in several cases and deduce the size of $W_{\pi}(g_{t,l,v})$ using \eqref{eq:unram_sc_reduced_S}.

\textbf{Case I: $0<l<\frac{n}{2}$.} In this situation we have $t=-k=-n$ and the structure of $S^{\times}$ is very simple. Indeed, we have
\begin{equation}
	x_2 \in b_2 +\p^r. \nonumber
\end{equation}
This leads to the quadratic congruence
\begin{equation}
	v\varpi^{\frac{k}{2}-l}x_1^2+x_1-(b_1+\zeta b_2^2\varpi^{\frac{k}{2}-l}) \in \p^r.\nonumber
\end{equation}
In the notation of Lemma~\ref{lm:quad_cong} this puts us in the situation where $v(b)=0$ and $v(a)=\frac{k}{2}-l>0$. Thus there is one solution. Even more, if $b_2 \not\in \mathcal{O}^{\times}$, then $x_2$ is not. But this forces $b_1 \in \mathcal{O}^{\times}$. Therefore, the unique solution satisfies $z_0=x_1+x_2\sqrt{\zeta}\in \mathfrak{D}^{\times}$. We  obtain
\begin{equation}
	W_{\pi}(g_{t,l,v}) = \gamma q^{\rho}\xi^{-1}(z_0)\psi(\varpi^{\frac{t}{2}}\Tr(z_0)+v\varpi^{-l}\Norm(z_0)) G(\varpi^{-\rho}A_2, 2\varpi^{-r-\rho}B). \nonumber
\end{equation}
Furthermore, since $\det(A_2) = \zeta\Norm(\frac{b}{z_0^2}) \in \mathcal{O}^{\times}$ and $A_2$ has entries in $\mathcal{O}$  we use Lemma~\ref{lm:eval_quad_GS} to see that
\begin{equation}
	\abs{W_{\pi}(g_{-n,l,v})} \leq 1. \nonumber
\end{equation}

\textbf{Case II: $l=\frac{n}{2}$.} In this case $k = n=2l$ and $-n\leq t<0$. We call $x_2$ admissible if it satisfies 
\begin{equation}
	x_2\varpi^{\frac{k}{2}+\frac{t}{2}} \in -b_2+\p^r. \nonumber
\end{equation}
In order to determine the structure of $S^{\times}$ we have to solve the quadratic congruence
\begin{equation}
	vx_1^2+x_1\varpi^{\frac{k}{2}+\frac{t}{2}}-(b_1+v\zeta x_2^2) \in \p^r, \nonumber
\end{equation}
for each admissible $x_2$. 

\textbf{Case II.1: $b_2 \in \mathcal{O}^{\times}$.} One sees that admissible $x_2$ exists only if $t=-k$. In this situation we  have exactly one admissible $x_2$. Namely
\begin{equation}
	x_2 = b_2 \in (\mathcal{O}/\p^r)^{\times}. \nonumber
\end{equation}
The quadratic equation for $x_1$ then has discriminant $\Delta=\Delta(v)=1+4v^2b_2^2\zeta +4vb_1$. If $\Delta\in \mathcal{O}^{\times}$, we have up to two possibilities for $x_1$, so that $\sharp S^{\times} \leq 2$. We now turn towards the matrix $vA_1+A_2$. We can compute 
\begin{equation}
	\det(vA_1+A_2) = \zeta \Norm(\frac{b}{x^2})-\zeta v^2 \in \zeta\frac{\Norm(b_{\xi})+(x_1-b_1)^2}{\Norm(x)^2} + \p. \nonumber
\end{equation}
Note that for certain compositions of $v$, $b_1$, and $b_2$ the case $\det(vA_1+A_2) \in \p$ can not be excluded. Therefore, we use the estimate
\begin{equation}
	\abs{G\left( \varpi^{-1}\Big(vA_1+A_2\Big),2\varpi^{-r-1}B\right)} \leq q^{-\frac{1}{2}}, \nonumber
\end{equation}
Thus in this case we obtain the bound
\begin{equation}
	\abs{W_{\pi}(g_{t,\frac{n}{2},v})} \leq  \begin{cases} 2q^{\frac{\rho}{2}} &\text{ if } t=-n,\\ 0 &\text{ else.} \end{cases} \nonumber
\end{equation}

Unfortunately, viewing $\Delta$ as an quadratic equation in $v$, it turns out that if $\Norm(b) \in \mathcal{O}^{\times 2}$ there are possibilities for $v$ such that $\Delta \in \p$. If this happens we use Lemma~\ref{lm:quad_cong} to parametrize the set $S^{\times}$ and define
\begin{equation}
	A_{\pm} =  -\frac{1}{2v}+b_2\sqrt{\zeta}\pm \frac{Y}{2v}\varpi^{\delta} \in \mathfrak{O}^{\times}. \nonumber
\end{equation}
Inserting the so obtained parametrization in \eqref{eq:unram_sc_reduced_S} yields
\begin{eqnarray}
	W_{\pi}(g_{t,l,v}) &=& q^{\rho}\sum_{\pm} \gamma_{\pm} \psi(\varpi^{\frac{t}{2}}\Tr(A_{\pm})+v\Norm(A_{\pm})\varpi^{-l}) \nonumber\\
	&&\qquad \cdot \sum_{x \in \mathcal{O}/\p^{\delta}}\xi^{-1}(A_{\pm}+x\Omega^{r-\delta}) \psi((1\pm  Y\varpi^{\delta})x\varpi^{-\rho-r-\delta}+vx^2\varpi^{-\rho-2\delta}) \nonumber \\
	&&\qquad\qquad \cdot G\left( \varpi^{-\rho}\Big(vA_1+A_2\Big),2\varpi^{-r-\rho}B_x\right). \nonumber
\end{eqnarray}
using the convention that $\gamma_{\pm} = \frac{\gamma}{2}$  if $v(\Delta)>0$ and $\gamma_{\pm} = \gamma$ otherwise.

We use
\begin{equation}
	x_2 = b_2 \text{ and } x_1 \in -\frac{1}{2v}+\p \nonumber
\end{equation}
to compute
\begin{eqnarray}
	A_{\p} &=& vA_1+\frac{1}{1- 4v^2b_2^2\zeta}\left( \begin{matrix} -4v\Norm(b) & \zeta b_2 \Delta \\ \zeta b_2 \Delta & -\zeta 4v\Norm(b) \end{matrix} \right) \nonumber \\
	&=& \frac{v}{1-\zeta 4 v^2b_2^2} \left( \begin{matrix} 1-\zeta 4 v^2b_2^2-4\Norm(b) & 0 \\ 0 & \zeta(-1+\zeta 4 v^2b_2^2-4\Norm(b))  \end{matrix} \right). \nonumber
\end{eqnarray}
Thus, $A_{\p}$ is independent of $x$ and diagonal. Furthermore, at least one of the diagonal entries is in $\mathcal{O}^{\times}$. The Gau\ss\  sum can be evaluated using Lemma~\ref{lm:eval_quad_GS}. For brevity we only deal with the case $\rho=0$. The details for the other case are very similar and left to the reader. Since $E/F$ is an unramified we have $\kappa_E =1$. Therefore, we can make use of the $p$-adic logarithm over $E$ without convergence issues and apply Lemma~\ref{lm:char_trick} to write
\begin{eqnarray}
	&&W_{\pi}(g_{t,l,v}) \nonumber \\
	&&\quad = q^{\delta}\sum_{\pm} \gamma_{\pm} \xi^{-1}(A_{\pm})\psi(\varpi^{\frac{t}{2}}\Tr(A_{\pm})+v\Norm(A_{\pm})\varpi^{-l}) \nonumber \\
	&&\qquad \cdot \int_{\mathcal{O}}\psi\left((1\pm  Y\varpi^{\delta})x\varpi^{-r-\delta}+vx^2\varpi^{-2\delta}-\Tr\left(\frac{b_{\xi}}{\Omega^{2r}}\log_E\left(1+\frac{x}{A_{\pm}}\Omega^{r-\delta}\right)\right)\right)  dx. \nonumber 
\end{eqnarray}
The remaining task is to show that the integral actually vanishes. To do so we open the Taylor expansion of the logarithm and obtain
\begin{eqnarray}
	I &=& \int_{\mathcal{O}}\psi\left((1\pm  Y\varpi^{\delta})x\varpi^{-r-\delta}+vx^2\varpi^{-2\delta}-\Tr\left(\frac{b_{\xi}}{\varpi^{2r}}\log_E\left(1+\frac{x}{A_{\pm}}\Omega^{r-\delta}\right)\right)\right)  dx \nonumber \\
	&=& \int_{\mathcal{O}}\psi(a_1x\varpi^{-r-\delta}+a_2x^2\varpi^{-2\delta}+a_3x^3 \varpi^{r-3\delta})dx \nonumber
\end{eqnarray}
for
\begin{eqnarray}
	a_1 &=& 1\pm Y\varpi^{\delta}-\Tr\left(\frac{b_{\xi}}{A_{\pm}}\right), \nonumber \\
	a_2 &=& v+\frac{1}{2}\Tr\left(\frac{b_{\xi}}{A_{\pm}^2}\right), \nonumber \\
	a_3 &=& -\frac{1}{3}\Tr\left(\frac{b_{\xi}}{A_{\pm}^3}\right). \nonumber
\end{eqnarray}
Our remaining task is to find further cancellation in $I$. We compute 
\begin{equation}
	a_1 \in 1+\frac{2b_2^2\zeta+\frac{b_1}{v}}{\frac{1}{4v^2}-\zeta b_2^2} \mod \p. \nonumber
\end{equation}
From this we can deduce that $a_1\in \p$ is equivalent to $\Delta\in\p$, which is assumed at the moment. Using $\Delta \in \p$ we can also check that
\begin{equation}
	a_2 \in v-\frac{\Norm(b)}{v\Norm(A_{\pm})^2}+\p. \nonumber
\end{equation}
However, this implies that in order for $a_2 \in \p$ we must have\footnote{One can actually check that $\Delta\in\p$ implies $a_1,a_2\in\p$. Even more, $\Delta=0$ implies $a_1=a_2=0$.}
\begin{equation}
	\Norm(b_{\xi})\in v^2\Norm(A_{\pm})^2+\p. \nonumber
\end{equation}

We further compute
\begin{eqnarray}
	a_3 \in -\frac{\Norm(b)}{3v^2 \Norm(A_{\pm})^3}+\p.\nonumber
\end{eqnarray}
Which implies that $a_3 \in 3^{-1}\mathcal{O}^{\times}$. Thus in the worst case scenario we obtain $I\leq q^{\frac{r}{3}-\delta}$. We conclude
\begin{equation}
	\abs{W_{\pi}(g_{-n,\frac{n}{2},v})} \leq 2q^{\frac{n}{12}}. \nonumber
\end{equation}

\textbf{Case II.2: $b_2\not\in \mathcal{O}^{\times}$, $ t = 2v(b_2)-n$, and $t<-\frac{n}{2}-\rho$.}  Thus, for each $\alpha \in \mathcal{O}/\p^{v(b_2)}$
\begin{equation}
	x_2 = (b_2)_0+\alpha \varpi^{r-v(b_2)} \nonumber
\end{equation}
is admissible. The equation for $x_1$ reads
\begin{equation}
	vx_1^2+\varpi^{v(b_2)}x_1-(b_1+v\zeta x_2^2)\in \p^r \nonumber
\end{equation}
and has discriminant
\begin{equation}
	\Delta = \varpi^{2v(b_2)}+4v(b_1+v x_2^2 \zeta). \nonumber
\end{equation}
For a fixed $x_2$ we write $S_{x_2}$ for the set of possible $x_1$. Since there are $q^{v(b_2)}$ admissible $x_2$ we have the estimate
\begin{equation}
	\abs{W_{\pi}(g_{2v(b_2)-n,\frac{n}{2},v})} \leq  \sup_{x_2 \text{ admissible}}\abs{\sum_{x_1\in S_{x_2}}\xi^{-1}(x_1+x_2\sqrt{\zeta})\psi(2\varpi^{\frac{t}{2}}x_1+v\varpi^{-l}x_1^2)}. \nonumber
\end{equation}
If $\Delta\in \mathcal{O}^{\times}$, then it is so for any $x_2$ and $\sharp S_{x_2} \leq2$. Thus, we are done after estimating trivial. However, if $\Delta \in \p$, the $S_{x_2}$-sum is potentially large. We can deal with this using another stationary phase estimate. The computations turn out to be very similar to ones in Case~II.1. The outcome is
\begin{equation}
	\abs{W_{\pi}(g_{2v(b_2)-n,\frac{n}{2},v})} \leq 2q^{\frac{n}{12}} \text{ for $v(b_2)>0$}. \nonumber
\end{equation}

\textbf{Case II.3: $b_2\not\in \mathcal{O}^{\times}$, $t\neq v(b_2)-n$ or $t\geq -\frac{n}{2}-\rho$.} In this case we use \eqref{eq:unram_sc_reduced_S} to estimate
\begin{equation}
	\abs{W_{\pi}(g_{t,l,v})}\leq q^{-2r-\frac{t}{2}-\frac{\rho}{2}}\sharp S. \nonumber
\end{equation}
Due to the assumptions of this case it is easy to compute $\sharp S$. We obtain
\begin{equation}
	\abs{W_{\pi}(g_{t,l,v})}\leq 2q^{\frac{\rho}{2}}. \nonumber
\end{equation}

\textbf{Case III: $\frac{n}{2}<l$.} Here we have $k=2l=-t$. Thus, towards the structure of $S^{\times}$  we find
\begin{equation}
	x_2 \in b_2\varpi^{l-a(\xi)}+\p^r \nonumber
\end{equation}
is no unit. We therefore have to find $x_1\in (\mathcal{O}/\p^r)^{\times} $ satisfying
\begin{equation}
	vx_1^2+x_1+(b_1\varpi^{l-a(\xi)}-\zeta b_2^2\varpi^{2l-n}) \in \p^r. \nonumber
\end{equation}
In particular we use Lemma~\ref{lm:quad_cong} with $v(a)=v(b)=v(\Delta)=0$ and find that $\sharp S^{\times} \leq 2$. Estimating the $S^{\times}$-sum in \eqref{eq:unram_sc_reduced_S} trivially yields
\begin{eqnarray}
	\abs{W_{\pi}(g_{-2l,l,v})} &\leq& 2q^{\rho}\abs{ G(\varpi^{-\rho}v,2\varpi^{-r-\rho}((b_1x_1-b_2x_2)\Norm(x)+v(x_1)+1))}\nonumber \\
	&&\qquad\cdot \abs{G(-\varpi^{-\rho}v\zeta,2\varpi^{-r-\rho}((b_2x_1-b_1x_2)\zeta \Norm(x)+v\zeta x_2))} \nonumber \\
	&\leq& 2. \nonumber
\end{eqnarray}

This was the last case to consider and the proof is complete.
\end{proof}

We now turn to representations associated to ramified extensions of $F$.

\begin{lemma} \label{lm:ramified_sc_bound}
Let $\pi$ be a dihedral supercuspidal representation associated to a ramified quadratic extension $E/\Q_p$ and a multiplicative character $\xi$. Then we have
\begin{equation}
	\abs{W_{\pi}(g)}\leq 2q^{\frac{n}{12}+\frac{1}{2}}. \nonumber
\end{equation}
\end{lemma}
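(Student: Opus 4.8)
The plan is to transcribe the $p$-adic stationary phase argument of Lemma~\ref{lm:unramified_sc_bound} to a ramified basis of $E$, the genuinely new features being a partially degenerate norm form and the fact that both parities of $t$ now occur. I would first collect the relevant structure: for a ramified quadratic extension $E/\Q_p$ with $p$ odd we have $e=2$, $f=1$, $d=1$, hence $n=a(\xi)+1$, $n(\psi_E)=-1$, $\vol(\mathfrak{O},\mu_E)=q^{-1/2}$, and $\kappa_E=\lceil 2/(p-1)\rceil=1$, so that Lemma~\ref{lm:char_trick} may be applied over $E$ without convergence problems. Writing $E=F(\sqrt{D})$ with $v(D)=1$ and $\Omega=\sqrt{D}$ one has $\mathfrak{O}=\mathcal{O}\oplus\mathcal{O}\Omega$, $\mathfrak{O}^{\times}=\{x_1+x_2\Omega\colon x_1\in\mathcal{O}^{\times},\,x_2\in\mathcal{O}\}$, and for $x=x_1+x_2\Omega$
\[
	\Tr(x)=2x_1,\qquad \Norm(x)=x_1^2-Dx_2^2 .
\]
Thus the norm form reduces modulo $\p$ to the rank-one form $x_1^2$; this, together with the $q^{-d/2}$ normalisation, is the source of the extra factor $q^{1/2}$ in the final bound.

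Next I would invoke Lemma~\ref{lm:supp_wh_supercusp} to reduce to $t=-k$ (when $l\neq\frac{n}{2}$) or $-k\leq t<0$ (when $l=\frac{n}{2}$), with $k=\max(n,2l)$, and to the oscillatory integral $W_{\pi}(g_{t,l,v})=\gamma q^{-t/2}K(\xi^{-1},\Omega^{t},v\varpi^{-l})$. Since $f=1$, here $\Omega^{t}$ makes sense for every integer $t$, and $\Tr(\Omega^{t}x)$ is a multiple of $x_1$ when $t$ is even and of $x_2$ when $t$ is odd; keeping track of this parity (along with $n=a(\xi)+1$) is the bookkeeping device that did not arise in the unramified case. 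Expanding $x=x_1+x_2\Omega$ in coordinates I would split $\mathfrak{O}^{\times}$ into cosets of $1+\mathfrak{P}^{r}$ for the appropriate depth $r$, linearise $\xi^{-1}$ on each coset by Lemma~\ref{lm:char_trick}, and evaluate the inner integral in $y=y_1+y_2\Omega$ as a two-dimensional Gau\ss\ sum $G(\tfrac{\varpi^{-\rho}}{2}A_{x},\varpi^{-r-\rho}B_{x})$ over $\mathcal{O}^2$, exactly as in \eqref{eq:unram_sc_reduced_S}. Because the quadratic part comes from $v\varpi^{-l}\Norm(x+y\Omega^{r})$ together with the $b_{\xi}$-term, the reduction of $A_{x}$ modulo $\p$ will in general have rank one, so Lemma~\ref{lm:eval_quad_GS} bounds these Gau\ss\ sums by $q^{-1/2}$ rather than $q^{-1}$.

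I would then run the case analysis on $l$ versus $\frac{n}{2}$, in parallel with Cases~I--III of Lemma~\ref{lm:unramified_sc_bound}. In each case the congruence cutting out the critical set $S^{\times}$ essentially pins $x_2$ (up to a power of $\p$ governed by $b_{\xi}$) and leaves a single quadratic congruence in $x_1$; Lemma~\ref{lm:quad_cong} then gives $\sharp S^{\times}\leq 2$ when its discriminant is a unit and an explicit parametrisation otherwise. In the non-degenerate subcases one estimates the $S^{\times}$-sum in \eqref{eq:unram_sc_reduced_S} trivially and bounds the Gau\ss\ sums by Lemma~\ref{lm:eval_quad_GS}, picking up the $q^{1/2}$ from the rank-one contribution; in the degenerate subcase (discriminant in $\p$) one reparametrises $S^{\times}$ via Lemma~\ref{lm:quad_cong}, opens the Taylor expansion of $\log_E$ to third order, and recognises the remaining integral as a $p$-adic Airy integral $\text{Ai}_{\psi}$, which is $\leq 2$; this produces the $q^{n/12}$ term. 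For the very shallow edge cases ($r=0$) one falls back on \cite[Corollary~2.35]{Sa15_2}, as in the unramified proof. Assembling the worst cases over all $t,l$ gives $\sup_{g}\abs{W_{\pi}(g)}\leq 2q^{n/12+1/2}$.

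The main obstacle I anticipate is the combined bookkeeping of the parity of $t$ (which decides whether the linear term sits on $x_1$ or on $x_2$, hence the shape of $S^{\times}$) and the partial degeneracy of the norm form $x_1^2-Dx_2^2$, which makes the rank of the associated quadratic Gau\ss\ sums case-dependent; both were essentially trivial in the unramified setting. The degenerate-discriminant subcase, requiring the reparametrisation and the cubic/Airy estimate, is technically the heaviest part, but it is otherwise a faithful transcription of Case~II.1 of Lemma~\ref{lm:unramified_sc_bound}.
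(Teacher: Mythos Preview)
Your proposal is correct and follows essentially the same approach as the paper: reduce via Lemma~\ref{lm:supp_wh_supercusp} to the integral $K(\xi^{-1},\Omega^t,v\varpi^{-l})$, linearise $\xi$ via Lemma~\ref{lm:char_trick}, extract a two-dimensional Gau\ss\ sum, and run a case split on $l$ versus $n/2$ with the degenerate-discriminant subcase handled by a cubic/Airy estimate. Two minor bookkeeping points you will discover on implementation: the paper takes the depth to be $k=\max(a(\xi),2l)$ (not $\max(n,2l)$) and the critical congruence lands in $\mathfrak{P}^{r-1}$ rather than $\mathfrak{P}^{r}$ because $n(\psi_E)=-1$, so an extra $\mathfrak{O}/\mathfrak{P}$-fibre must be summed; it is this fibre (in the $r-1$ odd subcase) that produces the $q^{1/2}$, rather than the rank of $A_{\p}$ per se.
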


\begin{proof}
Since $E/F$ is a ramified extension we have $f=1$, $e=2$ and $d=1$. In particular $n(\psi_E)=-1$ and the additive measure on $E$ is normalized so that
\begin{equation}
	\vol(\mathfrak{D},\mu_E) = q^{-\frac{1}{2}}. \nonumber
\end{equation} 
By changing $\varpi$ if necessary we can assume that $E=F(\sqrt{\varpi})$ and $\Omega = \sqrt{\varpi}$. The identity \eqref{eq:important_norm_formula} still holds.

The log-conductor of $\pi$ is given by $n=a(\pi) = a(\xi)+1$. We observe that
\begin{equation}
	\abs{W_{\pi}(g_{-n,0,v})} = \abs{\epsilon(\frac{1}{2},\tilde{\pi})} = 1. \nonumber
\end{equation}
Thus, we can assume $l>0$ and define $k=\max(a(\xi),2l)=2r+\rho$. Using Lemma~\ref{lm:supp_wh_supercusp} and Lemma~\ref{lm:char_trick} we compute
\begin{eqnarray}
	W_{\pi}(g_{t,l,v}) &=& \gamma q^{-\frac{t}{2}}K(\xi^{-1}, \Omega^t,v\varpi^{-l}) \nonumber \\
	&=& \gamma q^{-\frac{t}{2}-r} \sum_{x\in (\mathfrak{O}/\mathfrak{P}^r)^{\times}} \xi^{-1}(x)\psi(\Tr(x\Omega^{t})+v\varpi^{-l}\Norm(x)) \nonumber \\
	&&\qquad \cdot  \int_{\mathfrak{O}} \psi_E\bigg(v\Norm(y) \Omega^{k-l-\rho}+\frac{b_{\xi}}{x^2}y^2\Omega^{k-a(\chi)-1-\rho} \nonumber \\
	&&\qquad\qquad\qquad +\left(-\frac{b_{\xi}}{x}\Omega^{k-a(\xi)}+\Omega^{k+1+t}+\frac{v\Norm(x)}{x}\Omega^{k-2l+1}\right) y\Omega^{-r-\rho-1}\bigg)d_Ey. \label{eq:prototype_ram_su_w}
\end{eqnarray}
We need to estimate this for $t\geq -k$ if $l \geq \frac{n}{2}$ and $t=-k-1$ otherwise. In all these cases the remaining integral reduces to a quadratic Gau\ss\  sum over $E$. Thus we can restrict the $x$-sum to
\begin{equation}
	S= \{ x\in (\mathfrak{O}/\mathfrak{P}^r)^{\times} \colon  -b_{\xi} \Omega^{k-n}+x\Omega^{k+t}+v\Norm(x)\Omega^{k-2l}\in \mathfrak{P}^{r-1} \}.\nonumber
\end{equation}

\textbf{Case I: $0<l<\frac{n}{2}$.} Due to the support properties of $W_{\pi}$ we can assume that $t=-n=-a(\xi)-1$. Obviously $k=a(\xi)$. The set $S$ is determined by the congruence
\begin{equation}
	-b_{\xi} + x +v\Norm(x)\Omega^{n-2l}\in \mathfrak{P}^r. \nonumber
\end{equation}
If $n-2l\geq r$, there is exactly one solution. Namely $x = b_{\xi}$ modulo $\mathfrak{P}^r$. Otherwise we write $x = x_1+x_2\Omega$. This, leads to the two congruences
\begin{eqnarray}
	-b_1+x_1 &\in& \p^{\lceil \frac{r}{2} \rceil},\nonumber\\
	-b_2+x_2+v(x_1^2-\varpi x_2^2)\varpi^{r-l} &\in& \p^{\lfloor\frac{r}{2}\rfloor}. \nonumber
\end{eqnarray}
Using Lemma~\ref{lm:quad_cong} we observe that there is a unique solution $(x_1,x_2) \in \mathcal{O}/\p^{\lceil \frac{r}{2} \rceil} \times \mathcal{O}/\p^{\lfloor\frac{r}{2}\rfloor}$. Furthermore, one quickly checks that $x_1$ is a unit. We recall $n(\psi_E)=-1$,  $\vol(\mathfrak{D},\mu_E) = q^{-\frac{1}{2}}$ and $k-l-\rho>0$. With this in mind we evaluate the quadratic Gau\ss\  sum and obtain
\begin{equation}
	\abs{W_{\pi}(g_{-n,l,v})}= 1. \nonumber
\end{equation}

\textbf{Case II: $l=\frac{n}{2}$.} In this case $2l=a(\xi)+1$, $k=2l$ and $\rho=0$. We write $x=x_1+x_2\Omega$ and $b_{\xi} = b_1+b_2\Omega$, to transform the congruence condition in $S$ into a system of congruences over $F$. For simplicity we consider further subcases.

\textbf{Case II.1: $2l+t>0$ even.} In this case we obtain
\begin{eqnarray}
	-b_1+x_1 \varpi^{\frac{2l+t}{2}}+v x_1^2-v\varpi x_2^2 &\in& \p^{\lceil \frac{r-1}{2}\rceil}, \nonumber\\
	-b_2+x_2 \varpi^{\frac{2l+t}{2}} &\in& \p^{\lfloor \frac{r-1}{2}\rfloor}. \nonumber
\end{eqnarray}
The first equation is quadratic in $x_1$ with discriminant $\Delta(v,x_2) = \varpi^{2l+t}+4v(b_1+\varpi x_2^2)$. Since $b_{\xi}\in \mathfrak{O}^{\times}$ we have $b_1 \in \mathcal{O}^{\times}$ and therefore $\Delta\in \mathcal{O}^{\times}$. There are up to two solutions modulo $\p^{\lceil \frac{r-1}{2}\rceil}$ for $x_1$ for each given $x_2$. Thus the two congruences define a set $S'$ modulo $\mathfrak{P}^{r-1}$ with 
\begin{equation}
	\sharp S' \leq \begin{cases}
		2q^{\frac{2l+t}{2}} &\text{ if } \frac{2l+t}{2}< \lfloor\frac{r-1}{2}\rfloor,\\
		2q^{\lfloor\frac{r-1}{2}\rfloor} &\text{ if } \frac{2l+t}{2}\geq \lfloor\frac{r-1}{2}\rfloor, \\
		0 &\text{ else},
	\end{cases} \nonumber
\end{equation}
and 
\begin{equation}
	S = \{ x+\alpha \Omega^{r-1}\colon x\in S', \alpha\in \mathfrak{O}/\mathfrak{P}\}. \nonumber
\end{equation}
We obtain
\begin{equation} 
	\abs{W_{\pi}(g_{-2l,l,v})} \leq q^{-\frac{1}{2}-\frac{2l+t}{2}}\sharp S' \sup_{x\in S'} \abs{\sum_{\alpha \in \mathfrak{O}/\mathfrak{P}} \psi_E(\Omega^{-2}(z_x \alpha +\frac{v}{2}\Norm(\alpha)))}. \nonumber
\end{equation}
The remaining sum is a sum over the finite field $\mathcal{O}/\p$ and can be estimated using $\eqref{eq:Weils_bound}$. We obtain
\begin{equation}
	\abs{W_{\pi}(g_{-2l,l,v})} \leq 2. \nonumber
\end{equation}

\textbf{Case II.2: $2l+t>0$ odd. } We write $2l+t = 2\beta-1$ for some $\beta \in \N$. Then we have to solve the congruences
\begin{eqnarray}
	-b_2+x_1\varpi^{\beta-1} &\in& \p^{\lfloor \frac{r-1}{2}\rfloor} , \label{eq:ram_cong_case_122a} \\
	-v\varpi x_2^2+\varpi^{\beta}x_2-b_1+vx_1^2 &\in& \p^{\lceil\frac{r-1}{2}\rceil}. \label{eq:ram_cong_case_122b}
\end{eqnarray}
Looking at the first congruence reveals that, if $\beta-1 \neq v(b_2) < \lfloor \frac{r-1}{2} \rfloor$, there are no solutions for $x_1\in \mathcal{O}^{\times}$. Thus, $S=\emptyset$ and $W_{\pi}(g_{t, \frac{n}{2},v})=0$. 

The other extreme is $\beta-1, v(b_2)\geq \lfloor \frac{r-1}{2}\rfloor$. Here we can go back to the original equation determining $x$ and find that we need to count solutions to
\begin{equation}
	\Norm(x) \in \frac{b_{\xi}}{v}+\mathfrak{P}^{r-1}. \nonumber
\end{equation}
Therefore,
\begin{equation}
	\sharp S \leq  q \sharp\{ y \colon \Norm(y) \in 1+\p^{\lceil \frac{r-1}{2} \rceil}\} \leq  q^{\lceil\frac{r-1}{2}\rceil}.\nonumber
\end{equation}
We conclude that
\begin{equation}
	\abs{W_{\pi}(g_{t,\frac{n}{2},v})} \leq q^{-\beta}\sharp S \leq 1. \nonumber 
\end{equation}

The generic situation appears whenever $\beta-1 = v(b_2) <\lfloor \frac{r-1}{2}\rfloor$. In this case there are $q^{\beta-1}$ solutions for $x_1$. Let $S_{x_1}$ be the set of possible $x_2$ for each $x_1$. Further, we note that $-\frac{t}{2}-r-\frac{1}{2} = -\beta$ and bound
\begin{equation}
	\abs{W_{\pi}(g_{t,l,v})} \leq q^{-1} \sup_{x_1} \abs{\mathcal{S}(x_1) }, \nonumber
\end{equation}
for 
\begin{equation}
	\mathcal{S}(x_1) = \sum_{\substack{\beta\in \mathfrak{O}/\mathfrak{P},\\ x_2 \in S_{x_2} }}\xi^{-1}(x_1+\Omega x_2 + \beta \Omega^{r-1})\psi_E(x_2\Omega+\beta \Omega^{r-1}+\frac{v\varpi^{-l}}{2}\Norm(x_1+x_2\Omega+\beta \Omega^{r-1})). \nonumber
\end{equation}
After considering $r-1$ even or odd separately one can estimate $\mathcal{S}(x_1)$ and obtain
\begin{equation}
	\abs{W_{\pi}(g_{t,l,v})} \leq 2q^{\frac{n}{12}}. \nonumber
\end{equation}
We leave out the details at this point since we will perform a very similar calculation in the next case.

\textbf{Case II.3: $t=-2l$.} In this situation we have $x_2 = b_2 +\p^{\lfloor \frac{r-1}{2} \rfloor}$ so that everything comes down to solving the quadratic congruence
\begin{equation}
	vx_1^2+x_1-\varpi vb_2^2-b_1 \in \p^{\lceil \frac{r-1}{2}\rceil} \nonumber
\end{equation}
with discriminant
\begin{equation}
	\Delta = 1+4vb_1+4v^2b_2^2\varpi. \nonumber
\end{equation}
Depending on the $p$-adic size of $\Delta$ we have to examine different cases.

First, assume $v(\Delta)\geq 1$. Then $x_1$ is of the form 
\begin{equation}
	x_{\pm} =\begin{cases}
		-\frac{1}{2v} \pm \frac{Y}{2v} \varpi^{\delta}+\alpha \varpi^{\lceil \frac{r-1}{2}\rceil -\delta} &\text{ if }\Delta = Y^2\varpi^{2\delta} \text{ for some } \delta<\frac{1}{2}\lceil\frac{r-1}{2}\rceil \text{ for } \alpha \in \mathcal{O}/\p^{ \delta}, \\
	 	-\frac{1}{2v}+\alpha \varpi^{\lceil \frac{r-1}{2}\rceil -\delta} &\text{ if } v(\Delta) \geq \lceil \frac{r-1}{2}\rceil \text{ with } \delta=\lfloor\frac{1}{2}\lceil \frac{r-1}{2} \rceil\rfloor \text{ and } \alpha \in \mathcal{O}/\p^{ \delta}.
	\end{cases} \nonumber
\end{equation}
We define $B_{\pm}$ to be the $\alpha$ independent part of $x_{\pm}$. This determines $x\in S$ up to $\mathfrak{P}^{r-1}$. We obtain
\begin{eqnarray}
	S &=& \{ B_{\pm}+\alpha \varpi^{\lceil \frac{r-1}{2}\rceil -\delta}+b_2\Omega + \beta\Omega^{r-1} \colon \alpha \in \mathcal{O}/\p^{\delta}, \beta \in \mathfrak{O}/\mathfrak{P} \} \nonumber \\
	&=& \begin{cases} 
		\{B_{\pm}+\alpha \varpi^{\lceil \frac{r-1}{2}\rceil -\delta}+b_2\Omega \colon \alpha \in \mathcal{O}/\p^{\delta +1} \} &\text{ if $r-1$ is even,}\\
		\{ B_{\pm}+\alpha \varpi^{\lceil \frac{r-1}{2}\rceil -\delta}+(b_2+\beta \varpi^{\frac{r}{2}-1})\Omega  \colon \alpha \in \mathcal{O}/\p^{ \delta }, \beta \in \mathcal{O}/\p \} &\text{ if $r-1$ is odd.}
	\end{cases} \nonumber
\end{eqnarray} 

Next, we reinsert this parametrization in \eqref{eq:prototype_ram_su_w}. First, we deal with $r-1$ odd. Each element of $S$ is of the shape $A_{\pm} + \beta \Omega^{r-1} + \alpha \Omega^{r-2\delta}$, for $A_{\pm}= B_{\pm}+b_2\Omega$. We find
\begin{eqnarray}
	W_{\pi}(g_{-n, \frac{n}{2},v}) &=& \sum_{\pm}\gamma_{\pm} q^{-\frac{1}{2}}\xi^{-1}(A_{\pm}) \psi(\Tr(A_{\pm})+v\varpi^{-l}\Norm(A_{\pm})) \nonumber \\
	&&  \quad \sum_{\substack{\alpha \in \mathcal{O}/\p^{\delta },\\ \beta\in \mathcal{O}/\p}}\xi^{-1}(1+\frac{\alpha}{A_{\pm}}\Omega^{r-2\delta}+\frac{\beta}{A_{\pm}}\Omega^{r-1}) \nonumber \\
	&&\qquad \qquad \cdot \psi(2(1+v\Re(\overline{A_{\pm}}))\alpha\varpi^{-\frac{r}{2}-\delta}-2vb_2\beta \varpi^{-\frac{r}{2}}+v\alpha^2 \varpi^{-2\delta}+v\beta^2 \varpi^{-1}). \nonumber
\end{eqnarray}	
We use Lemma~\ref{lm:char_trick} to transform $\xi$ into an additive oscillation. The Taylor expansion of $\log_E$ is given by
\begin{eqnarray}
	&&-\Tr\left( \frac{b_{\xi}}{\Omega^{a(\xi)+1}}\log_E(1+(\frac{\alpha+\beta\Omega^{2\delta -1 }}{A_{\pm}})\Omega^{r-2\delta})\right)   \nonumber \\
	&&\qquad\qquad\qquad\qquad = \sum_{j\geq 1} \frac{(-1)^j}{j}\Tr\left[ \frac{b_{\xi}}{A_{\pm}^j}(\alpha+\beta\varpi^{\delta-1}\Omega)^j \right] \varpi^{(j-2)\frac{r}{2}-j\delta}. \nonumber
\end{eqnarray}
Observe $r-4\delta \geq 0$, so that we can truncate after the third term. We write $\frac{b_{\xi}}{A_{\pm}^j} = a_j'+a_j''\Omega$ and have a closer look at the coefficients.
\begin{eqnarray}
	\Tr\left[ \frac{b_{\xi}}{A_{\pm}}(\alpha+\beta\varpi^{\delta-1}\Omega)\right] &=& 2(a_1'\alpha+a_1''\beta \varpi^{\delta}), \nonumber \\
	\Tr\left[ \frac{b_{\xi}}{A_{\pm}^2}(\alpha+\beta\varpi^{\delta-1}\Omega)^2 \right] &=& 2(a_2'\alpha^2+2a_2''\alpha \beta \varpi^{\delta}+a_2'\beta^2\varpi^{2\delta-1})\nonumber \\
	\Tr\left[ \frac{b_{\xi}}{A_{\pm}^3}(\alpha+\beta\varpi^{\delta-1}\Omega)^3\right] &=& 2(a_3'\alpha^3+3a_3'\alpha \beta^2\varpi^{2\delta-1}+3a_3''\alpha^2\beta\varpi^{\delta}+a_3''\beta^3\varpi^{3\delta-1})\nonumber
\end{eqnarray}
This shows
\begin{eqnarray}
	&&W_{\pi}(g_{-n, \frac{n}{2},v})  \nonumber \\
	&&\quad =  \sum_{\pm}\gamma_{\pm} q^{-\frac{1}{2}}\xi^{-1}(A_{\pm}) \psi(\Tr(A_{\pm})+v\varpi^{-l}\Norm(A_{\pm})) \nonumber \\
	&& \qquad\cdot \sum_{\alpha \in \mathcal{O}/\p^{\delta }}\psi(-\frac{2}{3}a_3'\alpha^3\varpi^{\frac{r}{2}-3\delta}+(v+a_2')\alpha^2\varpi^{-2\delta}+2(1+v\Re(\overline{A_{\pm}})-a_1')\alpha\varpi^{-\frac{r}{2}-\delta}) \nonumber \\
	&& \quad \qquad \cdot \sum_{\beta\in \mathcal{O}/\p} \psi(\beta^2\varpi^{-1}(v+a_2')+\beta\varpi^{-\frac{r}{2}}2(-vb_2-a_1''+a_2''\alpha\varpi^{\frac{r}{2}-\delta}). \nonumber
\end{eqnarray}	

We first look at the $\beta$-sum. It turns out that
\begin{equation}
	v+a_2' \in v(1+4vb_1) + \p \subset \p. \nonumber
\end{equation}
Thus, the inner sum is actually linear and introduces the congruence condition
\begin{equation}
	\alpha a_2'' \varpi^{\frac{r}{2}-\delta} -vb_2-a_1'' \in \p^{\frac{r}{2}}. \nonumber
\end{equation}
A short computation shows that $a_2''\in \p^{\delta}$ which makes this congruence independent of $\alpha$. We may as well assume that it is satisfied since the remaining sum vanishes otherwise. We are left with the $\alpha$-sum. To deal with it we check that $a_3' = -8b_1v^3 \in\mathcal{O}^{\times}$. This implies at worst cubic cancellation. Indeed one can show, assuming $\Delta \in\p$, that the linear and the quadratic term are in $\p$. We obtain
\begin{equation}
	\abs{W_{\pi}(g_{-n, \frac{n}{2},v})} \leq 2q^{\frac{r}{6}+\frac{1}{2}} = 2q^{\frac{n}{12}+\frac{1}{2}}.\nonumber
\end{equation}

If $r-1$ even or $\Delta\in\mathcal{O}^{\times}$, one obtains
\begin{equation}
	W_{\pi}(g_{-n,\frac{n}{2},v}) \leq 2q^{\frac{n}{12}}. \nonumber
\end{equation}
The details are left to the reader.

\textbf{Case III: $\frac{n}{2}<l$.} In this case we have $k=2l$, $\rho=0$ and $t= -k =-2l$. In order to compute $S$ we write $x=x_1+x_2\Omega$ and obtain the system of equations
\begin{eqnarray}
	-b_2'+x_2 &\in &\p^{\lfloor \frac{r-1}{2} \rfloor}, \nonumber \\
	vx_1^2+x_1 -v\varpi x_2^2-b_1' &\in & \p^{\lceil \frac{r-1}{2} \rceil}, \nonumber
\end{eqnarray}
where $b_1'+b_2'\Omega = b_{\xi} \Omega^{2l-n}$. Because $2l>n$ we obtain a quadratic equation for $x_1$ with discriminant in $\mathcal{O}^{\times}$. Thus, we obtain maximal two solutions $\{x_+,x_-\}\subset (\mathfrak{O}/\mathfrak{P}^{r-1})^{\times}$. We write $S'$ for this set of solutions. Then we have
\begin{equation}
	S= \{ x+\alpha\Omega^{r-1} \colon x\in S', \alpha \in \mathfrak{O}/\mathfrak{P}\}\subset (\mathfrak{O}/\mathfrak{P}^r)^{\times}. \nonumber
\end{equation}
Inserting this parametrization of $S$ in \eqref{eq:prototype_ram_su_w} yields
\begin{equation}
	W_{\pi}(g_{-2l,l,v}) = \gamma q^{-\frac{1}{2}} \sum_{x\in S'} \xi^{-1}(x) \psi(\Tr(\Omega^t x)+v\varpi^{-l}\Norm(x)) \sum_{\alpha \in \mathfrak{O}/\mathfrak{P}} \psi_E(\Omega^{-2}(z_x \alpha +\frac{v}{2}\Norm(\alpha))). \nonumber
\end{equation}
Since $n(\psi_E)=-1$, the $\alpha$-sum is a complete exponential sum over a finite field. Therefore, we can use \eqref{eq:Weils_bound} and obtain
\begin{equation}
	\abs{W_{\pi}(g_{-2l,l,v})} \leq 2. \nonumber
\end{equation}

Finally, we observe that since $\xi$ does not factor through the norm we have $a(\xi)\geq 2$. So that $k\geq 2$ and we have covered all possible cases.
\end{proof}

\subsection{Twists of Steinberg}

In this section we analyse the behaviour of $W_{\pi}$ when $\pi$ is a twist of Steinberg. Before treating the most general case we look at two examples of small ramification.

\begin{lemma} \label{lm:St_bound}
If $\pi  = St$ then
\begin{equation}
	\abs{W_{\pi}(g_{t,l,v})} = \begin{cases} 
		q^{-(t+k)} &\text{ if } t\geq -k, \\
		0 &\text{ else,}
	\end{cases}\nonumber
\end{equation}
for $k=\max(2l,n)$. In particular
\begin{equation}
	\abs{W_{\pi}(g)} \leq 1. \nonumber 
\end{equation}
\end{lemma}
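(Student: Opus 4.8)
The plan is to derive the formula directly from the explicit evaluation in Lemma~\ref{lm:steinberg_exp} by taking absolute values, and then to propagate the resulting pointwise bound to all of $GL_2(\Q_p)$ using the decomposition \eqref{eq:basic_decomp}.

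First I would record that for $\pi = St$ one has $n = a(\pi) = \max(1,0) = 1$, so $k = \max(2l,n) = \max(2l,1)$; concretely $k = 1$ when $l = 0$ and $k = 2l$ when $l \geq 1$. Next I pass to absolute values in the four cases of Lemma~\ref{lm:steinberg_exp}. If $l = 0$, the only non-vanishing case is $t \geq -1 = -k$, where $\abs{W_{\pi}(g_{t,l,v})} = q^{-t-1} = q^{-(t+k)}$. If $l \geq 1$, the non-vanishing cases are $t \geq -l$, giving $\abs{W_{\pi}(g_{t,l,v})} = q^{-t-2l} = q^{-(t+k)}$, and $-2l \leq t \leq -l-1$, giving $\abs{W_{\pi}(g_{t,l,v})} = q^{-t-2l}\abs{\psi(-\varpi^{l+t}v^{-1})} = q^{-t-2l} = q^{-(t+k)}$ since $\psi$ is unitary; and $W_{\pi}(g_{t,l,v}) = 0$ whenever $t < -2l = -k$. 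I would then check that for $l \geq 1$ the two non-trivial ranges $t \geq -l$ and $-2l \leq t \leq -l-1$ are disjoint and their union is exactly $\{t : t \geq -k\}$, so that in every case
\begin{equation}
	\abs{W_{\pi}(g_{t,l,v})} = \begin{cases} q^{-(t+k)} &\text{if } t \geq -k, \\ 0 &\text{else,} \end{cases} \nonumber
\end{equation}
which is the first assertion of the lemma.

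For the uniform bound, note that $t \geq -k$ forces $t+k \geq 0$, hence $q^{-(t+k)} \leq 1$; together with the vanishing for $t < -k$ this gives $\abs{W_{\pi}(g_{t,l,v})} \leq 1$ for all $t \in \Z$, $l \geq 0$, $v \in \mathcal{O}^{\times}$. Finally, for arbitrary $g \in GL_2(\Q_p)$, the decomposition \eqref{eq:basic_decomp} places $g$ in some coset $ZN g_{t,l,v} K_1(n)$, and the equivariance relation from the start of Section~\ref{se:Wh_supp}, $W_{\pi}(z\,n(x)\,g_{t,l,v}\,k) = \omega_{\pi}(z)\psi(x)W_{\pi}(g_{t,l,v})$ for $k \in K_1(n)$, combined with $\abs{\omega_{\pi}} = \abs{\psi} = 1$, yields $\abs{W_{\pi}(g)} = \abs{W_{\pi}(g_{t,l,v})} \leq 1$.

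There is essentially no obstacle here: the content is entirely in Lemma~\ref{lm:steinberg_exp}, and the only step needing a moment's care is the case bookkeeping — verifying that the three non-trivial cases of that lemma partition $\{t \geq -k\}$ and that each contributes precisely $q^{-(t+k)}$ in absolute value (in particular that on the overlap region $l \geq 1$, $t \geq -l$ one really gets the single term $q^{-t-2l}$).
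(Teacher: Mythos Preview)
Your proposal is correct and takes essentially the same approach as the paper, which simply states that the claim is a direct consequence of Lemma~\ref{lm:steinberg_exp}. You have merely made the case-by-case verification explicit, including the reduction from $g\in GL_2(\Q_p)$ to the representatives $g_{t,l,v}$ via \eqref{eq:basic_decomp}, all of which the paper leaves implicit.
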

\begin{proof}
The claim is a direct consequence of  Lemma~\ref{lm:steinberg_exp}.
\end{proof}

\begin{lemma} \label{lm:St_twist_1_bound}
Let $q$ be odd and let $\pi = \chi St$ for $\chi\in \mathfrak{X}_1'$. Then we have
\begin{equation}
	W_{\pi}(g) \leq 2. \nonumber
\end{equation}
\end{lemma}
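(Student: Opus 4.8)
Since $\chi\in\mathfrak{X}_1'$ we have $a(\chi)=1$, hence $n=a(\pi)=\max(1,2a(\chi))=2$ and the borderline level $l=\tfrac{n}{2}$ coincides with $l=a(\chi)=1$. By the equivariance recalled at the beginning of Section~\ref{se:Wh_supp} (together with the decomposition~\eqref{eq:basic_decomp}) it suffices to bound $\abs{W_\pi(g_{t,l,v})}$, and the plan is to split into the ``non-critical'' range $l\neq 1$ and the single critical case $l=1$. For $l\neq 1$ I would simply read the answer off Lemma~\ref{lm:twist_steinberg_exp}: in the branch $l\neq a(\chi)=\tfrac{n}{2}$ the only nonzero values occurring are $\epsilon(\tfrac12,\tilde\pi)$ (at $l=0$) and $\chi^2(-v^{-1})\psi(-v^{-1}\varpi^{-l})$ (at $l\geq n$), both of modulus $1$; the potentially large term $q^{-t/2}\zeta_F(1)^{-2}K(\chi\circ\Norm,\dots)$ appears only for $0<l<n$, which for $n=2$ forces $l=1$ and is thus excluded.

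This leaves $l=1=a(\chi)=\tfrac{n}{2}$, which I would treat by ranges of $t$ using the coefficients from Lemma~\ref{lm:twis_of_ST}. For $t<-2$ one gets $c_{t,1}(\mu)=0$ for every $\mu\in\mathfrak{X}_1$: a nonzero coefficient would force $a(\mu\chi)=-\tfrac{t}{2}\geq 2$, contradicting $a(\mu\chi)\leq\max(a(\mu),a(\chi))\leq 1$; hence $W_\pi(g_{t,1,v})=0$. For $t>-2$ only $\mu=\chi^{-1}$ contributes, and combining Lemma~\ref{lm:twis_of_ST} with \eqref{eq:evaluation_of_GS} and $\chi(\varpi)=1$ gives $\abs{W_\pi(g_{t,1,v})}=\abs{c_{t,1}(\chi^{-1})}=(1+q^{-1})\,q^{-\frac{3}{2}-t}$, which for $t\geq -1$ is at most $(1+q^{-1})q^{-1/2}<1$ (recall $q$ is odd, so $q\geq 3$).

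The remaining case $t=-2$, $l=1$ is where all the content sits. The computation carried out in the proof of Lemma~\ref{lm:twist_steinberg_exp} is valid for any $l\geq 1$ and yields $W_\pi(g_{-2,1,v})=q\,\zeta_F(1)^{-2}\,K(\chi\circ\Norm,(\varpi^{-1},\varpi^{-1}),v\varpi^{-1})$, with quadratic space $E=F\times F$. Writing $x=(x_1,x_2)\in\mathcal{O}^\times\times\mathcal{O}^\times$, I would perform the inner integral over $x_1$, which (after replacing the additive measure on $\mathcal{O}^\times$ by $\zeta_F(1)^{-1}$ times the multiplicative one) is the Gau\ss\ sum $\zeta_F(1)^{-1}G(\varpi^{-1}(1+vx_2),\chi)$. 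By \eqref{eq:evaluation_of_GS} and $a(\chi)=1$ this vanishes unless $1+vx_2\in\mathcal{O}^\times$, in which case it equals $q^{-1/2}\epsilon(\tfrac12,\chi^{-1})\chi^{-1}(1+vx_2)$ (using $\chi(\varpi)=1$). So $K$ collapses to a constant of modulus $\zeta_F(1)^{-2}q^{-1/2}$ times the one-variable integral $\int_{\mathcal{O}^\times}\chi\!\left(\frac{x_2}{1+vx_2}\right)\psi(\varpi^{-1}x_2)\,d^\times x_2$ running over $x_2$ not congruent to $-v^{-1}$ modulo $\varpi$. Since $a(\chi)=1$ the integrand factors through $\mathbb{F}_q$, so this integral equals $(q-1)^{-1}$ times a complete exponential sum, and Weil's estimate \eqref{eq:Weils_bound} applies with the rational function $g(x)=x(1+vx)^{-1}$ (so $d=2$) and the linear phase $f(x)=x$ (so $N=1$), bounding the sum by $(N+d-1)\sqrt q=2\sqrt q$. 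Assembling the constants gives $\abs{K}\leq\frac{2}{q}$ and hence $\abs{W_\pi(g_{-2,1,v})}=\frac{(q-1)^2}{q}\abs{K}\leq\frac{2(q-1)^2}{q^2}<2$. Combining the three cases gives $\abs{W_\pi(g)}\leq 2$.

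I expect the only real obstacle to be extracting the sharp constant $2$ in this last case: one must keep the full $x_2$-sum intact and estimate it via Weil's bound with the correct degree count, whereas the more naive substitution turning $K$ directly into a Kloosterman sum times a pure character integral only delivers $\abs{W_\pi}\leq 2\sqrt q$. Everything else is bookkeeping of Gau\ss\ sums and $\epsilon$-factors.
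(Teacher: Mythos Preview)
Your approach is essentially the same as the paper's: both reduce the only nontrivial case $(t,l)=(-2,1)$ to a one--variable exponential sum over $\mathbb{F}_q$ and appeal to Weil's bound~\eqref{eq:Weils_bound}, while the remaining cases follow directly from the explicit formulas in Lemma~\ref{lm:twist_steinberg_exp} (you instead work directly from Lemma~\ref{lm:twis_of_ST}, which is fine and in fact slightly more transparent for the ranges $t<-2$ and $t>-2$). One small slip: the constant in front of the one--variable integral should be $\zeta_F(1)^{-1}q^{-1/2}$, not $\zeta_F(1)^{-2}q^{-1/2}$ (the inner integral already absorbed one factor $\zeta_F(1)^{-1}$ when you converted to $G$, and evaluating $G$ returned a $\zeta_F(1)$); your stated bound $|K|\leq 2/q$ and the final inequality $|W_\pi|\leq 2$ are nonetheless correct.
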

\begin{proof}
We consider all the cases appearing in Lemma~\ref{lm:twist_steinberg_exp}.  If $l=1$ and $t=-2$ we have
\begin{eqnarray}
	W_{\pi}(g_{-2,1,v}) &=& q\zeta_F(1)^{-2}  K(\chi\circ \Norm, (\varpi^{-1},\varpi^{-1}),v\varpi^{-1})  \nonumber \\	
	&=& q^{\frac{1}{2}}\zeta_F(1)^{-1}\epsilon(\frac{1}{2},\chi^{-1})\int_{\mathcal{O}^{\times}\setminus(-v^{-1}+\p)} \chi\left(\frac{x}{1+vx}\right)\psi(\varpi^{-1}x)d^{\times}x.\nonumber
\end{eqnarray}
This is an exponential sum over a finite field and \eqref{eq:Weils_bound} implies
\begin{equation}
	\abs{W_{\pi}(g_{t,l,v})} \leq 2. \nonumber
\end{equation}
The other cases are rather easy. One sees
\begin{equation}
	\abs{W_{\pi}(g_{t,l,v})} \leq 1 \nonumber
\end{equation}
for the remaining combinations of $l$ and $t$.
\end{proof}

\begin{lemma} \label{lm:St_twist_2_bound}
Let $\pi = \chi St$ for some unitary character $\chi$ with $\lceil \frac{a(\chi)}4 \rceil \geq \kappa_F$ and $a(\chi)>1$. Then we can evaluate the Whittaker function explicitly as follows.

If $l\neq \frac{n}{2},0$, then
\begin{equation}
	W_{\pi}(g_{t,l,v}) = \begin{cases} 
		\chi^2(x_0)\psi((x_0-b)\varpi^{-\frac{k}{2}}) &\text{ if } t=-k,\\
		0 &\text{ else,}
	\end{cases} \nonumber
\end{equation}
where $x_0$ is the unique solution to $v\varpi^{\frac{k}{2}-l}x^2+x+b\varpi^{\frac{k}{2}-a(\chi)}$ satisfying $v(x_0)=0$.

If $l=\frac{n}{2}$ and $-2> t>-n$, then $W_{\pi}(g_{t,l,v})=0$ unless $-bv\in \mathcal{O}^{\times 2}$. In the latter case we observe that
\begin{equation}
	W_{\pi}(g_{t,l,v}) = q^{-\frac{n+t}{4}}\gamma_F(-\frac{b}{2},l)\psi(-b\varpi^{-l})\sum_{\pm}\begin{cases}
		\gamma_F(\pm Y,-\frac{t}{2})\chi(-\frac{b}{v})\psi(\pm 2Y\varpi^{\frac{t}{2}}) \\
		\text{ if } Y^2=-\frac{b}{v}\in \mathcal{O}^{\times 2} \text{ and } t\geq -2\lceil\frac{l}{2}\rceil, \\
		\gamma_F(2v,-\frac{t}{2})\chi\left(b\frac{A_{\pm}+x_0\varpi^{-\frac{t}{2}-\lfloor\frac{l}{2}}\rfloor}{vA_{\mp}-x_0\varpi^{-\frac{t}{2}-\lfloor\frac{l}{2}}\rfloor}\right)\psi\left(A_{\pm}\varpi^{\frac{t}{2}}+x_0\varpi^{\lfloor\frac{l}{2}\rfloor}\right) \\
		\text{ if } Y^2=-4bv+\varpi^{n+t} = \Delta \in \mathcal{O}^{\times 2} \text{ and } t< -2\lceil\frac{l}{2}\rceil, \\
		\text{ where $x_0\in\mathcal{O}$ solves \eqref{eq:def_eq_for_x_0_tst} below and $A_{\pm}=-\frac{\varpi^{l+\frac{t}{2}}}{2v}\pm\frac{Y}{2v}$.}
	\end{cases} \nonumber
\end{equation}

If $-\frac{t}{2} = l=\frac{n}{2}$, we define $\Delta=1-4vb$. One has
\begin{equation}
	W_{\pi}(g_{t,l,v}) = q^{\frac{n}{12}}\gamma_F(-2v,l)\chi^{-1}(4v^2) \psi(\frac{3}{4v}\varpi^{-\frac{n}{2}})\text{Ai}_{\psi}(-16bv^3 \varpi^{\lceil \frac{l}{2}\rceil+2\{\frac{l}{2}\}-3\lfloor \frac{1}{2}\lceil\frac{l}{2}\rceil \rfloor};\Delta\varpi^{-\lfloor \frac{l}{2}\rfloor-\lfloor \frac{1}{2}\lceil\frac{l}{2}\rceil \rfloor}), \nonumber
\end{equation}
for $v(\Delta)\geq \lceil\frac{l}{2}\rceil$ and
\begin{equation}
	W_{\pi}(g_{t,l,v}) = \delta_{\Delta \in F^{2\times}}q^{\frac{\Delta}{4}}\sum_{\pm}\gamma_F(-1\pm \sqrt{\Delta},\rho)\gamma_F(\Delta \pm \sqrt{\Delta},l-\frac{v(\Delta)}{2})\chi^2(-\frac{1}{2v}\pm\frac{\sqrt{\Delta}}{2v}) \psi(\varpi^{-\frac{n}{2}}(\frac{\Delta-3}{4v})), \nonumber
\end{equation}
for $v(\Delta)<\lceil\frac{l}{2}\rceil$.

In particular,
\begin{equation}
	\abs{W_{\pi}(g)} \leq 2q^{\frac{n}{12}} \text{ and } \sup_g \abs{W_{\pi}(g)} \gg_q q^{\frac{n}{12}}. \nonumber
\end{equation}
\end{lemma}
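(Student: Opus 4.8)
The plan is to start from the explicit formulas for $W_{\pi}(g_{t,l,v})$ given in Lemma~\ref{lm:twist_steinberg_exp} and reduce everything, via Lemma~\ref{lm:char_trick}, to oscillatory integrals that can be evaluated by the stationary phase machinery of Section~4. We treat the cases of Lemma~\ref{lm:twist_steinberg_exp} one at a time. The bulk of the work is the case $l\neq 0,\frac{n}{2}$: here $W_{\pi}(g_{t,l,v})$ is (up to a constant) $K(\chi\circ\Norm,(\varpi^{t/2},\varpi^{t/2}),v\varpi^{-l})$, a two-dimensional integral over $E=F\times F$. Since $\chi$ factors through the norm this $K$-sum degenerates: after writing $y_i = x_i(1+\cdot\,\varpi^{k/2})$ and applying Lemma~\ref{lm:char_trick} (legitimate because $\lceil a(\chi)/4\rceil\geq \kappa_F$ guarantees enough room) the phase becomes a quadratic form in the fluctuation variables whose defining set $S$ is cut out by a \emph{linear} system, forcing $\sharp S = 1$ generically. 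The unique critical point $x_0$ solves the quadratic congruence $v\varpi^{k/2-l}x^2+x+b\varpi^{k/2-a(\chi)}\equiv 0$; since $v(a)>0$, $v(b)=0$ here, Lemma~\ref{lm:quad_cong} gives the unique unit solution. Evaluating the resulting Gau\ss\ sum via Lemma~\ref{lm:eval_quad_GS} and bookkeeping the constants yields the stated closed form $\chi^2(x_0)\psi((x_0-b)\varpi^{-k/2})$, which has absolute value $1$.

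For the degenerate range $l=\frac{n}{2}=a(\chi)$ I would split as in the statement according to $-t/2$ versus $l$. When $-2>t>-n$ the congruence determining $S$ is genuinely quadratic in one variable, with discriminant $\Delta = -4bv+\varpi^{n+t}$ (or simply $-\frac{b}{v}$ in the split-square subcase); Lemma~\ref{lm:quad_cong} parametrizes $S$ as $A_\pm + \alpha\varpi^{\delta}$, and each branch contributes a one-dimensional quadratic Gau\ss\ sum evaluated by Lemma~\ref{lm:eval_quad_GS}, producing the $\sum_\pm \gamma_F(\cdots)$ expression — again of size $O(1)$ per branch, so $O(1)$ overall. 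When $-t/2 = l = \frac{n}{2}$ the linear and quadratic Taylor coefficients of the phase both fall into $\p$ (this is the computation $v+a_2'\equiv v(1+4vb)\equiv 0$, $a_1\equiv 0 \bmod \p$ when $\Delta\in\p$), so the leading nonvanishing term is cubic, with cubic coefficient a unit (concretely $\propto bv^3$), and the integral is exactly a $p$-adic Airy integral $\text{Ai}_{\psi}(\cdot;\cdot)$ as defined at the end of Section~4; by the trivial bound $|\text{Ai}_{\psi}|\leq 2$ stated there, $|W_{\pi}(g_{t,l,v})|\leq 2q^{n/12}$. If instead $v(\Delta)<\lceil l/2\rceil$ the square-root of $\Delta$ exists (when $\Delta$ is a square) and one is back to two non-degenerate branches of size $O(1)$, giving the second displayed formula. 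Finally one checks $t=-2$ and $t>-2$ separately: these reduce to ordinary Kloosterman/Gau\ss\ sums already handled in Lemma~\ref{lm:est_TKS} and \eqref{eq:evaluation_of_GS}, all of size $O(1)$.

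The upper bound $|W_{\pi}(g)|\leq 2q^{n/12}$ then follows by taking the maximum over all the cases and recalling that by the Iwasawa-type decomposition \eqref{eq:basic_decomp} together with the transformation rule for $W_{\pi}$ it suffices to bound $|W_{\pi}(g_{t,l,v})|$. For the matching lower bound $\sup_g |W_{\pi}(g)|\gg_q q^{n/12}$ I would exhibit a value of $v$ for which $\Delta = 1-4vb$ has large valuation, say $v(\Delta)\geq \lceil l/2\rceil$ (such $v\in\mathcal{O}^{\times}$ exists since $b\in\mathcal{O}^{\times}$), so that $W_{\pi}(g_{t,\frac{n}{2},v})$ is exactly the Airy integral $\text{Ai}_{\psi}(-16bv^3\varpi^{\cdots};\Delta\varpi^{\cdots})$ up to a unit-modulus constant, and invoke the known lower bound $|\text{Ai}_{\psi}(a;b)|\gg 1$ (for admissible $a,b$, coming from the standard $p$-adic stationary-phase evaluation of a cubic integral with a single degenerate critical point) to conclude that this particular Whittaker value has size $\gg_q q^{n/12}$. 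The main obstacle is the precise constant-chasing in the degenerate case: keeping track of the $\gamma_F$ factors, the $\epsilon$-factor of $\chi$, the powers of $\zeta_F(1)$, and the shifts in the exponents so that the Airy integral appears with exactly the arguments claimed; the analytic input (stationary phase, $\text{Ai}_\psi$ bounds, $\sharp S\leq 2q^\delta$) is routine given Section~4, but verifying that the linear and quadratic coefficients genuinely vanish mod $\p$ exactly when $\Delta\in\p$ — and not merely generically — requires the explicit expansion of $\log_F$ as in Lemma~\ref{lm:char_trick} and some care.
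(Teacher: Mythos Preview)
Your plan is correct and follows essentially the same route as the paper: start from Lemma~\ref{lm:twist_steinberg_exp}, feed the two-dimensional $K$-integral into Lemma~\ref{lm:prototype_K_stat}, determine $S$ via Lemma~\ref{lm:quad_cong}, and evaluate the remaining Gau\ss\ sums with Lemma~\ref{lm:eval_quad_GS}, with the $l=\tfrac{n}{2}$ case splitting into Kloosterman-type, two-branch quadratic, and degenerate cubic (Airy) subcases according to the size of $-t$ relative to $a(\chi)$. One small correction: for $l>\tfrac{n}{2}$ the leading coefficient of the congruence $v\varpi^{k/2-l}x^2+x+b\varpi^{k/2-a(\chi)}$ has valuation $0$, not $>0$; the uniqueness of the unit root there comes instead from the fact that the product of the two roots lies in $\p$ (since $k/2-a(\chi)>0$), so only one of them can be a unit.
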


\begin{proof}
Define $k=\max(n,2l)$. For $l=0$, $l\geq n$ or $t\geq -2$ the statement follows immediately from the expressions given in Lemma~\ref{lm:twist_steinberg_exp}. Thus, from now on we assume $0<l<n$ and $t<-2$. By Lemma~\ref{lm:twist_steinberg_exp} we have
\begin{eqnarray}
	W_{\pi}(g_{t,l,v}) &=& \zeta_F(1)^{-2} q^{-\frac{t}{2}} K(\chi\circ \Norm, (\varpi^{\frac{t}{2}},\varpi^{\frac{t}{2}}), v\varpi^{-l}), \nonumber
\end{eqnarray}
for $E=F\times F$. We will evaluate the oscillatory integral $K$ starting from the prototype given in Lemma~\ref{lm:prototype_K_stat}. To do so we need to investigate the structure of the critical set $S$ in several cases.

\textbf{Case I: $0<l<\frac{n}{2}$.} In this situation we have $k=n=-t$. The matrix $A_{x_1,x_2}$ given in Lemma~\ref{lm:prototype_K_stat} is diagonal modulo $\p$ and independent of $x_1$ and $x_2$. Furthermore, the congruence conditions for $(x_1,x_2) \in S$ read
\begin{eqnarray}
	x_1 -x_2 &\in& \p^r, \nonumber \\
	v\varpi^{\frac{k}{2}-l}x_1^2+x_1+b&\in& \p^r. \nonumber
\end{eqnarray}
By Lemma~\ref{lm:quad_cong} we conclude that $\sharp S =1$. Therefore, we have
\begin{eqnarray}
	W_{\pi}(g_{t,l,v})&=&  q^{\rho}\sum_{(x_0,x_0) \in S}  \chi^2(x_0) \psi(2x_0 \varpi^{-\frac{k}{2}}+v\varpi^{-l} x_0^2) \nonumber  \\
	&&\qquad\qquad G(-\frac{b}{2}\varpi^{-\rho},  (b+x_0+vx_0^2\varpi^{\frac{k}{2}-l})\varpi^{-r-\rho})^2.\nonumber
\end{eqnarray}
By Lemma~\ref{lm:eval_quad_GS} we arrive at 
\begin{equation}
	W_{\pi}(g_{t,l,v}) = \chi(x_0)^2 \psi((x_0-b) \varpi^{-\frac{k}{2}}) \nonumber
\end{equation}
where $x_0$ is the unique solution of $v\varpi^{\frac{k}{2}-l}x_1^2+x_1+b=0$ satisfying $v(x_0)=0$.

\textbf{Case II: $l=\frac{n}{2}$.} This is the transition region where the Whittaker function can be nonzero for several $t$. Recall that the congruences defining $S$ are
\begin{eqnarray}
	b+\varpi^{l+\frac{t}{2}}x_1+vx_1x_2 &\in&\p^r, \label{eq:first_cong_crit_TST} \\
	b+\varpi^{l+\frac{t}{2}}x_2+vx_1x_2 &\in&\p^r. \label{eq:second_cong_crit_TST} 
\end{eqnarray}

\textbf{Case II.1: $-a(\chi)-\rho\leq t<-2$.} The congruences simplify to $x_1x_2 \in -\frac{b}{v} +\p^r$. Which has a unique solution $x_2$ for each $x_1\in (\mathcal{O}/\p^r)^{\times}$. Using the (non obvious) fact that the $S$-sum in Lemma~\ref{lm:prototype_K_stat} is well defined modulo $\p^r$ we obtain
\begin{eqnarray}
	W_{\pi}(g_{t,a(\chi),v}) &=& q^{-\frac{t}{2}-2r} \chi\left(-\frac{b}{v}\right) \psi\left(-\frac{b}{v}\varpi^{-a(\chi)}\right)\sum_{x\in (\mathcal{O}/\p^r)^{\times}}   \psi\left(x \varpi^{\frac{t}{2}}-x^{-1}\frac{b}{v} \varpi^{\frac{t}{2}}\right) \nonumber \\
	&&\qquad\qquad G\left( \frac{\varpi^{-\rho}}{2} \left(\begin{matrix} -b &  -b \\ -b & -b\end{matrix}\right), \varpi^{r +\frac{t}{2}}\left(\begin{matrix} x \\ -\frac{b}{vx} \end{matrix}\right) \right). \nonumber
\end{eqnarray}
Evaluating the Gau\ss\  sum using Lemma~\ref{lm:eval_quad_GS} yields
\begin{equation}
	G\left( \frac{\varpi^{-\rho}}{2} \left(\begin{matrix} -b &  -b \\ -b & -b\end{matrix}\right), \varpi^{r +\frac{t}{2}}\left(\begin{matrix} x \\ -\frac{b}{vx} \end{matrix}\right) \right) = 		\gamma_F(-\frac{b}{2},\rho)q^{-\frac{\rho}{2}} \nonumber
\end{equation}
if $t\geq -a(\chi)$ or $t=-a(\chi)-1$ and $x^2 \in -\frac{b}{v}+\p$. Otherwise, the Gau\ss\  sum vanishes. Thus, for $t\geq -a(\chi)$ we get
\begin{eqnarray}
	W_{\pi}(g_{t,a(\chi),v}) &=& \gamma_F\left(-\frac{b}{2},\rho\right) \chi\left(-\frac{b}{v}\right)\psi(-b\varpi^{-l})  \zeta_F(1)^{-1}q^{-\frac{t}{2}-r-\frac{\rho}{2}} S_1\left(1,-\frac{b}{v},-\frac{t}{2}\right). \nonumber
\end{eqnarray}
Evaluating the Kloosterman sum reveals
\begin{equation}
	W_{\pi}(g_{t,a(\chi),v}) = 	q^{-\frac{t}{4}-\frac{n}{4}}  \sum_{\pm}\gamma_F\left(-\frac{b}{2},\rho\right)\gamma_F\left(\pm Y,2\{-\frac{t}{4}\}\right) \chi\left(-\frac{b}{v}\right)\psi((\pm Y\varpi^{l+\frac{t}{2}}-b)\varpi^{-l})\label{eq:twist_steinberg_big_t}
\end{equation}
if $Y^2 = -\frac{b}{v} \in \mathcal{O}^{\times 2}$. Otherwise, the Whittaker function vanishes. 

For $t=-a(\chi)-1$ we observe that the critical points of the Kloosterman sum are congruent to $Y$ modulo $\p$. Thus, we also arrive at \eqref{eq:twist_steinberg_big_t}.

\textbf{Case II.2: $-n< t<-a(\chi)-\rho$.} The first congruence, \eqref{eq:first_cong_crit_TST}, can be rewritten as
\begin{equation}
	x_1 \in  -b(\varpi^{l+\frac{t}{2}}+vx_2)^{-1}  +\p^r. \nonumber
\end{equation}
Substituting this in the second congruence, \eqref{eq:second_cong_crit_TST}, yields
\begin{equation}
	vx_2^2 +\varpi^{l+\frac{t}{2}}x_2 + b \in \p^{-\frac{t}{2}-r-\rho}. \nonumber
\end{equation}
It is easy to see that the discriminant of this equation satisfies $v(\Delta)=0$. We can parametrize $x_2$ using Lemma~\ref{lm:quad_cong}. We compute
\begin{eqnarray}
	W_{\pi}(g_{t,l,v}) &=& q^{-\frac{t}{2}-l+\rho}\chi(-b)\psi(-b\varpi^{-l})\sum_{x_2}\chi(x_2)\chi^{-1}(vx_2+\varpi^{l+\frac{t}{2}})\psi(x_2\varpi^{\frac{t}{2}})\nonumber \\
	&&\qquad\qquad \cdot G\left(\frac{v^2x_2^2}{2}\left(\begin{matrix} -b & -b \\ -b &-b\end{matrix}\right)\varpi^{-\rho},\varpi^{r+\frac{t}{2}}\left(\begin{matrix} 0 \\ vx^2_2+x_2\varpi^{l+\frac{t}{2}}+b\end{matrix} \right)\right). \nonumber
\end{eqnarray} 
We use Lemma~\ref{lm:quad_cong} to parametrise the family $x_2$ and set $A_{\pm} = -\frac{\varpi^{l+\frac{t}{2}}}{2v}\pm \frac{\sqrt{\Delta}}{2v}\in\mathcal{O}^{\times}$ to shorten notation. Observe that $vA_{\pm}+\varpi^{\frac{t}{2}+l} = -vA_{\mp}$. For $-t> a(\chi)+\rho$ we can use Lemma~\ref{lm:char_trick} and get
\begin{eqnarray}
	W_{\pi}(g_{t,l,v}) &=& q^{\frac{\rho}{2}}\gamma_F(-\frac{b}{2},\rho)\sum_{\pm}\chi\left(\frac{bA_{\pm}}{vA_{\mp}}\right)\psi(A_{\pm}\varpi^{\frac{t}{2}}-b\varpi^{-l})\nonumber \\
	&&\quad \cdot \int_{\mathcal{O}} \psi\left(\sum_{j\geq 2}\frac{b}{j}((-1)^jA_{\pm}^j-A_{\mp}^j)\left(\frac{-vx}{b}\right)^j\varpi^{-(\frac{t}{2}+r)j-l}\right)dx. \nonumber
\end{eqnarray}
One checks that $A_{\pm}^2-A_{\mp}^2 = \pm\frac{\sqrt{\Delta}}{v^2}\varpi^{l+\frac{t}{2}}$. Furthermore, the binomial expansion shows that $((-1)^jA_{\pm}^j-A_{\mp}^j)\in\p^{l+\frac{t}{2}}$. Evaluating the remaining oscillatory integral yields
\begin{eqnarray}
	W_{\pi}(g_{t,l,v}) &=& q^{-\frac{t}{4}-\frac{n}{4}}\psi(-b\varpi^{-l})\gamma_F(-\frac{b}{2},\rho)\nonumber \\
	&&\qquad \cdot \sum_{\pm}\gamma_F\left(\frac{\sqrt{\Delta}}{2b},-\frac{t}{2}\right) \chi\left(b\frac{A_{\pm}+x_0\varpi^{-\frac{t}{2}-\lfloor\frac{l}{2}}\rfloor}{vA_{\mp}-x_0\varpi^{-\frac{t}{2}-\lfloor\frac{l}{2}}\rfloor}\right)\psi\left(A_{\pm}\varpi^{\frac{t}{2}}+x_0\varpi^{\lfloor\frac{l}{2}\rfloor}-b\varpi^{-l}\right), \nonumber
\end{eqnarray}
where $x_0\in\mathcal{O}$ is the unique solution to
\begin{equation}
	\sum_{j\geq 2} -v\left(-\frac{v}{b}x\right)^{j-1}\frac{(-1)^jA_{\pm}^j-A_{\mp}^j}{\varpi^{l+\frac{t}{2}}}\varpi^{-rj-(j-1)\frac{t}{2}} =0.\label{eq:def_eq_for_x_0_tst}
\end{equation}

\textbf{Case II.3: $t=-n$.}
In this case we have to solve the congruences
\begin{eqnarray}
	x_1-x_2  &\in&  \p^r, \nonumber\\
	vx_1^2+x_1 +b &\in& \p^r. \nonumber
\end{eqnarray}
The quadratic congruence has discriminant
\begin{equation}
	\Delta  = \Delta(v) = 1-4bv. \nonumber
\end{equation}
For some $v$ the discriminant might be ($p$-adically) small, so that there are many solutions for $x_1$. In this case we have to argue slightly more carefully.

Using Lemma~\ref{lm:quad_cong} we can parametrize
\begin{equation}
	S = \left\{ \left(-\frac{1}{2v}\pm \frac{Y}{2v}\varpi^{\delta}+\alpha \varpi^{r-\delta}, -\frac{1}{2v}\pm \frac{Y}{2v}\varpi^{\delta}+\alpha \varpi^{r-\delta}\right) \in ((\mathcal{O}/\p^r)^{\times})^2 \colon \alpha \in \mathcal{O}/\p^{\delta} \right\}. \label{eq:shape_S_very_deg}
\end{equation}
We set 
\begin{equation}
	A_{\pm} =-\frac{1}{2v}\pm \frac{Y}{2v}\varpi^{\delta} \nonumber
\end{equation}
and $\gamma_{\pm} = 1$ if $v(\Delta)<r$ and $\gamma{\pm} =\frac{1}{2}$ otherwise. We can rewrite the Gau\ss\  sum from Lemma~\ref{lm:prototype_K_stat} as
\begin{eqnarray}
	&G\left(\frac{\varpi^{-\rho}}{4}\left(\begin{matrix}-1&1\\ 1 & 1 \end{matrix}\right)\left(\begin{matrix} -b-vx_1^2& 0 \\ 0 & -b+vx_1^2 \end{matrix}\right)\left(\begin{matrix} -1&1\\1&1\end{matrix}\right) ,\varpi^{-r-\rho} B_{x_1,x_2}\right) \nonumber \\
	&= G\left(\frac{\varpi^{-\rho}x_1}{4}, 0\right) G\left( \frac{-\varpi^{-\rho}}{4}(-2b-x_1), \varpi^{-r-\rho}(x_1+vx_1^2+b)\right). \nonumber
\end{eqnarray}
First, we consider the degenerate case, $v(\Delta)>0$. In particular, we have $-2b-x_1 \in\p$ and $v(\Delta)\geq 2 > \rho$. Therefore, we obtain the stronger congruence $vx_1^2+x_1+b\in \p^{r+\rho}$. Using our parametrisation of $S$ in Lemma~\ref{lm:prototype_K_stat} yields
\begin{eqnarray}
	W_{\pi}(g_{t,l,v}) &=& q^{\frac{\rho}{2}}\sum_{\pm}\gamma_F(A_{\pm},\rho)\chi^2(A_{\pm}) \psi(\varpi^{-\frac{n}{2}}(2A_{\pm}+vA_{\pm}^2)) \nonumber \\
	&& \qquad \cdot \sum_{\alpha\in\mathcal{O}/\p^{\delta-\rho}} \chi^2(1+\frac{\alpha}{A_{\pm}}\varpi^{r+\rho-\delta})\psi((1\pm Y\varpi^{\delta})\alpha \varpi^{-r-\delta}+v\alpha^2\varpi^{\rho-2\delta}) \nonumber \\
	&=&q^{\delta-\frac{\rho}{2}}\sum_{\pm}\gamma_F(A_{\pm},\rho)\chi^2(A_{\pm}) \psi(\varpi^{-\frac{n}{2}}(2A_{\pm}+vA_{\pm}^2)) \nonumber \\
	&& \quad \cdot \int_{\mathcal{O}} \psi(\underbrace{(1\pm Y\varpi^{\delta}+\frac{2b}{A_{\pm}})}_{=\frac{-\Delta}{2vA_{\pm}}+\frac{Y^2\varpi^{2\delta}}{2vA_{\pm}}\in \p^{r+\rho}}\alpha\varpi^{-r-\delta}+\underbrace{(v-\frac{b}{A_{\pm}^2})}_{\in\frac{\Delta \mp Y\varpi^{\delta}}{2vA_{\pm}^2}+\p^{r+\rho}}\alpha^2\varpi^{\rho-2\delta}+\frac{2b}{3A_{\pm}^3}\alpha^3\varpi^{r+2\rho-3\delta})d\alpha. \nonumber
\end{eqnarray}
Here we used the fact that $r+\rho-\delta\geq \frac{r}{2}+\rho \geq \kappa_F$ to insert the Taylor expansion of the $p$-adic logarithm. Since $2r-4\delta \geq 0$ we could truncate after the third term. Since the cubic coefficient is a unit we have the bound
\begin{equation}
	\abs{W_{\pi}(g_{t,l,v})} \leq 2 q^{\frac{r}{3}+\frac{\rho}{6}} = 2 q^{\frac{n}{12}}. \nonumber
\end{equation}  
Even more, if $v(\Delta)\geq r+\rho$, we have
\begin{equation}
	W_{\pi}(g_{t,l,v}) = q^{\frac{n}{12}}\gamma_F(-2v,\rho)\chi^{-1}(4v^2) \psi(\frac{3}{4v}\varpi^{-\frac{n}{2}})\text{Ai}_{\psi}(-16bv^3 \varpi^{r+2\rho-3\delta};\Delta\varpi^{-r-\delta}). \nonumber
\end{equation}
If $\rho<\Delta<r+\rho$, we arrive at
\begin{equation}
	W_{\pi}(g_{t,l,v}) = q^{\frac{\delta}{2}}\sum_{\pm}\gamma_F(-1\pm \sqrt{\Delta},\rho)\gamma_F(\Delta \pm \sqrt{\Delta},\abs{2\{\frac{\delta-\rho}{2}\}})\chi^2(-\frac{1}{2v}\pm\frac{\sqrt{\Delta}}{2v}) \psi(\varpi^{-\frac{n}{2}}(\frac{\Delta-3}{4v})). \label{eq:exp_eval_mild_disc_st}
\end{equation}
Note that in this case $W_{\pi}(g_{t,l,v})$ vanishes when $\Delta\not \in F^{\times 2}$. Finally, if $0= v(\Delta)$, one easily checks that \eqref{eq:exp_eval_mild_disc_st} holds as well.

\textbf{Case III: $\frac{n}{2}<l<n$.} Here we have $k=2l$ and the Whittaker function is non zero only for $t=-k$. The congruence conditions defining $S$ yield the system of equations 
\begin{eqnarray}
	x_1 -x_2 &\in& \p^r, \nonumber \\
	(x_1+\frac{1}{2v})^2 &\in& \frac{1}{4v^2}-\frac{b}{v}\varpi^{\frac{k}{2}-a(\chi)} + \p^r \subset \mathcal{O}^{\times}. \nonumber
\end{eqnarray}
The quadratic equation has a unique solution modulo $\p^r$ which is in $\mathcal{O}^{\times}$. Thus, $\sharp S=1$. From Lemma~\ref{lm:prototype_K_stat} and Lemma~\ref{lm:eval_quad_GS} we obtain
\begin{equation}
	W_{\pi}(g_{-2l,l,v}) = \chi(x_0)^2 \psi((x_0-b\varpi^{\frac{k}{2}-a(\chi)}) \varpi^{-\frac{k}{2}}) \nonumber
\end{equation}
where $x_0$ is the unique solution of $vx_1^2+x_1+b\varpi^{\frac{k}{2}-a(\chi)}=0$ satisfying $v(x_0)=0$.
	
This was the last case to be considered and the proof is complete.
\end{proof}

\begin{rem}
There are several other ways to evaluate the integral $K(\chi\circ\Norm,\cdot,\cdot)$. For example one may compute that
\begin{equation}
	K(\chi\circ\Norm,(\varpi^{-k},\varpi^{-k}),v\varpi^{-l}) = \int_{\mathcal{O}^{\times}}\chi(x)\psi(xv\varpi^{-l})S_1(1,x,k)d^{\times}x. \nonumber
\end{equation}
For $k>1$ the Kloosterman sum may be evaluated and one is left with a twisted quadratic Gau\ss\  sum. To treat the remaining sum one can apply the (1-dimensional) method of stationary phase. This turns out to be similar in spirit to the calculation in \cite[Lemma~10]{BM15}.
\end{rem}

\subsection{Irreducible principal series}

The last category of representations to deal with are principal series representations. It is already known from \cite{Sa15_2} that in certain degenerate situations the Whittaker function can be as large as the local bound \eqref{eq:loc_bound_Sa} predicts. We consider several different situations starting with one that is similar to the twisted Steinberg representations.

\begin{lemma} \label{lm:balanced_ps_bound}
Let $\pi = \chi \abs{\cdot}^s \boxplus \chi \abs{\cdot}^{-s}$ for some $s\in i \R$ and $\lceil\frac{a(\chi)}{4}\rceil \geq \kappa_F$. Then we have
\begin{equation}
	\abs{W_{\pi}(g)}\leq 2q^{\frac{n}{12}} \text{ and } \sup_{g}\abs{W_{\pi}(g)}\gg_{q} q^{\frac{n}{12}}. \nonumber
\end{equation}
Even more, for $t<-2$ we can evaluate the $W_{\pi}(g_{t,l,v})$ explicitly and obtain the same expressions as in Lemma~\ref{lm:St_twist_2_bound}.
\end{lemma}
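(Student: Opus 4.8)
The plan is to reduce the assertion almost entirely to Lemma~\ref{lm:St_twist_2_bound}, by exploiting a structural coincidence. Comparing the finite Fourier expansions computed in Lemma~\ref{lm:equiv_ps_expwh} and Lemma~\ref{lm:twist_steinberg_exp}, one sees that for $\pi=\chi\abs{\cdot}^s\boxplus\chi\abs{\cdot}^{-s}$ the values $W_{\pi}(g_{t,l,v})$ are given by literally the same formulas as for the twisted Steinberg representation $\chi St$, with the sole exception of the entries with $t\geq -1$ inside the degenerate configuration $l=a(\chi)=\frac{n}{2}$, where the extra factors $q^{\pm s(t+2)}$ occur; these have modulus $1$ since $s\in i\R$. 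In particular, for every $t\leq -2$ one has the equality $W_{\pi}(g_{t,l,v})=W_{\chi St}(g_{t,l,v})$.

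Assuming $a(\chi)>1$, Lemma~\ref{lm:St_twist_2_bound} then applies verbatim to this range. It supplies the explicit evaluations for $t<-2$ claimed in the ``Even more'' clause, the bound $\abs{W_{\pi}(g_{t,l,v})}\leq 2q^{\frac{n}{12}}$ whenever $t\leq -2$, and — since the lower bound $\sup_g\abs{W_{\chi St}(g)}\gg_q q^{\frac{n}{12}}$ of Lemma~\ref{lm:St_twist_2_bound} is realized through the $p$-adic Airy evaluation of $W_{\chi St}(g_{-n,\frac{n}{2},v})$, whose arguments lie in the range $t=-n<-2$ (recall $n=2a(\chi)$ is even, so $l=\frac{n}{2}$ is admissible) and therefore coincide with the corresponding values of $W_{\pi}$ — also the lower bound $\sup_g\abs{W_{\pi}(g)}\gg_q q^{\frac{n}{12}}$. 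It then remains only to bound $W_{\pi}(g_{t,l,v})$ for $t\geq -1$. By the support statement of Lemma~\ref{lm:equiv_ps_expwh} (and $n\geq 4$), such a value vanishes unless $l=a(\chi)=\frac{n}{2}$, and in that degenerate case I would estimate the explicit expression directly: for $t\geq 0$ one uses $\abs{G(\varpi^{-n/2},\chi)}=\zeta_F(1)q^{-n/4}$ together with the elementary fact that $q^{-t/2}(t+1)$ is bounded for $t\geq 0$, for $t=-1$ one estimates the two-term expression directly, and for $t=-2$ (with $l=a(\chi)>1$) the expression is a constant times $S(1,-b_{\chi}v^{-1},1)$ so that Weil's bound \eqref{eq:finite_field_KS} gives $\abs{W_{\pi}}\leq 2q^{(1-a(\chi))/2}\leq 2$. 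In every case $\abs{W_{\pi}(g_{t,l,v})}\leq 2\leq 2q^{\frac{n}{12}}$.

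Finally, the residual low-conductor case $a(\chi)=1$ (so $n=2$) lies outside the hypotheses of Lemma~\ref{lm:St_twist_2_bound}; there the relevant exponential sums are complete sums over the residue field, and the argument of Lemma~\ref{lm:St_twist_1_bound} carries over via \eqref{eq:Weils_bound} to give $\abs{W_{\pi}(g)}\leq 2$, while $\sup_g\abs{W_{\pi}(g)}\geq W_{\pi}(1)=1\gg_q q^{1/6}$ holds trivially. Putting the pieces together yields $\abs{W_{\pi}(g)}\leq 2q^{\frac{n}{12}}$ for all $g$ together with the matching lower bound, and the explicit formulas for $t<-2$ are exactly those of Lemma~\ref{lm:St_twist_2_bound}.

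The main obstacle here is bookkeeping rather than analysis: the genuine work — the $p$-adic method of stationary phase, the degenerate critical point responsible for the exponent $\frac{n}{12}$, and the emergence of the $p$-adic Airy function — has already been done in the proof of Lemma~\ref{lm:St_twist_2_bound}. The one point demanding care is verifying that the coincidence with Lemma~\ref{lm:twist_steinberg_exp} genuinely holds on the nose for all $t\leq -2$ in the balanced configuration $l=a(\chi)=\frac{n}{2}$ (in particular on the ``$-2l\leq t<-2$ even'' branch), and that the finitely many genuinely new entries with $t\geq -1$ contribute no more than a bounded amount; neither is serious.
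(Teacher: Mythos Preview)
Your proposal is correct and follows essentially the same approach as the paper: observe that the formulas in Lemma~\ref{lm:equiv_ps_expwh} coincide with those in Lemma~\ref{lm:twist_steinberg_exp} for $t\leq -2$, invoke Lemma~\ref{lm:St_twist_2_bound} wholesale for that range (including the lower bound via the Airy evaluation at $t=-n$, $l=\frac{n}{2}$), and estimate the remaining entries with $t\geq -1$ trivially from the explicit formulas. You are in fact more careful than the paper's two-line proof in spelling out the $t\geq -1$ bounds and the residual case $a(\chi)=1$, but the strategy is the same.
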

\begin{proof}
 From the proof of Lemma~\ref{lm:equiv_ps_expwh} we see that for $t\leq -2$ we are in the same situation as for $\pi = \chi St$. The remaining cases can be estimated trivially using Lemma~\ref{lm:equiv_ps_expwh}.
\end{proof}

We move on to the unbalanced principal series $\pi =\chi_1\boxplus\chi_2$ with unramified $\chi_2$.

\begin{lemma}  \label{lm:unbalanced_ps_bound_1}
Let $\pi = \chi \abs{\cdot}^s\boxplus \abs{\cdot}^{-s}$ for $s\in i\R$ and put $n=a(\chi)>0$. Then
\begin{equation}
	\abs{W_{\pi}(g)} \leq q^{\frac{1}{2}\lfloor\frac{n}{2} \rfloor}  \nonumber
\end{equation}
\end{lemma}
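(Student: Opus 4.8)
The plan is to reduce everything to the explicit formula for $W_{\pi}(g_{t,l,v})$ already established in Lemma~\ref{lm:unbalanced_ps} and then bound the single genuinely oscillatory term by two complementary elementary estimates. Applying that lemma with central character $\omega_{\pi}=\chi$ (so that $n=a(\pi)=a(\omega_{\pi})=a(\chi)$, and $\chi_2=\abs{\cdot}^{-s}$ is unitary because $s\in i\R$), the Whittaker vector vanishes off the strata $l=0$, $t\geq-n$; $l\geq n$, $t\geq-2l$; and $0<l<n$, $t=-n-l$. On the first two strata the formula gives $\abs{W_{\pi}(g_{t,l,v})}=q^{-(t+n)/2}$ and $q^{-(t+2l)/2}$ respectively, both $\leq 1\leq q^{\frac12\lfloor n/2\rfloor}$. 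So the whole content of the statement lies in the third stratum, where Lemma~\ref{lm:unbalanced_ps} together with $\abs{\chi_2(\varpi^{n-l})}=\abs{\omega_{\pi}(-v^{-1})}=1$ gives $\abs{W_{\pi}(g_{t,l,v})}=\zeta_F(1)^{-1}q^{(n+l)/2}\,\abs{G_l(-v^{-1}\varpi^{-n},\omega_{\pi})}$ with $0<l<n$.

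I would then estimate the incomplete Gau\ss\ sum $G_l(y,\omega_{\pi})$, $y=-v^{-1}\varpi^{-n}$, in two ways. First, trivially $\abs{G_l(y,\omega_{\pi})}\leq\vol(1+\varpi^l\mathcal{O},\mu^{\times})=\zeta_F(1)q^{-l}$, which already yields $\abs{W_{\pi}(g_{t,l,v})}\leq q^{(n-l)/2}$; this is the good bound when $l$ is large. Second, for small $l$ I would expand the indicator $\mathbbm{1}_{1+\varpi^l\mathcal{O}}=\frac{\zeta_F(1)}{q^l}\sum_{\mu\in\mathfrak{X}_l}\mu$ on $\mathcal{O}^{\times}$ (using $\sharp\mathfrak{X}_l=q^l\zeta_F(1)^{-1}$ for $l\geq 1$), which turns $G_l(y,\omega_{\pi})$ into $\frac{\zeta_F(1)}{q^l}\sum_{\mu\in\mathfrak{X}_l}G(y,\omega_{\pi}\mu)$. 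Since $a(\mu)\leq l<n=a(\omega_{\pi})$ forces $a(\omega_{\pi}\mu)=n$, each such Gau\ss\ sum is ramified, so by the evaluation \eqref{eq:evaluation_of_GS} it either vanishes or has absolute value $\zeta_F(1)q^{-n/2}$; the triangle inequality then gives $\abs{G_l(y,\omega_{\pi})}\leq\zeta_F(1)q^{-n/2}$ and hence $\abs{W_{\pi}(g_{t,l,v})}\leq q^{l/2}$. Combining the two estimates, $\abs{W_{\pi}(g_{t,l,v})}\leq q^{\min(l,n-l)/2}\leq q^{\frac12\lfloor n/2\rfloor}$, since $\min(l,n-l)\leq\lfloor n/2\rfloor$ for every integer $l$; the passage from $\sup_{t,l,v}$ to $\sup_{g}$ is supplied by \eqref{eq:basic_decomp} and the transformation law of $W_{\pi}$ recorded at the start of Section~\ref{se:Wh_supp}.

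There is no real obstacle here: in contrast to the supercuspidal and balanced principal series cases, the phase underlying $G_l$ never acquires a degenerate critical point, so none of the stationary phase machinery of the preceding section is needed. The only point requiring some care is keeping the measure normalisations straight (namely $\mu^{\times}(1+\varpi^l\mathcal{O})=\zeta_F(1)q^{-l}$ and $\sharp\mathfrak{X}_l=q^l\zeta_F(1)^{-1}$), so that the two bounds $q^{(n-l)/2}$ and $q^{l/2}$ come out with clean constants and meet near $l=n/2$ to produce exactly the exponent $\frac12\lfloor n/2\rfloor$. One should also verify that for odd $n$ the bound is attained (at $l=\tfrac{n-1}{2}$), confirming that it is the expected sharp local exponent \eqref{eq:loc_bound_Sa}.
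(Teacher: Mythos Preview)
Your argument is correct. Both your proof and the paper's split into the regimes $l\leq\lfloor n/2\rfloor$ and $l>\lfloor n/2\rfloor$ and obtain the bounds $q^{l/2}$ and $q^{(n-l)/2}$ respectively, but the methods differ. For small $l$ the paper invokes the stability of $\epsilon$-factors (\cite[Lemma~2.37]{Sa15_2}) to evaluate the sum $\sum_{\mu\in\mathfrak{X}_l}\epsilon(\tfrac12,\mu^{-1}\chi^{-1})\mu(-v)$ \emph{exactly} as $q^{l/2}$ or $0$; you reach the same upper bound by the triangle inequality after expanding $G_l$ into ordinary Gau\ss\ sums, which is more elementary but loses the information on where the maximum is attained. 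For large $l$ the paper linearises $\chi(1+\varpi^l x)$ via the $p$-adic logarithm (Lemma~\ref{lm:char_trick}) to obtain again an exact dichotomy $q^{(n-l)/2}$ or $0$; you instead use the trivial volume bound $\vol(1+\varpi^l\mathcal{O},\mu^{\times})=\zeta_F(1)q^{-l}$, which is shorter but again does not identify the $v$ at which the value is realised. The paper's exact evaluations feed into Remark~\ref{rem:bound_inc_GS} and into the sharpness statement; your route suffices for the upper bound in the lemma and avoids both auxiliary tools.
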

This follows from \cite[Corollary~2.35]{Sa15_2} and is sharp by the lower bound given in \cite[Proposition~2.39]{Sa15_2}. We will give a proof based on properties of epsilon factors. In particular their stability under twists  by characters with small log-conductor. 
\begin{proof}
First we observe from Lemma~\ref{lm:unbalanced_ps} that
\begin{equation}
	\abs{W_{\pi}(g_{t,l,v})} \leq 1  \nonumber
\end{equation}
if $t\geq -n$ and $l=0$ or $t\geq -2l$ and $l\geq n$. The only other non-zero situations are $0<l<n$ and $t=-n-l$. In these cases the delta term in \eqref{eq:unbalanced_ps_as_epss} does not contribute so that we obtain
\begin{equation}
	\abs{W_{\pi}(g_{t,l,v})} = \zeta_F(1) q^{-\frac{l}{2}}\abs{\sum_{\mu\in \mathfrak{X}_l}\epsilon(\frac{1}{2},\mu^{-1} \chi^{-1})\mu(-v)}. \nonumber
\end{equation}	
If $l\leq \lfloor \frac{n}{2}\rfloor$, we use \cite[Lemma~2.37]{Sa15_2} to obtain\footnote{This is basically the argument used in the proof of \cite[Proposition~2.39]{Sa15_2}.}
\begin{eqnarray}
	\abs{W_{\pi}(g_{t,l,v})} &=& \zeta_F(1) q^{-\frac{l}{2}}\abs{\sum_{\mu\in \mathfrak{X}_l}\epsilon(\frac{1}{2},\chi^{-1})\mu(b_{\chi}v)} 
	= \begin{cases}
		q^{\frac{l}{2}} &\text{ if } v \in b_{\chi}^{-1}+\p^l, \\
		0 &\text{ else.}
	\end{cases} \label{eq:size_unbalanced_whitt_smalll}
\end{eqnarray}
On the other hand, if $l > \lfloor\frac{n}{2}\rfloor$, Lemma~\ref{lm:unbalanced_ps} shows
\begin{equation}
	\abs{W_{\pi}(g_{t,l,v})} = \zeta_F(1)^{-1}q^{-\frac{t}{2}}\abs{G_l(-v^{-1}\varpi^{t+l},\chi^{-1})} = q^{\frac{n-l}{2}}\abs{ I_{\chi}(l,v)}. \nonumber
\end{equation}
for
\begin{eqnarray}
	I_{\chi}(l,v) &=& \int_{\mathcal{O}} \chi(1+\varpi^{l}x)\psi(-v^{-1}\varpi^{l-n}x)dx 
	= \int_{\mathcal{O}} \psi((b_{\chi}-v^{-1})\varpi^{l-n}x) dx. \nonumber
\end{eqnarray}
In the last step we used Lemma~\ref{lm:char_trick}. The remaining integral can be calculated using Gau\ss\  sums. Indeed,
\begin{equation}
	\abs{W_{\pi}(g_{t,l,v})} = \begin{cases}
		q^{\frac{n-l}{2}} &\text{ if }v(b_{\chi}-v^{-1})\geq n-l, \\
		0 &\text{ else.}
	\end{cases} \nonumber
\end{equation}
This concludes the proof.
\end{proof}

\begin{rem} \label{rem:bound_inc_GS}
A byproduct of the previous lemma is the bound
\begin{equation}
	\abs{G_l(v\varpi^{-a(\chi)},\chi)}\leq \zeta_F(1)q^{-\frac{a(\chi)}{2}}, \nonumber
\end{equation}
for $l\leq \frac{a(\chi)}{2}$.
\end{rem}

We move towards more generic situations.

\begin{lemma} \label{lm:unbalanced_ps_bound_2}
Let $\pi = \chi_1\abs{\cdot}^{s} \boxplus \chi_2\abs{\cdot}^{-s}$ be a irreducible principle series representation. Also assume $a(\chi_1)>a(\chi_2)$ and $s\in i\R$. Define $m=\max(2l,n)$. One has
\begin{equation}
	\abs{W_{\pi}(g_{t,l,v})} \leq 2q^{-\frac{t+m}{2}} \nonumber
\end{equation}
as long as $n$ is odd or $a_2\leq 3$ or $b_1b_2\not \in \mathcal{O}^{\times 2}$. In general we might encounter a degenerate critical point for $l=\frac{n}{2}$ which leads to the weaker bound
\begin{equation}
	\abs{W_{\pi}(g_{-l-a(\chi_1),l,v})} \leq 2q^{\frac{n}{4}-\frac{a_2}{3}}. \nonumber
\end{equation}
\end{lemma}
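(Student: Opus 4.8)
The strategy is to push each of the explicit formulae of Lemma~\ref{lm:generic_ps_shape_Wf} into the two-dimensional stationary phase machinery of the previous section and track the size of the resulting critical sets. Since $s\in i\R$, every factor $q^{s(\cdots)}$ and every $\epsilon$-factor occurring in Lemma~\ref{lm:generic_ps_shape_Wf} has absolute value $1$, so $\abs{W_\pi(g_{t,l,v})}\le q^{-t/2}\abs{K(\chi_1\otimes\chi_2,\cdot,\cdot)}$ up to the short secondary terms of the shape $q^{-t/2}\abs{G(v\varpi^{-l},\chi_i)G(\cdots,\chi_i^{-1}\chi_j)}$, which are $O(q^{-(t+m)/2})$ by \eqref{eq:evaluation_of_GS} and cause no trouble. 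For $l=0$, for $l\ge n$, and for the residual $t>-2$ contributions, Lemma~\ref{lm:generic_ps_shape_Wf} gives $\abs{W_\pi(g_{t,l,v})}\le 2$ directly, and $m=\max(2l,n)$ together with the support statement pins down the unique non-vanishing $t$; so from here on I would take $0<l<n$, noting that, because $a(\chi_1)>a(\chi_2)$, the awkward case $l=a(\chi_1)=a(\chi_2)$ of Lemma~\ref{lm:generic_ps_shape_Wf} never occurs.

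For the admissible pairs $(t,l)$ I would feed the corresponding $K$ into Lemma~\ref{lm:prototype_K_stat} with $k=\max(a(\chi_1),l)=2r+\rho$, so that
\[
\abs{W_\pi(g_{t,l,v})}\le \zeta_F(1)^2\,q^{-t/2-2r}\,\sharp S\,\sup_{x\in S}\Bigl|G\bigl(\tfrac{\varpi^{-\rho}}2 A_x,\varpi^{-r-\rho}B_x\bigr)\Bigr| .
\]
In every range with $l\notin\{a(\chi_1),a(\chi_2)\}$, and in the ranges $l=a(\chi_i)$, the two congruences cutting out $S$ decouple, after eliminating one variable, into a single quadratic congruence whose leading coefficient is a unit, or whose leading coefficient has positive valuation while the linear coefficient is a unit; Lemma~\ref{lm:quad_cong} then bounds the number of residues in the reduced variable by $2$, and the freedom in lifting back to $\mathcal{O}/\p^r$ supplies exactly the power of $q$ needed to balance the $\varpi$-powers carried by $A_x,B_x$ against $q^{-t/2-2r}$. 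Combining with Lemma~\ref{lm:eval_quad_GS} one lands on $\abs{W_\pi(g_{t,l,v})}\le 2q^{-(t+m)/2}$. The point requiring care here is the bookkeeping of how the off-diagonal entries of $A_x$ (which carry a positive power $\varpi^{k-l}$) make $A_x$ split into a part of rank one over the residue field and a part that only enters linearly.

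The genuinely delicate configuration is $l=n/2$ (hence $a(\chi_2)<l<a(\chi_1)$ and $m=n$) at $t=-l-a(\chi_1)$, where the relevant sum is $K(\chi_1\otimes\chi_2,(\varpi^{-a(\chi_1)},\varpi^{-n/2}),v\varpi^{-n/2})$ with $k=a(\chi_1)$. Here $\det A_x$ has valuation $a(\chi_1)-a(\chi_2)$ with leading unit $\equiv b_1b_2$, and after eliminating $x_1$ the equation for $x_2$ becomes a quadratic congruence, to modulus $\p^{\sim a(\chi_2)/2}$, whose reduced discriminant is a unit unless $b_1b_2\in\mathcal{O}^{\times2}$. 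Thus when $b_1b_2\notin\mathcal{O}^{\times2}$, or when $n$ is odd so that $l=n/2\notin\Z$, or when $a(\chi_2)\le 3$ so that the surviving sum is a finite-field sum controlled by Weil's bounds \eqref{eq:Weils_bound} and \eqref{eq:finite_field_KS}, one has $\sharp S\le 2q^{(a(\chi_1)-a(\chi_2))/2}$ and the generic bound survives. Otherwise I would parametrise $S$ by Lemma~\ref{lm:quad_cong}, reduce $K$ to a one-dimensional oscillatory integral over a box of radius $q^{-\delta}$ with $\delta\le\tfrac14 a(\chi_2)$, then linearise the characters $\chi_1,\chi_2$ near the critical point by the $p$-adic logarithm (Lemma~\ref{lm:char_trick}, quadratic version, needing no hypothesis on $\kappa_F$) and Taylor-expand. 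Exactly as in Case~II of Lemma~\ref{lm:unramified_sc_bound} and Lemma~\ref{lm:ramified_sc_bound}, the linear and quadratic coefficients vanish modulo $\p$ precisely when the discriminant is small, while the cubic coefficient is a unit (up to $\tfrac13$); so only cubic cancellation is available, and collecting the exponents with $r\approx a(\chi_1)/2$ yields $\abs{W_\pi(g_{-l-a(\chi_1),l,v})}\le 2q^{n/4-a(\chi_2)/3}$.

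The cubic transition just described is the main obstacle: it is where square-root cancellation genuinely fails and the $-\tfrac13 a(\chi_2)$ term, rather than $-\tfrac12 a(\chi_2)$, appears. Everything else is routine — repeated application of Lemma~\ref{lm:prototype_K_stat}, Lemma~\ref{lm:quad_cong} and Lemma~\ref{lm:eval_quad_GS}, with the same kind of case analysis already carried out for the supercuspidal and twisted Steinberg families, and I would present it by case on the position of $l$ relative to $a(\chi_2)<n/2<a(\chi_1)<n$, reserving the full cubic argument for the single exceptional pair $(t,l)=(-l-a(\chi_1),\,n/2)$.
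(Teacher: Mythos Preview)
Your overall architecture---reduce to bounding $K(\chi_1\otimes\chi_2,\cdot,\cdot)$ via Lemma~\ref{lm:prototype_K_stat}, control the critical set with Lemma~\ref{lm:quad_cong}, and in the transitional window $l=n/2$ parametrise a possibly large $S$ and extract cubic decay---matches the paper's philosophy, but the paper in fact deviates from the two-dimensional stationary phase in two places, and your sketch glosses over both.

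First, for small $l$ (the paper's Case~I, $l\le a_1/2$) the paper does \emph{not} invoke Lemma~\ref{lm:prototype_K_stat}; it exploits the stability of $\epsilon$-factors under twists with $a(\mu)\le a_1/2$ (cf.\ \cite[Lemma~2.37]{Sa15_2}) to collapse the Gau\ss\ sum $G(\varpi^{-a_1},\mu\chi_1)$ to a constant and reduce the whole expression to a single Gau\ss\ sum $G(\varpi^{-t-a_1}-\varpi^{-l}vb_1,\chi_2)$. This is cleaner than running stationary phase with $k=a_1$ while one of the two exponents $l_1,l_2$ is much smaller. Your claim that here ``the two congruences decouple into a single quadratic congruence'' is not quite what happens in the 2D picture, and the paper also needs, in the intermediate ranges (its Cases~II and III, $a_1/2<l\le a_2$), to identify the residual integral as an \emph{incomplete} Gau\ss\ sum $G_{r+\rho+a_2-a_1}(\cdot,\chi_2)$ and appeal to Remark~\ref{rem:bound_inc_GS}, an ingredient absent from your sketch.

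Second, and more importantly, for the transition window $\max(a_1/2,a_2)<l<a_1$ (the paper's Case~VI, which contains the delicate $l=n/2$) the paper explicitly abandons the 2D scheme, commenting that it ``mutates into very messy calculations'', and instead integrates out one variable by a Gau\ss\ sum evaluation to obtain the one-dimensional integrals \eqref{eq:2_one_dim_integrals_(a)}--\eqref{eq:2_one_dim_integrals_(b)}. The cubic analysis is then performed on a \emph{one}-dimensional phase $f_y(t)$ with Lemma~\ref{lm:quoted_qp_bound}, yielding the discriminant $\Delta=\beta^2/v^2-4b_1b_2$ (where $1-vb_1=\beta\varpi^{(a_1-a_2)/2}$), from which the conditions $n$ odd, $a_2\le 3$, or $b_1b_2\notin\mathcal{O}^{\times2}$ emerge transparently. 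Your assertion that ``$\det A_x$ has valuation $a_1-a_2$ with leading unit $\equiv b_1b_2$'' is not accurate: at $l=n/2$ one has $\det A_x=\varpi^{a_1-a_2}(b_1b_2-v^2x_1^2x_2^2)$, so the unit part depends on the critical point, and disentangling the quadratic congruence for $x_2$ to modulus $\p^{\sim a_2/2}$ inside the 2D framework is exactly the ``mess'' the paper sidesteps. Your sketch may in principle be salvageable, but as written it does not make clear why the 2D route gives the sharp exponent $n/4-a_2/3$ rather than something looser; the paper's 1D reduction is the cleaner device here.
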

\begin{proof} 
To simplify notation we write $a_1=a(\chi_1)$ and $a_2=a(\chi_2)$ and put $k=\max(a_1,l)=2r+\rho$. We also write $b_i = b_{\chi_i}\in \mathcal{O}^{\times}$ for the constant associated to $\chi_i$ in Lemma~\ref{lm:char_trick} for $i=1,2$.

From Lemma~\ref{lm:generic_ps_shape_Wf} it is clear that
\begin{equation}
	\abs{W_{\pi}(g_{t,l,v})}=1 \text{ if } l=0\text{ and } t=-n \text{ or } l\geq n \text{ and } t=-2l. \nonumber
\end{equation}
We also see that 
\begin{equation}
	\abs{W_{\pi}(g_{t,a_i,v})} = q^{-\frac{t+n}{2}} \text{ if } t\geq -a_1 \text{ and } i=1,2. \nonumber
\end{equation}
To evaluate the remaining non zero cases  we have to understand 
\begin{equation}
	K(\chi_1\otimes \chi_2, (\varpi^{-l_1},\varpi^{-l_2}),v\varpi^{-l}) \nonumber
\end{equation}
for suitable $0<l_1,l_2\leq l$. In most cases we will do this as before using the method of stationary phase. However, if $l$ is small, we find it easier to exploit the stability of $\epsilon$-factors directly similarly to the approach in \cite[Proposition~2.40]{Sa15_2}.

\textbf{Case I: $l\leq \frac{a_1}{2}$ and $t<-a_1$.} In this case we have $l_1=a_1$, and $l_2=-t-a_1$. Since we assume $t<-a_1$ the delta term in \eqref{eq:orig_generic_contribution} does not contribute. We have
\begin{equation}
	W_{\pi}(g_{t,l,v}) = \zeta_F(1)^{-2} q^{-\frac{t}{2}} q^{s(l_1-l_2)} \sum_{\mu\in \mathfrak{X}_l} G(\varpi^{-a_1},\mu\chi_1)G(\varpi^{-l_2},\mu\chi_2)G(v\varpi^{-l}, \mu^{-1}). \nonumber
\end{equation}
Recall from \cite[Lemma~2.37]{Sa15_2} that
\begin{equation}
	\epsilon(\frac{1}{2},\mu^{-1}\chi_1^{-1}) = \epsilon(\frac{1}{2},\chi_1^{-1})\mu(-b_1). \nonumber
\end{equation}
This implies
\begin{equation}
	G(\varpi^{-a_1},\mu\chi_1) = \zeta_F(1)q^{-\frac{a_1}{2}}\mu(-b_1)\epsilon(\frac{1}{2},\chi_1^{-1}). \nonumber
\end{equation}
Inserting this expression above yields
\begin{equation} 
	W_{\pi}(g_{t,l,v}) = \zeta_F(1)^{-1} q^{-\frac{a_1+t}{2}} q^{s(l_1-l_2)} \epsilon(\frac{1}{2},\chi_1^{-1})\sum_{\mu\in \mathfrak{X}_l}G(\varpi^{-l_2},\mu\chi_2)G(-vb_1\varpi^{-l}, \mu^{-1}). \nonumber
\end{equation}
We can now evaluate the $\mu$-sum by writing the Gau\ss\  sum as an integral, taking the $\mu$-sum inside, and exploiting full cancellation. One arrives at
\begin{equation}
	W_{\pi}(g_{t,l,v}) = \zeta_F(1)^{-1} q^{-\frac{a_1+t}{2}} q^{s(l_1-l_2)} \epsilon(\frac{1}{2},\chi_1^{-1}) G(\varpi^{-t-a_1}-\varpi^{-l}vb_1, \chi_2). \nonumber
\end{equation}
By evaluating the remaining Gau\ss\  sum we obtain the sharp bounds
\begin{equation}
	\abs{W_{\pi}(g_{t,l,v})} \leq \begin{cases}
		1 &\text{ if } l<a_2 \text{ and }t=-n, \\
		q^{-\frac{t+n}{2}} &\text{ if }	l=a_2 \text{ and } -n\leq t<-a_1, \\
		q^{\frac{l-a_2}{2}} &\text{ if } l>a_2 \text{ and } t=-a_1-l, \\
		0 &\text{ else.}
	\end{cases} \nonumber
\end{equation}

\textbf{Case II: $\frac{a_1}{2}< l < a_2$.} In this case we have $l_1=a_1$, $l_2=a_2$ and $t=-n$. Note that $a_1-a_2\geq r$ implies $a_2\leq \frac{a_1}{2}$. However, this situation was covered in Case~I. Thus, we assume $a_1-a_2<r$. Our starting point is Lemma~\ref{lm:prototype_K_stat} together with Lemma~\ref{lm:generic_ps_shape_Wf}. Using Lemma~\ref{lm:eval_quad_GS} we compute the Gau\ss\  sum to be
\begin{eqnarray}
	G\left( \frac{\varpi^{-\rho}}{2}A_{x_1,x_2}, \varpi^{-r-\rho}B_{x_1,x_2}\right)
	&=& q^{-\frac{\rho}{2}} \gamma_F(-\frac{b_1}{2},\rho), \nonumber
\end{eqnarray}
whenever $x_1$ and $x_2$ satisfy
\begin{equation}
	b_1+x_1+vx_1x_2\varpi^{a_1-l}\in \p^r \text{ and } b_2\varpi^{a_1-a_2}+x_2\varpi^{a_1-a_2}+vx_1x_2\varpi^{a_1-l}\in \p^{r+\rho}. \nonumber
\end{equation}
This is reformulated to the congruences
\begin{eqnarray}
	&&x_1 \in -b_1+x_2\varpi^{a_1-a_2}+b_2\varpi^{a_1-a_2}+\p^r, \nonumber \\
	&&x_2^2 v \varpi^{a_1-l}+x_2(1+vb_1\varpi^{a_2-l}+vb_2\varpi^{a_1-l}) + b_2 \in \p^{r+\rho+a_2-a_1}. \nonumber 	
\end{eqnarray}
By Lemma~\ref{lm:quad_cong} $x_2\in\mathcal{O}^{\times}$ is uniquely determined modulo $\p^{r+a_2-a_1}$. Say $x_0$ is the unique solution in $(\mathcal{O}/\p^{r+a_2-a_1})^{\times}$ then we have
\begin{equation}
	S= \{ (x_0\varpi^{a_1-a_2}-b_1+b_2\varpi^{a_1-a_2}, x_0+\alpha\varpi^{r+\rho+a_2-a_1}) \colon \alpha \in \mathcal{O}/\p^{a_1-a_2}\}.\nonumber
\end{equation}
We insert this parametrization in the $S$-sum from Lemma~\ref{lm:prototype_K_stat}. Elementary rearrangements yield $\tilde{\gamma}\in S^1$ and $c\in \mathcal{O}^{\times}$ such that
\begin{eqnarray}
	W_{\pi}(g_{t,l,v}) &=& \tilde{\gamma} \zeta_F(1)^{-1} q^{\frac{a_2}{2}} G_{r+\rho+a_2-a_1}(cx_0 \varpi^{-a_2}, \chi_2). \nonumber
\end{eqnarray}
After checking that $r+a_2-a_1 \leq\frac{a_2}{2}$ we apply Remark~\ref{rem:bound_inc_GS} and obtain
\begin{equation}
	\abs{W_{\pi}(g_{t,l,v})} \leq 1. \nonumber
\end{equation}

\textbf{Case III: $\frac{a_1}{2}< l  =  a_2$.} This leads to $k=2r=a_1$, $l_1 = a_1$, and $l_2=-t-a_1$ for $-a_1>t\geq -n$. The situation turns out to very similar to the one in Case~II. One arrives at
\begin{eqnarray}
	W_{\pi}(g_{t,a_2,v})&=& \tilde{\gamma} \zeta_F(1)^{-1}q^{-\frac{t+a_1}{2}}G_{r+\rho+a_2-a_1}(c\varpi^{-a_2},\chi_2) \nonumber
\end{eqnarray} 
for some $\tilde{\gamma}\in S^1$ and some $c\in \mathcal{O}^{\times}$. Estimating the incomplete Gau\ss\  sum using Remark~\ref{rem:bound_inc_GS} yields
\begin{equation}
	\abs{W_{\pi}(g_{t,a_2,v})}\leq q^{-\frac{t+n}{2}}. \nonumber
\end{equation}

\textbf{Case IV: $l = a_1$.} We have $l_1 = -t-l$ and $-a_1>t\geq -2l$. We can rewrite the congruences defining $S$ and get
\begin{eqnarray}
	&&x_2\in x_1\varpi^{t+2l}+b_1-b_2\varpi^{a_1-a_2}+\p^r, \nonumber \\
	&&x_1^2v\varpi^{t+2l} +x_1(vb_1+\varpi^{t+2l}-vb_2\varpi^{a_1-a_2})+b_1\in \p^r. \nonumber
\end{eqnarray}
Depending on $t$ the behaviour can be quite different. The matrix $A_{\p}$ turns out to be
\begin{equation}
	A_{\p}= \left( \begin{matrix} -b_1 & vx_1x_2 \\ vx_1x_2 & 0 \end{matrix} \right) . \nonumber
\end{equation}

\textbf{Case IV.1: $t>-2l$.} In this case we have $(vb_1+\varpi^{t+2l}-vb_2\varpi^{a_1-a_2})\in \mathcal{O}^{\times}$. Thus, Lemma~\ref{lm:quad_cong} implies $\sharp S =1$. Estimating trivially yields
\begin{equation}
	\abs{W_{\pi}(g_{t,a_1,v})} = q^{-\frac{t+2l}{2}}\leq 1. \nonumber
\end{equation}

\textbf{Case IV.2: $t=-2l$.} Each $x_1$ determines a unique $x_2$ modulo $\p^r$. The quadratic congruence determining $x_1$ reads
\begin{equation}
	vx_1^2 +x_1(vb_1+1-vb_2\varpi^{a_1-a_2})+b_1\in \p^r \nonumber
\end{equation}
and has discriminant 
\begin{equation}
	\Delta = (1+vb_1-vb_2\varpi^{a_1-a_2})^2-4vb_1 = (1-vb)^2-4vb_2\varpi^{a_1-a_2}. \nonumber
\end{equation}
We can rewrite the quadratic equation as
\begin{equation}
	(2vx_1+1+vb)^2 \in \Delta + \p^r,\nonumber
\end{equation}
for $b=b_1-b_2\varpi^{a_1-a_2}$. Suppose that $v(\Delta)>0$. In particular, $1-vb\in \p$.  So that any admissible $x_1$ satisfies
\begin{equation}
	x_1 \in -\frac{1}{2v}-\frac{b_1}{2} +\p. \nonumber
\end{equation}
Then each admissible $x_1$ determines a unique $x_2$ by
\begin{equation}
	x_2 \in x_1 + b+\p^r \subset -\frac{1}{2v}+\frac{b_1}{2}  + \p \subset \p. \nonumber
\end{equation}
But we need $x_2$ to be a unit. We conclude that if $v(\Delta)>0$, then $S=\emptyset$ and therefore $W_{\pi}(g_{-2a_1,a_1,v})=0$.

If $v(\Delta) = 0$, we have $\sharp S \leq 2$ so that trivial estimates reveal
\begin{equation}
	\abs{W_{\pi}(g_{-2a_1,a_1,v})} \leq 2. \nonumber
\end{equation}

\textbf{Case V: $a_1 < l < n$.} Here we have $t=-2l$, and $l_1=l_2=l$. The set $S$ from Lemma\ref{lm:prototype_K_stat} is given by the system of congruences
\begin{eqnarray}
	&&x_1 \in x_2-b_1\varpi^{l-a_1}+b_2\varpi^{l-a_2} + \p^r, \nonumber \\
	&&vx_2^2+x_2(1-vb_1\varpi^{l-a_1}+vb_2\varpi^{l-a_2}) + b_2\varpi^{l-a_2} \in \p^r. \nonumber	
\end{eqnarray}
One can check that the discriminant of the quadratic equation for $x_2$ satisfies $\Delta \in 1+\p$. Therefore $\sharp S \leq 2$. Furthermore, 
\begin{equation}
	A_{\p}= \left( \begin{matrix} 0 & vx_1x_2 \\ vx_1x_2 & 0 \end{matrix} \right). \nonumber
\end{equation}
Thus, estimating trivially yields
\begin{equation}
	\abs{W_{\pi}(g_{-2l,l,v})} \leq 2. \nonumber
\end{equation}

\textbf{Case VI: $\max(\frac{a_1}{2},a_2) < l < a_1$.} This constitutes the transition region and it turns out that the approach we took before mutates into very messy calculations. We therefore choose to take a different approach. We calculate
\begin{eqnarray}
	&&K(\chi_1\otimes \chi_2, (\varpi^{-a_1},\varpi^{-l}), v\varpi^{-l})\nonumber \\
	&& \qquad = \int_{\mathcal{O}^{\times}} \chi_2(x_2)\psi(\varpi^{-l}x_2)G(\varpi^{-a_1}(1+\varpi^{a_1-l}vx_2), \chi_1)d^{\times} x_2 \nonumber \\
	&& \qquad = \epsilon(\frac{1}{2}, \chi_1^{-1})\zeta_F(1) q^{-\frac{a_1}{2}} \int_{\mathcal{O}^{\times}} \chi_1^{-1}(1+\varpi^{a_1-l}vx) \chi_2(x)\psi(x\varpi^{-l}) d^{\times}x. \nonumber
\end{eqnarray}
Using the same trick in the $x_2$ integral yields
\begin{eqnarray}
	&& K(\chi_1\otimes \chi_2, (\varpi^{-a_1},\varpi^{-l}), v\varpi^{-l}) =  \int_{\mathcal{O}^{\times}} \chi_1(x_1)\psi(\varpi^{-a_1}x_1)G(\varpi^{-l}(1+vx), \chi_2)d^{\times} x_1 \nonumber \\
	&&\quad = \epsilon(\frac{1}{2}, \chi_2^{-1})\chi_1(-\frac{1}{v})\psi(-v^{-1}\varpi^{-a_1})\zeta_F(1) q^{\frac{a_2}{2}-l} \int_{\mathcal{O}^{\times}} \chi_1(1-v\varpi^{l-a_2}x)\chi_2^{-1}(x)\psi(x\varpi^{l-a_1-a_2})d^{\times} x. \nonumber
\end{eqnarray}
We insert these expressions in Lemma~\ref{lm:generic_ps_shape_Wf} and obtain 
\begin{eqnarray}
	W_{\pi}(g_{-l-a_1,l,v}) &=& \chi_2(v^{-1})\epsilon(\frac{1}{2}, \chi_1^{-1}) \zeta_F(1)^{-1} q^{\frac{l}{2}}  \nonumber \\
	&&\qquad \cdot \int_{\mathcal{O}^{\times}} \chi_1^{-1}(1+\varpi^{a_1-l}x) \chi_2(x)\psi(v^{-1}x\varpi^{-l}) d^{\times}x,\label{eq:2_one_dim_integrals_(a)}
\end{eqnarray}
and
\begin{eqnarray}
	W_{\pi}(g_{-l-a_1,l,v}) &=& \gamma  \epsilon(\frac{1}{2}, \chi_2^{-1})[\chi_1\chi_2^{-1}](-\frac{1}{v})\psi(-v^{-1}\varpi^{-a_1})\zeta_F(1)^{-1} q^{\frac{a_1+a_2-l}{2}} \nonumber \\
	&&\qquad \cdot\int_{\mathcal{O}^{\times}} \chi_1(1+\varpi^{l-a_2}x)\chi_2^{-1}(x)\psi(-v^{-1}x\varpi^{l-a_1-a_2})d^{\times} x. \label{eq:2_one_dim_integrals_(b)}
\end{eqnarray}

Note that estimating the integrals trivially recovers the local bound of \cite[Corollary~2.35]{Sa15_2}. We will now reduce these integrals to a situation where Lemma~\ref{lm:quoted_qp_bound} becomes applicable. We treat different cases.

\textbf{Case VI.1: $a_2<l<\frac{a_1+a_2}{2}$.} We will show further cancellation in the integral appearing in \eqref{eq:2_one_dim_integrals_(a)}. Suppose $a_2=1$ then the current situation implies $l< \frac{a_1+1}{2}$ but obviously this yields $l\leq \frac{a_1}{2}$ which is excluded from Case~IV. Thus we assume $a_2>1$ and write $a_2= 2r+\rho$ for $\rho\in \{0,1\}$ and $r\in \N$. For any $r\leq \kappa \leq a_2$ we calculate
\begin{eqnarray}
	&&W_{\pi}(g_{-l-a_1,l,v})\nonumber \\
 	&&\quad = \gamma\chi_2(v^{-1})\epsilon(\frac{1}{2}, \chi_1^{-1}) q^{\frac{l}{2}-\kappa} \sum_{y\in (\mathcal{O}/\p^{\kappa})^{\times}}\chi_2(y)\psi(v^{-1}y\varpi^{-l}) \nonumber \\
	&&\qquad \cdot \int_{\mathcal{O}}\psi\left(-\frac{b_1}{\varpi^{a_1}}\log_F(1+(y+\varpi^{\kappa} x)\varpi^{a_1-l})+\frac{b_2}{\varpi^{a_2}}(\frac{x}{y}\varpi^{\kappa}-\frac{x^2}{2y^2}\varpi^{2\kappa})+v^{-1}x\varpi^{\kappa-l}\right)dx. \nonumber
\end{eqnarray}
The Taylor expansion of the logarithm reads
\begin{equation}
	-\frac{b_1}{\varpi^{a_1}}\log_F(1+(y+\varpi^{\kappa} x)\varpi^{a_1-l}) = \sum_{j\geq 1} \frac{(-1)^{j} b_1}{j}(y+\varpi^{\kappa}x)^j\varpi^{(j-1)a_1-jl}. \nonumber
\end{equation}
We first observe that, since $l<\frac{a_1+a_2}{2}$, we have
\begin{equation}
	(j-1)a_1-jl+2\kappa \geq (j-1)a_1-jl+2r \geq (j-2)(a_1-l) \geq j-2. \nonumber
\end{equation}
Opening up $(y+t\varpi^r)^j$ using the binomial expansion and dropping all the terms with coefficients in $\mathcal{O}$ yields
\begin{eqnarray}
	W_{\pi}(g_{-l-a_1,l,v})  &=& \chi_2(v^{-1})\epsilon(\frac{1}{2}, \chi_1^{-1}) q^{\frac{l}{2}-\kappa} \sum_{y\in (\mathcal{O}/\p^{\kappa})^{\times}}\chi_2(y)\psi\left(v^{-1}y\varpi^{-l}-\frac{b_1}{\varpi^{a_1}}\log_F(1+y\varpi^{a_1-l})\right) \nonumber \\
	&&\qquad \cdot G(-\frac{b_2}{2y^2}\varpi^{2\kappa-a_2},g(y)), \nonumber
\end{eqnarray}
for
\begin{equation}
	g(y) = \frac{b_2}{y}\varpi^{\kappa-a_2}+(v^{-1}-b_1)\varpi^{\kappa-l}+b_1\sum_{j\geq 2} (-1)^{j} y^{j-1}\varpi^{(j-1)a_1-jl+\kappa}. \nonumber
\end{equation}
If we set $\kappa = a_2$, the Gau\ss\  sum degenerates to the characteristic function on $1-vb_1\in \p^{l-a_2}$ which is independent of $y$. Thus, we can write
\begin{equation}
	1-vb_1 = \beta \varpi^{l-a_2}. \nonumber
\end{equation} 
This suggests that the true period of the integrals is $\p^a_2$ so that we choose $\kappa= r$. In this case a quick investigation of $g(y)$ reveals
\begin{equation}
	g(y) \in y^{-1}v^{-1}\varpi^{-r-\rho}(vb_2+y\beta+y^2\p[y]). \nonumber 
\end{equation}
We define $h(y) = vyg(y)\varpi^{r+\rho}$ and conclude
\begin{eqnarray}
	W_{\pi}(g_{-l-a_1,l,v})  &=& \chi_2(v^{-1})\epsilon(\frac{1}{2}, \chi_1^{-1}) q^{\frac{l}{2}-r} \sum_{y\in (\mathcal{O}/\p^r)^{\times}}\chi_2(y)\psi\left(v^{-1}y\varpi^{-l}-\frac{b_1}{\varpi^{a_1}}\log_F(1+y\varpi^{a_1-l})\right) \nonumber \\
	&&\qquad \cdot G(-\frac{b_2}{2y^2}\varpi^{-\rho},\frac{\varpi^{-r-\rho}}{vy}h(y)). \nonumber
\end{eqnarray}
The congruence $h(y)\in \p^r$ has one solution in $\mathcal{O}/\p^{r}$ and therefore, the evaluation of the Gau\ss\  sum yields
\begin{equation}
	\abs{W_{\pi}(g_{-l-a_1,l,v})} \leq q^{\frac{l-a_2}{2}}. \nonumber 
\end{equation}

\textbf{Case IV.2: $l=\frac{a_1+a_2}{2}$.} Note that this implies that $a_1$ and $a_2$ have the same parity so that $n$ must be even. We write $a_2 = 2r+\rho$ for some $\rho\in \{0,1\}$ and some $r\in \N_0$. Similar to Case~IV.1 we can deduce from \eqref{eq:2_one_dim_integrals_(a)} that 
\begin{equation}
	1-vb_1 = \beta \varpi^{\frac{a_1-a_2}{2}}. \nonumber
\end{equation}

\textbf{Case IV.2.1: $r=0$.} Since we assume $a_2>0$ this implies $\rho=1$. We compute
\begin{eqnarray}
	W_{\pi}(g_{-l-a_1,l,v}) &=& \chi_2(v^{-1})\epsilon(\frac{1}{2}, \chi_1^{-1}) q^{\frac{l}{2}-1} \sum_{y\in (\mathcal{O}/\p)^{\times}} \chi_2(y)\chi_1^{-1}(1+\varpi^{\frac{a_1-1}{2}}y) \psi(v^{-1}y\varpi^{-\frac{a_1+1}{2}}) \nonumber \\
	&=& \chi_2(v^{-1})\epsilon(\frac{1}{2}, \chi_1^{-1}) q^{\frac{l}{2}-1} \sum_{y\in (\mathcal{O}/\p)^{\times}} \chi_2(y) \psi\left( \varpi^{-1}\left(\frac{y\beta}{v}+\frac{b_1y^2}{2}\right)\right). \nonumber
\end{eqnarray}
Using Weil's bound \eqref{eq:Weils_bound} we obtain square-root cancellation in the remaining sum. We have shown
\begin{equation}
	\abs{W_{\pi}(g_{-l-a_1,l,v})} \leq 2q^{\frac{l-a_2}{2}}. \nonumber
\end{equation}

\textbf{Case IV.2.2: $r>0$.} In this case we split up the integral from \eqref{eq:2_one_dim_integrals_(a)} in $q$-pieces. Using suitable Taylor expansions we can write
\begin{equation}
	W_{\pi}(g_{t,l,v}) = \chi_2(v^{-1})\epsilon(\frac{1}{2}, \chi_1^{-1}) q^{\frac{l}{2}-1} \sum_{y\in (\mathcal{O}/\p)^{\times}} \chi_2(y)\chi_1^{-1}(1+\varpi^{\frac{a_1-a_2}{2}}y) \psi(\frac{y}{v}\varpi^{-\frac{a_1+a_2}{2}}) S(f_y,a_2-1)\nonumber
\end{equation}	
 where $S(f_y,a_1-1)$ is the integral defined in \eqref{eq:def_of_S(fm)} with
\begin{eqnarray}
	f_y(t) &=& \frac{t}{y}\left[ b_2+\frac{\beta}{v} y+b_1y^2+\sum_{j\geq 3}(-1)^jb_1y^j \varpi^{(j-2)\frac{a_1-a_2}{2}}\right] \nonumber \\
	&&\quad + \sum_{k\geq 2} \frac{t^k}{y^k} \left[ \frac{(-1)^{k+1}}{k}b_2\varpi^{k-1}+\sum_{j\geq k} \frac{(-1)^j}{j}\binom{j}{k} b_1y^j\varpi^{(k-1)+(j-2)\frac{a_1-a_2}{2}} \right]. \nonumber 
\end{eqnarray}
The size of $S(f_y,a_2-1)$ is controlled by the discriminant $\Delta= \frac{\beta^2}{v^2}-4b_1b_2$. If $\Delta\in \p$, we see straight away that
\begin{equation}
	S(f_y,a_2-1) = 0 \text{ for all } y \in (\mathcal{O}/\p)^{\times}\setminus \{ -\frac{\beta}{2vb_1}\}. \nonumber
\end{equation}
Putting $y_0  = -\frac{\beta}{2vb_1}$ we obtain
\begin{equation}
	W_{\pi}(g_{t,l,v}) = \chi_2(v^{-1})\epsilon(\frac{1}{2}, \chi_1^{-1}) q^{\frac{l}{2}-1}\chi_2(y_0)\chi_1^{-1}(1+\varpi^{\frac{a_1-a_2}{2}}y_0) \psi(\frac{y_0}{v}\varpi^{-\frac{a_1+a_2}{2}}) S(f_{y_0},a_2-1). \nonumber
\end{equation}
Having a closer look at $f_{y_0}$ reveals
\begin{eqnarray}
	f_{y_0}(t) &=& t\left[-\frac{\Delta v}{2\beta}+  \sum_{j\geq 3} (-1)^j b_1y_0^{j-1}\varpi^{(j-2)\frac{a_1-a_2}{2}}\right]\nonumber  \\
	&&\quad +  \varpi t^2\left[\frac{\Delta v^2 b_1}{2\beta^2}+\varpi^{\frac{a_1-a_2}{2}}(\cdots) \right]+ \varpi^2 t^3\left[ \frac{b_2}{3y_0^3}+\varpi( \cdots)\right] + \varpi^3 t^4 \left[ \cdots \right]. \nonumber
\end{eqnarray}
Thus we can estimate $S(f_{y_0},a_2-1)$ using Lemma~\ref{lm:quoted_qp_bound} with $d_{\p} =3$, $\tau = 2$ and $M=2$. This yields
\begin{equation}
	\abs{W_{\pi}(g_{-l-a_1,l,v})} \leq 2q^{\frac{l}{2}-\frac{a_2}{3}} \text{ for } a_2\geq 4. \nonumber
\end{equation}
The remaining cases $a_2=2,3$ can be checked by hand.

If $\Delta\in \mathcal{O}^{\times}$ a similar analysis leads to the stronger bound
\begin{equation}
	\abs{W_{\pi}(g_{-l-a_1,l,v})} \leq 2q^{\frac{l}{2}-\frac{a_2}{2}} \text{ for } a_2\geq 4. \nonumber
\end{equation}

\textbf{Case VI.3: $l>\frac{a_1+a_2}{2}$.} This case is very similar to Case~IV.1.  One uses \eqref{eq:2_one_dim_integrals_(b)} instead of \eqref{eq:2_one_dim_integrals_(a)}. Slightly adjusting the argument leads to
\begin{equation}
	\abs{W_{\pi}(g_{-l-a_1,l,v})} \leq q^{\frac{a_1-l}{2}}. \nonumber 
\end{equation}

This completes the last case and so the proof.
\end{proof}

Before we come to the last situation we will prove some estimates for
\begin{equation}
	K_{l_2} = K(\chi_1\otimes \chi_2,(\varpi^{t+l_2},\varpi^{-l_2}),v\varpi^{-l})
\end{equation}
when $a(\chi_1)=a(\chi_2)=l>1$.

\begin{lemma} \label{lm:est_KL2}
Suppose $a(\chi_1) = a(\chi_1) = l > 1$. Then $K_{l_2}$ is non-zero if $t=-l_2$ or $t=-a(\chi_1\chi_2^{-1})-l_2$ and $a(\chi_1\chi_2^{-1})>\frac{a_1}{2}$. If there are no degenerate critical points we have
\begin{eqnarray}
	\abs{K_{l_2}} &\leq& 2\zeta_F(1)^2 q^{-\frac{a_1}{2}+\frac{t}{4}}.\nonumber
\end{eqnarray}
Furthermore, if $t>-2a_1$, we have degenerate critical points exactly when $t=-2l_2=-2a(\chi_1\chi_2^{-1})$. On the other hand, if $t=-2a_1$ and $a(\chi_1\chi_2)<a_1$ degenerate critical points exist if and only if $-1\in\mathcal{O}^2$. For $t=-2a_1$ and $a(\chi_1\chi_2)=a_1$ there is always a degenerate critical point. In general we have the bound
\begin{equation}
	\abs{K_{l_2}} \leq 2\zeta_F(1)^2 q^{-\frac{a_1}{2}+\frac{t}{6}}.\nonumber
\end{equation}
\end{lemma}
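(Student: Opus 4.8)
The plan is to reduce the study of $K_{l_2}$, via Lemma~\ref{lm:prototype_K_stat}, to a $\mu$-average of products of Gau\ss\  sums, and then to analyse the resulting critical set $S$ case by case according to the relative size of $t$, $l_2$ and $a(\chi_1\chi_2^{-1})$. First I would apply Lemma~\ref{lm:char_trick} to replace $\chi_1,\chi_2$ by additive oscillations governed by the constants $b_1,b_2$, writing $K_{l_2}$ (after collapsing the $\mu$-sum as in the proof of Lemma~\ref{lm:generic_ps_shape_Wf}) as $\zeta_F(1)^2 q^{-2r}$ times a sum over $(x_1,x_2)\in S$ of a phase times a two-dimensional quadratic Gau\ss\  sum $G(\tfrac{\varpi^{-\rho}}{2}A_{x_1,x_2},\varpi^{-r-\rho}B_{x_1,x_2})$, with $k=\max(a_1,l)=2r+\rho$. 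The support properties of $G$ in Lemma~\ref{lm:eval_quad_GS} force $B_{x_1,x_2}\equiv 0$, which is exactly the congruence defining $S$; inspecting that congruence shows $K_{l_2}=0$ unless $t=-l_2$ or ($t=-a(\chi_1\chi_2^{-1})-l_2$ with $a(\chi_1\chi_2^{-1})>a_1/2$), giving the stated support.

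Next I would determine $\sharp S$ and the degeneracy of the critical points using Lemma~\ref{lm:quad_cong}. The two congruences cutting out $S$ are of the form $x_1\equiv(\text{linear in }x_2)$ together with a quadratic congruence for $x_2$ whose discriminant $\Delta$ I would compute explicitly in each configuration. In the ``generic'' cases $v(\Delta)=0$, so $\sharp S\le 2$ and each term contributes at most $q^{-\rho/2}$ (via the bound in Lemma~\ref{lm:eval_quad_GS}), yielding $\abs{K_{l_2}}\le 2\zeta_F(1)^2 q^{-2r-\rho/2}$; one then checks $2r+\rho/2\ge a_1/2-t/4$ using $k=\max(a_1,l)$ and the support constraint on $t$, which gives the non-degenerate bound $2\zeta_F(1)^2 q^{-a_1/2+t/4}$. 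The degenerate cases are precisely when the quadratic for $x_2$ acquires a repeated root modulo $\p$: a direct discriminant computation shows this happens exactly when $t=-2l_2=-2a(\chi_1\chi_2^{-1})$ (for $t>-2a_1$), and for $t=-2a_1$ it is governed by whether $-1\in\mathcal{O}^2$ (when $a(\chi_1\chi_2)<a_1$) while for $a(\chi_1\chi_2)=a_1$ the linear term drops out and degeneracy is automatic.

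In the degenerate cases I would parametrise $S$ by Lemma~\ref{lm:quad_cong}, writing $x_2=A_\pm+\alpha\varpi^{\text{(something)}}$ around the double root $A_\pm=-b/(2v)$, insert this into the $S$-sum, and then expand the character oscillation on the residual $\alpha$-integral using the Taylor expansion of $\log_F$ from Lemma~\ref{lm:char_trick}. As in the proofs of Lemma~\ref{lm:unramified_sc_bound} and Lemma~\ref{lm:St_twist_2_bound}, the linear and quadratic coefficients in $\alpha$ will lie in $\p$ exactly because of the degeneracy condition, while the cubic coefficient is a unit (this is the algebraic heart of the estimate and where I expect to spend most effort — verifying that $a_3\in\mathcal{O}^\times$ and that $\Delta$ small forces $a_1,a_2\in\p$). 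Then $S(f,\cdot)$ with $f$ cubic modulo $\p$, estimated by Lemma~\ref{lm:quoted_qp_bound} with $M=2$, gives at worst cube-root cancellation, and bookkeeping of the exponents produces $\abs{K_{l_2}}\le 2\zeta_F(1)^2 q^{-a_1/2+t/6}$; since $t/6\ge t/4$ when $t<0$ this is the weaker of the two bounds and holds uniformly, giving the claimed general estimate. The main obstacle is the last step: the exact computation of the coefficients $a_1,a_2,a_3$ of the post-logarithm cubic phase and the verification that degeneracy of $\Delta$ implies $a_1,a_2\in\p$ while $a_3$ remains a unit, together with the careful exponent bookkeeping across the several sub-configurations of $(t,l_2,a(\chi_1\chi_2^{-1}))$.
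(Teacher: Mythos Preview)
Your high-level plan matches the paper's approach in spirit, but there is a structural gap. You assume that the two congruences cutting out $S$ always take the form ``$x_1$ linear in $x_2$, $x_2$ satisfies a quadratic congruence'', giving $\sharp S\le 2$ in the non-degenerate regime. This is only true in one of the paper's four cases (essentially $t=-2l$). In the other regimes the coefficient of the would-be quadratic term in $x_2$ (or $x_1$) is itself in $\p^r$, and the congruences instead force $x_1x_2\equiv -b_i/v$ with one variable free; then $\sharp S\asymp q^r$, and bounding the $S$-sum trivially gives nothing.

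The paper handles this by splitting according to the relative sizes of $a_1+l_2+t$, $a_1-l_2$ and $r$. When $a_1+l_2+t\ge r$ the $S$-sum collapses to a one-dimensional twisted Kloosterman sum $S_{\chi_1\chi_2^{-1}}(1,\cdot,-l_2-t)$, which is then estimated by Lemma~\ref{lm:est_TKS}; this is also where the support condition ``$t=-l_2$ or $t=-a(\chi_1\chi_2^{-1})-l_2$'' and the degeneracy criterion $t=-2l_2=-2a(\chi_1\chi_2^{-1})$ come from. When $a_1-l_2<r$ the paper instead evaluates the inner Gau\ss\ sum in one variable first and reduces to a genuinely one-dimensional stationary phase problem. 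Your cubic-phase endgame for the degenerate critical points is exactly right and matches the paper's Case~IV, but to reach the non-degenerate bound $2\zeta_F(1)^2 q^{-a_1/2+t/4}$ across all parameter ranges you will need these additional one-dimensional reductions rather than a uniform ``$\sharp S\le 2$'' argument.
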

\begin{proof}
After exchanging the roles of $x_1$ and $x_2$, if necessary, we can assume without loss of generality that $t+l_2 \leq -l_2$. An important invariant in the following calculations will be
$v(b_1-b_2)$. Note that we have
\begin{equation}
	b_1-b_2 = b_{\chi_1\chi_2^{-1}}\varpi^{a(\chi_1)-a(\chi_1\chi_2^{-1})}. \nonumber
\end{equation}
We write $S_{l_2}$ for the set $S$ defined in Lemma~\ref{lm:prototype_K_stat} to keep track of the $l_2$ dependence. 

In the following we will only deal with even $a_1$. The situation for odd $a_2$ is similar and the necessary adaptions can be extracted from the proof of Lemma~\ref{lm:St_twist_2_bound}. Thus, we set $a_1 = 2r$.

\textbf{Case I: $a_1+l_2+t\geq r$.} This leads to a very simple structure of $S_{l_2}$.  Indeed the congruence condition can be transformed into
\begin{equation}
	x_2= -\frac{b_2}{vx_1}+\p^r \text{ and } b_1-b_2 \in \p^r. \nonumber
\end{equation}
Thus, if $v(b_1-b_2) \geq r$, then every $x_1\in (\mathcal{O}/\p^r)^{\times}$  determines a unique $x_2$. Otherwise $S_{l_2}$ is empty. We compute
\begin{eqnarray}
	K_{l_2} &=& q^{-a(\chi_1)}\zeta_F(1)^2 \chi_2\left(-\frac{b_2}{v}\right)\psi(-b_2\varpi^{-l}) \sum_{x_1\in  (\mathcal{O}/\p^r)^{\times}} [\chi_1\chi_2^{-1}](x_1)\psi\left(x_1\varpi^{t+l_2}-\frac{b_2}{vx_1}\varpi^{-l_2}\right) \nonumber \\
	&=& q^{-r}\zeta_F(1) \chi_2\left(-\frac{b_2}{v}\right)\psi(-b_2\varpi^{-l})S_{\chi_1\chi_2^{-1}}\left(1,-\frac{b_2}{v}\varpi^{-2l_2-t},-l_2-t\right). \nonumber
\end{eqnarray}
To estimate the twisted Kloosterman sum we use Lemma~\ref{lm:est_TKS}. First, if $-l_2-t = a(\chi_1\chi_2^{-1})$, then the current assumptions imply $l_2 = a(\chi_1\chi_2^{-1})$ and we have
\begin{equation}
	\abs{K_{l_2}} \leq 2\zeta_F(1)^2 q^{-\frac{a_1}{2}-\frac{a(\chi_1\chi_2^{-1})}{3}} = 2\zeta_F(1)^2 q^{-\frac{a_1}{2}+\frac{t}{6}}. \nonumber
\end{equation}
Second, if $-l_2-t<a(\chi_1\chi_2^{-1})$, then the twisted Kloosterman sum vanishes. Finally, if $-l_2-t>a(\chi_1\chi_2^{-1})\geq 1$, then due to the support of twisted Kloosterman sums the only interesting situation is $t=-2l_2$. In this case we have square root cancellation. Indeed
\begin{equation}
	\abs{K_{l_2}} \leq 2\zeta_F(1)^2 q^{-\frac{a_1}{2}+\frac{t}{4}}. \nonumber
\end{equation}

\textbf{Case II: $a_1-l_2 \geq r > a_1+l_2+t$.} In this situation the set $S_{l_2}$ is slightly more complicated. It is described by the congruences
\begin{equation} 
	x_2 = -\frac{b_2}{vx_1} + \p^r \text{ and } x_1\varpi^{a_1+l_2+t} = -b_1+b_2 +\p^r. \nonumber
\end{equation}
We observe that this implies $a(\chi_1\chi_1^{-1})>\frac{a_1}{2}$, since otherwise there are no solutions for $x_1$. Furthermore, $S_{l_2}$ is empty unless $l_2=-t-a(\chi_1\chi_2^{-1})$. We can parametrize $x_1$ by
\begin{equation}
	x_1 = -b_{\chi_1\chi_2^{-1}}+\alpha \varpi^{a(\chi_1\chi_2^{-1})-r} \text{ for } \alpha \in \mathcal{O}/\p^{a_1-a(\chi_1\chi_2^{-1})}. \nonumber
\end{equation}
Each choice of $x_1$ determines $x_2$ by
\begin{equation}
	x_2=-\frac{b_2}{vx_1} = \frac{b_2}{v} \sum_{j\geq 0} \frac{\alpha^j}{b_{\chi_1\chi_2^{-1}}^{j+1}}\varpi^{ja(\chi_1\chi_2^{-1})-jr}. \nonumber
\end{equation}
Note that if $a(\chi_1\chi_2^{-1})=a_1$ then there is a unique $x_1$ and estimating trivially yields
\begin{equation}
	K_{l_2} \leq \zeta_F(1)^2q^{-a_1}\leq \zeta_F(1)^2 q^{-\frac{a_1}{2}+\frac{t}{4}} . \nonumber
\end{equation}

Otherwise we can use this parametrization and write
\begin{eqnarray}
	K_{l_2} &=& \chi_1(-b_{\chi_1\chi_2^{-1}})\chi_2\left(\frac{b_2}{vb_{\chi_1\chi_2^{-1}}}\right)\psi\left(-b_{\chi_1\chi_2^{-1}}\varpi^{-a(\chi_1\chi_2^{-1})}-b_2\varpi^{-l}+\frac{b_2}{vb_{\chi_1\chi_2^{-1}}}\varpi^{t+a(\chi_1\chi_2^{-1})}\right) \nonumber \\
	&& \quad \cdot\zeta_F(1)^2 q^{-a_1}\sum_{\alpha\in \mathcal{O}/\p^{a_1-a(\chi_1\chi_2^{-1})}}[\chi_1\chi_2^{-1}]\left(1-\frac{\alpha}{b_{\chi_1\chi_2^{-1}}}\varpi^{a(\chi_1\chi_2^{-1})-r}\right) \nonumber \\
	&& \qquad \cdot\psi\left( \alpha \varpi^{-r} +\frac{b_2}{v} \sum_{j\geq 1} \frac{\alpha^j}{b_{\chi_1\chi_2^{-1}}^{j+1}}\varpi^{t+(j+1)a(\chi_1\chi_2^{-1})-jr} \right). \nonumber
\end{eqnarray}
Since $a(\chi_1\chi_2^{-1})-r>0$ we can apply Lemma~\ref{lm:char_trick}. Exploiting the convergent Taylor expansion of $\log_F$ one computes
\begin{eqnarray}
	K_{l_2} &=&  \chi_1(-b_{\chi_1\chi_2^{-1}})\chi_2(\frac{b_2}{vb_{\chi_1\chi_2^{-1}}})\psi(-b_{\chi_1\chi_2^{-1}}\varpi^{-a(\chi_1\chi_2^{-1})}-b_2\varpi^{-l}+\frac{b_2}{vb_{\chi_1\chi_2^{-1}}}\varpi^{t+a(\chi_1\chi_2^{-1})})\zeta_F(1)^2 \nonumber \\
	&&\quad \cdot q^{-a(\chi_1\chi_2^{-1})} \int_{\mathcal{O}} \psi\bigg(\alpha (2+\frac{b_2}{vb_{\chi_1\chi_2^{-1}}^2}\varpi^{a(\chi_1\chi_2^{-1})-l_2})\varpi^{-r} \nonumber \\
	&&\qquad\qquad\qquad \qquad + \sum_{j\geq 2}\frac{\alpha^j}{jb_{\chi_1\chi_2^{-1}}^{j+1}}(jb_2v^{-1}\varpi^{a(\chi_1\chi_2^{-1})-l_2}-b_{\chi_1\chi_2^{-1}}^2)\varpi^{(j-1)a(\chi_1\chi_2^{-1})-jr} \bigg)d\alpha. \nonumber
\end{eqnarray}
The remaining integral vanishes if the linear term is a unit. Thus, we are in a non zero situation only if $l_2 = a(\chi_1\chi_2^{-1})$. However, this implies $0 = a(\chi_1\chi_2^{-1})-l_2 > r-l_2 \geq 0$ due to the assumptions in the case under consideration. We conclude that the current case does not contribute any non-zero situation.

\textbf{Case III: $a_1-l_2 <r$ and $t>-2l$.} First we observe that $-l_2>-a_1$, since otherwise we are in the situation where $t=-2l$. We compute
\begin{eqnarray}
	K_{l_2}&=& \int_{\mathcal{O}^{\times}}\chi_1(x_1)G((\varpi^{a_1-l_2}+vx_1)\varpi^{-a_1},\chi_2)\psi(\varpi^{t+l_2}x_1)d^{\times}x_1\nonumber \\
	&=& \zeta_F(1)q^{-\frac{a_1}{2}}\epsilon(\frac{1}{2},\chi_2^{-1})\int_{\mathcal{O}^{\times}}\chi_1(x_1)\chi_2^{-1}(\varpi^{a_1-l_2}+vx_1)\psi(\varpi^{t+l_2}x_1)d^{\times}x_1.\nonumber
\end{eqnarray}
Using the $p$-adic logarithm yields
\begin{eqnarray}
	K_{l_2}&=& \zeta_F(1)^2q^{-\frac{a_1}{2}-\kappa}\epsilon(\frac{1}{2},\chi_2^{-1}) \sum_{x\in(\mathcal{O}/p^{\kappa})^{\times}}\chi_1(x)\chi_2^{-1}(\varpi^{a_1-l_2}+vx)\psi(\varpi^{t+l_2}x) \nonumber\\
	&&\qquad \cdot\int_{\mathcal{O}} \psi\bigg(y(\varpi^{a_1+\kappa+t}+\frac{b_1}{x}-\frac{vb_2}{vx+\varpi^{a_1-l_2}})\varpi^{\kappa-a_1}\nonumber \\
	&&\qquad\qquad\qquad\qquad - \sum_{j\geq 2}\frac{(-1)^j}{j}y^j\left( \frac{b_1}{x^j}-\frac{b_2v^j}{(vx+\varpi^{a_1-l_2})^j} \right)\varpi^{j\kappa-a_1} \bigg)dy.\nonumber
\end{eqnarray}
From the linear term we obtain the quadratic congruence
\begin{equation}
	vx^2\varpi^{a_1+l_2+t}+x(vb_{\chi_1\chi_2^{-1}}\varpi^{a_1-a(\chi_1\chi_2^{-1})}+\varpi^{2a_1+t})+b_1\varpi^{a_1-l_2} \in \p^{a_1-\kappa} \nonumber
\end{equation}
which is necessary for the $t$-integral to be non-zero.  In particular looking at $\kappa=r$ we observe that we must have $a(\chi_1\chi_2^{-1})>r$. 

First, we suppose that $l_2+t<-l_2$. In this case the congruence has solutions $x$ that are units if and only if $t=-l_2-a(\chi_1\chi_2^{-1})$. We choose $\kappa = \lfloor \frac{a(\chi_1\chi_2^{-1})}{2} \rfloor$. The current assumptions imply that we can truncate the sum in the integral after the second term. We are left with a normal quadratic Gau\ss\  sum. Looking at the entries carefully reveals that there is exactly one admissible $x$ for which we can estimate the Gau\ss\  sum by $q^{-\{\frac{a(\chi_1\chi_2^{-1})}{2}\}}$. Therefore,
\begin{equation}
	\abs{K_{l_2}}\leq \zeta_F(1)^2 q^{-\frac{a_1+a(\chi_1\chi_2^{-1})}{2}}\leq \zeta_F(1)^2 q^{-\frac{a_1}{2}+\frac{t}{4}}. \nonumber
\end{equation}

Second, if $t=-2l_2 \neq -2a(\chi_1\chi_2^{-1})$ we choose $\kappa = \lfloor \frac{l_2}{2} \rfloor$. A familiar arguments yields
\begin{equation}
	\abs{K_{l_2}}\leq 2\zeta_F(1)^2q^{-\frac{a_1}{2}+\frac{t}{4}}. \nonumber
\end{equation}

At last we consider $t=-2a(\chi_1\chi_2^{-1})$. For this situation there might exists degenerate critical points. We have to solve the congruence
\begin{equation}
	x^2+x(b_{\chi_1\chi_2^{-1}}+\frac{\varpi^{a_1-a(\chi_1\chi_2^{-1})}}{v})+\frac{b_1}{v} \in \p^{a(\chi_1\chi_2^{-1})-\kappa}. \nonumber
\end{equation}
If the discriminant $\Delta=(b_{\chi_1\chi_2^{-1}}+\frac{\varpi^{a_1-a(\chi_1\chi_2^{-1})}}{v})^2-\frac{4b_1}{v}$ is a unit, we may argue as above. Thus, let us assume $\Delta\in\p$. We choose $\kappa = r$ and parametrise
\begin{equation}
	x=-\frac{b_{\chi_1\chi_2^{-1}}}{2}-\frac{\varpi^{a_1-a(\chi_1\chi_2^{-1})}}{2v} \pm \frac{Y}{2}\varpi^{\delta}+\alpha \varpi^{a(\chi_1\chi_2^{-1})-r-\delta} \text{ for } \alpha \in\mathcal{O}/\p^{a_1-a(\chi_1\chi_2^{-1})+\delta} \nonumber
\end{equation}
and
\begin{eqnarray}
	Y &=& \begin{cases} 0 &\text{ if } v(\Delta)\geq a(\chi_1\chi_2^{-1})-r, \\ Y_0 &\text{ if } v(\Delta)<a(\chi_1\chi_2^{-1})-r \text{ and } (\Delta)_0 = Y_0^2, \end{cases} \text{and }\nonumber\\
	 \delta &=& \begin{cases} \lfloor\frac{a(\chi_1\chi_2^{-1})-r}{2}\rfloor &\text{ if } v(\Delta)\geq a(\chi_1\chi_2^{-1})-r, \\ \delta_0 &\text{ if } v(\Delta)=2\delta_0<a(\chi_1\chi_2^{-1})-r. \end{cases} \nonumber
\end{eqnarray}
If we reinsert this parametrisation in the $x$-sum, we obtain 
\begin{eqnarray}
	K_{l_2}&=& \zeta_F(1)^2q^{\delta-a(\chi_1\chi_2^{-1})}\sum_{\pm} \gamma_{\pm} \int_{\mathcal{O}^{\times}}\psi\bigg(A_1x  \nonumber \\
	&&\qquad\qquad - \sum_{j\geq 2}\frac{(-1)^j}{j}x^j\left( \frac{b_1}{A}-\frac{b_2v^j}{(vA+\varpi^{a_1-a(\chi_1\chi_2^{-1})})^j }\right)\varpi^{j(a(\chi_1\chi_2^{-1})-r-\delta)-a_1}\bigg) \nonumber
\end{eqnarray}
for some $A_1\in F$, some $\gamma_{\pm}\in S^1$ and $A=-\frac{b_{\chi_1\chi_2^{-1}}}{2}-\frac{\varpi^{a_1-a(\chi_1\chi_2^{-1})}}{2v} \pm \frac{Y}{2}\varpi^{\delta}$. Observe that $A(vA+\varpi^{a_1-a(\chi_1\chi_2^{-1})})\in\mathcal{O}^{\times}$ so that  looking at the $j$-th coefficient yields
\begin{equation}
	\left( \frac{b_1}{A}-\frac{b_2v^j}{(vA+\varpi^{a_1-a(\chi_1\chi_2^{-1})})^j }\right) \in \p^{a_1-a(\chi_1\chi_2^{-1})}.\nonumber
\end{equation}
Furthermore, we check that 
\begin{equation}
	(vA+\varpi^{a_1-a(\chi_1\chi_2^{-1})})^jb_1-A^jb_2 \in A^jv^j(b_1-b_2)+jv^{j-1}A^{j-1}\varpi^{a_1-a(\chi_1\chi_2^{-1})}b_1 +\p^{2a_1-2a(\chi_1\chi_2^{-1})}.\nonumber
\end{equation}
This helps us to check that the second order term is in $\p^{a_1-a(\chi_1\chi_2^{-1})+\delta}$ and the third order term is in $3^{-1}\varpi^{a_1-a(\chi_1\chi_2^{-1})}\mathcal{O}^{\times}$. Note that we can truncate the Taylor series latest after the $a_1$-th term. Thus, in the worst case scenario we obtain the bound
\begin{equation}
	\abs{K_{l_2}}\leq 2\zeta_{F}(1)^{2}q^{-r-\frac{a(\chi_1\chi_2^{-1})}{3}}.\nonumber
\end{equation}

\textbf{Case IV: $t=-2l$.} In this case we will take a very familiar approach. First, we note that $l_2 = t+l_2 = l =a_1$. Thus, the congruences reduce to
\begin{equation}
	vx_2^2+(1-vb_{\chi_1\chi_2^{-1}}\varpi^{a_1-a(\chi_1\chi_2^{-1})})x_2+b_2 \in \p^r \text{ and } x_1 = x_2-b_{\chi_1\chi_2^{-1}}\varpi^{a_1-a(\chi_1\chi_2^{-1})}\in \p^r. \nonumber
\end{equation}
We can now solve the remaining quadratic congruence as in many of the previous cases. Its discriminant is given by
\begin{equation}
	\Delta = (1-vb_{\chi_1\chi_2^{-1}}\varpi^{a_1-a(\chi_1\chi_2^{-1})})^2-4vb_2 = 1-2v(b_1+b_2)+v^2b_{\chi_1\chi_2^{-1}}^2\varpi^{2a_1-2a(\chi_1\chi_2^{-1})}. \nonumber
\end{equation}
If $\Delta \in \mathcal{O}^{\times}$, then obviously $\sharp S\leq 2$. In this case we can estimate trivially and obtain
\begin{equation}
	\abs{K_{a_1}} \leq 2\zeta_F(1)^2 q^{-a_1}. \nonumber
\end{equation}

From now on we assume that $\Delta \in \p$. We define
\begin{equation}
	\delta = \begin{cases} 
		\lfloor \frac{r}{2} \rfloor &\text{ if } v(\Delta) \geq r, \\
		\delta_0 &\text{ if } v(\Delta) = 2\delta_0 <r, \\
	\end{cases}  \qquad
	Y = \begin{cases} 
		0 &\text{ if }v(\Delta) \geq r, \\
		Y_0 &\text{ if } (\Delta)_0 = Y_0^2 \text{ and } v(\Delta)<r.
	\end{cases} \nonumber
\end{equation}
Assuming $S_{a_1}$ is non empty, we parametrise it by
\begin{equation}
	S_{a_1} = \left\{ \left(-\frac{1}{2v}-\frac{b_1-b_2}{2}\pm \frac{Y}{2v}\varpi^{\delta}+\alpha \varpi^{r-\delta},-\frac{1}{2v}+\frac{b_1-b_2}{2}\pm \frac{Y}{2v}\varpi^{\delta}+\alpha \varpi^{r-\delta}\right) \colon \alpha \in \mathcal{O}/\p^{\delta} \right\}. \nonumber
\end{equation}
To shorten notation we set $A_{\pm} = -\frac{1}{2v}-\frac{b_1-b_2}{2}\pm \frac{Y}{2v}\varpi^{\delta}$  and $B_{\pm} = -\frac{1}{2v}+\frac{b_1-b_2}{2}\pm \frac{Y}{2v}\varpi^{\delta}$. 

We proceed by inserting the parametrisation of $S_{a_1}$ in the $S_{a_1}$-sum. This yields
\begin{eqnarray}
	K_{a_1} &=& \sum_{\pm}\chi_1(A_{\pm}) \chi_2(B_{\pm})\psi(A_{\pm}\varpi^{-a_1}+B_{\pm}\varpi^{-a_1}+vA_{\pm}B_{\pm}\varpi^{-a_1}) \zeta_F(1)^2 q^{-a_1} \nonumber \\
	&&\quad \cdot  \sum_{\alpha \in \mathcal{O}/\p^{\delta}}\chi_1(1+\frac{\alpha}{A_{\pm}}\varpi^{r-\delta})\chi_2(1+\frac{\alpha}{B_{\pm}}\varpi^{r-\delta})\psi((1\pm Y\varpi^{\delta})\alpha \varpi^{-r-\delta}+v\alpha^2\varpi^{-2\delta}) \nonumber 
\end{eqnarray}
As usual we use Lemma~\ref{lm:char_trick} and the $p$-adic logarithm to deal with the two characters. Observing $\delta\leq \frac{r}{2}$ enables us to truncate the Taylor expansion of the logarithm after the 3rd term. This yields
\begin{eqnarray}
	K_{a_1} &=& \sum_{\pm} \chi_1(A_{\pm}) \chi_2(B_{\pm})\psi(A_{\pm}\varpi^{-a_1}+B_{\pm}\varpi^{-a_1}+vA_{\pm}B_{\pm}\varpi^{-a_1}) \zeta_F(1)^2 q^{\delta-a_1} \nonumber \\
	&&  \cdot \int_{\mathcal{O}} \psi\bigg(y(1\pm Y\varpi^{\delta}+\frac{b_1}{A_{\pm}}+\frac{b_2}{B_{\pm}})\varpi^{-r-\delta} \nonumber \\
	&&\qquad\qquad +y^2(v-\frac{1}{2}(\frac{b_1}{A_{\pm}^2}+\frac{b_2}{B_{\pm}^2}))\varpi^{-2\delta}+\frac{y^3}{3}(\frac{b_1}{A_{\pm}^3}+\frac{b_2}{B_{\pm}^3})\varpi^{r-3\delta} \bigg) dy. \nonumber
\end{eqnarray} 
First, we observe that in order for $S_{l_2}$ to be non-empty we need $A_{\pm},B_{\pm}  \in \mathcal{O}^{\times}$. This translates to 
\begin{equation}
	v\not \in \pm (b_1-b_2)^{-1}+\p. \nonumber
\end{equation}
Since 
\begin{equation}
	A_{\pm}B_{\pm} \in \frac{1}{4v^2}-\frac{(b_1-b_2)^2}{4}+\p  \nonumber
\end{equation}
we conclude that $A_{\pm}B_{\pm} \in \mathcal{O}^{\times}$. 

Note that if the linear or the quadratic term are units then we have at least square root cancellation. Thus, we are left with showing that the coefficient in front of $t^3$ is (close to) a unit. The following computations are modulo $\p$. First, $\Delta\in\p$ implies
\begin{equation}
	1+v^2(b_1-b_2)^2 \in 2v(b_1+b_2)+\p. \nonumber
\end{equation}
We also compute
\begin{equation}
	A_{\pm}^2 \in \frac{b_1}{v}+\p \text{ and } B_{\pm}^2 \in \frac{b_2}{v}. \nonumber  
\end{equation}
Using this an easy computation shows
\begin{equation}
	\frac{b_1}{A_{\pm}^3}+\frac{b_2}{B_{\pm}^3} \in (A_{\pm}B_{\pm})^{-1}+\p \subset \mathcal{O}^{\times}. \nonumber
\end{equation}
Thus we may apply Lemma~\ref{lm:quoted_qp_bound} to obtain cube root cancellation. This gives
\begin{equation}
	\abs{K_{a_1}} \leq 2\zeta_F(1)^2q^{-a_1+\frac{r}{3}}. \nonumber
\end{equation}

\end{proof}

\begin{lemma} \label{lm:generic_ps_bound}
Let $\pi = \chi_1\abs{\cdot}^s\boxplus\chi_2\abs{\cdot}^{-s}$ where $s\in i\R$, and $a(\chi_1)=a(\chi_2)$ but $\chi_1\neq \chi_2$. Then we have
\begin{equation}
	\abs{W_{\pi}(g)}\leq 2q^{\frac{n}{12}}. \nonumber
\end{equation}
If $a(\chi_1\chi_2)<\frac{n}{2}$ and $-1\not\in \mathcal{O}^{\times 2}$, then we have the stronger bound
\begin{equation}
	\abs{W_{\pi}(g)}\leq 2. \nonumber
\end{equation}
\end{lemma}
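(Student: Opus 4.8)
The plan is to combine Lemma~\ref{lm:generic_ps_shape_Wf} with the estimates for $K_{l_2}$ obtained in Lemma~\ref{lm:est_KL2}, carefully bookkeeping the three ranges of $l$ that arise in Lemma~\ref{lm:generic_ps_shape_Wf}. Write $a_1 = a(\chi_1) = a(\chi_2)$, so $n = 2a_1$, and put $m = \max(2l,n)$. For $l=0$, $t=-n$ and for $l\geq n$, $t=-2l$ the displayed formulas give $\abs{W_\pi(g_{t,l,v})} = 1$ immediately, so assume $0<l<n$. For $l\notin\{a_1\}$ — i.e.\ $l<a_1$ or $a_1<l<n$ — one is in the first block of Lemma~\ref{lm:generic_ps_shape_Wf}, where $W_\pi(g_{t,l,v})$ is a single scalar times $q^{-t/2}K(\chi_1\otimes\chi_2,(\varpi^{-l_1},\varpi^{-l_2}),v\varpi^{-l})$ with $(l_1,l_2)$ and the forced value of $t$ read off from the four sub-cases; here $a(\chi_1)=a(\chi_2)$ forces $l_1$ and $l_2$ to lie in $\{a_1,l\}$ and in particular $0<l_1,l_2\leq\max(a_1,l)$, so one applies Lemma~\ref{lm:est_KL2} (after matching notation: the $t$ there is our $t$, and $a(\chi_1\chi_2^{-1}) = a(\chi_1\chi_2)$ since the central character of $\pi$ has conductor $\leq a_1$ and $\chi_1\chi_2 = \omega_\pi$). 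The bound $\abs{K}\leq 2\zeta_F(1)^2 q^{-a_1/2 + t/6}$ combined with the prefactor $\zeta_F(1)^{-2}q^{-t/2}$ (and $\abs{q^{s(\cdots)}}=1$ as $s\in i\R$) yields $\abs{W_\pi(g_{t,l,v})}\leq 2q^{-t/2 - a_1/2 + t/6} = 2q^{-t/3 - a_1/2}$; since in every non-vanishing sub-case $-t\leq \max(2a_1,2l)=m$ and $l<n=2a_1$ forces $m = \max(2l,2a_1)$, one checks case by case that $-t/3 - a_1/2 \leq n/12 = a_1/6$, giving the claimed exponent. When no degenerate critical point occurs Lemma~\ref{lm:est_KL2} gives the stronger $q^{-a_1/2+t/4}$, which is more than enough.

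The case $l = a_1 = a(\chi_1) = a(\chi_2)$ is the genuinely delicate one, because the third block of Lemma~\ref{lm:generic_ps_shape_Wf} expresses $W_\pi(g_{t,l,v})$ not as a single $K$ but as a sum $\sum_{l_2=1}^{l} q^{s(-t-2l_2)}K_{l_2}$ (times a scalar) plus two boundary Gau\ss-sum terms, valid for $-n\leq t\leq -2$; for $t>-2$ it is an explicit sum of two scalars of modulus $\leq 1$. So I would: (i) handle $t>-2$ trivially; (ii) for $-n\leq t\leq -2$, bound the boundary terms — each is $\zeta_F(1)^{-2}q^{-t/2}$ times a product of two Gau\ss\ sums, one of modulus $\zeta_F(1)q^{-a_1/2}$ and one of modulus $\leq\zeta_F(1)q^{-a(\chi_1\chi_2)/2}$, and these appear only when $t\geq -a(\chi_1\chi_2)$, so they contribute $\leq q^{-t/2 - a_1/2 - a(\chi_1\chi_2)/2}\leq q^{a(\chi_1\chi_2)/2 - a_1/2 -a(\chi_1\chi_2)/2}= q^{-a_1/2}\cdot q^{-t/2-a(\chi_1\chi_2)/2}\leq 1$ after using $-t\leq a(\chi_1\chi_2)$; and (iii) bound the main sum $\zeta_F(1)^{-2}q^{-t/2}\sum_{l_2}\abs{K_{l_2}}$. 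For step (iii) the key point from Lemma~\ref{lm:est_KL2} is that $K_{l_2}$ is \emph{nonzero only} when $t=-l_2$ or $t=-a(\chi_1\chi_2)-l_2$ with $a(\chi_1\chi_2)>a_1/2$; hence for each fixed $t$ at most two values of $l_2$ contribute, so the sum has at most two terms and $\abs{W_\pi(g_{t,l,v})}\leq 2\cdot\sup_{l_2}\abs{K_{l_2}}\cdot q^{-t/2}\leq 2\cdot 2\zeta_F(1)^2 q^{-a_1/2+t/6}\cdot\zeta_F(1)^{-2}q^{-t/2} = 4q^{-a_1/2 - t/3}$, and again $-t\leq 2a_1$ gives the exponent $\leq a_1/6 = n/12$ (the constant $4$ versus $2$ needs a slightly more careful look: when $t/6$ is at its extreme $-t=2l=2a_1$ only one $l_2$ — namely $l_2=a_1$, i.e.\ $t=-2l_2$ — actually meets the degenerate-point condition, the other potential contributor $t=-a(\chi_1\chi_2)-l_2$ would need $l_2 = 2a_1 - a(\chi_1\chi_2) > a_1$, impossible; so the degenerate bound is hit by a single term and one recovers the constant $2$).

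Finally, for the improved bound under the hypothesis $a(\chi_1\chi_2)<n/2 = a_1$ and $-1\notin\mathcal{O}^{\times 2}$: the point is that Lemma~\ref{lm:est_KL2} says degenerate critical points occur only when $t=-2l_2 = -2a(\chi_1\chi_2)$, or when $t=-2a_1$ with either $a(\chi_1\chi_2)=a_1$ (excluded by hypothesis) or $a(\chi_1\chi_2)<a_1$ and $-1\in\mathcal{O}^{\times 2}$ (excluded by hypothesis). In all remaining cases $\abs{K_{l_2}}\leq 2\zeta_F(1)^2 q^{-a_1/2+t/4}$, so $\abs{W_\pi(g_{t,l,v})}\leq 2\cdot 2\zeta_F(1)^2 q^{-a_1/2+t/4}\zeta_F(1)^{-2}q^{-t/2}\leq 4q^{-a_1/2-t/4}\leq 4q^{-a_1/2 + a_1/2} = 4$ using $-t\leq 2a_1$; the one genuinely degenerate case left, $t=-2a(\chi_1\chi_2)$, gives $\abs{K_{l_2}}\leq 2\zeta_F(1)^2 q^{-a_1/2+t/6} = 2\zeta_F(1)^2 q^{-a_1/2 - a(\chi_1\chi_2)/3}$, so $\abs{W_\pi}\leq 4 q^{-a_1/2 - a(\chi_1\chi_2)/3 + a(\chi_1\chi_2)} = 4q^{-a_1/2 + 2a(\chi_1\chi_2)/3} < 4q^{-a_1/2 + 2a_1/3}$, which is not obviously $\leq 2$ — so here one must instead use that for $t=-2a(\chi_1\chi_2)$ only the single term $l_2 = a(\chi_1\chi_2)$ survives and the contributing boundary terms vanish, reducing the constant, and recheck the arithmetic $-a_1/2 - a(\chi_1\chi_2)/3 - t/2 = -a_1/2 - a(\chi_1\chi_2)/3 + a(\chi_1\chi_2) = -a_1/2 + 2a(\chi_1\chi_2)/3 < -a_1/2 + a_1 \cdot 2/3 = -a_1/6 < 0$, so indeed $\leq 1$. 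The main obstacle is exactly this bookkeeping in the $l=a_1$ transition range: keeping the constant at $2$ requires pinning down, for each $t$, precisely which $l_2$ contribute and which boundary terms are present, rather than bounding termwise and summing blindly.
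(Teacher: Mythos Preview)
Your treatment of the delicate case $l=a_1=\frac{n}{2}$ is essentially the paper's argument: combine Lemma~\ref{lm:generic_ps_shape_Wf} with Lemma~\ref{lm:est_KL2}, use the support of $K_{l_2}$ to see that only one (the paper says one; you say at most two, and then correctly observe that at the extremal $t$ only one survives) term in the $l_2$-sum contributes, and feed in the $q^{-a_1/2+t/6}$ bound. This part is fine.

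The genuine gap is in the range $0<l<n$, $l\neq a_1$. You invoke Lemma~\ref{lm:est_KL2} there, but that lemma carries the hypothesis $a(\chi_1)=a(\chi_2)=l$; it is stated and proved only for $l=a_1$, and its bound $q^{-a_1/2+t/6}$ does not extend to general $l$. Your own arithmetic shows the problem: for $a_1<l<2a_1$ and $t=-2l$ you get exponent $-t/3-a_1/2=2l/3-a_1/2$, which for $l$ close to $2a_1$ is about $5a_1/6$, far exceeding $n/12=a_1/6$. The paper handles these $l$ differently: it applies Lemma~\ref{lm:prototype_K_stat} directly and checks that $\#S\leq 2$ by solving the critical-point congruences. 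For $l<a_1$ the quadratic congruence for (say) $x_2$ has leading coefficient $v\varpi^{a_1-l}$ of positive valuation, so Lemma~\ref{lm:quad_cong} gives a unique solution; for $a_1<l<n$ the discriminant lies in $1+\mathfrak p$, hence is a unit, giving at most two solutions. Either way one gets $\abs{W_\pi(g_{t,l,v})}\leq 2$ with no $q$-power loss.

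A smaller point: you identify $a(\chi_1\chi_2^{-1})$ with $a(\chi_1\chi_2)$, but these are different invariants. Lemma~\ref{lm:est_KL2} uses $a(\chi_1\chi_2^{-1})$ for the support conditions and $a(\chi_1\chi_2)$ for the degenerate-point criterion at $t=-2a_1$; the hypothesis of the improved bound in Lemma~\ref{lm:generic_ps_bound} concerns $a(\chi_1\chi_2)=a(\omega_\pi)$, matching the latter. Keep them separate in the bookkeeping.
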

The proof follows our usual strategy and is very similar in spirit to the proof of previous lemmata. Thus we will be very brief.
\begin{proof}
If $l=0$ or $l\geq n$ or $t>-2$, the formulas given in Lemma~\ref{lm:generic_ps_shape_Wf} can be easily estimated. The cases $0<l<n$ and $l\neq \frac{n}{2}$ can be covered by estimating the $S$-sum in Lemma~\ref{lm:prototype_K_stat} trivially. Finally, if $l=\frac{n}{2}$, we use Lemma~\ref{lm:generic_ps_shape_Wf} and Lemma~\ref{lm:est_KL2}. First, one observes that by the support properties of $K_{l_2}$ there is only one $l_2$ for each $t$ that contributes to the $l_2$-sum. The desired estimate follows directly after applying the upper bounds from  Lemma~\ref{lm:est_KL2}.
\end{proof}

\bibliographystyle{plain}
\bibliography{bibliography} 

\end{document}